\newtheorem*{introremark}{Remark}
\newtheorem*{introexample}{Example}
\newtheorem{theorem}{Theorem}[section]
\newtheorem*{maintheorem}{Main Theorem}
\newtheorem{corollary}[theorem]{Corollary}
\newtheorem{definition}[theorem]{Definition}
\newtheorem{example}[theorem]{Example}
\newtheorem{lemma}[theorem]{Lemma}
\newtheorem{proposition}[theorem]{Proposition}
\newtheorem{remark}[theorem]{Remark}
\newcommand{\bfa}{\mathbf{a}}
\newcommand{\bfe}{\mathbf{e}}
\newcommand{\bfv}{\mathbf{v}}
\newcommand{\cA}{\mathcal{A}}
\newcommand{\cC}{\mathcal{C}}
\newcommand{\cF}{\mathcal{F}}
\newcommand{\cG}{\mathcal{G}}
\newcommand{\cK}{\mathcal{K}}
\newcommand{\cT}{\mathcal{T}}
\newcommand{\FF}{\mathbb{F}}
\newcommand{\kk}{\Bbbk}
\renewcommand{\AA}{\mathbb{A}}
\newcommand{\KK}{\mathbb{K}}
\newcommand{\QQ}{\mathbb{Q}}
\newcommand{\ZZ}{\mathbb{Z}}
\newcommand{\Ext}{\operatorname{Ext}}
\newcommand{\hgt}{{\operatorname{ht}}}
\newcommand{\dpt}{{\operatorname{dp}}}
\newcommand{\supp}{\operatorname{supp}}
\newcommand{\rsh}{{\operatorname{rsh}}}
\newcommand{\rep}{{\operatorname{rep}}}
\newcommand{\sh}{{\operatorname{sh}}}
\newcommand{\wt}{\operatorname{wt}}
\newcommand{\exclude}[1]{{}}
\newenvironment{enumeratea}{\begin{enumerate}[\upshape (a)]}
                           {\end{enumerate}}
\newenvironment{enumeratei}{\begin{enumerate}[\upshape (i)]}
                           {\end{enumerate}}
\newcommand{\erase}[1]{{}}
\title[Rank Two Non-Commutative Laurent Phenomenon and Pseudo-Positivity]{Rank Two Non-Commutative Laurent Phenomenon\\and Pseudo-Positivity}
\author{Dylan Rupel}
\address[Dylan Rupel]{University of Notre Dame, Department of Mathematics, Notre Dame, IN 46556, USA}
\email{drupel@nd.edu}
\begin{document}
  \begin{abstract}
    We study polynomial generalizations of the Kontsevich automorphisms acting on the skew-field of formal rational expressions in two non-commuting variables.  
    Our main result is the Laurentness and pseudo-positivity of iterations of these automorphisms.  
    The resulting expressions are described combinatorially using a generalization (studied in \cite{rupel2}) of the combinatorics of compatible pairs in a maximal Dyck path developed by Lee, Li, and Zelevinsky in \cite{lee-li-zelevinsky}.  

    By specializing to quasi-commuting variables we obtain pseudo-positive expressions for rank 2 quantum generalized cluster variables.  
    In the \emph{binomial case} when all internal exchange coefficients are zero, this quantum specialization provides a positive combinatorial construction of counting polynomials for Grassmannians of submodules in exceptional representations of valued quivers with two vertices.
  \end{abstract}
  \maketitle


  \setcounter{section}{1}

  Let $\kk$ be any field of characteristic zero.  Write $\KK=\kk(X,Y)$ for the skew-field of rational functions in non-commuting variables $X$ and $Y$.  Intuitively, writing $\pi:\kk(X,Y)\to\kk(x,y)$ for the commutative specialization, we may formally invert any element $W\in\KK$ for which $\pi(W)\ne0$; this idea has been made precise in \cite{usnich1} by considering iterated localizations of the free algebra $\kk\langle X,Y\rangle$.

  For any nonzero polynomial $P\in\kk[z]$, consider the following $\kk$-linear endomorphism of $\KK$:
  \[F_P:\begin{cases} X\mapsto XYX^{-1}\\ Y\mapsto P(Y)X^{-1}.\end{cases}\]
  In fact $F_P$ is an automorphism of $\KK$ as $F_P^{-1}$ is given by $X\mapsto P(X)Y^{-1}$ and $Y\mapsto YXY^{-1}$.
  We remark for later use that the element $Q:=XYX^{-1}Y^{-1}$ is fixed by $F_P$ for any nonzero polynomial $P$.

  Fix nonzero monic polynomials $P_1,P_2\in\kk[z]$ such that $P_1(0)=1=P_2(0)$, say
  \[P_1(z)=p_{1,0}+p_{1,1}z+\cdots+p_{1,d_1-1}z^{d_1-1}+p_{1,d_1}z^{d_1}\quad\!\text{ and }\!\quad P_2(z)=p_{2,0}+p_{2,1}z+\cdots+p_{2,d_2-1}z^{d_2-1}+p_{2,d_2}z^{d_2}\]
  with $p_{1,0}=p_{1,d_1}=p_{2,0}=p_{2,d_2}=1$.  Set $\AA_+=\ZZ_{\ge0}[p_{1,i},p_{2,j}:0\le i\le d_1,0\le j\le d_2]$ and call this the \emph{pseudo-positive semiring} associated to $P_1$ and $P_2$.  

  We will write $\bar{P}_1(z):=z^{d_1}P_1(z^{-1})$ and $\bar{P}_2(z):=z^{d_2}P_2(z^{-1})$ for the polynomials obtained from $P_1$ and $P_2$ by reversing the order of the coefficients.  Note that these are again polynomials of the same form.  For notational convenience, for $k\in\ZZ$ we define 
  \begin{equation}
    \label{eq:reversed polynomials}
    P_k(z)=p_{k,0}+p_{k,1}z+\cdots+p_{k,d_k-1}z^{d_k-1}+p_{k,d_k}z^{d_k}:=\begin{cases}\bar{P}_2(z) & \text{if $k\equiv 0\mod 4$;}\\P_1(z) & \text{if $k\equiv 1\mod 4$;}\\P_2(z) & \text{if $k\equiv 2\mod 4$;}\\\bar{P}_1(z) & \text{if $k\equiv 3\mod 4$.}\end{cases}
  \end{equation}
  Here we use the notation $d_k:=d_1$ if $k$ is odd and $d_k:=d_2$ if $k$ is even.

  Set $X_0:=X$ and $Y_0:=Y$.
  \begin{maintheorem}
    \label{th:main}
    For any $m\ge0$, the elements $ X_m, Y_m\in\KK$ given by
    \begin{equation}
      \label{eq:non-commutative initial cluster mutation}
      X_m:=F_{P_1}F_{P_2}\cdots F_{P_m}(X)\quad\text{and}\quad Y_m:=F_{P_1}F_{P_2}\cdots F_{P_m}(Y)
    \end{equation}
    are contained in the semiring $\AA_+\langle X^{\pm1},Y^{\pm1}\rangle\subset\KK$ of pseudo-positive non-commutative Laurent polynomials.
  \end{maintheorem}
  \begin{introremark}
    When $P_1$ and $P_2$ are monic and of the same degree but $P_1(0)=P_2(0)\ne1$, this result also holds and can be deduced from the Main Theorem by passing to an appropriate algebraic extension of $\kk$, then rescaling all variables.  
    The same is true when the coefficients $p_{1,0},p_{1,d_1},p_{2,0},p_{2,d_2}\ne0$ are arbitrary but satisfy a balancing condition which we leave as an exercise for the reader to work out.  
    In the absence of such a balancing condition the definitions of the polynomials $P_k(z)$ should be adjusted according to the exchange polynomial mutation rules developed by Chekhov and Shapiro \cite{chekhov-shapiro}, though it is not clear that the combinatorial construction below can be adapted to this setting.
    Also, since $F_P(X)=QY$ for any polynomial $P$, we have $X_{m+1}=QY_m$ for $m\ge0$; in particular, the claim for the $X_m$ follows from the claim for the $Y_m$.
  \end{introremark}
  \begin{introremark}
    If $d_1d_2\le3$, the Main Theorem can be observed quite explicitly by computing $X_1,X_2,\ldots,X_m$ by hand for 
    \[m=\begin{cases} 4 & \text{if $d_1d_2=0$;}\\ 5 & \text{if $d_1d_2=1$;}\\ 6 & \text{if $d_1d_2=2$;}\\ 8 & \text{if $d_1d_2=3$;}\end{cases}\]
    and observing in each case that these are given by pseudo-positive non-commutative Laurent polynomials with $X_m=QXQ^{-1}$.
    The combinatorics below can be adapted to these cases, however in everything that follows we assume $d_1d_2\ge4$ as such cases may be treated more uniformly.
  \end{introremark}

  For the following example, observe that the $Y_m$ for $m\ge2$ may alternatively be computed via the following non-commutative analogue of generalized cluster exchange relations:
  \begin{equation}
    \label{eq:non-commutative exchange relation}
    Y_mQY_{m-2}=1+p_{m,1}Y_{m-1}+\cdots+p_{m,d_m-1}Y_{m-1}^{d_m-1}+Y_{m-1}^{d_m}.
  \end{equation}
  \begin{introexample}
    Let $P_1=1+p_{1,1}z+p_{1,2}z^2+z^3$ and $P_2=1+p_{2,1}z+z^2$.
    Then the first few non-commutative generalized cluster variables $Y_m$ are given by:
    \[ Y_1=(1+p_{1,1}Y+p_{1,2}Y^2+Y^3)X^{-1}, \quad Y_2=(1+p_{2,1}Y_1+Y_1^2)Y^{-1}Q^{-1}, \quad Y_3=(1+p_{1,2}Y_2+p_{1,1}Y_2^2+Y_2^3)Y_1^{-1}Q^{-1}.\]
    While $Y_2$ is manifestly an element of $\AA_+\langle X^{\pm1},Y^{\pm1}\rangle$, a highly nontrivial cancellation must occur in the expansion of $Y_3$ in order for it to be a pseudo-positive non-commutative Laurent polynomial.
    Such cancellations indeed occur and we obtain the expansion
    \begin{align*}
      Y_3&=
      \Big(XY^{-3}+p_{1,2}(p_{2,1}+Y_1)Y^{-1}+p_{1,1}(p_{2,1}+Y_1)Y^{-2}+p_{1,1}(1+p_{2,1}Y_1+Y_1^2)XY^{-1}X^{-1}(p_{2,1}+Y_1)Y^{-1}+\\
      &\qquad +(p_{2,1}+Y_1)Y^{-3}+(1+p_{2,1}Y_1+Y_1^2)XY^{-1}X^{-1}(p_{2,1}+Y_1)Y^{-2}+\\
      &\qquad +(1+p_{2,1}Y_1+Y_1^2)XY^{-1}X^{-1}(1+p_{2,1}Y_1+Y_1^2)XY^{-1}X^{-1}(p_{2,1}+Y_1)Y^{-1}\Big)Q^{-1}.
    \end{align*}
  \end{introexample}

  The automorphisms $F_{P_k}$ are generalizations of automorphisms of $\KK$ introduced by Kontsevich \cite{kontsevich} which are recovered in the \emph{binomial case} when $p_{1,i}=0=p_{2,j}$ for $1\le i\le d_1-1$ and $1\le j\le d_2-1$.
  In this binomial case, Kontsevich conjectured the Laurentness and positivity of the \emph{non-commutative cluster variables} $X_m$ and $Y_m$.
  This terminology is justified by specializing to commutative variables through which we recover the initial cluster mutations in the rank two cluster algebra \cite{fomin-zelevinsky} associated to the exchange matrix $\left[\begin{array}{cc} 0 & d_2\\ -d_1 & 0\end{array}\right]$ after composing with the transposition of initial cluster variables.
  In the binomial case, Laurentness was established by Usnich \cite{usnich0} when $d_1=d_2=2$, and by Berenstein and Retakh \cite{berenstein-retakh} for arbitrary polynomial degrees.
  Positivity in the binomial case was proven by Di Francesco and Kedem \cite{difrancesco-kedem} when $d_1d_2=4$, by Lee and Schiffler \cite{lee-schiffler} for $d_1=d_2$, and by the author \cite{rupel1} for arbitrary polynomial degrees.
  The proofs in \cite{lee-schiffler,rupel1} use a Dyck path combinatorics which is rather different than that employed in the present work.

  The Laurentness of $X_m$ and $Y_m$ was established by Usnich \cite{usnich2} in the special case where $P_k=P_1$ for all $k\in\ZZ$.
  We will prove the Main Theorem by providing a combinatorial construction of the elements $Y_m$, called \emph{non-commutative generalized cluster variables}.  
  This combinatorics was studied by the author \cite{rupel2} to construct greedy bases for (commutative) rank two generalized cluster algebras by building upon the notion of compatible pairs in a maximal Dyck path developed by Lee, Li, and Zelevinsky \cite{lee-li-zelevinsky} for constructing greedy bases of rank two cluster algebras.

  For $\bfa=(a_1,a_2)\in\ZZ_{\ge0}^2$, let $D:=D_\bfa$ denote the lattice path in the rectangle $[0,a_1]\times[0,a_2]$ which begins at $(0,0)$ takes unit length East and North steps to end at $(a_1,a_2)$ and is maximal among all such \emph{Dyck paths} that never pass above the main diagonal of the rectangle $[0,a_1]\times[0,a_2]$.  
  In other words, no lattice point of $D$ lies strictly above the main diagonal and any lattice point which lies strictly above $D$ also lies strictly above the main diagonal.
  Label the edges of $D$ as $E=\{1,\ldots,a_1+a_2\}$, where this bijection of ordered sets respects the natural order on edges from $(0,0)$ to $(a_1,a_2)$.
  There is a partition $E=H\sqcup V$, where $H$ (resp. $V$) denotes the set of horizontal (resp. vertical) edges of $D$. 

  For edges $e,e'\in E$, we write $ee'$ for the subpath of $D$ beginning with $e$ traveling North-East and ending with $e'$.  
  By convention, this path will be empty if $e$ is to the North-East of $e'$, while the path $ee$ contains the single edge $e$.  
  Let $\overline{e}e'$ (resp. $e\overline{e}'$) denote the path obtained from $ee'$ by removing the edge $e$ (resp. $e'$).  
  Write $(ee')_H$ (resp. $(ee')_V$) for the set of horizontal (resp. vertical) edges in the path $ee'$.
  We abbreviate $|ee'|_H:=|(ee')_H|$ and $|ee'|_V:=|(ee')_V|$.  
  
  \begin{remark}
    In \cite{lee-li-zelevinsky} and \cite{rupel2}, the definition for subpaths $ee'$ of $D$ includes a ``wrap-around'' condition whereby $ee'$ is non-empty for $e'<e$, however following \cite[Remark 2.21]{rupel2} such a condition will not be necessary in our situation and all relevant results quoted from \cite{rupel2} will be modified accordingly.
  \end{remark}

  \begin{definition}
    \label{def:compatibility}
    \cite[Definition 4.1]{rupel2}
    A \emph{grading} $\omega:E\to\ZZ_{\ge0}$ (on the edges) of $D$ is called \emph{compatible} if: for every $h\in H$ and $v\in V$ with $h<v$, there exists an edge $e$ along the path $hv$ so that at least one of the following holds:
    \begin{align}
      \tag{HGC}\label{eq:hgc} e\ne v\quad&\text{ and }\quad |he|_V=\sum\limits_{h'\in(he)_H} \omega(h');\\
      \tag{VGC}\label{eq:vgc} e\ne h\quad&\text{ and }\quad |ev|_H=\sum\limits_{v'\in(ev)_V} \omega(v').
    \end{align}
  \end{definition}

  Recall that $d_1,d_2\in\ZZ_{\ge0}$ denote the degrees of the exchange polynomials $P_1$ and $P_2$ respectively.  
  We say that a grading $\omega$ of $D$ is \emph{$(d_1,d_2)$-bounded} if $\omega(h)\le d_1$ for all $h\in H$ and $\omega(v)\le d_2$ for all $v\in V$.  
  For the remainder of the paper we will restrict to such bounded gradings $\omega$, though we continue to write $\omega:E\to\ZZ_{\ge0}$ throughout.
  This notion of compatible gradings was introduced in \cite{rupel2} building upon the notion of compatible subsets of $E$ developed in \cite{lee-li-zelevinsky} which can be recovered when $\omega(h)\in\{0,d_1\}$ for $h\in H$ and $\omega(v)\in\{0,d_2\}$ for $v\in V$.

  For a $(d_1,d_2)$-bounded grading $\omega$, we associate the non-commutative monomial $\wt_\omega(e)$ to each edge $e\in E$ as follows:
  \begin{equation}
    \label{eq:edge weights}
    \wt_\omega(e)=\begin{cases}
                    p_{1,\omega(e)}Y^{\omega(e)}X^{-1} & \text{ if $e\in H$;}\\
                    p_{2,d_2-\omega(e)}X^{\omega(e)+1}Y^{-1}X^{-1} & \text{ if $e\in V$.}\\
                  \end{cases}
  \end{equation}
  Thus we may associate a non-commutative Laurent monomial to each $(d_1,d_2)$-bounded grading $\omega$ by taking the product of the associated non-commutative weights in the natural order along the path $D$:
  \begin{equation}
    \label{eq:total path weights}
    Y_D(\omega):=\wt_\omega(1)\wt_\omega(2)\cdots\wt_\omega(a_1+a_2).
  \end{equation}  
  Define $Y_D:=\sum\limits_\omega Y_D(\omega)$, where the sum ranges over all $(d_1,d_2)$-bounded compatible gradings $\omega$ of $D$.
  This construction is analogous to those employed in \cite{lee-schiffler,rupel1} using a different Dyck path combinatorics.

  We will mainly be interested in the maximal Dyck paths $D_m:=D_{\bfa_m}$ for integer vectors $\bfa_m\in\ZZ^2$, $m\ge1$, defined recursively by
  \begin{equation}
    \label{eq:roots recursive}
    \bfa_0=(0,-1),\quad
    \bfa_1=(1,0),\quad
    \bfa_{m-1}+\bfa_{m+1}=\begin{cases}d_2\bfa_m & \text{if $m$ is odd;}\\ d_1\bfa_m & \text{if $m$ is even.}\end{cases}
  \end{equation}
  These vectors are precisely (half of) the \emph{almost positive roots} in the root system associated to the Cartan matrix $\left[\begin{array}{cc} 2 & -d_2\\ -d_1 & 2\end{array}\right]$ which describe the denominator vectors of cluster variables.
  The Main Theorem is an immediate consequence of the following combinatorial construction of the non-commutative generalized cluster variables $Y_m$.
  \begin{theorem}
    \label{th:combinatorial construction}
    For $m\ge1$, we have $Y_{D_m}=Y_m$.
  \end{theorem}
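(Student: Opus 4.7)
The natural approach is induction on $m$, which I would strengthen to cover arbitrary starting position in the polynomial sequence. Writing $P_k$ for the polynomials of \eqref{eq:reversed polynomials}, set $Y_m^{(k)}:=F_{P_k}F_{P_{k+1}}\cdots F_{P_{k+m-1}}(Y)$, and let $D_m^{(k)}$ denote the maximal Dyck path built via the recursion \eqref{eq:roots recursive} but using the shifted polynomial data; the theorem is then the $k=1$ case of the identity $Y_m^{(k)}=Y_{D_m^{(k)}}$. The base case $m=1$ is direct: $D_1^{(k)}=D_{(1,0)}$ consists of a single horizontal edge, and summing $p_{k,i}Y^iX^{-1}$ for $0\le i\le d_k$ yields $P_k(Y)X^{-1}=F_{P_k}(Y)=Y_1^{(k)}$.

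For the inductive step, the factorization $Y_m^{(k)}=F_{P_k}\bigl(Y_{m-1}^{(k+1)}\bigr)$ together with the inductive hypothesis reduces matters to the identity
\[
F_{P_k}\bigl(Y_{D_{m-1}^{(k+1)}}\bigr)=Y_{D_m^{(k)}}.
\]
I would substitute $X\mapsto XYX^{-1}$ and $Y\mapsto P_k(Y)X^{-1}$ into each edge weight \eqref{eq:edge weights} appearing in the summands of the left-hand side, and then expand the resulting non-commutative product. The $X\cdot X^{-1}$ cancellations in the ordered product \eqref{eq:total path weights}, together with the expansion of each newly introduced $P_k(Y)$ factor into $d_k+1$ monomials, produce a large sum of non-commutative Laurent monomials which must be reassembled into the weights of compatible gradings on the larger path $D_m^{(k)}$.

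The main obstacle is combinatorial: constructing a weight-preserving bijection between pairs (a compatible grading $\omega'$ on $D_{m-1}^{(k+1)}$, plus a choice of monomial extracted from each $P_k$-expansion along its edges) and compatible $(d_1,d_2)$-bounded gradings $\omega$ on $D_m^{(k)}$. This bijection must respect both the non-commutative ordering of factors along the Dyck path and the compatibility conditions \eqref{eq:hgc}, \eqref{eq:vgc}. The tropical identity $\bfa_{m-1}+\bfa_{m+1}=d_m\bfa_m$ accounts for the exact change in path size, while the generalized compatible-pair combinatorics of \cite{rupel2} (adapted to the wrap-around-free setting discussed in the remark above) should supply the necessary refinement dictionary relating gradings on the smaller path (enriched with expansion choices) to gradings on the larger one. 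Verifying that this dictionary preserves compatibility under substitution, and that the resulting ordered non-commutative monomials match \eqref{eq:edge weights} exactly edge-by-edge, is where I expect the bulk of the technical difficulty to lie.
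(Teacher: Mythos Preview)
Your inductive framework is sound and essentially matches the paper's: both prove an identity of the form $F_{P_k}\bigl(Y_{D_{m-1}^{(k+1)}}\bigr)=Y_{D_m^{(k)}}$ (in the paper's language, $F_{P_0}(Y_{D_m})=Y'_{D'_{m+1}}$), decomposing the combinatorial sum along a fixed horizontal grading and using the recursive factorization of $D_m$ into copies of $D_{m-1}$ to reduce to the inductive hypothesis. Your shifted objects $D_m^{(k)}$ are the paper's primed paths $D'_m$.

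The genuine gap is your expectation of a weight-preserving \emph{bijection}. No such bijection exists. When you expand each $P_k(Y)$ factor and reassemble, what you naturally parametrize on the $D_m$ side is the set of \emph{piecewise compatible} gradings: tuples of compatible gradings on each $D_{m-1}$ subpath of $D_m$ glued together. The paper's Section~\ref{sec:piecewise compatibility} is devoted to showing that these are \emph{not} all compatible on $D_m$; there is a precisely characterized overcount (Theorem~\ref{th:blocking edge conditions}) coming from horizontal gradings that admit a ``blocking edge'' and are strongly left/right-justified there. The proof therefore proceeds by writing $Y_{D_m}(\omega_H)=Y^{pc}_{D_m}(\omega_H)-Y^{nc}_{D_m}(\omega_H)$ and showing that \emph{both} pieces transform correctly under $F_{P_0}$. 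The piecewise-compatible piece $Y^{pc}$ factorizes cleanly (Lemmas~\ref{le:piecewise compatible factorization 1}--\ref{le:piecewise compatible factorization 3}) and induction applies directly to each factor. The non-compatible correction $Y^{nc}$ is handled by a separate phenomenon you have not anticipated: each incompatible piecewise grading contributes a monomial that collapses to $p\,YXY^{-1}X^{-1}=pQ$ (Proposition~\ref{prop:noncommutative collapse} and Corollary~\ref{cor:incompatible collapse}), and since $Q$ is $F_P$-invariant these error terms also match up under $F_{P_0}$ (Lemmas~\ref{le:incompatible factorization 1}--\ref{le:incompatible factorization 2}). Without this subtraction-and-collapse mechanism, the ``bijection'' step cannot close, and the bulk of the paper's technical work (the maps $\varphi^*$, $\Omega$, remote shadows, and the maximality of Corollary~\ref{cor:maximality}) exists precisely to make it go through.
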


  \begin{example}
    \label{ex:combinatorial cluster variables}
    We continue the example from above with $P_1=1+p_{1,1}z+p_{1,2}z^2+z^3$ and $P_2=1+p_{2,1}z+z^2$.
    
    For $m=1$, we get $\bfa_1=(1,0)$ so that $D_1=$  
    \raisebox{0.15em}{\begin{tikzpicture}
      \begin{scope}[scale=0.5]
      \draw[step=1.0, black, thin] (0,0) grid (1,0);
      \draw[black, thin] (0,0) -- (1,0);
      \draw[black, fill=black] (0,0) circle (3pt);
      \draw[black, fill=black] (1,0) circle (3pt);
      \end{scope}
    \end{tikzpicture}}.
    This maximal Dyck path consists of a single horizontal edge which may be assigned any of the weights $0,1,2,3$, a situation which we denote by the dashed edge
    \raisebox{0.15em}{\begin{tikzpicture}
      \begin{scope}[scale=0.5]
      \draw[step=1.0, black, thin] (0,0) grid (1,0);
      \draw[black, thin] (0,0) -- (1,0);
      \draw[black, fill=black] (0,0) circle (3pt);
      \draw[black, line width=1.5pt, dash pattern=on 2.5pt off 1.5pt] (0,0) -- (1,0);
      \draw[black, fill=black] (1,0) circle (3pt);
      \end{scope}
    \end{tikzpicture}}.
    Summing the monomial contributions coming from \eqref{eq:edge weights} for each choice of weight, we get
    \[Y_{D_1}=X^{-1}+p_{1,1}YX^{-1}+p_{1,2}Y^2X^{-1}+Y^3X^{-1}=Y_1\]
    and this same equality holds for any dashed edge below.

    For $m=2$, we get $\bfa_2=(2,1)$ so that $D_2=$
    \raisebox{-0.5em}{\begin{tikzpicture}
      \begin{scope}[scale=0.5]
      \draw[step=1.0, black, thin] (0,0) grid (2,1);
      \draw[black, thin] (0,0) -- (2,1);
      \draw[black, fill=black] (0,0) circle (3pt);
      \draw[black, fill=black] (1,0) circle (3pt);
      \draw[black, fill=black] (2,0) circle (3pt);
      \draw[black, fill=black] (2,1) circle (3pt);
      \end{scope}
    \end{tikzpicture}}.
    In this case, the compatible gradings of the edges in $D_2$ are given by
    \[\begin{tikzpicture}
        \begin{scope}[scale=0.5]
        \draw[step=1.0, black, thin] (0,0) grid (2,1);
        \draw[black, thin] (0,0) -- (2,1);
        \draw[black, fill=black] (0,0) circle (3pt);
        \draw[black, line width=1.5pt, dash pattern=on 2.5pt off 1.5pt] (0,0) -- (1,0);
        \draw[black, fill=black] (1,0) circle (3pt);
        \draw[black, line width=1.5pt, dash pattern=on 2.5pt off 1.5pt] (1,0) -- (2,0);
        \draw[black, fill=black] (2,0) circle (3pt);
        \draw[black, fill=black] (2,1) circle (3pt);
        \node at (2.25,0.5) {$0$};
        \end{scope}
      \end{tikzpicture}
      \qquad
      \begin{tikzpicture}
        \begin{scope}[scale=0.5]
        \draw[step=1.0, black, thin] (0,0) grid (2,1);
        \draw[black, thin] (0,0) -- (2,1);
        \draw[black, fill=black] (0,0) circle (3pt);
        \draw[black, line width=1.5pt, dash pattern=on 2.5pt off 1.5pt] (0,0) -- (1,0);
        \draw[black, fill=black] (1,0) circle (3pt);
        \draw[black, fill=black] (2,0) circle (3pt);
        \draw[black, fill=black] (2,1) circle (3pt);
        \node at (2.25,0.5) {$1$};
        \end{scope}
      \end{tikzpicture}
      \qquad
      \begin{tikzpicture}
        \begin{scope}[scale=0.5]
        \draw[step=1.0, black, thin] (0,0) grid (2,1);
        \draw[black, thin] (0,0) -- (2,1);
        \draw[black, fill=black] (0,0) circle (3pt);
        \draw[black, fill=black] (1,0) circle (3pt);
        \draw[black, fill=black] (2,0) circle (3pt);
        \draw[black, fill=black] (2,1) circle (3pt);
        \node at (2.25,0.5) {$2$};
        \end{scope}
    \end{tikzpicture}\]
    where we again use a dashed line to indicate that a horizontal edge may be assigned any of the weights $0,1,2,3$ without affecting compatibility and a horizontal edge with no weight indicates that the edge must be assigned weight $0$ in order to have a compatible grading.
    Summing the monomial contributions coming from \eqref{eq:total path weights} for each choice of compatible grading, we get
    \[Y_{D_2}=Y_1^2XY^{-1}X^{-1}+Y_1X^{-1}p_{2,1}X^2Y^{-1}X^{-1}+X^{-1}X^{-1}X^3Y^{-1}X^{-1}=Y_2.\]

    For $m=3$, we get $\bfa_3=(5,3)$ so that $D_3=$
    \raisebox{-1.9em}{\begin{tikzpicture}
      \begin{scope}[scale=0.5]
      \draw[step=1.0, black, thin] (0,0) grid (5,3);
      \draw[black, thin] (0,0) -- (5,3);
      \draw[black, fill=black] (0,0) circle (3pt);
      \draw[black, fill=black] (1,0) circle (3pt);
      \draw[black, fill=black] (2,0) circle (3pt);
      \draw[black, fill=black] (2,1) circle (3pt);
      \draw[black, fill=black] (3,1) circle (3pt);
      \draw[black, fill=black] (4,1) circle (3pt);
      \draw[black, fill=black] (4,2) circle (3pt);
      \draw[black, fill=black] (5,2) circle (3pt);
      \draw[black, fill=black] (5,3) circle (3pt);
      \end{scope}
    \end{tikzpicture}}.
    In this case, the compatible gradings of the edges in $D_3$ are given by
    \begin{align*}
      \begin{tikzpicture}
        \begin{scope}[scale=0.5]
        \draw[step=1.0, black, thin] (0,0) grid (5,3);
        \draw[black, thin] (0,0) -- (5,3);
        \draw[black, fill=black] (0,0) circle (3pt);
        \draw[black, fill=black] (1,0) circle (3pt);
        \draw[black, fill=black] (2,0) circle (3pt);
        \node at (2.25,0.5) {$2$};
        \draw[black, fill=black] (2,1) circle (3pt);
        \draw[black, fill=black] (3,1) circle (3pt);
        \draw[black, fill=black] (4,1) circle (3pt);
        \node at (4.25,1.5) {$2$};
        \draw[black, fill=black] (4,2) circle (3pt);
        \draw[black, fill=black] (5,2) circle (3pt);
        \node at (5.25,2.5) {$2$};
        \draw[black, fill=black] (5,3) circle (3pt);
        \end{scope}
      \end{tikzpicture}
      \quad
      \begin{tikzpicture}
        \begin{scope}[scale=0.5]
        \draw[step=1.0, black, thin] (0,0) grid (5,3);
        \draw[black, thin] (0,0) -- (5,3);
        \draw[black, fill=black] (0,0) circle (3pt);
        \draw[black, fill=black] (1,0) circle (3pt);
        \draw[black, fill=black] (2,0) circle (3pt);
        \node at (2.25,0.5) {$2$};
        \draw[black, fill=black] (2,1) circle (3pt);
        \draw[black, fill=black] (3,1) circle (3pt);
        \draw[black, fill=black] (4,1) circle (3pt);
        \node at (4.25,1.5) {$2$};
        \draw[black, fill=black] (4,2) circle (3pt);
        \draw[black, fill=black] (5,2) circle (3pt);
        \node at (5.25,2.5) {$1$};
        \draw[black, fill=black] (5,3) circle (3pt);
        \end{scope}
      \end{tikzpicture}
      \quad
      \begin{tikzpicture}
        \begin{scope}[scale=0.5]
        \draw[step=1.0, black, thin] (0,0) grid (5,3);
        \draw[black, thin] (0,0) -- (5,3);
        \draw[black, fill=black] (0,0) circle (3pt);
        \draw[black, fill=black] (1,0) circle (3pt);
        \draw[black, fill=black] (2,0) circle (3pt);
        \node at (2.25,0.5) {$2$};
        \draw[black, fill=black] (2,1) circle (3pt);
        \draw[black, fill=black] (3,1) circle (3pt);
        \draw[black, fill=black] (4,1) circle (3pt);
        \node at (4.25,1.5) {$2$};
        \draw[black, fill=black] (4,2) circle (3pt);
        \draw[black, line width=1.5pt, dash pattern=on 2.5pt off 1.5pt] (4,2) -- (5,2);
        \draw[black, fill=black] (5,2) circle (3pt);
        \node at (5.25,2.5) {$0$};
        \draw[black, fill=black] (5,3) circle (3pt);
        \end{scope}
      \end{tikzpicture}
      \quad
      \begin{tikzpicture}
        \begin{scope}[scale=0.5]
        \draw[step=1.0, black, thin] (0,0) grid (5,3);
        \draw[black, thin] (0,0) -- (5,3);
        \draw[black, fill=black] (0,0) circle (3pt);
        \draw[black, fill=black] (1,0) circle (3pt);
        \draw[black, fill=black] (2,0) circle (3pt);
        \node at (2.25,0.5) {$2$};
        \draw[black, fill=black] (2,1) circle (3pt);
        \node at (2.5,1.3) {\scriptsize $01$};
        \draw[black, fill=black] (3,1) circle (3pt);
        \draw[black, fill=black] (4,1) circle (3pt);
        \node at (4.25,1.5) {$1$};
        \draw[black, fill=black] (4,2) circle (3pt);
        \draw[black, fill=black] (5,2) circle (3pt);
        \node at (5.25,2.5) {$2$};
        \draw[black, fill=black] (5,3) circle (3pt);
        \end{scope}
      \end{tikzpicture}
      \quad
      \begin{tikzpicture}
        \begin{scope}[scale=0.5]
        \draw[step=1.0, black, thin] (0,0) grid (5,3);
        \draw[black, thin] (0,0) -- (5,3);
        \draw[black, fill=black] (0,0) circle (3pt);
        \draw[black, fill=black] (1,0) circle (3pt);
        \draw[black, fill=black] (2,0) circle (3pt);
        \node at (2.25,0.5) {$2$};
        \draw[black, fill=black] (2,1) circle (3pt);
        \draw[black, line width=1.5pt, dash pattern=on 2.5pt off 1.5pt] (2,1) -- (3,1);
        \draw[black, fill=black] (3,1) circle (3pt);
        \draw[black, fill=black] (4,1) circle (3pt);
        \node at (4.25,1.5) {$1$};
        \draw[black, fill=black] (4,2) circle (3pt);
        \draw[black, fill=black] (5,2) circle (3pt);
        \node at (5.25,2.5) {$1$};
        \draw[black, fill=black] (5,3) circle (3pt);
        \end{scope}
      \end{tikzpicture}\\
      \begin{tikzpicture}
        \begin{scope}[scale=0.5]
        \draw[step=1.0, black, thin] (0,0) grid (5,3);
        \draw[black, thin] (0,0) -- (5,3);
        \draw[black, fill=black] (0,0) circle (3pt);
        \draw[black, fill=black] (1,0) circle (3pt);
        \draw[black, fill=black] (2,0) circle (3pt);
        \node at (2.25,0.5) {$2$};
        \draw[black, fill=black] (2,1) circle (3pt);
        \draw[black, line width=1.5pt, dash pattern=on 2.5pt off 1.5pt] (2,1) -- (3,1);
        \draw[black, fill=black] (3,1) circle (3pt);
        \draw[black, fill=black] (4,1) circle (3pt);
        \node at (4.25,1.5) {$1$};
        \draw[black, fill=black] (4,2) circle (3pt);
        \draw[black, line width=1.5pt, dash pattern=on 2.5pt off 1.5pt] (4,2) -- (5,2);
        \draw[black, fill=black] (5,2) circle (3pt);
        \node at (5.25,2.5) {$0$};
        \draw[black, fill=black] (5,3) circle (3pt);
        \end{scope}
      \end{tikzpicture}
      \quad
      \begin{tikzpicture}
        \begin{scope}[scale=0.5]
        \draw[step=1.0, black, thin] (0,0) grid (5,3);
        \draw[black, thin] (0,0) -- (5,3);
        \draw[black, fill=black] (0,0) circle (3pt);
        \draw[black, fill=black] (1,0) circle (3pt);
        \draw[black, fill=black] (2,0) circle (3pt);
        \node at (2.25,0.5) {$2$};
        \draw[black, fill=black] (2,1) circle (3pt);
        \draw[black, line width=1.5pt, dash pattern=on 2.5pt off 1.5pt] (2,1) -- (3,1);
        \draw[black, fill=black] (3,1) circle (3pt);
        \node at (3.5,1.3) {\scriptsize $01$};
        \draw[black, fill=black] (4,1) circle (3pt);
        \node at (4.25,1.5) {$0$};
        \draw[black, fill=black] (4,2) circle (3pt);
        \draw[black, fill=black] (5,2) circle (3pt);
        \node at (5.25,2.5) {$2$};
        \draw[black, fill=black] (5,3) circle (3pt);
        \end{scope}
      \end{tikzpicture}
      \quad
      \begin{tikzpicture}
        \begin{scope}[scale=0.5]
        \draw[step=1.0, black, thin] (0,0) grid (5,3);
        \draw[black, thin] (0,0) -- (5,3);
        \draw[black, fill=black] (0,0) circle (3pt);
        \draw[black, fill=black] (1,0) circle (3pt);
        \draw[black, fill=black] (2,0) circle (3pt);
        \node at (2.25,0.5) {$2$};
        \draw[black, fill=black] (2,1) circle (3pt);
        \draw[black, line width=1.5pt, dash pattern=on 2.5pt off 1.5pt] (2,1) -- (3,1);
        \draw[black, fill=black] (3,1) circle (3pt);
        \draw[black, line width=1.5pt, dash pattern=on 2.5pt off 1.5pt] (3,1) -- (4,1);
        \draw[black, fill=black] (4,1) circle (3pt);
        \node at (4.25,1.5) {$0$};
        \draw[black, fill=black] (4,2) circle (3pt);
        \draw[black, fill=black] (5,2) circle (3pt);
        \node at (5.25,2.5) {$1$};
        \draw[black, fill=black] (5,3) circle (3pt);
        \end{scope}
      \end{tikzpicture}
      \quad
      \begin{tikzpicture}
        \begin{scope}[scale=0.5]
        \draw[step=1.0, black, thin] (0,0) grid (5,3);
        \draw[black, thin] (0,0) -- (5,3);
        \draw[black, fill=black] (0,0) circle (3pt);
        \draw[black, fill=black] (1,0) circle (3pt);
        \draw[black, fill=black] (2,0) circle (3pt);
        \node at (2.25,0.5) {$2$};
        \draw[black, fill=black] (2,1) circle (3pt);
        \draw[black, line width=1.5pt, dash pattern=on 2.5pt off 1.5pt] (2,1) -- (3,1);
        \draw[black, fill=black] (3,1) circle (3pt);
        \draw[black, line width=1.5pt, dash pattern=on 2.5pt off 1.5pt] (3,1) -- (4,1);
        \draw[black, fill=black] (4,1) circle (3pt);
        \node at (4.25,1.5) {$0$};
        \draw[black, fill=black] (4,2) circle (3pt);
        \draw[black, line width=1.5pt, dash pattern=on 2.5pt off 1.5pt] (4,2) -- (5,2);
        \draw[black, fill=black] (5,2) circle (3pt);
        \node at (5.25,2.5) {$0$};
        \draw[black, fill=black] (5,3) circle (3pt);
        \end{scope}
      \end{tikzpicture}
      \quad
      \begin{tikzpicture}
        \begin{scope}[scale=0.25]
        \draw[step=1.0, white, thin] (0,0) grid (5,3);
        \end{scope}
      \end{tikzpicture}\\
      \begin{tikzpicture}
        \begin{scope}[scale=0.5]
        \draw[step=1.0, black, thin] (0,0) grid (5,3);
        \draw[black, thin] (0,0) -- (5,3);
        \draw[black, fill=black] (0,0) circle (3pt);
        \node at (0.5,0.3) {\scriptsize $012$};
        \draw[black, fill=black] (1,0) circle (3pt);
        \draw[black, fill=black] (2,0) circle (3pt);
        \node at (2.25,0.5) {$1$};
        \draw[black, fill=black] (2,1) circle (3pt);
        \draw[black, fill=black] (3,1) circle (3pt);
        \draw[black, fill=black] (4,1) circle (3pt);
        \node at (4.25,1.5) {$2$};
        \draw[black, fill=black] (4,2) circle (3pt);
        \draw[black, fill=black] (5,2) circle (3pt);
        \node at (5.25,2.5) {$2$};
        \draw[black, fill=black] (5,3) circle (3pt);
        \end{scope}
      \end{tikzpicture}
      \quad
      \begin{tikzpicture}
        \begin{scope}[scale=0.5]
        \draw[step=1.0, black, thin] (0,0) grid (5,3);
        \draw[black, thin] (0,0) -- (5,3);
        \draw[black, fill=black] (0,0) circle (3pt);
        \draw[black, line width=1.5pt, dash pattern=on 2.5pt off 1.5pt] (0,0) -- (1,0);
        \draw[black, fill=black] (1,0) circle (3pt);
        \draw[black, fill=black] (2,0) circle (3pt);
        \node at (2.25,0.5) {$1$};
        \draw[black, fill=black] (2,1) circle (3pt);
        \draw[black, fill=black] (3,1) circle (3pt);
        \draw[black, fill=black] (4,1) circle (3pt);
        \node at (4.25,1.5) {$2$};
        \draw[black, fill=black] (4,2) circle (3pt);
        \draw[black, fill=black] (5,2) circle (3pt);
        \node at (5.25,2.5) {$1$};
        \draw[black, fill=black] (5,3) circle (3pt);
        \end{scope}
      \end{tikzpicture}
      \quad
      \begin{tikzpicture}
        \begin{scope}[scale=0.5]
        \draw[step=1.0, black, thin] (0,0) grid (5,3);
        \draw[black, thin] (0,0) -- (5,3);
        \draw[black, fill=black] (0,0) circle (3pt);
        \draw[black, line width=1.5pt, dash pattern=on 2.5pt off 1.5pt] (0,0) -- (1,0);
        \draw[black, fill=black] (1,0) circle (3pt);
        \draw[black, fill=black] (2,0) circle (3pt);
        \node at (2.25,0.5) {$1$};
        \draw[black, fill=black] (2,1) circle (3pt);
        \draw[black, fill=black] (3,1) circle (3pt);
        \draw[black, fill=black] (4,1) circle (3pt);
        \node at (4.25,1.5) {$2$};
        \draw[black, fill=black] (4,2) circle (3pt);
        \draw[black, line width=1.5pt, dash pattern=on 2.5pt off 1.5pt] (4,2) -- (5,2);
        \draw[black, fill=black] (5,2) circle (3pt);
        \node at (5.25,2.5) {$0$};
        \draw[black, fill=black] (5,3) circle (3pt);
        \end{scope}
      \end{tikzpicture}
      \quad
      \begin{tikzpicture}
        \begin{scope}[scale=0.5]
        \draw[step=1.0, black, thin] (0,0) grid (5,3);
        \draw[black, thin] (0,0) -- (5,3);
        \draw[black, fill=black] (0,0) circle (3pt);
        \draw[black, line width=1.5pt, dash pattern=on 2.5pt off 1.5pt] (0,0) -- (1,0);
        \draw[black, fill=black] (1,0) circle (3pt);
        \draw[black, fill=black] (2,0) circle (3pt);
        \node at (2.25,0.5) {$1$};
        \draw[black, fill=black] (2,1) circle (3pt);
        \node at (2.5,1.3) {\scriptsize $01$};
        \draw[black, fill=black] (3,1) circle (3pt);
        \draw[black, fill=black] (4,1) circle (3pt);
        \node at (4.25,1.5) {$1$};
        \draw[black, fill=black] (4,2) circle (3pt);
        \draw[black, fill=black] (5,2) circle (3pt);
        \node at (5.25,2.5) {$2$};
        \draw[black, fill=black] (5,3) circle (3pt);
        \end{scope}
      \end{tikzpicture}
      \quad
      \begin{tikzpicture}
        \begin{scope}[scale=0.5]
        \draw[step=1.0, black, thin] (0,0) grid (5,3);
        \draw[black, thin] (0,0) -- (5,3);
        \draw[black, fill=black] (0,0) circle (3pt);
        \draw[black, line width=1.5pt, dash pattern=on 2.5pt off 1.5pt] (0,0) -- (1,0);
        \draw[black, fill=black] (1,0) circle (3pt);
        \draw[black, fill=black] (2,0) circle (3pt);
        \node at (2.25,0.5) {$1$};
        \draw[black, fill=black] (2,1) circle (3pt);
        \draw[black, line width=1.5pt, dash pattern=on 2.5pt off 1.5pt] (2,1) -- (3,1);
        \draw[black, fill=black] (3,1) circle (3pt);
        \draw[black, fill=black] (4,1) circle (3pt);
        \node at (4.25,1.5) {$1$};
        \draw[black, fill=black] (4,2) circle (3pt);
        \draw[black, fill=black] (5,2) circle (3pt);
        \node at (5.25,2.5) {$1$};
        \draw[black, fill=black] (5,3) circle (3pt);
        \end{scope}
      \end{tikzpicture}\\
      \begin{tikzpicture}
        \begin{scope}[scale=0.5]
        \draw[step=1.0, black, thin] (0,0) grid (5,3);
        \draw[black, thin] (0,0) -- (5,3);
        \draw[black, fill=black] (0,0) circle (3pt);
        \draw[black, line width=1.5pt, dash pattern=on 2.5pt off 1.5pt] (0,0) -- (1,0);
        \draw[black, fill=black] (1,0) circle (3pt);
        \draw[black, fill=black] (2,0) circle (3pt);
        \node at (2.25,0.5) {$1$};
        \draw[black, fill=black] (2,1) circle (3pt);
        \draw[black, line width=1.5pt, dash pattern=on 2.5pt off 1.5pt] (2,1) -- (3,1);
        \draw[black, fill=black] (3,1) circle (3pt);
        \draw[black, fill=black] (4,1) circle (3pt);
        \node at (4.25,1.5) {$1$};
        \draw[black, fill=black] (4,2) circle (3pt);
        \draw[black, line width=1.5pt, dash pattern=on 2.5pt off 1.5pt] (4,2) -- (5,2);
        \draw[black, fill=black] (5,2) circle (3pt);
        \node at (5.25,2.5) {$0$};
        \draw[black, fill=black] (5,3) circle (3pt);
        \end{scope}
      \end{tikzpicture}
      \quad
      \begin{tikzpicture}
        \begin{scope}[scale=0.5]
        \draw[step=1.0, black, thin] (0,0) grid (5,3);
        \draw[black, thin] (0,0) -- (5,3);
        \draw[black, fill=black] (0,0) circle (3pt);
        \draw[black, line width=1.5pt, dash pattern=on 2.5pt off 1.5pt] (0,0) -- (1,0);
        \draw[black, fill=black] (1,0) circle (3pt);
        \draw[black, fill=black] (2,0) circle (3pt);
        \node at (2.25,0.5) {$1$};
        \draw[black, fill=black] (2,1) circle (3pt);
        \draw[black, line width=1.5pt, dash pattern=on 2.5pt off 1.5pt] (2,1) -- (3,1);
        \draw[black, fill=black] (3,1) circle (3pt);
        \node at (3.5,1.3) {\scriptsize $01$};
        \draw[black, fill=black] (4,1) circle (3pt);
        \node at (4.25,1.5) {$0$};
        \draw[black, fill=black] (4,2) circle (3pt);
        \draw[black, fill=black] (5,2) circle (3pt);
        \node at (5.25,2.5) {$2$};
        \draw[black, fill=black] (5,3) circle (3pt);
        \end{scope}
      \end{tikzpicture}
      \quad
      \begin{tikzpicture}
        \begin{scope}[scale=0.5]
        \draw[step=1.0, black, thin] (0,0) grid (5,3);
        \draw[black, thin] (0,0) -- (5,3);
        \draw[black, fill=black] (0,0) circle (3pt);
        \draw[black, line width=1.5pt, dash pattern=on 2.5pt off 1.5pt] (0,0) -- (1,0);
        \draw[black, fill=black] (1,0) circle (3pt);
        \draw[black, fill=black] (2,0) circle (3pt);
        \node at (2.25,0.5) {$1$};
        \draw[black, fill=black] (2,1) circle (3pt);
        \draw[black, line width=1.5pt, dash pattern=on 2.5pt off 1.5pt] (2,1) -- (3,1);
        \draw[black, fill=black] (3,1) circle (3pt);
        \draw[black, line width=1.5pt, dash pattern=on 2.5pt off 1.5pt] (3,1) -- (4,1);
        \draw[black, fill=black] (4,1) circle (3pt);
        \node at (4.25,1.5) {$0$};
        \draw[black, fill=black] (4,2) circle (3pt);
        \draw[black, fill=black] (5,2) circle (3pt);
        \node at (5.25,2.5) {$1$};
        \draw[black, fill=black] (5,3) circle (3pt);
        \end{scope}
      \end{tikzpicture}
      \quad
      \begin{tikzpicture}
        \begin{scope}[scale=0.5]
        \draw[step=1.0, black, thin] (0,0) grid (5,3);
        \draw[black, thin] (0,0) -- (5,3);
        \draw[black, fill=black] (0,0) circle (3pt);
        \draw[black, line width=1.5pt, dash pattern=on 2.5pt off 1.5pt] (0,0) -- (1,0);
        \draw[black, fill=black] (1,0) circle (3pt);
        \draw[black, fill=black] (2,0) circle (3pt);
        \node at (2.25,0.5) {$1$};
        \draw[black, fill=black] (2,1) circle (3pt);
        \draw[black, line width=1.5pt, dash pattern=on 2.5pt off 1.5pt] (2,1) -- (3,1);
        \draw[black, fill=black] (3,1) circle (3pt);
        \draw[black, line width=1.5pt, dash pattern=on 2.5pt off 1.5pt] (3,1) -- (4,1);
        \draw[black, fill=black] (4,1) circle (3pt);
        \node at (4.25,1.5) {$0$};
        \draw[black, fill=black] (4,2) circle (3pt);
        \draw[black, line width=1.5pt, dash pattern=on 2.5pt off 1.5pt] (4,2) -- (5,2);
        \draw[black, fill=black] (5,2) circle (3pt);
        \node at (5.25,2.5) {$0$};
        \draw[black, fill=black] (5,3) circle (3pt);
        \end{scope}
      \end{tikzpicture}
      \quad
      \begin{tikzpicture}
        \begin{scope}[scale=0.25]
        \draw[step=1.0, white, thin] (0,0) grid (5,3);
        \end{scope}
      \end{tikzpicture}\\
      \begin{tikzpicture}
        \begin{scope}[scale=0.5]
        \draw[step=1.0, black, thin] (0,0) grid (5,3);
        \draw[black, thin] (0,0) -- (5,3);
        \draw[black, fill=black] (0,0) circle (3pt);
        \draw[black, line width=1.5pt, dash pattern=on 2.5pt off 1.5pt] (0,0) -- (1,0);
        \draw[black, fill=black] (1,0) circle (3pt);
        \node at (1.5,0.3) {\scriptsize $012$};
        \draw[black, fill=black] (2,0) circle (3pt);
        \node at (2.25,0.5) {$0$};
        \draw[black, fill=black] (2,1) circle (3pt);
        \draw[black, fill=black] (3,1) circle (3pt);
        \draw[black, fill=black] (4,1) circle (3pt);
        \node at (4.25,1.5) {$2$};
        \draw[black, fill=black] (4,2) circle (3pt);
        \draw[black, fill=black] (5,2) circle (3pt);
        \node at (5.25,2.5) {$2$};
        \draw[black, fill=black] (5,3) circle (3pt);
        \end{scope}
      \end{tikzpicture}
      \quad
      \begin{tikzpicture}
        \begin{scope}[scale=0.5]
        \draw[step=1.0, black, thin] (0,0) grid (5,3);
        \draw[black, thin] (0,0) -- (5,3);
        \draw[black, fill=black] (0,0) circle (3pt);
        \draw[black, line width=1.5pt, dash pattern=on 2.5pt off 1.5pt] (0,0) -- (1,0);
        \draw[black, fill=black] (1,0) circle (3pt);
        \draw[black, line width=1.5pt, dash pattern=on 2.5pt off 1.5pt] (1,0) -- (2,0);
        \draw[black, fill=black] (2,0) circle (3pt);
        \node at (2.25,0.5) {$0$};
        \draw[black, fill=black] (2,1) circle (3pt);
        \draw[black, fill=black] (3,1) circle (3pt);
        \draw[black, fill=black] (4,1) circle (3pt);
        \node at (4.25,1.5) {$2$};
        \draw[black, fill=black] (4,2) circle (3pt);
        \draw[black, fill=black] (5,2) circle (3pt);
        \node at (5.25,2.5) {$1$};
        \draw[black, fill=black] (5,3) circle (3pt);
        \end{scope}
      \end{tikzpicture}
      \quad
      \begin{tikzpicture}
        \begin{scope}[scale=0.5]
        \draw[step=1.0, black, thin] (0,0) grid (5,3);
        \draw[black, thin] (0,0) -- (5,3);
        \draw[black, fill=black] (0,0) circle (3pt);
        \draw[black, line width=1.5pt, dash pattern=on 2.5pt off 1.5pt] (0,0) -- (1,0);
        \draw[black, fill=black] (1,0) circle (3pt);
        \draw[black, line width=1.5pt, dash pattern=on 2.5pt off 1.5pt] (1,0) -- (2,0);
        \draw[black, fill=black] (2,0) circle (3pt);
        \node at (2.25,0.5) {$0$};
        \draw[black, fill=black] (2,1) circle (3pt);
        \draw[black, fill=black] (3,1) circle (3pt);
        \draw[black, fill=black] (4,1) circle (3pt);
        \node at (4.25,1.5) {$2$};
        \draw[black, fill=black] (4,2) circle (3pt);
        \draw[black, line width=1.5pt, dash pattern=on 2.5pt off 1.5pt] (4,2) -- (5,2);
        \draw[black, fill=black] (5,2) circle (3pt);
        \node at (5.25,2.5) {$0$};
        \draw[black, fill=black] (5,3) circle (3pt);
        \end{scope}
      \end{tikzpicture}
      \quad
      \begin{tikzpicture}
        \begin{scope}[scale=0.5]
        \draw[step=1.0, black, thin] (0,0) grid (5,3);
        \draw[black, thin] (0,0) -- (5,3);
        \draw[black, fill=black] (0,0) circle (3pt);
        \draw[black, line width=1.5pt, dash pattern=on 2.5pt off 1.5pt] (0,0) -- (1,0);
        \draw[black, fill=black] (1,0) circle (3pt);
        \draw[black, line width=1.5pt, dash pattern=on 2.5pt off 1.5pt] (1,0) -- (2,0);
        \draw[black, fill=black] (2,0) circle (3pt);
        \node at (2.25,0.5) {$0$};
        \draw[black, fill=black] (2,1) circle (3pt);
        \node at (2.5,1.3) {\scriptsize $01$};
        \draw[black, fill=black] (3,1) circle (3pt);
        \draw[black, fill=black] (4,1) circle (3pt);
        \node at (4.25,1.5) {$1$};
        \draw[black, fill=black] (4,2) circle (3pt);
        \draw[black, fill=black] (5,2) circle (3pt);
        \node at (5.25,2.5) {$2$};
        \draw[black, fill=black] (5,3) circle (3pt);
        \end{scope}
      \end{tikzpicture}
      \quad
      \begin{tikzpicture}
        \begin{scope}[scale=0.5]
        \draw[step=1.0, black, thin] (0,0) grid (5,3);
        \draw[black, thin] (0,0) -- (5,3);
        \draw[black, fill=black] (0,0) circle (3pt);
        \draw[black, line width=1.5pt, dash pattern=on 2.5pt off 1.5pt] (0,0) -- (1,0);
        \draw[black, fill=black] (1,0) circle (3pt);
        \draw[black, line width=1.5pt, dash pattern=on 2.5pt off 1.5pt] (1,0) -- (2,0);
        \draw[black, fill=black] (2,0) circle (3pt);
        \node at (2.25,0.5) {$0$};
        \draw[black, fill=black] (2,1) circle (3pt);
        \draw[black, line width=1.5pt, dash pattern=on 2.5pt off 1.5pt] (2,1) -- (3,1);
        \draw[black, fill=black] (3,1) circle (3pt);
        \draw[black, fill=black] (4,1) circle (3pt);
        \node at (4.25,1.5) {$1$};
        \draw[black, fill=black] (4,2) circle (3pt);
        \draw[black, fill=black] (5,2) circle (3pt);
        \node at (5.25,2.5) {$1$};
        \draw[black, fill=black] (5,3) circle (3pt);
        \end{scope}
      \end{tikzpicture}\\
      \begin{tikzpicture}
        \begin{scope}[scale=0.5]
        \draw[step=1.0, black, thin] (0,0) grid (5,3);
        \draw[black, thin] (0,0) -- (5,3);
        \draw[black, fill=black] (0,0) circle (3pt);
        \draw[black, line width=1.5pt, dash pattern=on 2.5pt off 1.5pt] (0,0) -- (1,0);
        \draw[black, fill=black] (1,0) circle (3pt);
        \draw[black, line width=1.5pt, dash pattern=on 2.5pt off 1.5pt] (1,0) -- (2,0);
        \draw[black, fill=black] (2,0) circle (3pt);
        \node at (2.25,0.5) {$0$};
        \draw[black, fill=black] (2,1) circle (3pt);
        \draw[black, line width=1.5pt, dash pattern=on 2.5pt off 1.5pt] (2,1) -- (3,1);
        \draw[black, fill=black] (3,1) circle (3pt);
        \draw[black, fill=black] (4,1) circle (3pt);
        \node at (4.25,1.5) {$1$};
        \draw[black, fill=black] (4,2) circle (3pt);
        \draw[black, line width=1.5pt, dash pattern=on 2.5pt off 1.5pt] (4,2) -- (5,2);
        \draw[black, fill=black] (5,2) circle (3pt);
        \node at (5.25,2.5) {$0$};
        \draw[black, fill=black] (5,3) circle (3pt);
        \end{scope}
      \end{tikzpicture}
      \quad
      \begin{tikzpicture}
        \begin{scope}[scale=0.5]
        \draw[step=1.0, black, thin] (0,0) grid (5,3);
        \draw[black, thin] (0,0) -- (5,3);
        \draw[black, fill=black] (0,0) circle (3pt);
        \draw[black, line width=1.5pt, dash pattern=on 2.5pt off 1.5pt] (0,0) -- (1,0);
        \draw[black, fill=black] (1,0) circle (3pt);
        \draw[black, line width=1.5pt, dash pattern=on 2.5pt off 1.5pt] (1,0) -- (2,0);
        \draw[black, fill=black] (2,0) circle (3pt);
        \node at (2.25,0.5) {$0$};
        \draw[black, fill=black] (2,1) circle (3pt);
        \draw[black, line width=1.5pt, dash pattern=on 2.5pt off 1.5pt] (2,1) -- (3,1);
        \draw[black, fill=black] (3,1) circle (3pt);
        \node at (3.5,1.3) {\scriptsize $01$};
        \draw[black, fill=black] (4,1) circle (3pt);
        \node at (4.25,1.5) {$0$};
        \draw[black, fill=black] (4,2) circle (3pt);
        \draw[black, fill=black] (5,2) circle (3pt);
        \node at (5.25,2.5) {$2$};
        \draw[black, fill=black] (5,3) circle (3pt);
        \end{scope}
      \end{tikzpicture}
      \quad
      \begin{tikzpicture}
        \begin{scope}[scale=0.5]
        \draw[step=1.0, black, thin] (0,0) grid (5,3);
        \draw[black, thin] (0,0) -- (5,3);
        \draw[black, fill=black] (0,0) circle (3pt);
        \draw[black, line width=1.5pt, dash pattern=on 2.5pt off 1.5pt] (0,0) -- (1,0);
        \draw[black, fill=black] (1,0) circle (3pt);
        \draw[black, line width=1.5pt, dash pattern=on 2.5pt off 1.5pt] (1,0) -- (2,0);
        \draw[black, fill=black] (2,0) circle (3pt);
        \node at (2.25,0.5) {$0$};
        \draw[black, fill=black] (2,1) circle (3pt);
        \draw[black, line width=1.5pt, dash pattern=on 2.5pt off 1.5pt] (2,1) -- (3,1);
        \draw[black, fill=black] (3,1) circle (3pt);
        \draw[black, line width=1.5pt, dash pattern=on 2.5pt off 1.5pt] (3,1) -- (4,1);
        \draw[black, fill=black] (4,1) circle (3pt);
        \node at (4.25,1.5) {$0$};
        \draw[black, fill=black] (4,2) circle (3pt);
        \draw[black, fill=black] (5,2) circle (3pt);
        \node at (5.25,2.5) {$1$};
        \draw[black, fill=black] (5,3) circle (3pt);
        \end{scope}
      \end{tikzpicture}
      \quad
      \begin{tikzpicture}
        \begin{scope}[scale=0.5]
        \draw[step=1.0, black, thin] (0,0) grid (5,3);
        \draw[black, thin] (0,0) -- (5,3);
        \draw[black, fill=black] (0,0) circle (3pt);
        \draw[black, line width=1.5pt, dash pattern=on 2.5pt off 1.5pt] (0,0) -- (1,0);
        \draw[black, fill=black] (1,0) circle (3pt);
        \draw[black, line width=1.5pt, dash pattern=on 2.5pt off 1.5pt] (1,0) -- (2,0);
        \draw[black, fill=black] (2,0) circle (3pt);
        \node at (2.25,0.5) {$0$};
        \draw[black, fill=black] (2,1) circle (3pt);
        \draw[black, line width=1.5pt, dash pattern=on 2.5pt off 1.5pt] (2,1) -- (3,1);
        \draw[black, fill=black] (3,1) circle (3pt);
        \draw[black, line width=1.5pt, dash pattern=on 2.5pt off 1.5pt] (3,1) -- (4,1);
        \draw[black, fill=black] (4,1) circle (3pt);
        \node at (4.25,1.5) {$0$};
        \draw[black, fill=black] (4,2) circle (3pt);
        \draw[black, line width=1.5pt, dash pattern=on 2.5pt off 1.5pt] (4,2) -- (5,2);
        \draw[black, fill=black] (5,2) circle (3pt);
        \node at (5.25,2.5) {$0$};
        \draw[black, fill=black] (5,3) circle (3pt);
        \end{scope}
      \end{tikzpicture}
      \quad
      \begin{tikzpicture}
        \begin{scope}[scale=0.25]
        \draw[step=1.0, white, thin] (0,0) grid (5,3);
        \end{scope}
      \end{tikzpicture}
    \end{align*}
    where we continue to use the conventions above and we indicate allowable weights when there are restrictions.
    We leave it as an exercise for the reader to verify that summing the monomial contributions coming from \eqref{eq:total path weights} for each choice of weights indeed gives $Y_3$.
  \end{example}

  Our proof of Theorem~\ref{th:combinatorial construction} requires a careful understanding of the recursive structure of the maximal Dyck paths $D_m$ which we will establish in the next section.  
  In Section~\ref{sec:compatible pairs}, we further develop the combinatorics of compatible gradings of $D_m$ introduced in \cite{rupel2}.  
  The main aim there is to understand gradings which behave nicely with respect to the recursive structure developed in Section~\ref{sec:dyck paths}.
  These results produce nicely factorizable summands of $Y_{D_m}$, facilitating an inductive proof of Theorem~\ref{th:combinatorial construction} which can be viewed as analogous to the arguments employed in \cite{lee-schiffler,rupel1}.
  Section~\ref{sec:proof of main} puts these combinatorial results together to establish Theorem~\ref{th:combinatorial construction}.
  We finish with Section~\ref{sec:specialization} discussing the specialization from non-commutative variables to quasi-commuting variables.
  A main goal of this section is proving Corollary~\ref{cor:combinatorial polynomials} which gives a positive combinatorial construction of counting polynomials for Grassmannians of subrepresentations in rigid indecomposable representations of a rank two valued quiver, this directly establishes a conjecture from \cite{rupel3} in the rank two case.
  These results lay the foundation for the work \cite{rupel-weist} which explains the reason for such a combinatorial construction of counting polynomials in the case $d_1=d_2$ by establishing the existence of cell decompositions for the Grassmannians of subrepresentations in rigid indecomposable representations of rank two quivers where cells are naturally labelled by compatible weightings of the maximal Dyck paths $D_m$.

  \subsection*{Notation}
  We adopt the following notational conventions throughout the paper.
  \begin{itemize}
    \item For integers $a<b$, set $[a,b]=\{a,a+1,\ldots,b\}$.
    \item Given any quantity $\alpha$ defined using the tuple $(d_1,d_2)$ or the pair of polynomials $(P_1,P_2)$, let $\alpha'$ denote the same quantity defined using the tuple $(d'_1,d'_2)=(d_0,d_1)$ or the polynomials $(P'_1,P'_2)=(P_0,P_1)$.  
      In particular, $p'_{1,j}=p_{2,d_2-j}$ and $p'_{2,j}=p_{1,j}$ when equation~\eqref{eq:edge weights} is applied to a $(d'_1,d'_2)$-bounded grading $\omega'$ on $D_{\bfa'_m}$.
    \item Equations that will be referenced globally will be assigned numbers, those that are referenced only locally (i.e.\ within a single proof) will be assigned symbols (e.g.\ $\dagger$ or $\ddagger$). 
      In particular, symbols labeling equations will be reused but this should not lead to any confusion. 
  \end{itemize}

  \subsection*{Acknowledgements}
  The author would like to thank the anonymous referees for suggestions which greatly improved the readability of this work.

\section{Maximal Dyck Paths}
\label{sec:dyck paths}

In this section we study the recursive structure present in the maximal Dyck paths $D_m$.  
To accomplish this, we note that the vectors $\bfa_m$ can be written more explicitly in terms of two-parameter Chebyshev polynomials $u_{m,k}$ ($m,k\in \ZZ$) defined recursively by: 
\begin{equation}
  \label{eq:chebyshev}
  u_{0,k}=0,\quad
  u_{1,k}=1,\quad
  u_{m+1,k+1}=d_ku_{m,k}-u_{m-1,k-1},
\end{equation}
where $d_k$ denotes the degree of the polynomial $P_k$ in equation~\eqref{eq:reversed polynomials}. 
Then, for $m\ge 1$, we have $\bfa_m=(u_{m,1},u_{m-1,2})$.  Write $\bfa'_m=(u'_{m,1},u'_{m-1,2})=(u_{m,2},u_{m-1,1})$ and set $D'_m=D_{\bfa'_m}$ for $m\ge1$.  
\begin{remark}
  \label{rem:chebyshev}
  To see the equivalence with equation~\eqref{eq:roots recursive}, one must use the identities $u_{m,k}=u_{m,k+1}$ for $m$ odd and $d_ku_{m,k}=d_{k+1}u_{m,k+1}$ for $m$ even.
\end{remark}
We record the next simple observations for future use.
\begin{lemma}
  \label{le:Dyck path inequality}
   For positive integers $d_1,d_2$ and any integers $m,k$, we have $u_{m,k+1}u_{m-2,k}<u_{m-1,k+1}u_{m-1,k}$.
\end{lemma}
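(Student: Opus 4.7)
The plan is to establish the stronger equality
\[
u_{m-1,k+1}\,u_{m-1,k} - u_{m,k+1}\,u_{m-2,k} \;=\; 1
\]
for every pair of integers $m,k$, from which the strict inequality follows at once. Write $D(m,k)$ for the left-hand side.

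The main step is a telescoping identity $D(m,k) = D(m-1,k-1)$. To derive it, apply the Chebyshev recursion \eqref{eq:chebyshev} to rewrite
\[
u_{m,k+1} = d_k\,u_{m-1,k} - u_{m-2,k-1} \qquad\text{and}\qquad u_{m-1,k+1} = d_k\,u_{m-2,k} - u_{m-3,k-1};
\]
both substitutions involve the same scalar $d_k$ because the second index is $k+1$ in each case. Plugging these into the definition of $D(m,k)$, the two $d_k$-contributions cancel (they contribute $\pm d_k\,u_{m-2,k}u_{m-1,k}$), leaving
\[
D(m,k) = u_{m-2,k-1}\,u_{m-2,k} - u_{m-3,k-1}\,u_{m-1,k},
\]
which is precisely $D(m-1,k-1)$ by definition.

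Iterating this identity (in either direction, extending \eqref{eq:chebyshev} uniquely to negative $m$ when needed) reduces $D(m,k)$ to the base row, where
\[
D(2,j) = u_{1,j+1}\,u_{1,j} - u_{2,j+1}\cdot 0 = 1,
\]
completing the argument. I do not foresee any genuine obstacle: the telescoping is purely formal, and the positivity hypothesis $d_1,d_2 \ge 1$ is not actually used. The only point worth being careful about is that the lemma is stated for arbitrary integer $m$, so one must note that the linear recurrence \eqref{eq:chebyshev} is solvable in both directions and the telescoping works either way until it meets the $m=2$ row.
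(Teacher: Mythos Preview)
Your proof is correct and uses the same reduction as the paper: both rely on the identity obtained by expanding $u_{m,k+1}$ and $u_{m-1,k+1}$ via the recursion to relate the $(m,k)$ case to the $(m-1,k-1)$ case. The paper phrases this as an inductive inequality, whereas you observe that the reduction is actually an equality $D(m,k)=D(m-1,k-1)$ and hence obtain the sharper conclusion $D(m,k)=1$; this is a nice strengthening but not a genuinely different route.
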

\begin{proof}
  We work by induction on $m$, the case $m=2$ being the trivial inequality $0<1$.  
  For $m\ge3$, we have
  \[u_{m,k+1}u_{m-2,k}=d_ku_{m-1,k}u_{m-2,k}-u_{m-2,k-1}u_{m-2,k}<d_ku_{m-1,k}u_{m-2,k}-u_{m-3,k-1}u_{m-1,k}=u_{m-1,k+1}u_{m-1,k},\]
  where the inequality above uses induction.
  The case $m\le 1$ can be handled similarly.
\end{proof}

In order to establish a recursive structure for $D_m$, we will show that the maximal Dyck paths $D_m$ and $D'_m$ are intimately related as can be seen in the following.
\begin{example}
  \label{ex:dyck paths}
  For $d_1=3$ and $d_2=2$, we have the following maximal Dyck paths:
  \[\begin{array}{|c|c|c|c|c|}
    \hline
    m & \bfa_m & D_m & \bfa'_m & D'_m \\
    \hline
    1 & (1,0) & 
    \raisebox{0.15em}{\begin{tikzpicture}
      \begin{scope}[scale=0.5]
      \draw[step=1.0, black, thin] (0,0) grid (1,0);
      \draw[black, thin] (0,0) -- (1,0);
      \draw[black, fill=black] (0,0) circle (3pt);
      \draw[black, fill=black] (1,0) circle (3pt);
      \end{scope}
    \end{tikzpicture}}
    & (1,0) & 
    \raisebox{0.15em}{\begin{tikzpicture}
      \begin{scope}[scale=0.5]
      \draw[step=1.0, black, thin] (0,0) grid (1,0);
      \draw[black, thin] (0,0) -- (1,0);
      \draw[black, fill=black] (0,0) circle (3pt);
      \draw[black, fill=black] (1,0) circle (3pt);
      \end{scope}
    \end{tikzpicture}} \\
    \hline
    2 & (2,1) & 
    \raisebox{-0.5em}{\begin{tikzpicture}
      \begin{scope}[scale=0.5]
      \draw[step=1.0, black, thin] (0,0) grid (2,1);
      \draw[black, thin] (0,0) -- (2,1);
      \draw[black, fill=black] (0,0) circle (3pt);
      \draw[black, fill=black] (1,0) circle (3pt);
      \draw[black, fill=black] (2,0) circle (3pt);
      \draw[black, fill=black] (2,1) circle (3pt);
      \end{scope}
    \end{tikzpicture}}
    & (3,1) & 
    \raisebox{-0.5em}{\begin{tikzpicture}
      \begin{scope}[scale=0.5]
      \draw[step=1.0, black, thin] (0,0) grid (3,1);
      \draw[black, thin] (0,0) -- (3,1);
      \draw[black, fill=black] (0,0) circle (3pt);
      \draw[black, fill=black] (1,0) circle (3pt);
      \draw[black, fill=black] (2,0) circle (3pt);
      \draw[black, fill=black] (3,0) circle (3pt);
      \draw[black, fill=black] (3,1) circle (3pt);
      \end{scope}
    \end{tikzpicture}} \\
    \hline
    3 & (5,3) & 
    \raisebox{-1.9em}{\begin{tikzpicture}
      \begin{scope}[scale=0.5]
      \draw[step=1.0, black, thin] (0,0) grid (5,3);
      \draw[black, thin] (0,0) -- (5,3);
      \draw[black, fill=black] (0,0) circle (3pt);
      \draw[black, fill=black] (1,0) circle (3pt);
      \draw[black, fill=black] (2,0) circle (3pt);
      \draw[black, fill=black] (2,1) circle (3pt);
      \draw[black, fill=black] (3,1) circle (3pt);
      \draw[black, fill=black] (4,1) circle (3pt);
      \draw[black, fill=black] (4,2) circle (3pt);
      \draw[black, fill=black] (5,2) circle (3pt);
      \draw[black, fill=black] (5,3) circle (3pt);
      \end{scope}
    \end{tikzpicture}}
    & (5,2) &
    \raisebox{-1.3em}{\begin{tikzpicture}
      \begin{scope}[scale=0.5]
      \draw[step=1.0, black, thin] (0,0) grid (5,2);
      \draw[black, thin] (0,0) -- (5,2);
      \draw[black, fill=black] (0,0) circle (3pt);
      \draw[black, fill=black] (1,0) circle (3pt);
      \draw[black, fill=black] (2,0) circle (3pt);
      \draw[black, fill=black] (3,0) circle (3pt);
      \draw[black, fill=black] (3,1) circle (3pt);
      \draw[black, fill=black] (4,1) circle (3pt);
      \draw[black, fill=black] (5,1) circle (3pt);
      \draw[black, fill=black] (5,2) circle (3pt);
      \end{scope}
    \end{tikzpicture}} \\
    \hline
    4 & (8,5) & 
    \raisebox{-3.3em}{\begin{tikzpicture}
      \begin{scope}[scale=0.5]
      \draw[step=1.0, black, thin] (0,0) grid (8,5);
      \draw[black, thin] (0,0) -- (8,5);
      \draw[black, fill=black] (0,0) circle (3pt);
      \draw[black, fill=black] (1,0) circle (3pt);
      \draw[black, fill=black] (2,0) circle (3pt);
      \draw[black, fill=black] (2,1) circle (3pt);
      \draw[black, fill=black] (3,1) circle (3pt);
      \draw[black, fill=black] (4,1) circle (3pt);
      \draw[black, fill=black] (4,2) circle (3pt);
      \draw[black, fill=black] (5,2) circle (3pt);
      \draw[black, fill=black] (5,3) circle (3pt);
      \draw[black, fill=black] (6,3) circle (3pt);
      \draw[black, fill=black] (7,3) circle (3pt);
      \draw[black, fill=black] (7,4) circle (3pt);
      \draw[black, fill=black] (8,4) circle (3pt);
      \draw[black, fill=black] (8,5) circle (3pt);
      \end{scope}
    \end{tikzpicture}}
    & (12,5) &
    \raisebox{-3.3em}{\begin{tikzpicture}
      \begin{scope}[scale=0.5]
      \draw[step=1.0, black, thin] (0,0) grid (12,5);
      \draw[black, thin] (0,0) -- (12,5);
      \draw[black, fill=black] (0,0) circle (3pt);
      \draw[black, fill=black] (1,0) circle (3pt);
      \draw[black, fill=black] (2,0) circle (3pt);
      \draw[black, fill=black] (3,0) circle (3pt);
      \draw[black, fill=black] (3,1) circle (3pt);
      \draw[black, fill=black] (4,1) circle (3pt);
      \draw[black, fill=black] (5,1) circle (3pt);
      \draw[black, fill=black] (5,2) circle (3pt);
      \draw[black, fill=black] (6,2) circle (3pt);
      \draw[black, fill=black] (7,2) circle (3pt);
      \draw[black, fill=black] (8,2) circle (3pt);
      \draw[black, fill=black] (8,3) circle (3pt);
      \draw[black, fill=black] (9,3) circle (3pt);
      \draw[black, fill=black] (10,3) circle (3pt);
      \draw[black, fill=black] (10,4) circle (3pt);
      \draw[black, fill=black] (11,4) circle (3pt);
      \draw[black, fill=black] (12,4) circle (3pt);
      \draw[black, fill=black] (12,5) circle (3pt);
      \end{scope}
    \end{tikzpicture}} \\
    \hline
  \end{array}\]
\end{example}

\begin{lemma}
  \label{le:Dyck path recursion}
  For $m\ge1$, the following hold.
  \begin{enumeratea}
    \item The maximal Dyck path $D'_{m+1}$ can be obtained from $D_m$ via replacing each horizontal edge, together with the $\ell$ vertical edges which immediately follow it, by $d_1-\ell$ horizontal edges followed by a vertical edge.
    \item The maximal Dyck path $D_{m+1}$ can be obtained from $D'_m$ via replacing each horizontal edge, together with the $\ell$ vertical edges which immediately follow it, by $d_2-\ell$ horizontal edges followed by a vertical edge.
  \end{enumeratea}
\end{lemma}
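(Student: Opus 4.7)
The plan is to exploit an explicit block decomposition of $D_m$. Since $u_{m,1}\ge 1$ for $m\ge 1$, the path $D_m$ begins with a horizontal edge and hence decomposes uniquely into $u_{m,1}$ consecutive blocks of the shape ``one horizontal edge followed by $\ell_j\ge 0$ consecutive vertical edges''; these $\ell_j$ are precisely the numbers $\ell$ appearing in the statement. Maximality of $D_m$ pins down $\ell_j$ as the largest integer such that the top of the $j$-th block still lies on or below the diagonal $u_{m,1}\,y=u_{m-1,2}\,x$, yielding the closed form
\[
L(j):=\ell_1+\cdots+\ell_j=\lfloor j\,u_{m-1,2}/u_{m,1}\rfloor.
\]
By the dual (vertical-last) grouping argument, $D'_{m+1}$ admits a unique decomposition into $u_{m,1}$ blocks of the shape ``$h'_i\ge 0$ horizontal edges followed by one vertical edge'', with $h'_1+\cdots+h'_i=\lceil i\,u_{m+1,2}/u_{m,1}\rceil$.

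Next I will verify that the substitution of (a) is well-defined, i.e., $\ell_j\le d_1$ for all $j$. This reduces to $u_{m-1,2}\le d_1 u_{m,1}$. Equation~\eqref{eq:chebyshev} with $k=1$ reads $u_{m+1,2}=d_1 u_{m,1}-u_{m-1,0}$, and a short parity case split using Remark~\ref{rem:chebyshev} identifies $u_{m-1,0}$ with $u_{m-1,2}$; hence $u_{m+1,2}=d_1 u_{m,1}-u_{m-1,2}$, and the desired bound is equivalent to $u_{m+1,2}\ge 0$, which holds because $\bfa'_{m+1}$ is a nonnegative lattice vector.

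With both ingredients in hand, I apply the substitution of (a) to each block and obtain a path consisting of $u_{m,1}$ blocks of the form ``$d_1-\ell_j$ horizontals followed by one vertical''. Its total horizontal and vertical counts are $d_1 u_{m,1}-u_{m-1,2}=u_{m+1,2}$ and $u_{m,1}$, matching the dimensions of $D'_{m+1}$. To conclude that the resulting path is in fact $D'_{m+1}$, I match the block structures: the $i$-th vertical edge of the transformed path is preceded by
\[
\sum_{j=1}^i(d_1-\ell_j)=id_1-L(i)=id_1-\lfloor i\,u_{m-1,2}/u_{m,1}\rfloor
\]
horizontals, which, using $u_{m+1,2}=d_1 u_{m,1}-u_{m-1,2}$ together with the identity $\lceil n-x\rceil=n-\lfloor x\rfloor$ for $n\in\ZZ$, equals $\lceil i\,u_{m+1,2}/u_{m,1}\rceil=h'_1+\cdots+h'_i$, as required. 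Part~(b) then follows by applying~(a) after interchanging the primed and unprimed conventions (equivalently, swapping $d_1\leftrightarrow d_2$). The main subtlety I foresee is the Chebyshev bookkeeping needed to identify $u_{m-1,0}$ with $u_{m-1,2}$; once that is cleared, the remainder is a clean arithmetic manipulation of floors and ceilings.
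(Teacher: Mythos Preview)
Your argument is correct and close in spirit to the paper's, but organized differently. Both proofs rely on the same block decomposition of $D_m$ into pieces ``one horizontal edge followed by $\ell_j$ verticals'' and on the Chebyshev identity $u_{m+1,2}=d_1u_{m,1}-u_{m-1,2}$. The paper, however, does not compute the explicit formulas $L(j)=\lfloor j\,u_{m-1,2}/u_{m,1}\rfloor$ and $H'(i)=\lceil i\,u_{m+1,2}/u_{m,1}\rceil$; these are essentially the height/depth formulas that appear later as Lemma~\ref{le:height and depth}, quoted there from \cite{rupel2}. Instead the paper argues by contradiction twice: if the transformed path $D'$ crossed the diagonal at its $t$-th vertical edge, or failed maximality at some lattice point, then the identity $u_{m+1,2}=d_1u_{m,1}-u_{m-1,2}$ rewrites the offending inequality into one asserting that $D_m$ itself crosses the diagonal or fails maximality. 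Your forward computation is arguably cleaner once the closed forms for $L(j)$ and $H'(i)$ are in hand, since a single floor/ceiling manipulation then finishes; but it does require independently justifying those closed forms from the definition of the maximal Dyck path (you sketch this greedy argument, and it is straightforward). The paper's contradiction argument avoids that up-front work at the cost of two separate inequality-chasing paragraphs and does not need to check $\ell_j\le d_1$ explicitly.
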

\begin{proof}
  We only prove (a) as (b) will immediately follow by interchanging the roles of $d_1$ and $d_2$.  
  Let $D'$ denote the lattice path obtained from $D_m$ as in (a).  
  It follows from the definition that $D'$ will contain $d_1u_{m,1}-u_{m-1,2}=u_{m+1,2}$ horizontal edges and $u_{m,1}$ vertical edges.  
  We need to show that $D'$ does not cross above the main diagonal and that it is maximal with this property. 

  Write $v'_1,\ldots,v'_{u_{m,1}}$ for the vertical edges of $D'$ and for $1\le r\le u_{m,1}$ suppose $v'_r$ is immediately preceded by exactly $\ell_r$ horizontal edges of the same height.
  Suppose there exists $t$ so that $v'_t$ passes above the main diagonal, this is equivalent to the inequality $\frac{t}{\sum\limits_{r=1}^t\ell_r}>\frac{u_{m,1}}{u_{m+1,2}}$.  
  Using the equality $u_{m+1,2}=d_1u_{m,1}-u_{m-1,2}$, this may be rewritten as 
  \[\tag{$\dagger$}\frac{d_1t-\sum\limits_{r=1}^t\ell_r}{t}>\frac{u_{m-1,2}}{u_{m,1}}.\]
  But by construction of $D'$, we see that $d_1-\ell_r$ is the number of vertical edges immediately following the $r$-th horizontal edge of $D_m$.  
  Thus, by rewriting the numerator as $d_1t-\sum\limits_{r=1}^t\ell_r=\sum\limits_{r=1}^t(d_1-\ell_r)$ in the inequality ($\dagger$), we see that the subpath of $D_m$ containing the first $t$ horizontal edges and the vertical edges immediately following these horizontal edges will cross above the main diagonal of the rectangle $[0,u_{m,1}]\times[0,u_{m-1,2}]$, a contradiction.  
  Thus $D'$ is a Dyck path, i.e.\ it does not pass above the main diagonal.

  To see that $D'$ is maximal, suppose there exists a lattice point $(s,t)$ strictly above $D'$ which does not lie above the main diagonal.  
  Without loss of generality, we may take $s=\sum\limits_{r=1}^t\ell_r-1$ and get the inequality $\frac{t}{\sum\limits_{r=1}^t\ell_r-1}\le\frac{u_{m,1}}{u_{m+1,2}}$.  
  Using the equality $u_{m+1,2}=d_1u_{m,1}-u_{m-1,2}$, this may be rewritten as
  \[\tag{$\ddagger$}\frac{d_1t-\sum\limits_{r=1}^t\ell_r+1}{t}\le\frac{u_{m-1,2}}{u_{m,1}}.\]
  Now considering the same initial segment of $D_m$ as above, we see that the point $(t,d_1t-\sum\limits_{r=1}^t\ell_r+1)$ lies strictly above $D_m$, but the inequality ($\ddagger$) implies this point does not lie above the main diagonal of the rectangle $[0,u_{m,1}]\times[0,u_{m-1,2}]$, contradicting the maximality of $D_m$.  
  Thus we may conclude that $D'=D'_{m+1}$ is the maximal Dyck path in the rectangle $[0,u_{m+1,2}]\times[0,u_{m,1}]$.
\end{proof}

With this we obtain the recursive structure of the maximal Dyck paths $D_m$, the reader should compare Corollary~\ref{cor:recursive dyck path structure} with Example~\ref{ex:dyck paths}.
There are some subtleties in describing this recursive structure when one of $d_1$ or $d_2$ is equal to 1 as can be seen in the following.
\begin{example}
  \label{ex:dyck paths 2}
  For $d_1=1$ and $d_2=5$, we have the following maximal Dyck paths:
  \[\begin{array}{|c|c|c|c|c|}
    \hline
    m & \bfa_m & D_m & \bfa'_m & D'_m \\
    \hline
    1 & (1,0) & 
    \raisebox{0.15em}{\begin{tikzpicture}
      \begin{scope}[scale=0.5]
      \draw[step=1.0, black, thin] (0,0) grid (1,0);
      \draw[black, thin] (0,0) -- (1,0);
      \draw[black, fill=black] (0,0) circle (3pt);
      \draw[black, fill=black] (1,0) circle (3pt);
      \end{scope}
    \end{tikzpicture}}
    & (1,0) & 
    \raisebox{0.15em}{\begin{tikzpicture}
      \begin{scope}[scale=0.5]
      \draw[step=1.0, black, thin] (0,0) grid (1,0);
      \draw[black, thin] (0,0) -- (1,0);
      \draw[black, fill=black] (0,0) circle (3pt);
      \draw[black, fill=black] (1,0) circle (3pt);
      \end{scope}
    \end{tikzpicture}} \\
    \hline
    2 & (5,1) & 
    \raisebox{-0.5em}{\begin{tikzpicture}
      \begin{scope}[scale=0.5]
      \draw[step=1.0, black, thin] (0,0) grid (5,1);
      \draw[black, thin] (0,0) -- (5,1);
      \draw[black, fill=black] (0,0) circle (3pt);
      \draw[black, fill=black] (1,0) circle (3pt);
      \draw[black, fill=black] (2,0) circle (3pt);
      \draw[black, fill=black] (3,0) circle (3pt);
      \draw[black, fill=black] (4,0) circle (3pt);
      \draw[black, fill=black] (5,0) circle (3pt);
      \draw[black, fill=black] (5,1) circle (3pt);
      \end{scope}
    \end{tikzpicture}}
    & (1,1) & 
    \raisebox{-0.5em}{\begin{tikzpicture}
      \begin{scope}[scale=0.5]
      \draw[step=1.0, black, thin] (0,0) grid (1,1);
      \draw[black, thin] (0,0) -- (1,1);
      \draw[black, fill=black] (0,0) circle (3pt);
      \draw[black, fill=black] (1,0) circle (3pt);
      \draw[black, fill=black] (1,1) circle (3pt);
      \end{scope}
    \end{tikzpicture}} \\
    \hline
    3 & (4,1) & 
    \raisebox{-0.5em}{\begin{tikzpicture}
      \begin{scope}[scale=0.5]
      \draw[step=1.0, black, thin] (0,0) grid (4,1);
      \draw[black, thin] (0,0) -- (4,1);
      \draw[black, fill=black] (0,0) circle (3pt);
      \draw[black, fill=black] (1,0) circle (3pt);
      \draw[black, fill=black] (2,0) circle (3pt);
      \draw[black, fill=black] (3,0) circle (3pt);
      \draw[black, fill=black] (4,0) circle (3pt);
      \draw[black, fill=black] (4,1) circle (3pt);
      \end{scope}
    \end{tikzpicture}}
    & (4,5) &
    \raisebox{-3.3em}{\begin{tikzpicture}
      \begin{scope}[scale=0.5]
      \draw[step=1.0, black, thin] (0,0) grid (4,5);
      \draw[black, thin] (0,0) -- (4,5);
      \draw[black, fill=black] (0,0) circle (3pt);
      \draw[black, fill=black] (1,0) circle (3pt);
      \draw[black, fill=black] (1,1) circle (3pt);
      \draw[black, fill=black] (2,1) circle (3pt);
      \draw[black, fill=black] (2,2) circle (3pt);
      \draw[black, fill=black] (3,2) circle (3pt);
      \draw[black, fill=black] (3,3) circle (3pt);
      \draw[black, fill=black] (4,3) circle (3pt);
      \draw[black, fill=black] (4,4) circle (3pt);
      \draw[black, fill=black] (4,5) circle (3pt);
      \end{scope}
    \end{tikzpicture}} \\
    \hline
    4 & (15,4) & 
    \raisebox{-2.6em}{\begin{tikzpicture}
      \begin{scope}[scale=0.5]
      \draw[step=1.0, black, thin] (0,0) grid (15,4);
      \draw[black, thin] (0,0) -- (15,4);
      \draw[black, fill=black] (0,0) circle (3pt);
      \draw[black, fill=black] (1,0) circle (3pt);
      \draw[black, fill=black] (2,0) circle (3pt);
      \draw[black, fill=black] (3,0) circle (3pt);
      \draw[black, fill=black] (4,0) circle (3pt);
      \draw[black, fill=black] (4,1) circle (3pt);
      \draw[black, fill=black] (5,1) circle (3pt);
      \draw[black, fill=black] (6,1) circle (3pt);
      \draw[black, fill=black] (7,1) circle (3pt);
      \draw[black, fill=black] (8,1) circle (3pt);
      \draw[black, fill=black] (8,2) circle (3pt);
      \draw[black, fill=black] (9,2) circle (3pt);
      \draw[black, fill=black] (10,2) circle (3pt);
      \draw[black, fill=black] (11,2) circle (3pt);
      \draw[black, fill=black] (12,2) circle (3pt);
      \draw[black, fill=black] (12,3) circle (3pt);
      \draw[black, fill=black] (13,3) circle (3pt);
      \draw[black, fill=black] (14,3) circle (3pt);
      \draw[black, fill=black] (15,3) circle (3pt);
      \draw[black, fill=black] (15,4) circle (3pt);
      \end{scope}
    \end{tikzpicture}}
    & (3,4) &
    \raisebox{-2.6em}{\begin{tikzpicture}
      \begin{scope}[scale=0.5]
      \draw[step=1.0, black, thin] (0,0) grid (3,4);
      \draw[black, thin] (0,0) -- (3,4);
      \draw[black, fill=black] (0,0) circle (3pt);
      \draw[black, fill=black] (1,0) circle (3pt);
      \draw[black, fill=black] (1,1) circle (3pt);
      \draw[black, fill=black] (2,1) circle (3pt);
      \draw[black, fill=black] (2,2) circle (3pt);
      \draw[black, fill=black] (3,2) circle (3pt);
      \draw[black, fill=black] (3,3) circle (3pt);
      \draw[black, fill=black] (3,4) circle (3pt);
      \end{scope}
    \end{tikzpicture}} \\
    \hline
  \end{array}\]
\end{example}
In what follows we always assume $d_1d_2\ge4$ and take 
\[\delta_m:=\begin{cases} 1 & \text{if $d_{m-1}=1$ and $m\ne3$;}\\ 0 & \text{if $d_{m-1}\ne1$ or $m=3$.}\end{cases}\]
\begin{corollary}
  \label{cor:recursive dyck path structure}
  The maximal Dyck paths $D_m$, $m\ge1$, admit the following recursive structure:
  \begin{enumeratea}
    \item $D_1$ consists of a single horizontal edge;
    \item $D_2$ consists of $d_2$ horizontal edges followed by a single vertical edge;
    \item for $m\ge3$, $D_m$ can be constructed by concatenating $d_m-1-\delta_m$ copies of $D_{m-1}$ followed by a copy of $D_{m-1}$ with its first copy of $D_{m-2-\delta_m}$ removed.
  \end{enumeratea}
\end{corollary}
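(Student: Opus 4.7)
Parts~(a) and~(b) are immediate: \eqref{eq:roots recursive} (or \eqref{eq:chebyshev} combined with Remark~\ref{rem:chebyshev}) gives $\bfa_1=(1,0)$ and $\bfa_2=(d_2,1)$, and the claimed maximal Dyck paths in these rectangles are visibly as described.

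For~(c), my plan is a joint induction on $m \ge 3$, carried out simultaneously for the claim and its primed analog for $D'_m$, with Lemma~\ref{le:Dyck path recursion} as the inductive engine. Two structural observations drive the argument. First, the replacement rule in Lemma~\ref{le:Dyck path recursion}(b) is block-local: each horizontal edge of $D'_{m-1}$, together with the vertical edges immediately following it, is transformed into an independent block ending in a vertical edge, so any concatenation decomposition of $D'_{m-1}$ into sub-paths each terminating in a vertical edge lifts block-by-block through Lemma~\ref{le:Dyck path recursion}(b) to a concatenation decomposition of $D_m$. Second, Lemma~\ref{le:Dyck path recursion}(b) applied to the whole of $D'_{m-2}$ produces exactly $D_{m-1}$, and applied to an initial segment $D'_k \subseteq D'_{m-2}$ it produces the initial segment $D_{k+1} \subseteq D_{m-1}$. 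Feeding the primed inductive hypothesis for $D'_{m-1}$ through Lemma~\ref{le:Dyck path recursion}(b) therefore realises $D_m$ as $(d'_{m-1} - 1 - \delta'_{m-1})$ copies of $D_{m-1}$ followed by a copy of $D_{m-1}$ with its initial $D_{m-2-\delta'_{m-1}}$ removed.

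The identity $d'_{m-1} = d_m$ built into the primed convention converts this into~(c) provided $\delta'_{m-1} = \delta_m$. The main obstacle is that this matching fails precisely when $m = 4$ and $d_1 = 1$: the exclusion $m - 1 \ne 3$ in $\delta'_{m-1}$ rules out this case while the exclusion $m \ne 3$ in $\delta_m$ does not. I would therefore handle this anomaly together with the base case $m = 3$ by direct verification. For $m = 3$, trace Lemma~\ref{le:Dyck path recursion} explicitly from $D_1$ to $D'_2$ (which is $d_1$ horizontal edges followed by one vertical edge) and then to $D_3$, recovering $(d_1 - 1)$ copies of $D_2$ followed by $D_2$ with its first $D_1$ removed. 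For $m = 4$ with $d_1 = 1$ (where the standing hypothesis $d_1 d_2 \ge 4$ forces $d_2 \ge 4$), compute $\bfa_3 = (d_2 - 1, 1)$ and $\bfa_4 = (d_2^2 - 2d_2, d_2 - 1)$ and check by hand that $D_4$ decomposes as $(d_2 - 2)$ copies of $D_3$ followed by $D_3$ with its first $D_1$ removed, matching the $\delta_4 = 1$ correction. With these two configurations verified, the inductive step above runs without further incident.
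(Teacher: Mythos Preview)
Your proposal is correct and follows essentially the same route as the paper: trivialize (a) and (b), verify $m=3$ directly from Lemma~\ref{le:Dyck path recursion}, and for $m\ge4$ push the primed recursive structure of $D'_{m-1}$ through the block-local replacement of Lemma~\ref{le:Dyck path recursion}(b), using $d'_{m-1}=d_m$ and $\delta'_{m-1}=\delta_m$ for $m\ge5$, with the anomaly at $m=4$, $d_1=1$ treated separately. The only cosmetic difference is that the paper disposes of the $m=4$, $d_1=1$ case by applying Lemma~\ref{le:Dyck path recursion} to the explicit form of $D'_3$ (thus inheriting maximality automatically), whereas you propose to compute $\bfa_3,\bfa_4$ and verify the decomposition of $D_4$ directly; either route works.
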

For the remainder of the paper we will understand the notation $D_{m-1}\setminus D_{m-2-\delta_m}$ to mean the terminal subpath of $D_{m-1}$ obtained by removing its first copy of $D_{m-2-\delta_m}$ as in Corollary~\ref{cor:recursive dyck path structure}(c).
\begin{remark}
  \label{rem:primed recursive structure}
  The roles of $d_1$ and $d_2$ must be interchanged when applying Corollary~\ref{cor:recursive dyck path structure} to $D'_m$.
\end{remark}
\begin{proof}
  Parts (a) and (b) are immediate from the definitions of $D_1$ and $D_2$.  
  Part (c) with $m=3$ follows from Lemma~\ref{le:Dyck path recursion} and part (b) since $D'_2$ consists of $d_1$ horizontal edges followed by a vertical edge.  

  We establish part (c) by induction on $m\ge4$.  
  Notice that by Remark~\ref{rem:primed recursive structure} the claimed recursive structures of $D_m$ and $D'_{m-1}$ are the same for $m\ge5$, thus we obtain the result for $D_m$ if we know the result for $D'_{m-1}$ by applying the construction from Lemma~\ref{le:Dyck path recursion}.  
  Hence it suffices to establish the claimed recursive structure for $D_4$.  

  If $d_3\ne1$, then $\delta_4=0$ and the structure of $D_4$ is immediately deduced from Lemma~\ref{le:Dyck path recursion} and part (c) for $D'_3$.  
  If $d_3=1$, then $D'_2$ consists of a single horizontal edge followed by a single vertical edge and $D'_3$ consists of $d_2-1=d_4-1$ copies of $D'_2$ followed by a vertical edge.  
  Applying Lemma~\ref{le:Dyck path recursion} to $D'_3$ shows that $D_4$ consists of $d_4-2$ copies of $D_3$ followed by a copy of $D_3$ with its first horizontal edge (i.e.\ its first $D_1$) removed.  
  This establishes part (c) for $D_4$ and completes the proof.
\end{proof}
\begin{corollary}
  \label{cor:initial subpath}
  For $m\ge2$, if the last edge of $D_m$ is omitted, the resulting lattice path identifies with an initial subpath of the maximal Dyck path $C_m$ obtained by concatenating $d_m$ copies of $D_{m-1}$.
\end{corollary}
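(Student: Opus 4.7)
The plan is to prove the Corollary by induction on $m\geq 2$, using the recursive structure from Corollary~\ref{cor:recursive dyck path structure}(c) to reduce each level of the statement to a matching assertion one level down. The base case $m=2$ is immediate from Corollary~\ref{cor:recursive dyck path structure}(b): $D_2$ consists of $d_2$ horizontal edges followed by a single vertical edge, so removing that vertical edge yields $d_2$ consecutive horizontal edges, which is exactly $C_2=d_2\cdot D_1$.

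For the inductive step, write $L_k := |D_k|$, so the recursion \eqref{eq:roots recursive} gives $L_{k+1}=d_{k+1}L_k-L_{k-1}$. Assuming the Corollary for $D_m$, combine the inductive hypothesis with the decomposition $D_m=(d_m-1-\delta_m)D_{m-1}\cdot(D_{m-1}\setminus D_{m-2-\delta_m})$: the first $(d_m-1-\delta_m)L_{m-1}$ edges of both $D_m\setminus\{\text{last edge}\}$ and the first $L_m-1$ edges of $C_m=d_m\cdot D_{m-1}$ match trivially, so equating the remaining tails yields
\[
(\flat_m)\qquad (D_{m-1}\setminus D_{m-2-\delta_m})\setminus\{\text{last edge}\} = \text{first } L_{m-1}-L_{m-2-\delta_m}-1 \text{ edges of } D_{m-1}.
\]
The length bookkeeping needs the identity $\delta_m L_{m-1}+L_{m-2-\delta_m}=L_{m-2}$, obtained by comparing the decomposition's length with $L_m=d_m L_{m-1}-L_{m-2}$ in each case $\delta_m\in\{0,1\}$ (using $L_{m-1}=L_{m-2}-L_{m-3}$ when $d_{m-1}=1$).

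To prove the Corollary for $D_{m+1}$, decompose $D_{m+1}=(d_{m+1}-1-\delta_{m+1})D_m\cdot(D_m\setminus D_{m-1-\delta_{m+1}})$ and compare against $C_{m+1}=d_{m+1}\cdot D_m$. After discarding the $(d_{m+1}-1-\delta_{m+1})L_m$ matching initial edges, the claim reduces to
\[
(D_m\setminus D_{m-1-\delta_{m+1}})\setminus\{\text{last edge}\} = \text{first } L_m-L_{m-1-\delta_{m+1}}-1 \text{ edges of } D_m.
\]
This is verified by unpacking $D_m$ via Corollary~\ref{cor:recursive dyck path structure}(c) once more. In the generic case $\delta_{m+1}=0$, removing the leading $D_{m-1}$ from $D_m=(d_m-1-\delta_m)D_{m-1}\cdot(D_{m-1}\setminus D_{m-2-\delta_m})$ produces a path whose tail, after the last edge is stripped, is precisely the left-hand side of $(\flat_m)$; a parallel computation then identifies the result with the initial segment of $D_m$ of the required length. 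In the corner case $\delta_{m+1}=1$, the constraint $d_m=1$ (and consequently $\delta_m=0$, so $D_m=D_{m-1}\setminus D_{m-2}$) forces a further unpacking step via the decomposition of $D_{m-1}$ before $(\flat_m)$ can be invoked.

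The main obstacle will be the case analysis driven by $\delta_m$ and $\delta_{m+1}$, in particular the $\delta_{m+1}=1$ case where unpacking must descend an additional level. The standing hypothesis $d_1 d_2\ge 4$ is used to guarantee that every coefficient $d_k-1-\delta_k$ appearing in the decompositions is nonnegative and that enough copies of $D_{k-2}$ remain after the iterated removals for the identifications to be well defined.
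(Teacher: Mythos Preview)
Your plan is correct and follows essentially the same route as the paper's proof: both reduce the claim for $D_m$ to the tail-matching statement you call $(\flat_m)$, then use the recursive decomposition of $D_{m-1}$ to descend, splitting into the same two cases according to whether $d_{m-1}=1$ (your $\delta_{m+1}$). Two small points to tighten: the step $m=2\to 3$ needs to be handled directly (since Corollary~\ref{cor:recursive dyck path structure}(c) does not give a decomposition of $D_2$, so $(\flat_2)$ is not available), and in the corner case $\delta_{m+1}=1$ the statement actually invoked after unpacking $D_{m-1}$ is $(\flat_{m-1})$ rather than $(\flat_m)$---both are available from the inductive hypothesis, so this is only a notational slip.
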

\begin{proof}
  We work by induction on $m$, the case $m=2$ following immediately from Corollary~\ref{cor:recursive dyck path structure}(b).  

  Assume $m\ge3$.
  By part (c) of Corollary~\ref{cor:recursive dyck path structure}, in comparing $D_m$ to $C_m$ the first $d_m-1-\delta_m$ copies of $D_{m-1}$ inside $D_m$ may be ignored and the problem reduces to comparing the final $D_{m-1}\setminus D_{m-2-\delta_m}$ subpath of $D_m$ with the maximal Dyck path $D_{m-1}$.
  For $m=3$, removing the final edge of $D_{m-1}\setminus D_{m-2-\delta_m}$ produces $d_2-1$ consecutive horizontal edges which clearly identifies with an initial subpath of $D_{m-1}$.
  Assume $m\ge4$.
  There are two cases to consider.
  \begin{itemize}
    \item If $d_{m-1}\ne1$, $D_{m-1}$ consists of $d_{m-1}-1-\delta_{m-1}$ copies of $D_{m-2}$ followed by a copy of $D_{m-2}\setminus D_{m-3-\delta_{m-1}}$.  
      It follows that comparing $D_{m-1}\setminus D_{m-2}$ with $D_{m-1}$ reduces to comparing $D_{m-2}\setminus D_{m-3-\delta_{m-1}}$ with $D_{m-2}$.
      But by induction, we know that we obtain an initial subpath of $D_{m-2}$ by removing the last edge of $D_{m-2}\setminus D_{m-3-\delta_{m-1}}$.
    \item When $d_{m-1}=1$, the maximal Dyck path $D_{m-1}$ is just $D_{m-2}\setminus D_{m-3}$.  
      But $D_{m-2}$ consists of $d_m-2$ copies of $D_{m-3}$ followed by a copy of $D_{m-3}\setminus D_{m-5}$ and so $D_{m-1}$ consists of $d_m-3$ copies of $D_{m-3}$ followed by a copy of $D_{m-3}\setminus D_{m-5}$.  
      Hence comparing $D_{m-1}\setminus D_{m-3}$ with $D_{m-1}$ reduces to comparing $D_{m-3}\setminus D_{m-5}$ with $D_{m-3}$, but by induction we know that removing the last edge of $D_{m-3}\setminus D_{m-5}$ produces an initial subpath of $D_{m-3}$.
  \end{itemize}
  The two items above show that we get an initial subpath of $D_{m-1}$ by removing the last edge of $D_{m-1}\setminus D_{m-2-\delta_m}$ and thus removing the last edge of $D_m$ produces an initial subpath of $C_m$. 
\end{proof}

Let $E_m$ denote the edges of $D_m$, where $E_m=H_m\sqcup V_m$ for horizontal edges $H_m=\{h_1,\ldots,h_{u_{m,1}}\}$ and vertical edges $V_m=\{v_1,\ldots,v_{u_{m-1,2}}\}$, both labeled in the natural order along $D_m$.  
We may describe the structure of $D_m$ as follows.
\begin{lemma}\cite[Lemma~3.2]{rupel2}
  \label{le:height and depth}
  For $m\ge2$, the following hold.
  \begin{enumeratea}
    \item There are exactly $\hgt(h_i):=\lfloor (i-1)u_{m-1,2}/u_{m,1}\rfloor$ vertical edges of $D_m$ preceding the horizontal edge $h_i$, call this number the \emph{height} of $h_i$;
    \item There are exactly $\dpt(v_t):=\lceil tu_{m,1}/u_{m-1,2}\rceil$ horizontal edges of $D_m$ preceding the vertical edge $v_t$, call this number the \emph{depth} of $v_t$.
  \end{enumeratea}
\end{lemma}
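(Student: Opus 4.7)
The plan is to translate the maximality condition defining $D_m$ into an explicit formula for column heights, and then read off $\hgt(h_i)$ and $\dpt(v_t)$ directly. Write $a=u_{m,1}$ and $b=u_{m-1,2}$ for brevity, so that $D_m$ lives in $[0,a]\times[0,b]$ and the main diagonal has slope $b/a$. A lattice point $(x,y)$ is strictly above the main diagonal iff $ay>bx$, equivalently $y>\lfloor bx/a\rfloor$ when $y\in\ZZ_{\ge0}$.

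The key step is to prove that, for each $0\le x\le a$, the greatest height that $D_m$ reaches in column $x$ is exactly $\lfloor bx/a\rfloor$. The inequality $\le$ is immediate: since no lattice point of $D_m$ lies strictly above the diagonal, any on-path lattice point $(x,y)$ satisfies $y\le\lfloor bx/a\rfloor$. For the reverse inequality, suppose $D_m$ reaches only $y^\ast<\lfloor bx/a\rfloor$ at column $x$; then $(x,y^\ast+1)$ lies strictly above $D_m$ yet still satisfies $a(y^\ast+1)\le bx$, so it does not lie strictly above the diagonal, contradicting the second clause of the maximality condition stated just after the definition of $D_\bfa$.

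From this explicit description, parts (a) and (b) follow with very little additional work. For (a), the edge $h_i$ begins at $(i-1,\hgt(h_i))$, and its left endpoint is by construction the topmost lattice point that $D_m$ visits in column $i-1$ (once the path has arrived at column $i-1$, it exhausts all available vertical edges before emitting $h_i$). Hence $\hgt(h_i)=\lfloor(i-1)b/a\rfloor=\lfloor(i-1)u_{m-1,2}/u_{m,1}\rfloor$. For (b), the edge $v_t$ ends at $(\dpt(v_t),t)$, and $\dpt(v_t)$ is the smallest column $x$ whose maximal height reaches $t$; using the first step this translates to the smallest $x$ with $t\le\lfloor bx/a\rfloor$, equivalently $at\le bx$, equivalently $x\ge\lceil at/b\rceil$. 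Hence $\dpt(v_t)=\lceil tu_{m,1}/u_{m-1,2}\rceil$.

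There is no serious obstacle here: the entire argument is a careful unwinding of the characterization ``lattice points strictly above $D_m$ are strictly above the main diagonal.'' The only point that requires more than bookkeeping is the matching lower bound in the column-height computation, and even there the contradiction is immediate once one considers the candidate point $(x,y^\ast+1)$.
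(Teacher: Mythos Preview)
Your argument is correct. The paper does not supply its own proof of this lemma: it simply cites \cite[Lemma~3.2]{rupel2} and moves on. Your proposal therefore cannot be compared to a proof in the paper, but it stands on its own as a clean, self-contained verification. The identification of the top height of $D_m$ in column $x$ as $\lfloor bx/a\rfloor$ via the two clauses of the maximality condition is exactly the right lever, and parts (a) and (b) follow from it as you describe. The only phrasing I would tighten is the sentence ``once the path has arrived at column $i-1$, it exhausts all available vertical edges before emitting $h_i$'': what you really use is that an East/North path visits each column in a single contiguous stretch, so its exit point from column $i-1$ (the tail of $h_i$) is the highest lattice point it attains there. With that said, the logic is sound.
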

In the natural labeling of edges, Lemma~\ref{le:height and depth} gives $h_i=i+\lfloor (i-1)u_{m-1,2}/u_{m,1}\rfloor$ for $1\le i\le u_{m,1}$, $m\ge1$ and $v_t=t+\lceil tu_{m,1}/u_{m-1,2}\rceil$ for $1\le t\le u_{m-1,2}$, $m\ge2$.  
In particular, we see that $u_{m-1,2}<u_{m,1}$ implies $D_m$ contains no consecutive vertical edges, while $u_{m,1}<u_{m-1,2}$ implies $D_m$ contains no consecutive horizontal edges.  

For the next result, recall that we work under the assumption $d_1d_2\ge4$.
\begin{corollary}
  \label{cor:edge restrictions}
  For $m\ge2$, the following hold.
  \begin{enumeratea}
    \item $D_m$ contains at most $1+\delta_1$ vertical edges of any given depth.
    \item $D_m$ contains no consecutive horizontal edges if and only if $d_2=1$.
  \end{enumeratea}
\end{corollary}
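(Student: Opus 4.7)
The plan is to reduce each statement to an explicit inequality on the Chebyshev numbers $u_{m,1}$ and $u_{m-1,2}$ via Lemma~\ref{le:height and depth}, and then verify those inequalities by induction on $m$.  The height formula $\hgt(h_i)=\lfloor(i-1)u_{m-1,2}/u_{m,1}\rfloor$ shows that two consecutive horizontal edges of $D_m$ share a common height for some index $i$ if and only if $u_{m-1,2}/u_{m,1}<1$, so part (b) becomes the numerical claim that $u_{m,1}\le u_{m-1,2}$ if and only if $d_2=1$, for each $m\ge2$.  An analogous manipulation with $\dpt(v_t)=\lceil tu_{m,1}/u_{m-1,2}\rceil$ shows that $D_m$ has at least $k$ vertical edges of a common depth if and only if $(k-1)u_{m,1}<u_{m-1,2}$, so part (a) becomes the bound $u_{m-1,2}\le(1+\delta_1)u_{m,1}$.

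The base cases $m=2,3$ follow by direct inspection using Corollary~\ref{cor:recursive dyck path structure}(a,b,c); for example, $u_{3,1}\le u_{2,2}$ reduces to $d_1(d_2-1)\le 1$, which under the standing assumption $d_1d_2\ge 4$ holds precisely when $d_2=1$.  For the inductive step I exploit the recursion $\bfa_{m-1}+\bfa_{m+1}=d\,\bfa_m$ of~\eqref{eq:roots recursive}, which simultaneously gives $u_{m+1,1}=d\,u_{m,1}-u_{m-1,1}$ and $u_{m,2}=d\,u_{m-1,2}-u_{m-2,2}$ with the same $d\in\{d_1,d_2\}$, together with the identities of Remark~\ref{rem:chebyshev}.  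For (b) with $d_2\ge 2$, Lemma~\ref{le:Dyck path inequality} (with $k=1$) supplies the strict inequality needed to propagate $u_{m,1}>u_{m-1,2}$ across each inductive step; for $d_2=1$ the same recursion preserves the opposite inequality $u_{m,1}\le u_{m-1,2}$.  Part (a) in the case $d_2\ne 1$ is then an immediate consequence of part (b).

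The main obstacle is part (a) when $d_2=1$ (so $\delta_1=1$ and $d_1\ge 4$), where one needs $u_{m-1,2}\le 2u_{m,1}$.  Setting $s_m:=2u_{m,1}-u_{m-1,2}$, the key observation is that both $u_{m,1}$ and $u_{m-1,2}$ satisfy a common linear recursion $x_{m+1}=d^{\sharp}_{m}x_m-x_{m-1}$ (with $d^{\sharp}_m=d_2=1$ for $m$ odd and $d^{\sharp}_m=d_1\ge 4$ for $m$ even), hence so does $s_m$.  The base values $s_2=1$ and $s_3=d_1-2\ge 2$ start the induction, but at odd steps the recursion merely subtracts $s_{m-1}$ from $s_m$, so the naive hypothesis $s_m\ge 1$ is not self-sustaining.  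Instead one tracks the pair $(s_{m-1},s_m)$ across a combined even--odd pair of steps, establishing the strengthened invariant that $s_m\ge 1$ and $s_{m+1}\ge s_m+1$ whenever $m$ is even; this is enough to maintain positivity through the subsequent odd step and thereby close the induction.
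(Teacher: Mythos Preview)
Your reduction of both claims to the inequalities $u_{m-1,2}\le(1+\delta_1)u_{m,1}$ and $u_{m,1}\le u_{m-1,2}\Leftrightarrow d_2=1$ via Lemma~\ref{le:height and depth} is correct, and the overall strategy of proving these by induction on the recursion~\eqref{eq:roots recursive} is sound.  This is a genuinely different route from the paper, which instead argues structurally from Corollary~\ref{cor:recursive dyck path structure}(c): once the base cases $m=2,3$ are checked explicitly, the recursive decomposition of $D_m$ into copies of $D_{m-1}$ (and a truncated copy) immediately transfers both properties, with no Chebyshev arithmetic needed.

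There are, however, two genuine gaps in your execution.  First, the appeal to Lemma~\ref{le:Dyck path inequality} for part (b) with $d_2\ge2$ does not do what you claim: that lemma gives a \emph{multiplicative} inequality among four Chebyshev numbers, and there is no evident way to extract from it the \emph{additive} statement $u_{m+1,1}>u_{m,2}$.  In particular, when $d_1=1$ and $d_2\ge4$ the one-step recursion for $r_m:=u_{m,1}-u_{m-1,2}$ has a coefficient equal to $1$ at every other step, exactly mirroring the obstacle you identify for $s_m$ in part (a); this case requires the same care but you do not address it.  Second, for part (a) with $d_2=1$, the invariant you propose, namely $s_m\ge1$ and $s_{m+1}\ge s_m+1$ for $m$ even, does not propagate: deducing $s_{m+3}\ge s_{m+2}+1$ from it alone forces $s_m\le d_1-3$, which fails once $s_m$ grows (already at $m=6$ when $d_1=5$).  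A workable strengthening is $s_{m+1}\ge 2s_m$, or, more cleanly, pass to the two-step recursion $s_{m+2}=(d_1d_2-2)s_m-s_{m-2}$ (valid for $r_m$ as well), whose constant coefficient $d_1d_2-2\ge2$ makes the positivity a routine monotone induction on even and odd indices separately after checking $s_0,s_1,s_2,s_3$.  With that fix your argument goes through; the paper's structural argument sidesteps all of this arithmetic.
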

\begin{proof}
  For $D_2$, both claims are immediate from Corollary~\ref{cor:recursive dyck path structure}(b).
  There are two possibilities for $D_3$.
  If $d_2=1$, the result for $D_2$ together with Corollary~\ref{cor:recursive dyck path structure}(c) shows $D_3$ contains no consecutive horizontal edges and that the vertical edges of $D_3$ all have different depths except $v_{d_1-1}$ and $v_{d_1}$, which both have depth $d_1-1$.
  
  For $d_2>1$, the result for $D_2$ together with Corollary~\ref{cor:recursive dyck path structure}(c) shows all vertical edges of $D_3$ have different depths.
  To see that $d_2>1$ implies there are consecutive horizontal edges in $D_3$ we need to consider two case.
  When $d_1>1$, $D_3$ begins with a copy of $D_2$ by Corollary~\ref{cor:recursive dyck path structure}(c) and thus contains consecutive horizontal edges.
  When $d_1=1$, we must have $d_2\ge4$.
  But then $D_3$ is just $D_2\setminus D_1$ and, since $d_2\ge4$, it contains consecutive horizontal edges. 

  For $m\ge4$, both claims follow by induction using Corollary~\ref{cor:recursive dyck path structure}(c).
\end{proof}
Analogous statements hold for $D'_m$, with horizontal edges $H'_m=\{h'_1,\ldots,h'_{u_{m,2}}\}$ and vertical edges $V'_m=\{v'_1,\ldots,v'_{u_{m-1,1}}\}$, by interchanging the roles of $d_1$ and $d_2$.

The proof of Theorem~\ref{th:combinatorial construction} will go by induction.  Towards this aim we introduce notation, following Corollary~\ref{cor:recursive dyck path structure}, which captures the recursive structure in the edges of $D_m$, $m\ge3$.  
\begin{definition}
  \label{def:subpath edges}
  For $m\ge3$ and $1\le r\le d_m-1-\delta_m$, define the following subsets of $H_m$ and $V_m$: 
  \[H_{m,r}=\{h_{(r-1)u_{m-1,1}+1},h_{(r-1)u_{m-1,1}+2},\ldots,h_{ru_{m-1,1}}\};\]
  \[V_{m,r}=\{v_{(r-1)u_{m-2,2}+1},v_{(r-1)u_{m-2,2}+2},\ldots,v_{ru_{m-2,2}}\};\]
  we identify these, for each $r$, with the horizontal and vertical edges of $D_{m-1}$.  Also set 
  \[H_{m,d_m-\delta_m}=\{h_{(d_m-1-\delta_m)u_{m-1,1}+1},\ldots,h_{u_{m,1}-1},h_{u_{m,1}}\};\]
  \[V_{m,d_m-\delta_m}=\{v_{(d_m-1-\delta_m)u_{m-2,2}+1},\ldots,v_{u_{m-1,2}-1},v_{u_{m-1,2}}\};\]
  we identify these subsets with the horizontal and vertical edges of $D_{m-1}\setminus D_{m-2-\delta_m}$.

  As a notational convenience, for $1\le r\le d_m-1-\delta_m$ and $1\le i\le u_{m-1,1}$ we write $h_{i,r}:=h_{(r-1)u_{m-1,1}+i}$ and similarly $v_{t,r}:=v_{(r-1)u_{m-2,2}+t}$ for $1\le t\le u_{m-2,2}$.  
  For $u_{m-2-\delta_m,1}+1\le i\le u_{m-1,1}$, set $h_{i,d_m-\delta_m}:=h_{(d_m-1-\delta_m)u_{m-1,1}+i-u_{m-2-\delta_m,1}}$ and set $v_{t,d_m-\delta_m}:=v_{(d_m-1-\delta_m)u_{m-2,2}+t-u_{m-3-\delta_m,2}}$ for $u_{m-3-\delta_m,2}+1\le t\le u_{m-2,2}$.
\end{definition}

The next technical result will be useful in the proof of Theorem~\ref{th:blocking edge conditions}.
\begin{lemma}
  \label{le:Dyck tail inequality}
  Assume $m\ge3$, $d_1d_2\ge4$, and $d_1\ge2$.
  Then the terminal $D_{m-1}\setminus D_{m-2-\delta_m}$ subpath of $D_m$ contains at least $d_1$ vertical edges unless one of the following holds:
  \begin{enumerate}
    \item $m=3$ or $m=4$;
    \item $m=5$ with $d_2\le 2$.
    \item $m\ge6$ with $d_1=d_2=2$;
    \item $m\ge6$ is odd with $d_1=4$ and $d_2=1$
  \end{enumerate}
  When $m=4$ above, $D_m$ contains $d_2-1$ horizontal edges of height $u_{m-1,2}-d_1$, namely the edges 
  \[h_{u_{3,1},d_2-1},h_{u_{3,1}-1,d_2-1},\cdots,h_{u_{3,1}-d_2+2,d_2-1}.\]
  In cases (2), (3), or (4) above, $D_m$ contains only one horizontal edge of height $u_{m-1,2}-d_1$, namely the edge $h_{u_{m-1,1},d_m-1-\delta_m}$.
\end{lemma}
\begin{proof}
  For $m\ge3$, the number of vertical edges in $D_{m-1}\setminus D_{m-2-\delta_m}$ is given by $u_{m-2,2}-u_{m-3-\delta_m,2}$.
  For $m=3$, this is $u_{1,2}-u_{0,2}=1<d_1$.

  When $d_1=d_2=2$, this number is always equal to $1<d_1$.
  For $m\ge4$ in this case, the only horizontal edge of height $u_{m-1,2}-d_1$ inside $D_m$ is the horizontal edge of the terminal $D_{m-2}\setminus D_{m-3}$ subpath inside the first $D_{m-1}$ subpath of $D_m$.

  When $m\ge5$ is odd with $d_1=4$ and $d_2=1$, $u_{m-2,2}-u_{m-4,2}$ is always equal to $2<d_1$.
  Since $d_2=1$, $D_m$ contains no consecutive horizontal edges and thus contains at most one horizontal edge of any given height.
  In this case, there is a horizontal edge of height $u_{m-1,2}-d_1$ inside $D_m$, namely the last horizontal edge $h_{u_{m-1,1},d_1-2}$ inside the $(d_1-2)$-nd $D_{m-1}$ subpath of $D_m$ since this horizontal edge is followed by 2 vertical edges.

  In the case $m=4$, we have $u_{2,2}-u_{1,2}=d_1-1$ vertical edges in $D_3\setminus D_2$.
  The horizontal edges here having height $u_{m-1,2}-d_1$ are precisely the $d_2-1$ horizontal edges in the terminal $D_2\setminus D_1$ subpath of the $(d_2-1)$-st copy of $D_3$ inside $D_4$, namely the edges
  \[h_{u_{3,1},d_2-1},h_{u_{3,1}-1,d_2-1},\cdots,h_{u_{3,1}-d_2+2,d_2-1}.\]

  For $m=5$ with $d_2=1$, there are $u_{3,2}-u_{1,2}=(d_1d_2-1)-1=d_1-2<d_1$ vertical edges in $D_4\setminus D_2$.
  In this case, there is a unique horizontal edge of height $u_{m-1,2}-d_1$ which can be described in exactly the same way as the case $d_1=4$, $d_2=1$ above.
  If $m=5$ with $d_2\ge2$, we have $u_{3,2}-u_{2,2}=(d_1d_2-1)-d_1=d_1(d_2-1)-1$ vertical edges in $D_4\setminus D_3$, which is less than $d_1$ if $d_2=2$ and greater than $d_1$ if $d_2>2$ since $d_1\ge2$.
  In the case $d_2=2$ above, the last horizontal edge $h_{u_{4,1},d_1-1}$ of the $(d_1-1)$-st $D_4$ subpath of $D_5$ is immediately followed by a vertical edge and is the only horizontal edge of height $u_{m-1,2}-d_1$.

  For $m\ge6$ with $\{d_1,d_2\}\ne\{2\}$, there are several cases to consider.
  If $m$ is odd and $d_2\ge2$, we have
  \[u_{m-2,2} + u_{m-4,2} = d_1 u_{m-3,1} = d_2 u_{m-3,2} \ge 2 u_{m-3,2},\]
  where the middle equality uses the second identity from Remark~\ref{rem:chebyshev}, which is equivalent to the inequality
  \[u_{m-2,2} - u_{m-3,2} \ge u_{m-3,2} - u_{m-4,2}.\]
  In particular, the claim for $m$ follows from the claim for $m-1$.
  If $m$ is even, we have
  \[u_{m-2,2} + u_{m-4,2} = d_1 u_{m-3,1} = d_1 u_{m-3,2},\]
  where the last equality uses the first identity from Remark~\ref{rem:chebyshev}, which is equivalent to the identity
  \[\tag{$\dagger$}u_{m-2,2} - u_{m-3,2} = (d_1-1) u_{m-3,2} - u_{m-4,2}.\]
  In particular, the claim for $m$ follows from the claim for $m-1$ since $d_1\ge2$.
  For $d_2>2$, we have already seen that the claim holds for $m=5$ and thus it holds for all $m\ge6$.
  If $d_2=2$, we must have $d_1>2$ and it follows from ($\dagger$) that the claim holds for $m=6$ (and hence for all $m\ge6$) since
  \[(d_1-1) u_{3,2} - u_{2,2} > u_{3,2} - u_{2,2} = d_1-1.\]

  For $m\ge6$ with $d_2=1$, we must have $d_1\ge4$.
  Thus when $m$ is even, ($\dagger$) gives
  \[\tag{$\ddagger$}u_{m-2,2} - u_{m-3,2} \ge 3 u_{m-3,2} - u_{m-4,2} > 3 (u_{m-3,2} - u_{m-4,2})\]
  and we see again that the claim for $m$ follows from the claim for $m-1$.
  The claim for $m=6$ (and hence for all even $m\ge6$) also follows from ($\ddagger$) since 
  \[ 3 u_{3,2} - u_{2,2} = 3 (d_1d_2 - 1) - d_1 = 2 d_1 -1 > d_1. \]
  If $m$ is odd, we have
  \[u_{m-2,2} - u_{m-4,2} = u_{m-3,2} - 2 u_{m-4,2} \ge (3 u_{m-4,2} - u_{m-5,2}) - u_{m-4,2} = 2 u_{m-4,2} - u_{m-5,2} = u_{m-4,2} - u_{m-6,2}.\]
  In particular, the claim for $m$ follows from the claim for $m-2$.
  To see the claim for $m=7$ (and hence for all odd $m\ge6$) when $d_1\ge5$, we compute
  \[ u_{5,2} - u_{3,2} = (d_1^2d_2^2 - 3d_1d_2 + 1) - (d_1d_2 - 1) = d_1^2 - 4d_1 + 2 = d_1(d_1-4) + 2 > d_1.\]
  This completes the proof.
\end{proof}

\section{Combinatorics of Compatible Gradings}
\label{sec:compatible pairs}

Let $\omega:E_m\to\ZZ_{\ge0}$ be a $(d_1,d_2)$-bounded grading of $D_m$, $m\ge1$.  
It will be convenient to write $\omega_H$ and $\omega_V$ for the restrictions of $\omega$ to $H_m$ and to $V_m$ respectively.  
In the absence of a total grading $\omega$, we refer to the maps $\omega_H:H_m\to[0,d_1]$ and $\omega_V:V_m\to[0,d_2]$ respectively as \emph{horizontal gradings} and \emph{vertical gradings} of $D_m$.    
We will often consider $\omega$ to be the pair $(\omega_H,\omega_V)$ and refer to $\omega_H$ and $\omega_V$ as being \emph{compatible} if Definition~\ref{def:compatibility} is satisfied for $\omega$.  
Since the first condition~\eqref{eq:hgc} of Definition~\ref{def:compatibility} only involves $\omega_H$, we refer to it as the \emph{horizontal grading condition}.  
Similarly, we refer to the second condition~\eqref{eq:vgc} as the \emph{vertical grading condition}.

Write $\supp(\omega):=\{e\in E_m:\omega(e)\ne0\}$ and call this the \emph{support} of $\omega$.
Set $\supp(\omega_H)=\supp(\omega)\cap H$ and $\supp(\omega_V)=\supp(\omega)\cap V$.
Define $|\omega|_H:=\sum\limits_{h\in H_m} \omega_H(h)$ and $|\omega|_V:=\sum\limits_{v\in V_m} \omega_V(v)$.  

\subsection{Shadow Statistics}
\label{sec:shadows}
To begin we introduce notation to gain a more delicate grasp of the compatibility conditions \eqref{eq:hgc} and \eqref{eq:vgc} from Definition~\ref{def:compatibility}.
For a horizontal grading $\omega_H:H_m\to[0,d_1]$ and any subpath $ee'\subset D_m$, define the \emph{horizontal shadow statistic}
\[f_{\omega_H}(ee'):=-|ee'|_V+\sum\limits_{h\in(ee')_H}\omega_H(h).\]
We also define the \emph{vertical shadow statistic}
\[f_{\omega_V}(ee'):=-|ee'|_H+\sum\limits_{v\in(ee')_V}\omega_V(v)\]
for each vertical grading $\omega_V:V_m\to[0,d_2]$.  
It immediately follows from the definitions that the shadow statistics satisfy the following additivity property with respect to concatenation of paths:
\begin{equation}
  \label{eq:shadow statistic concatenation}
  f_{\omega_H}(e_1e_3)=f_{\omega_H}(e_1e_2)+f_{\omega_H}(\overline{e}_2e_3)\quad\text{and}\quad f_{\omega_V}(e_1e_3)=f_{\omega_V}(e_1e_2)+f_{\omega_V}(\overline{e}_2e_3)
\end{equation}
for edges $e_i\in E_m$ with $e_2\in e_1e_3$.

The shadow statistics give the following alternative check for compatibility, c.f. \cite[Lemma 3.9]{lee-li-zelevinsky}.
\begin{lemma}
  \label{le:compatibility inequality}
  Let $\omega:E_m\to\ZZ_{\ge0}$ be a compatible grading of $D_m$.  For $h\in H_m$ and $v\in V_m$, the following hold:
  \begin{enumeratea}
    \item if $f_{\omega_H}(hv)<0$, then the horizontal grading condition \eqref{eq:hgc} is satisfied for the path $hv$;
    \item if $f_{\omega_V}(hv)<0$, then the vertical grading condition \eqref{eq:vgc} is satisfied for the path $hv$.
  \end{enumeratea}
\end{lemma}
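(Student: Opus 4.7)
The plan is to argue by a direct ``discrete intermediate value'' analysis of the shadow statistic along the path $hv$. For part (a), I list the edges of $hv$ in order as $e_1=h, e_2,\ldots,e_n=v$ (note $n\ge 2$ since $h\in H_m$ and $v\in V_m$ are distinct) and set $g_i:=f_{\omega_H}(he_i)$. Applying the concatenation identity \eqref{eq:shadow statistic concatenation} with inner edge $e_i$ shows that the increment $g_{i+1}-g_i=f_{\omega_H}(\overline{e}_i e_{i+1})$ equals $\omega(e_{i+1})\ge 0$ when $e_{i+1}\in H_m$ and equals $-1$ when $e_{i+1}\in V_m$.

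The key (and essentially only) step is then to locate the sign change. We have $g_1=f_{\omega_H}(hh)=\omega(h)\ge 0$ because $\omega$ takes values in $\ZZ_{\ge 0}$, whereas $g_n=f_{\omega_H}(hv)<0$ by hypothesis. Let $i\ge 2$ be the smallest index with $g_i<0$, so $g_{i-1}\ge 0$. Since the only negative increments in the sequence $(g_i)$ are jumps of exactly $-1$, the transition from $g_{i-1}$ to $g_i$ must come from a vertical $e_i$, and we must have $g_{i-1}=0$ (with $g_i=-1$). Setting $e:=e_{i-1}$ gives $f_{\omega_H}(he)=0$, which is exactly the equality $|he|_V=\sum_{h'\in(he)_H}\omega(h')$ in \eqref{eq:hgc}; and since $i-1<n$ we have $e\ne v$, completing part (a).

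Part (b) is perfectly dual. One tracks $g'_i:=f_{\omega_V}(e_i v)$ instead, which starts at $g'_1=f_{\omega_V}(hv)<0$ and ends at $g'_n=\omega(v)\ge 0$, with increment $g'_{i+1}-g'_i$ equal to $+1$ when $e_i$ is horizontal and $-\omega(e_i)\le 0$ when $e_i$ is vertical (again via \eqref{eq:shadow statistic concatenation}). Hence the first index $i$ with $g'_i\ge 0$ must satisfy $g'_i=0$ and arise from a horizontal $e_{i-1}$, producing an edge $e:=e_i$ with $f_{\omega_V}(ev)=0$ and $e\ne h$, which is \eqref{eq:vgc}.

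I anticipate no real obstacle here: the whole lemma is a one-step discrete intermediate value theorem, and the argument relies only on the non-negativity of $\omega$ and the additivity \eqref{eq:shadow statistic concatenation}. It is worth noting that the compatibility hypothesis on $\omega$ is not actually invoked — it is only the $\ZZ_{\ge 0}$-valuedness that matters — so the lemma holds for any $(d_1,d_2)$-bounded grading whatsoever.
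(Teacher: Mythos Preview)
Your proof is correct and follows essentially the same discrete intermediate value argument as the paper's own proof: track $f_{\omega_H}(he)$ as $e$ moves along $hv$, note it starts nonnegative, ends negative, and can only decrease by~$1$, so it must hit~$0$ before reaching~$v$. Your observation that the compatibility hypothesis on $\omega$ is never used is also accurate, and your explicit treatment of part~(b) merely spells out what the paper leaves to the reader.
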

\begin{proof}
  We prove (a), the proof of (b) is similar.
  
  There is nothing to show when $\omega_H(h)=0$, so assume $h\in\supp(\omega_H)$.
  Then $f_{\omega_H}(hh)>0$ and as $e$ ranges from $h$ to $v$ the value of $f_{\omega_H}(he)$ either increases, stays the same, or decreases by 1 with each step.
  Since $f_{\omega_H}(hv)<0$, we see that $f_{\omega_H}(he)$ must eventually take the value 0 with $e\ne v$, i.e.\ the horizontal grading condition is satisfied for the path $hv$.
\end{proof}

Apart from their relationship to the compatibility conditions \eqref{eq:hgc} and \eqref{eq:vgc}, the shadow statistics $f_{\omega_H}$ and $f_{\omega_V}$ encode the following important information.  
For each subpath $ee'\subset D_m$, we obtain a factor $Y_{ee'}(\omega_H,\omega_V)$ of the monomial $Y_{D_m}(\omega_H,\omega_V)$ appearing in equation~\eqref{eq:total path weights} by only multiplying the weights of edges along the path $ee'$.
\begin{lemma}
  \label{le:shadows and degrees}
  The quantities $f_{\omega_H}(ee')$ and $f_{\omega_V}(ee')$ record the total $Y$-degree and the total $X$-degree respectively of the monomial $Y_{ee'}(\omega_H,\omega_V)$.
\end{lemma}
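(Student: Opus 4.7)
The plan is essentially to unwind the definitions; there is no serious obstacle here beyond careful bookkeeping of the $X$- and $Y$-exponents appearing in the edge weights from equation~\eqref{eq:edge weights}.

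First I would read off, from the formula for $\wt_\omega$, the contribution of a single edge to the total $X$-degree and the total $Y$-degree. A horizontal edge $h$ contributes the factor $p_{1,\omega(h)}Y^{\omega(h)}X^{-1}$, so it contributes $\omega_H(h)$ to the $Y$-degree and $-1$ to the $X$-degree. A vertical edge $v$ contributes $p_{2,d_2-\omega(v)}X^{\omega(v)+1}Y^{-1}X^{-1}$, so it contributes $-1$ to the $Y$-degree and $(\omega(v)+1)+(-1)=\omega_V(v)$ to the $X$-degree. Summing the exponents is legitimate because the total $X$-degree and total $Y$-degree of a non-commutative Laurent monomial depend only on the multiset of individual factors.

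Next, since $Y_{ee'}(\omega_H,\omega_V)$ is by definition the product (in order) of $\wt_\omega(e'')$ over $e''\in ee'$, summing these contributions over all edges in $ee'$ yields
\[
\text{$Y$-degree of }Y_{ee'}(\omega_H,\omega_V)=\sum_{h\in(ee')_H}\omega_H(h)-|ee'|_V=f_{\omega_H}(ee'),
\]
\[
\text{$X$-degree of }Y_{ee'}(\omega_H,\omega_V)=-|ee'|_H+\sum_{v\in(ee')_V}\omega_V(v)=f_{\omega_V}(ee'),
\]
which are exactly the definitions of the horizontal and vertical shadow statistics.

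The only thing to double-check is that the argument needs no compatibility hypothesis on $\omega$ (indeed it does not) and that it applies to arbitrary subpaths $ee'$, including empty and single-edge paths, where both sides of each identity evaluate to $0$ and to the degrees of a single factor respectively. Given this, no induction or appeal to the recursive structure of $D_m$ is required: the statement is a direct consequence of the definitions in~\eqref{eq:edge weights} and of the shadow statistics.
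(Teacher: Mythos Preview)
Your argument is correct and is essentially identical to the paper's own proof: both simply read off the $X$- and $Y$-degree contributions of each edge weight from equation~\eqref{eq:edge weights} and sum over the edges of $ee'$ to recover the definitions of $f_{\omega_H}(ee')$ and $f_{\omega_V}(ee')$.
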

\begin{proof}
  A horizontal edge $h\in(ee')_H$ contributes a factor of $p_{1,\omega_H(h)}Y^{\omega_H(h)}X^{-1}$ to $Y_{ee'}(\omega_H,\omega_V)$ while a vertical edge $v\in(ee')_V$ contributes a factor of $p_{2,d_2-\omega_V(v)}X^{\omega_V(v)+1}Y^{-1}X^{-1}$.  
  The result now follows by comparing the total $Y$- and $X$-degrees of $Y_{ee'}(\omega_H,\omega_V)$ with the definitions of $f_{\omega_H}(ee')$ and $f_{\omega_V}(ee')$ respectively.
\end{proof}

Following \cite[Section 3]{lee-li-zelevinsky}, for a horizontal grading $\omega_H:H_m\to[0,d_1]$, define the \emph{local shadow path} $D(h;\omega_H)$ of a horizontal edge $h\in H_m$ to be the shortest nonempty subpath $he\subset D_m$ such that $f_{\omega_H}(he)=0$, if there is no such subpath we set $D(h;\omega_H)=hv_{u_{m-1,2}}$.  
Write $D_H(h;\omega_H):=D(h;\omega_H)\cap H_m$ and $D_V(h;\omega_H):=D(h;\omega_H)\cap V_m$ for the \emph{local horizontal shadow} and \emph{local vertical shadow} of $h$ with respect to $\omega_H$.  
The local shadow path $D(v;\omega_V)$ is defined similarly for a vertical edge $v\in V_m$ and a vertical grading $\omega_V:V_m\to[0,d_2]$, where $D(v;\omega_V)=h_1v$ if there is no edge $e\le v$ for which $f_{\omega_V}(ev)=0$.
The local shadows $D_H(v;\omega_V)$, $D_V(v;\omega_V)$ are defined as above.  

By definition we have $f_{\omega_H}\big(D(h;\omega_H)\big)=0$ whenever the final edge of $D(h;\omega_H)$ is not $v_{u_{m-1,2}}$.  
More importantly, writing $D(h;\omega_H)=he$, Lemma~\ref{le:compatibility inequality} together with equation~\eqref{eq:shadow statistic concatenation} imply that $f_{\omega_H}(he')>0$ and $f_{\omega_H}(e'e)<0$ for any proper subpaths $he',e'e\subset D(h;\omega_H)$.  
Thus we see for $h\in\supp(\omega_H)$ and $v\in D_V(h;\omega_H)$ that the condition \eqref{eq:hgc} is not satisfied for the path $hv$, however for any $\omega_V$ compatible with $\omega_H$ the condition \eqref{eq:vgc} is satisfied for $h$ and $v$.  
In particular, when $\omega_V$ is compatible with $\omega_H$, $D(v;\omega_V)$ is a proper subpath of $D(h;\omega_H)$ for any $v\in D_V(h;\omega_H)$.

Similar statements hold using the vertical shadow statistic $f_{\omega_V}$.

\subsection{Recursions}
\label{sec:recursions}
We introduce in this section a recursive construction of gradings analogous to the recursive operations on Dyck paths from Lemma~\ref{le:Dyck path recursion}.  
These results are direct generalizations of constructions from \cite[Section 3]{lee-li-zelevinsky}.

The \emph{shadow} of a horizontal grading $\omega_H:H_m\to[0,d_1]$ is the collection of vertical edges in the local vertical shadows of all horizontal edges, i.e.\ $\sh(\omega_H)=\bigcup_{h\in H_m} D_V(h;\omega_H)$.  
The \emph{remote shadow} of a horizontal grading $\omega_H:H_m\to[0,d_1]$ is the subset $\rsh(\omega_H)\subset\sh(\omega_H)$ obtained by excluding for each $d$ the (up to) $\omega_H(h_d)$ vertical edges of depth $d$ immediately following $h_d$.  
The shadow and remote shadow of a vertical grading $\omega_V:V_m\to[0,d_2]$ are defined similarly.
\begin{remark}
  \label{rem:remote shadows}
  The remote shadow $\rsh(\omega_H)\subset\sh(\omega_H)$ of a horizontal grading $\omega_H$ can be described as the subset consisting of those vertical edges $v\in\sh(\omega_H)$ for which there exists a vertical grading $\omega_V$ compatible with $\omega_H$ such that $\omega_V(v)>0$.
  In particular, any vertical grading $\omega_V$ compatible with $\omega_H$ must satisfy $\omega_V(v)=0$ for $v\in\sh(\omega_H)\setminus\rsh(\omega_H)$.
\end{remark}

\begin{example}
  In Example~\ref{ex:combinatorial cluster variables}, the gradings are organized according to their associated vertical gradings~$\omega_V$.
  The dashed horizontal edges are precisely those lying outside the shadow $\sh(\omega_V)$ while horizontal edges with a restriction on their weights comprise the remote shadow $\rsh(\omega_V)$.
\end{example}

In order to give a relationship between gradings of $D_m$ and gradings of $D'_{m+1}$, we need to partition the remote shadows according to which local shadow contains a given edge. 
\begin{definition}
  \label{def:remote_shadows}
  Let $\omega:E_m\to\ZZ_{\ge0}$ be a grading of $D_m$.
  \begin{enumeratea}
    \item For $1\le j\le d\le u_{m,1}$, denote by $\rsh(\omega_H)_{j;d}$ the set of $v\in\rsh(\omega_H)$ of depth $d$ such that $v\in D_V(h_j;\omega_H)$ and $h_j$ is the first horizontal edge before $v$ with this property.  
    Define the \emph{local remote shadow} of the edge $h_j$ as $\rsh(h_j;\omega_H):=\coprod\limits_{d\in[j+1,u_{m,1}]}\rsh(\omega_H)_{j;d}$.
    \item For $0\le\ell<t\le u_{m-1,2}$, denote by $\rsh(\omega_V)_{t;\ell}$ the set of $h\in\rsh(\omega_V)$ of height $\ell$ such that $h\in D_H(v_t;\omega_V)$ and $v_t$ is the first vertical edge after $h$ with this property.  
    Define the \emph{local remote shadow} of the edge $v_t$ as $\rsh(v_t;\omega_V):=\coprod\limits_{\ell\in[0,t-2]}\rsh(\omega_V)_{t;\ell}$.
  \end{enumeratea}
\end{definition}
\begin{remark}
  By the definition of the remote shadows, it is impossible to have $d=j$ or $\ell=t-1$ in Definition~\ref{def:remote_shadows}.
\end{remark}

Lemma~\ref{le:Dyck path recursion} establishes a canonical order preserving bijection between the vertical edges $V'_{m+1}$ of $D'_{m+1}$ and the horizontal edges $H_m$ of $D_m$ which we write as $\varphi=\varphi_m:V'_{m+1}\to H_m$, $\varphi(v'_i)=h_i$ for $1\le i\le u_{m,1}$.  
Thus we obtain a bijection from $d_1$-bounded horizontal gradings of $D_m$ to $d_1$-bounded vertical gradings of $D'_{m+1}$ taking a horizontal grading $\omega_H:H_m\to[0,d_1]$ to the vertical grading $\varphi^*\omega_H:V'_{m+1}\to[0,d_1]$ given by $\varphi^*\omega_H(v'_i)=d_1-\omega_H(h_i)$.
\begin{remark}
  We will abuse notation slightly and also write $\varphi^*_m$ for the bijection between horizontal gradings of $D'_m$ and vertical gradings of $D_{m+1}$ where the roles of $d_1$ and $d_2$ need to be interchanged in the definitions above, however this abuse should not lead to any confusion.
\end{remark}

The next result shows that the remote shadows for $\omega_H$ and $\varphi^*\omega_H$ are intimately related.
\begin{lemma}\cite[Corollary~4.18]{rupel2}
  \label{le:remote shadow cardinalities}
  Let $\omega_H:H_m\to[0,d_1]$ be a horizontal grading of $D_m$.
  For $1\le j<d\le u_{m,1}$, we have $|\rsh(\omega_H)_{j;d}|=|\rsh(\varphi^*\omega_H)_{d;j-1}|$.
\end{lemma}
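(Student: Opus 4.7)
My plan is to translate both sides of the claimed equality into counts of specific pop events in LIFO bracket processes on $D_m$ and $D'_{m+1}$, and then establish a bijection. In $D_m$ I read the path forward, treating each horizontal edge $h_i$ as pushing $\omega_H(h_i)$ copies of the label $i$ onto a stack and each vertical edge as popping the top bracket. A direct analysis of the walk that defines $D_V(h_j;\omega_H)$ shows that $v\in D_V(h_j;\omega_H)$ precisely when an $h_j$-labeled bracket is present in the stack just before $v$ is processed, so the closest (hence maximal) such $j$ equals the label of the bracket that $v$ pops. Consequently $\rsh(\omega_H)_{j;d}$ counts exactly the vertical edges of depth $d$ whose popped bracket carries label $j<d$; the remote-shadow condition automatically excludes self-pops of $h_d$'s own freshly-pushed labels. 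An entirely analogous analysis run in the reverse edge order on $D'_{m+1}$---each $v'_i$ pushes $d_1-\omega_H(h_i)$ copies of the label $i$, each horizontal edge encountered while reading backward pops the top---identifies $\rsh(\varphi^*\omega_H)_{d;j-1}$ with the height-$(j-1)$ horizontal edges whose popped bracket carries label $d>j$.

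Under the edge-bijection $\varphi:v'_i\leftrightarrow h_i$ of Lemma~\ref{le:Dyck path recursion}, both sides then become counts of ``pop events'' indexed by pairs $(j,d)$ with $j<d$: in $D_m$, an event at block $d$ popping a label-$j$ bracket; in the backward walk on $D'_{m+1}$, an event at block $j$ popping a label-$d$ bracket. The two processes are governed by complementary per-block balances: block $i$ of $D_m$ pushes $\omega_H(h_i)$ copies of label $i$ and then undergoes $\ell_i$ pops, while block $i$ of the backward walk on $D'_{m+1}$ pushes $d_1-\omega_H(h_i)$ copies of label $i$ and undergoes $d_1-\ell_i$ pops. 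Accordingly, block $i$ leaves $\max(0,\omega_H(h_i)-\ell_i)$ surviving label-$i$ brackets on top of the pre-block stack in the $D_m$ process, and $\max(0,\ell_i-\omega_H(h_i))$ surviving label-$i$ brackets on top of the pre-block stack in the $D'_{m+1}$ process---one of which is always zero. This already yields equality of the total number of non-self pops summed over the unfixed index on each side.

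To refine this to the individual $(j,d)$-count I would induct on $d-j$, using Corollary~\ref{cor:recursive dyck path structure} and the primed version of Lemma~\ref{le:Dyck path recursion} to reduce larger separations to smaller ones. The key invariant I would maintain through the induction is that the multiset of surviving label-$j$ brackets occupying the top of the $D_m$ stack just before block $d$ coincides, position by position, with the multiset of surviving label-$d$ brackets occupying the top of the $D'_{m+1}$ backward stack just before block $j$. The main obstacle is establishing this synchronization invariant: since the forward walk on $D_m$ and the backward walk on $D'_{m+1}$ run in opposite directions, the correct notion of ``position'' requires care, and verifying that it is preserved by each block-processing step is where the bulk of the technical work will lie.
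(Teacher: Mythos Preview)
This lemma is not proven in the paper; it is imported verbatim from \cite[Corollary~4.18]{rupel2}, so there is no in-paper argument to compare against. Your LIFO reinterpretation is correct and is a natural way to attack the statement: identifying $\rsh(\omega_H)_{j;d}$ with the depth-$d$ vertical edges whose forward pop carries label $j$, and $\rsh(\varphi^*\omega_H)_{d;j-1}$ with the height-$(j{-}1)$ horizontal edges whose backward pop carries label $d$, is sound.

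That said, what you have written is a plan, not a proof, and two of its steps are not justified. First, the assertion that the complementary per-block balances ``already yield equality of the total number of non-self pops summed over the unfixed index'' does not follow from what precedes it. Fixing $j$ and summing over $d$, the forward side counts label-$j$ survivors from block $j$ that are \emph{eventually} popped, while the backward side counts \emph{successful} non-self pops at block $j$; both are bounded above by $\max(0,\omega_H(h_j)-\ell_j)$, but either can fall short of that bound (forward: label-$j$ brackets may remain on the stack at the end of $D_m$; backward: the stack may already be empty when some height-$(j{-}1)$ edges are processed). Matching these two deficits is itself a nontrivial instance of the claim, not a warm-up for it.

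Second, the synchronization invariant is not well-posed as stated. The label-$j$ brackets on the forward stack just before block $d$ sit at the top only when every intermediate block $j{+}1,\ldots,d{-}1$ has left zero survivors, so ``occupying the top, position by position'' has no meaning in general. What one actually needs is a label-reversing correspondence between the \emph{entire} forward stack just before block $d$ and the entire backward stack just before block $j$; that stronger invariant is what would make a block-by-block induction go through. Finally, invoking Corollary~\ref{cor:recursive dyck path structure} here is a red herring: the lemma concerns a fixed $D_m$ with an arbitrary grading, and the relevant induction runs over blocks within that single path, not over $m$.
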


Thus for $1\le j<d\le u_{m,1}$ we may define a bijection $\theta_{j;d}:\rsh(\omega_H)_{j;d}\to\rsh(\varphi^*\omega_H)_{d;j-1}$ which preserves the natural order determined by distance from $h_j$ and from $v'_d$ respectively.  More explicitly, as the vertical edges of $\rsh(\omega_H)_{j;d}$ are read from bottom to top the corresponding horizontal edges of $\rsh(\varphi^*\omega_H)_{d;j-1}$ are read from right to left.

For a horizontal grading $\omega_H:H_m\to[0,d_1]$, write $\cG(\omega_H)$ for the collection of all $(d_1,d_2)$-bounded gradings $\omega:E_m\to\ZZ_{\ge0}$ such that the restriction $\omega|_{H_m}$ is precisely $\omega_H$ and denote by $\cC(\omega_H)\subset\cG(\omega_H)$ the subset of compatible gradings.
Let $\cG_\rsh(\omega_H)\subset\cG(\omega_H)$ denote those gradings $\omega$ for which $\omega(v)=0$ whenever $v\in V_m\setminus\rsh(\omega_H)$ and write $\cC_\rsh(\omega_H):=\cG_\rsh(\omega_H)\cap\cC(\omega_H)$.
Define analogous collections of gradings associated to a vertical grading $\omega_V:V_m\to[0,d_2]$.

Define a map $\Omega=\Omega_m:\cG_\rsh(\omega_H)\to\cG_\rsh(\varphi^*\omega_H)$ as follows:
\[\Omega(\omega_V)(h')=
  \begin{cases}
    0 & \text{ if $h'\in H'_{m+1}\setminus\rsh(\varphi^*\omega_H)$;}\\
    \omega_V(v) & \text{ if $h'=\theta_{j;d}(v)$ for $v\in\rsh(\omega_H)_{j;d}$.}
  \end{cases}\]
Note that $\Omega$ admits an obvious inverse map.  
\begin{remark}
  Given a grading $\omega:E_m\to\ZZ_{\ge0}$ of $D_m$ where $\omega_V\notin\cG_\rsh(\omega_H)$, the map $\Omega$ may still be applied to $\omega_V$ to produce a horizontal grading in $\cG_\rsh(\varphi^*\omega_H)$.  
  This observation will be used without mention in the statements of Lemma~\ref{le:f_and_Omega} and Proposition~\ref{prop:piecewise equivalence} as well as in the proof of Corollary~\ref{cor:support and remote shadow}.
\end{remark}

The following result shows that we have some control over the shadow statistics under this operation.
It is also the essential ingredient for understanding the piecewise compatible gradings introduced in the next section.
\begin{lemma}\cite[Lemma~4.19]{rupel2}
  \label{le:f_and_Omega}
  Let $\omega:E_m\to\ZZ_{\ge0}$ be a grading on $D_m$.
  Suppose $h'=\theta_{j;d}(v)$ for a vertical edge $v\in\rsh(\omega_H)_{j;d}\cap\supp(\omega_V)$. Then $f_{\Omega(\omega_V)}(h'v'_d)=f_{\omega_V}(h_jv)$.
\end{lemma}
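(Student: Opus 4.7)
The plan is to unfold both sides of the equality using the shadow statistic definitions:
\[f_{\omega_V}(h_jv)=-|h_jv|_H+\sum_{v'\in(h_jv)_V}\omega_V(v'),\qquad f_{\Omega(\omega_V)}(h'v'_d)=-|h'v'_d|_V+\sum_{h''\in(h'v'_d)_H}\Omega(\omega_V)(h''),\]
and then to verify the ``count'' parts $|h_jv|_H=|h'v'_d|_V$ and the weighted-sum parts independently.

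For the counts, I appeal to Lemma~\ref{le:Dyck path recursion}: the order-preserving bijection $\varphi_m:V'_{m+1}\to H_m$ satisfies $\varphi_m(v'_i)=h_i$, and by Definition~\ref{def:remote_shadows}(b) the edge $h'=\theta_{j;d}(v)$ has height $j-1$ in $D'_{m+1}$, placing it strictly between $v'_{j-1}$ and $v'_j$. Hence $(h'v'_d)_V=\{v'_j,\ldots,v'_d\}$ corresponds under $\varphi_m$ to $(h_jv)_H=\{h_j,\ldots,h_d\}$, both of cardinality $d-j+1$.

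For the sums, the definition of $\Omega$ shows that only edges $h''\in\rsh(\varphi^*\omega_H)$ contribute on the right, each with value $\omega_V(\theta^{-1}(h''))$. The aim is thus to produce a weight-preserving bijection between $(h_jv)_V\cap\rsh(\omega_H)$ and $(h'v'_d)_H\cap\rsh(\varphi^*\omega_H)$ assembled from the component maps $\theta_{j';d'}$ of Lemma~\ref{le:remote shadow cardinalities}, combined with the observation (from Remark~\ref{rem:remote shadows}, applied to the compatible gradings to which this lemma is ultimately applied) that the left sum receives no contribution from $\sh(\omega_H)\setminus\rsh(\omega_H)$, so that both sums are indeed supported on their respective remote shadows.

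The main obstacle is constructing this bijection correctly. Using the block structure of $D'_{m+1}$ from Lemma~\ref{le:Dyck path recursion}---where each horizontal edge $h_{d'}$ of $D_m$ unfolds into $d_1-\ell_{d'}$ horizontal edges followed by $v'_{d'}$---a careful block-by-block analysis is required to match depth-$d'$ vertical edges of $(h_jv)_V\cap\rsh(\omega_H)$ to height-$(d'-1)$ horizontal edges of $(h'v'_d)_H\cap\rsh(\varphi^*\omega_H)$. The decisive tool is the order-reversing convention of each $\theta_{j;d}$ recalled immediately after Lemma~\ref{le:remote shadow cardinalities}: vertical edges of $\rsh(\omega_H)_{j;d}$ read bottom-to-top matched with horizontal edges of $\rsh(\varphi^*\omega_H)_{d;j-1}$ read right-to-left. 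This convention is what handles the delicate boundary cases where $j'=j$ or $d'=d$, because a subedge $v'<v$ in $\rsh(\omega_H)_{j;d}$ then maps to $\theta_{j;d}(v')$ strictly after $h'=\theta_{j;d}(v)$ in the same block of $D'_{m+1}$, placing it inside $(h'v'_d)_H$ as required.
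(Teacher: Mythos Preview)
The paper does not prove this lemma: it is quoted verbatim from \cite[Lemma~4.19]{rupel2} and used as a black box. So there is no ``paper's own proof'' to compare against, only the cited reference.

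Your outline is the natural one, and the count argument $|h_jv|_H=|h'v'_d|_V=d-j+1$ is correct. But the weighted-sum argument has a real gap that you yourself flag and do not close. The left-hand sum $\sum_{v''\in(h_jv)_V}\omega_V(v'')$ runs over \emph{all} vertical edges of $h_jv$, and since $v\in D_V(h_j;\omega_H)$ every such $v''$ lies in $\sh(\omega_H)$; but some of them may lie in $\sh(\omega_H)\setminus\rsh(\omega_H)$ (for instance the first few vertical edges of depth $d'$ immediately after $h_{d'}$ when $\omega_H(h_{d'})>0$). For an arbitrary grading $\omega$ as in the hypothesis, nothing prevents $\omega_V$ from being nonzero on such edges, and those values are simply discarded by $\Omega$, so the two sums need not agree. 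Your proposed fix---appealing to Remark~\ref{rem:remote shadows} for compatible gradings---does not prove the lemma as stated: the hypothesis is a bare grading, and the paper explicitly applies the lemma in Proposition~\ref{prop:piecewise equivalence} to gradings that are only \emph{piecewise} compatible. (In that application the needed vanishing does hold, because $h_j$ and $v$ lie in the same $D_{m-1}$ piece and the restricted grading $\omega_r$ is compatible there; but that is an argument about the application, not a proof of the lemma.) You should either add the hypothesis $\omega_V|_{(h_jv)_V\setminus\rsh(\omega_H)}=0$ and verify it holds wherever the lemma is invoked, or consult \cite{rupel2} for the precise statement proved there.

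Separately, the bijection between $(h_jv)_V\cap\rsh(\omega_H)$ and $(h'v'_d)_H\cap\rsh(\varphi^*\omega_H)$ is only asserted (``a careful block-by-block analysis is required''), not carried out. You need to show that for every $v''\in(h_jv)_V\cap\rsh(\omega_H)_{j';d'}$ the image $\theta_{j';d'}(v'')$ actually lands in $(h'v'_d)_H$, and conversely; the order-reversing convention you cite handles the boundary case $j'=j$, $d'=d$, but the interior blocks and the other boundary cases still need to be checked.
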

This crucial result also shows that $\Omega$ restricts to a map $\cC_\rsh(\omega_H)\to\cC_\rsh(\varphi^*\omega_H)$, i.e.\ that the pair $(\Omega(\omega_V),\varphi^*\omega_H)$ gives a compatible grading of $D'_{m+1}$ exactly when $\omega_V\in\cC_\rsh(\omega_H)$.
\begin{proposition}\cite[Lemma~4.20]{rupel2}
  \label{prop:compatibility equivalence}
  Let $\omega_H:H_m\to[0,d_1]$ be a horizontal grading of $D_m$.  
  For a vertical grading $\omega_V\in\cG_\rsh(\omega_H)$, we have $\omega_V\in\cC_\rsh(\omega_H)$ if and only if $\Omega(\omega_V)\in\cC_\rsh(\varphi^*\omega_H)$.
\end{proposition}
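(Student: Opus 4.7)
The plan is to check compatibility pair-by-pair, using the fact that the bijective correspondence $\Omega$ (together with $\varphi^*$) converts a critical pair on $D_m$ to a critical pair on $D'_{m+1}$ in a shadow-statistic-preserving way.

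First, I would reduce the compatibility check. For any pair $(h_i, v_t)$ on $D_m$, condition \eqref{eq:hgc} is trivially satisfied (with $e=h_i$) when $\omega_H(h_i)=0$, and \eqref{eq:vgc} is trivially satisfied (with $e=v_t$) when $\omega_V(v_t)=0$. Since $\omega_V\in\cG_\rsh(\omega_H)$, the latter triviality forces every \emph{nontrivial} pair to have $v_t\in\rsh(\omega_H)$; writing $v_t\in\rsh(\omega_H)_{j;d}$, the minimality of $j$ in Definition~\ref{def:remote_shadows}(a) further ensures that \eqref{eq:hgc} automatically holds for pairs $(h_i,v_t)$ with $i<j$. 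Thus compatibility of $(\omega_H,\omega_V)$ reduces to verifying \eqref{eq:vgc} at those pairs $(h_i,v_t)$ with $\omega_H(h_i)>0$, $i\ge j$, and $v_t\in D_V(h_i;\omega_H)\cap\supp(\omega_V)$. A symmetric analysis on the $D'_{m+1}$ side shows, using the construction of $\Omega$ and $\varphi^*$, that the nontrivial pairs there correspond bijectively (via $\theta_{j;d}$ and $\varphi$) to those identified on $D_m$.

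Second, the heart of the argument invokes Lemma~\ref{le:f_and_Omega}: for $h'=\theta_{j;d}(v)$ with $v\in\rsh(\omega_H)_{j;d}\cap\supp(\omega_V)$, we have $f_{\Omega(\omega_V)}(h'v'_d)=f_{\omega_V}(h_jv)$. Combined with the additivity in equation~\eqref{eq:shadow statistic concatenation}, one obtains equality of shadow statistics on the relevant subpaths, and through Lemma~\ref{le:compatibility inequality} this translates \eqref{eq:vgc} at $(h_j,v)$ on $D_m$ into \eqref{eq:hgc} at $(\theta_{j;d}(v),v'_d)$ on $D'_{m+1}$. Since $\Omega$ is invertible and the argument is symmetric in $(\omega_H,\omega_V)$ and $(\Omega(\omega_V),\varphi^*\omega_H)$, the biconditional follows.

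The main obstacle is the careful bookkeeping needed to promote the matching of shadow statistics from the ``minimal'' pairs $(h_j,v)$ to the secondary pairs $(h_i,v)$ with $i>j$ (and their $D'_{m+1}$ analogues), verifying that the local shadow paths of $\omega_V$ on $D_m$ align with those of $\Omega(\omega_V)$ on $D'_{m+1}$ throughout their lengths rather than only at endpoints. This propagation requires iteratively applying equation~\eqref{eq:shadow statistic concatenation} together with the order-reversing nature of $\theta_{j;d}$ noted immediately after Lemma~\ref{le:remote shadow cardinalities}.
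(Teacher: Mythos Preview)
The paper does not supply its own proof of this proposition: it is quoted verbatim as \cite[Lemma~4.20]{rupel2} and used as a black box. So there is no in-paper argument to compare against.

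That said, your strategy is the natural one and matches exactly the mechanism the paper \emph{does} display in the proof of the neighboring Proposition~\ref{prop:piecewise equivalence}: reduce to pairs where both $\omega_H(h)>0$ and $\omega_V(v)>0$, locate $v$ in some $\rsh(\omega_H)_{j;d}$, transport via $\theta_{j;d}$, invoke Lemma~\ref{le:f_and_Omega} to equate shadow statistics, and conclude with Lemma~\ref{le:compatibility inequality}. Your reduction in the first paragraph is correct; in particular the minimality of $j$ in Definition~\ref{def:remote_shadows}(a) does kill the pairs with $i<j$ exactly as you say.

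The obstacle you flag in the last paragraph is genuine but smaller than you suggest. For $i>j$ with $v\in D_V(h_i;\omega_H)$, the local shadow $D(h_i;\omega_H)$ sits inside $D(h_j;\omega_H)$, and the edge $v$ then also belongs to some $\rsh(\omega_H)_{j';d}$ with $j'\ge i$ relative to the inner shadow structure; iterating the base argument handles these. Equivalently, once you know $f_{\omega_V}(h_jv)<0$ for the minimal $j$, the local shadow path $D(v;\omega_V)$ is already pinned down as a proper subpath of $h_jv$, and one checks it is in fact a proper subpath of $h_iv$ for every $i$ with $v\in D_V(h_i;\omega_H)$ by the same nesting. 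The order-reversing property of $\theta_{j;d}$ you cite is exactly what makes the correspondence of nested shadows go through on the $D'_{m+1}$ side.
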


\begin{example}
  The compatible gradings (organized by their associated horizontal gradings $\omega_H$) of the maximal Dyck path $D'_2=$ 
    \raisebox{-0.5em}{\begin{tikzpicture}
      \begin{scope}[scale=0.5]
      \draw[step=1.0, black, thin] (0,0) grid (3,1);
      \draw[black, thin] (0,0) -- (3,1);
      \draw[black, fill=black] (0,0) circle (3pt);
      \draw[black, fill=black] (1,0) circle (3pt);
      \draw[black, fill=black] (2,0) circle (3pt);
      \draw[black, fill=black] (3,0) circle (3pt);
      \draw[black, fill=black] (3,1) circle (3pt);
      \end{scope}
    \end{tikzpicture}}
    from Example~\ref{ex:dyck paths} are given by
  \begin{align*}
    \begin{tikzpicture}
        \begin{scope}[scale=0.5]
          \useasboundingbox (0,0) rectangle (4,1);
        \draw[step=1.0, black, thin] (0,0) grid (3,1);
        \draw[black, thin] (0,0) -- (3,1);
        \draw[black, fill=black] (0,0) circle (3pt);
        \node at (0.5,0.3) {$0$};
        \draw[black, fill=black] (1,0) circle (3pt);
        \node at (1.5,0.3) {$0$};
        \draw[black, fill=black] (2,0) circle (3pt);
        \node at (2.5,0.3) {$0$};
        \draw[black, fill=black] (3,0) circle (3pt);
        \draw[black, line width=1.5pt, dash pattern=on 2.5pt off 1.5pt] (3,0) -- (3,1);
        \draw[black, fill=black] (3,1) circle (3pt);
        \end{scope}
      \end{tikzpicture}
      \qquad
      \begin{tikzpicture}
        \begin{scope}[scale=0.5]
          \useasboundingbox (0,0) rectangle (4,1);
        \draw[step=1.0, black, thin] (0,0) grid (3,1);
        \draw[black, thin] (0,0) -- (3,1);
        \draw[black, fill=black] (0,0) circle (3pt);
        \node at (0.5,0.3) {$0$};
        \draw[black, fill=black] (1,0) circle (3pt);
        \node at (1.5,0.3) {$0$};
        \draw[black, fill=black] (2,0) circle (3pt);
        \node at (2.5,0.3) {$1$};
        \draw[black, fill=black] (3,0) circle (3pt);
        \node at (3.25,0.5) {\scriptsize $0$};
        \draw[black, fill=black] (3,1) circle (3pt);
        \end{scope}
      \end{tikzpicture}
      \qquad
      \begin{tikzpicture}
        \begin{scope}[scale=0.5]
          \useasboundingbox (0,0) rectangle (4,1);
        \draw[step=1.0, black, thin] (0,0) grid (3,1);
        \draw[black, thin] (0,0) -- (3,1);
        \draw[black, fill=black] (0,0) circle (3pt);
        \node at (0.5,0.3) {$0$};
        \draw[black, fill=black] (1,0) circle (3pt);
        \node at (1.5,0.3) {$0$};
        \draw[black, fill=black] (2,0) circle (3pt);
        \node at (2.5,0.3) {$2$};
        \draw[black, fill=black] (3,0) circle (3pt);
        \node at (3.25,0.5) {\scriptsize $0$};
        \draw[black, fill=black] (3,1) circle (3pt);
        \end{scope}
      \end{tikzpicture}
      \qquad
      \begin{tikzpicture}
        \begin{scope}[scale=0.5]
          \useasboundingbox (0,0) rectangle (4,1);
        \draw[step=1.0, black, thin] (0,0) grid (3,1);
        \draw[black, thin] (0,0) -- (3,1);
        \draw[black, fill=black] (0,0) circle (3pt);
        \node at (0.5,0.3) {$0$};
        \draw[black, fill=black] (1,0) circle (3pt);
        \node at (1.5,0.3) {$1$};
        \draw[black, fill=black] (2,0) circle (3pt);
        \node at (2.5,0.3) {$0$};
        \draw[black, fill=black] (3,0) circle (3pt);
        \node at (3.4,0.5) {\scriptsize $01$};
        \draw[black, fill=black] (3,1) circle (3pt);
        \end{scope}
      \end{tikzpicture}
      \qquad
      \begin{tikzpicture}
        \begin{scope}[scale=0.5]
          \useasboundingbox (0,0) rectangle (4,1);
        \draw[step=1.0, black, thin] (0,0) grid (3,1);
        \draw[black, thin] (0,0) -- (3,1);
        \draw[black, fill=black] (0,0) circle (3pt);
        \node at (0.5,0.3) {$0$};
        \draw[black, fill=black] (1,0) circle (3pt);
        \node at (1.5,0.3) {$1$};
        \draw[black, fill=black] (2,0) circle (3pt);
        \node at (2.5,0.3) {$1$};
        \draw[black, fill=black] (3,0) circle (3pt);
        \node at (3.25,0.5) {\scriptsize $0$};
        \draw[black, fill=black] (3,1) circle (3pt);
        \end{scope}
      \end{tikzpicture}\\
      \begin{tikzpicture}
        \begin{scope}[scale=0.5]
          \useasboundingbox (0,0) rectangle (4,1);
        \draw[step=1.0, black, thin] (0,0) grid (3,1);
        \draw[black, thin] (0,0) -- (3,1);
        \draw[black, fill=black] (0,0) circle (3pt);
        \node at (0.5,0.3) {$0$};
        \draw[black, fill=black] (1,0) circle (3pt);
        \node at (1.5,0.3) {$1$};
        \draw[black, fill=black] (2,0) circle (3pt);
        \node at (2.5,0.3) {$2$};
        \draw[black, fill=black] (3,0) circle (3pt);
        \node at (3.25,0.5) {\scriptsize $0$};
        \draw[black, fill=black] (3,1) circle (3pt);
        \end{scope}
      \end{tikzpicture}
      \qquad
      \begin{tikzpicture}
        \begin{scope}[scale=0.5]
          \useasboundingbox (0,0) rectangle (4,1);
        \draw[step=1.0, black, thin] (0,0) grid (3,1);
        \draw[black, thin] (0,0) -- (3,1);
        \draw[black, fill=black] (0,0) circle (3pt);
        \node at (0.5,0.3) {$0$};
        \draw[black, fill=black] (1,0) circle (3pt);
        \node at (1.5,0.3) {$2$};
        \draw[black, fill=black] (2,0) circle (3pt);
        \node at (2.5,0.3) {$0$};
        \draw[black, fill=black] (3,0) circle (3pt);
        \node at (3.4,0.5) {\scriptsize $01$};
        \draw[black, fill=black] (3,1) circle (3pt);
        \end{scope}
      \end{tikzpicture}
      \qquad
      \begin{tikzpicture}
        \begin{scope}[scale=0.5]
          \useasboundingbox (0,0) rectangle (4,1);
        \draw[step=1.0, black, thin] (0,0) grid (3,1);
        \draw[black, thin] (0,0) -- (3,1);
        \draw[black, fill=black] (0,0) circle (3pt);
        \node at (0.5,0.3) {$0$};
        \draw[black, fill=black] (1,0) circle (3pt);
        \node at (1.5,0.3) {$2$};
        \draw[black, fill=black] (2,0) circle (3pt);
        \node at (2.5,0.3) {$1$};
        \draw[black, fill=black] (3,0) circle (3pt);
        \node at (3.25,0.5) {\scriptsize $0$};
        \draw[black, fill=black] (3,1) circle (3pt);
        \end{scope}
      \end{tikzpicture}
      \qquad
      \begin{tikzpicture}
        \begin{scope}[scale=0.5]
          \useasboundingbox (0,0) rectangle (4,1);
        \draw[step=1.0, black, thin] (0,0) grid (3,1);
        \draw[black, thin] (0,0) -- (3,1);
        \draw[black, fill=black] (0,0) circle (3pt);
        \node at (0.5,0.3) {$0$};
        \draw[black, fill=black] (1,0) circle (3pt);
        \node at (1.5,0.3) {$2$};
        \draw[black, fill=black] (2,0) circle (3pt);
        \node at (2.5,0.3) {$2$};
        \draw[black, fill=black] (3,0) circle (3pt);
        \node at (3.25,0.5) {\scriptsize $0$};
        \draw[black, fill=black] (3,1) circle (3pt);
        \end{scope}
      \end{tikzpicture}
      \qquad
      \begin{tikzpicture}
        \begin{scope}[scale=0.25]
          \useasboundingbox (0,0) rectangle (2.5,1);
        \end{scope}
      \end{tikzpicture}\\
    \begin{tikzpicture}
        \begin{scope}[scale=0.5]
          \useasboundingbox (0,0) rectangle (4,1);
        \draw[step=1.0, black, thin] (0,0) grid (3,1);
        \draw[black, thin] (0,0) -- (3,1);
        \draw[black, fill=black] (0,0) circle (3pt);
        \node at (0.5,0.3) {$1$};
        \draw[black, fill=black] (1,0) circle (3pt);
        \node at (1.5,0.3) {$0$};
        \draw[black, fill=black] (2,0) circle (3pt);
        \node at (2.5,0.3) {$0$};
        \draw[black, fill=black] (3,0) circle (3pt);
        \node at (3.55,0.5) {\scriptsize $012$};
        \draw[black, fill=black] (3,1) circle (3pt);
        \end{scope}
      \end{tikzpicture}
      \qquad
      \begin{tikzpicture}
        \begin{scope}[scale=0.5]
          \useasboundingbox (0,0) rectangle (4,1);
        \draw[step=1.0, black, thin] (0,0) grid (3,1);
        \draw[black, thin] (0,0) -- (3,1);
        \draw[black, fill=black] (0,0) circle (3pt);
        \node at (0.5,0.3) {$1$};
        \draw[black, fill=black] (1,0) circle (3pt);
        \node at (1.5,0.3) {$0$};
        \draw[black, fill=black] (2,0) circle (3pt);
        \node at (2.5,0.3) {$1$};
        \draw[black, fill=black] (3,0) circle (3pt);
        \node at (3.25,0.5) {\scriptsize $0$};
        \draw[black, fill=black] (3,1) circle (3pt);
        \end{scope}
      \end{tikzpicture}
      \qquad
      \begin{tikzpicture}
        \begin{scope}[scale=0.5]
          \useasboundingbox (0,0) rectangle (4,1);
        \draw[step=1.0, black, thin] (0,0) grid (3,1);
        \draw[black, thin] (0,0) -- (3,1);
        \draw[black, fill=black] (0,0) circle (3pt);
        \node at (0.5,0.3) {$1$};
        \draw[black, fill=black] (1,0) circle (3pt);
        \node at (1.5,0.3) {$0$};
        \draw[black, fill=black] (2,0) circle (3pt);
        \node at (2.5,0.3) {$2$};
        \draw[black, fill=black] (3,0) circle (3pt);
        \node at (3.25,0.5) {\scriptsize $0$};
        \draw[black, fill=black] (3,1) circle (3pt);
        \end{scope}
      \end{tikzpicture}
      \qquad
      \begin{tikzpicture}
        \begin{scope}[scale=0.5]
          \useasboundingbox (0,0) rectangle (4,1);
        \draw[step=1.0, black, thin] (0,0) grid (3,1);
        \draw[black, thin] (0,0) -- (3,1);
        \draw[black, fill=black] (0,0) circle (3pt);
        \node at (0.5,0.3) {$1$};
        \draw[black, fill=black] (1,0) circle (3pt);
        \node at (1.5,0.3) {$1$};
        \draw[black, fill=black] (2,0) circle (3pt);
        \node at (2.5,0.3) {$0$};
        \draw[black, fill=black] (3,0) circle (3pt);
        \node at (3.4,0.5) {\scriptsize $01$};
        \draw[black, fill=black] (3,1) circle (3pt);
        \end{scope}
      \end{tikzpicture}
      \qquad
      \begin{tikzpicture}
        \begin{scope}[scale=0.5]
          \useasboundingbox (0,0) rectangle (4,1);
        \draw[step=1.0, black, thin] (0,0) grid (3,1);
        \draw[black, thin] (0,0) -- (3,1);
        \draw[black, fill=black] (0,0) circle (3pt);
        \node at (0.5,0.3) {$1$};
        \draw[black, fill=black] (1,0) circle (3pt);
        \node at (1.5,0.3) {$1$};
        \draw[black, fill=black] (2,0) circle (3pt);
        \node at (2.5,0.3) {$1$};
        \draw[black, fill=black] (3,0) circle (3pt);
        \node at (3.25,0.5) {\scriptsize $0$};
        \draw[black, fill=black] (3,1) circle (3pt);
        \end{scope}
      \end{tikzpicture}\\
      \begin{tikzpicture}
        \begin{scope}[scale=0.5]
          \useasboundingbox (0,0) rectangle (4,1);
        \draw[step=1.0, black, thin] (0,0) grid (3,1);
        \draw[black, thin] (0,0) -- (3,1);
        \draw[black, fill=black] (0,0) circle (3pt);
        \node at (0.5,0.3) {$1$};
        \draw[black, fill=black] (1,0) circle (3pt);
        \node at (1.5,0.3) {$1$};
        \draw[black, fill=black] (2,0) circle (3pt);
        \node at (2.5,0.3) {$2$};
        \draw[black, fill=black] (3,0) circle (3pt);
        \node at (3.25,0.5) {\scriptsize $0$};
        \draw[black, fill=black] (3,1) circle (3pt);
        \end{scope}
      \end{tikzpicture}
      \qquad
      \begin{tikzpicture}
        \begin{scope}[scale=0.5]
          \useasboundingbox (0,0) rectangle (4,1);
        \draw[step=1.0, black, thin] (0,0) grid (3,1);
        \draw[black, thin] (0,0) -- (3,1);
        \draw[black, fill=black] (0,0) circle (3pt);
        \node at (0.5,0.3) {$1$};
        \draw[black, fill=black] (1,0) circle (3pt);
        \node at (1.5,0.3) {$2$};
        \draw[black, fill=black] (2,0) circle (3pt);
        \node at (2.5,0.3) {$0$};
        \draw[black, fill=black] (3,0) circle (3pt);
        \node at (3.4,0.5) {\scriptsize $01$};
        \draw[black, fill=black] (3,1) circle (3pt);
        \end{scope}
      \end{tikzpicture}
      \qquad
      \begin{tikzpicture}
        \begin{scope}[scale=0.5]
          \useasboundingbox (0,0) rectangle (4,1);
        \draw[step=1.0, black, thin] (0,0) grid (3,1);
        \draw[black, thin] (0,0) -- (3,1);
        \draw[black, fill=black] (0,0) circle (3pt);
        \node at (0.5,0.3) {$1$};
        \draw[black, fill=black] (1,0) circle (3pt);
        \node at (1.5,0.3) {$2$};
        \draw[black, fill=black] (2,0) circle (3pt);
        \node at (2.5,0.3) {$1$};
        \draw[black, fill=black] (3,0) circle (3pt);
        \node at (3.25,0.5) {\scriptsize $0$};
        \draw[black, fill=black] (3,1) circle (3pt);
        \end{scope}
      \end{tikzpicture}
      \qquad
      \begin{tikzpicture}
        \begin{scope}[scale=0.5]
          \useasboundingbox (0,0) rectangle (4,1);
        \draw[step=1.0, black, thin] (0,0) grid (3,1);
        \draw[black, thin] (0,0) -- (3,1);
        \draw[black, fill=black] (0,0) circle (3pt);
        \node at (0.5,0.3) {$1$};
        \draw[black, fill=black] (1,0) circle (3pt);
        \node at (1.5,0.3) {$2$};
        \draw[black, fill=black] (2,0) circle (3pt);
        \node at (2.5,0.3) {$2$};
        \draw[black, fill=black] (3,0) circle (3pt);
        \node at (3.25,0.5) {\scriptsize $0$};
        \draw[black, fill=black] (3,1) circle (3pt);
        \end{scope}
      \end{tikzpicture}
      \qquad
      \begin{tikzpicture}
        \begin{scope}[scale=0.25]
          \useasboundingbox (0,0) rectangle (2.5,1);
        \end{scope}
      \end{tikzpicture}\\
    \begin{tikzpicture}
        \begin{scope}[scale=0.5]
          \useasboundingbox (0,0) rectangle (4,1);
        \draw[step=1.0, black, thin] (0,0) grid (3,1);
        \draw[black, thin] (0,0) -- (3,1);
        \draw[black, fill=black] (0,0) circle (3pt);
        \node at (0.5,0.3) {$2$};
        \draw[black, fill=black] (1,0) circle (3pt);
        \node at (1.5,0.3) {$0$};
        \draw[black, fill=black] (2,0) circle (3pt);
        \node at (2.5,0.3) {$0$};
        \draw[black, fill=black] (3,0) circle (3pt);
        \node at (3.55,0.5) {\scriptsize $012$};
        \draw[black, fill=black] (3,1) circle (3pt);
        \end{scope}
      \end{tikzpicture}
      \qquad
      \begin{tikzpicture}
        \begin{scope}[scale=0.5]
          \useasboundingbox (0,0) rectangle (4,1);
        \draw[step=1.0, black, thin] (0,0) grid (3,1);
        \draw[black, thin] (0,0) -- (3,1);
        \draw[black, fill=black] (0,0) circle (3pt);
        \node at (0.5,0.3) {$2$};
        \draw[black, fill=black] (1,0) circle (3pt);
        \node at (1.5,0.3) {$0$};
        \draw[black, fill=black] (2,0) circle (3pt);
        \node at (2.5,0.3) {$1$};
        \draw[black, fill=black] (3,0) circle (3pt);
        \node at (3.25,0.5) {\scriptsize $0$};
        \draw[black, fill=black] (3,1) circle (3pt);
        \end{scope}
      \end{tikzpicture}
      \qquad
      \begin{tikzpicture}
        \begin{scope}[scale=0.5]
          \useasboundingbox (0,0) rectangle (4,1);
        \draw[step=1.0, black, thin] (0,0) grid (3,1);
        \draw[black, thin] (0,0) -- (3,1);
        \draw[black, fill=black] (0,0) circle (3pt);
        \node at (0.5,0.3) {$2$};
        \draw[black, fill=black] (1,0) circle (3pt);
        \node at (1.5,0.3) {$0$};
        \draw[black, fill=black] (2,0) circle (3pt);
        \node at (2.5,0.3) {$2$};
        \draw[black, fill=black] (3,0) circle (3pt);
        \node at (3.25,0.5) {\scriptsize $0$};
        \draw[black, fill=black] (3,1) circle (3pt);
        \end{scope}
      \end{tikzpicture}
      \qquad
      \begin{tikzpicture}
        \begin{scope}[scale=0.5]
          \useasboundingbox (0,0) rectangle (4,1);
        \draw[step=1.0, black, thin] (0,0) grid (3,1);
        \draw[black, thin] (0,0) -- (3,1);
        \draw[black, fill=black] (0,0) circle (3pt);
        \node at (0.5,0.3) {$2$};
        \draw[black, fill=black] (1,0) circle (3pt);
        \node at (1.5,0.3) {$1$};
        \draw[black, fill=black] (2,0) circle (3pt);
        \node at (2.5,0.3) {$0$};
        \draw[black, fill=black] (3,0) circle (3pt);
        \node at (3.4,0.5) {\scriptsize $01$};
        \draw[black, fill=black] (3,1) circle (3pt);
        \end{scope}
      \end{tikzpicture}
      \qquad
      \begin{tikzpicture}
        \begin{scope}[scale=0.5]
          \useasboundingbox (0,0) rectangle (4,1);
        \draw[step=1.0, black, thin] (0,0) grid (3,1);
        \draw[black, thin] (0,0) -- (3,1);
        \draw[black, fill=black] (0,0) circle (3pt);
        \node at (0.5,0.3) {$2$};
        \draw[black, fill=black] (1,0) circle (3pt);
        \node at (1.5,0.3) {$1$};
        \draw[black, fill=black] (2,0) circle (3pt);
        \node at (2.5,0.3) {$1$};
        \draw[black, fill=black] (3,0) circle (3pt);
        \node at (3.25,0.5) {\scriptsize $0$};
        \draw[black, fill=black] (3,1) circle (3pt);
        \end{scope}
      \end{tikzpicture}\\
      \begin{tikzpicture}
        \begin{scope}[scale=0.5]
          \useasboundingbox (0,0) rectangle (4,1);
        \draw[step=1.0, black, thin] (0,0) grid (3,1);
        \draw[black, thin] (0,0) -- (3,1);
        \draw[black, fill=black] (0,0) circle (3pt);
        \node at (0.5,0.3) {$2$};
        \draw[black, fill=black] (1,0) circle (3pt);
        \node at (1.5,0.3) {$1$};
        \draw[black, fill=black] (2,0) circle (3pt);
        \node at (2.5,0.3) {$2$};
        \draw[black, fill=black] (3,0) circle (3pt);
        \node at (3.25,0.5) {\scriptsize $0$};
        \draw[black, fill=black] (3,1) circle (3pt);
        \end{scope}
      \end{tikzpicture}
      \qquad
      \begin{tikzpicture}
        \begin{scope}[scale=0.5]
          \useasboundingbox (0,0) rectangle (4,1);
        \draw[step=1.0, black, thin] (0,0) grid (3,1);
        \draw[black, thin] (0,0) -- (3,1);
        \draw[black, fill=black] (0,0) circle (3pt);
        \node at (0.5,0.3) {$2$};
        \draw[black, fill=black] (1,0) circle (3pt);
        \node at (1.5,0.3) {$2$};
        \draw[black, fill=black] (2,0) circle (3pt);
        \node at (2.5,0.3) {$0$};
        \draw[black, fill=black] (3,0) circle (3pt);
        \node at (3.4,0.5) {\scriptsize $01$};
        \draw[black, fill=black] (3,1) circle (3pt);
        \end{scope}
      \end{tikzpicture}
      \qquad
      \begin{tikzpicture}
        \begin{scope}[scale=0.5]
          \useasboundingbox (0,0) rectangle (4,1);
        \draw[step=1.0, black, thin] (0,0) grid (3,1);
        \draw[black, thin] (0,0) -- (3,1);
        \draw[black, fill=black] (0,0) circle (3pt);
        \node at (0.5,0.3) {$2$};
        \draw[black, fill=black] (1,0) circle (3pt);
        \node at (1.5,0.3) {$2$};
        \draw[black, fill=black] (2,0) circle (3pt);
        \node at (2.5,0.3) {$1$};
        \draw[black, fill=black] (3,0) circle (3pt);
        \node at (3.25,0.5) {\scriptsize $0$};
        \draw[black, fill=black] (3,1) circle (3pt);
        \end{scope}
      \end{tikzpicture}
      \qquad
      \begin{tikzpicture}
        \begin{scope}[scale=0.5]
          \useasboundingbox (0,0) rectangle (4,1);
        \draw[step=1.0, black, thin] (0,0) grid (3,1);
        \draw[black, thin] (0,0) -- (3,1);
        \draw[black, fill=black] (0,0) circle (3pt);
        \node at (0.5,0.3) {$2$};
        \draw[black, fill=black] (1,0) circle (3pt);
        \node at (1.5,0.3) {$2$};
        \draw[black, fill=black] (2,0) circle (3pt);
        \node at (2.5,0.3) {$2$};
        \draw[black, fill=black] (3,0) circle (3pt);
        \node at (3.25,0.5) {\scriptsize $0$};
        \draw[black, fill=black] (3,1) circle (3pt);
        \end{scope}
      \end{tikzpicture}
      \qquad
      \begin{tikzpicture}
        \begin{scope}[scale=0.25]
          \useasboundingbox (0,0) rectangle (2.5,1);
        \end{scope}
      \end{tikzpicture}
    \end{align*}
    The corresponding images of these under the map $\Omega$ can be seen at the end of Example~\ref{ex:combinatorial cluster variables}.
\end{example}

\subsection{Piecewise Compatibility}
\label{sec:piecewise compatibility}
Our goal in this section is to understand which gradings on $D_m$, $m\ge3$, are obtained by gluing together compatible gradings on the $D_{m-1}$ subpaths of $D_m$ found in Corollary~\ref{cor:recursive dyck path structure}(c).
\begin{definition}
  \label{def:agregate grading}
  Fix $m\ge3$.  
  Consider $(d_1,d_2)$-bounded compatible gradings $\omega_r:=(\omega_{H,r},\omega_{V,r})$ of $D_{m-1}$ for $1\le r\le d_m-\delta_m$.
  We assume
  \begin{equation}
    \label{eq:V restriction}
    \omega_{V,d_m-\delta_m}(v)=0
  \end{equation}
  for $v$ in the first $D_{m-2-\delta_m}$ subpath of $D_{m-1}$ and
  \begin{equation}
    \label{eq:H restriction}
    \omega_{H,d_m-\delta_m}(h)=\ell
  \end{equation}
  for $h$ in the first $D_{m-2-\delta_m}$ subpath of $D_{m-1}$ if $h$ is immediately followed by exactly $\ell$ vertical edges inside $D_{m-2-\delta_m}$.  

  Define a grading $\omega:E_m\to\ZZ_{\ge0}$ of $D_m$ by
  \[\omega(e)=\begin{cases}\omega_{H,r}(e) & \text{if $e\in H_{m,r}$;}\\ \omega_{V,r}(e) & \text{if $e\in V_{m,r}$;}\end{cases}\]
  where we identify subsets of edges in $D_m$ with edges of its $D_{m-1}$ subpaths as in Definition~\ref{def:subpath edges}.
  We will refer to any grading on $D_m$ obtained in this way as \emph{piecewise compatible}.
\end{definition}
\begin{remark}
  \label{rem:restricted gradings}
  Every compatible grading of $D_m$, $m\ge3$, is piecewise compatible.  
  Given any grading $\omega$ of $D_m$ and $1\le r\le d_m-\delta_m$, we will denote by $\omega_r=(\omega_{H,r},\omega_{V,r})$ the grading of $D_{m-1}$ obtained by restricting $\omega$ to the $r$-th copy of $D_{m-1}$ inside $D_m$, where $\omega_{d_m-\delta_m}=(\omega_{H,d_m-\delta_m},\omega_{V,d_m-\delta_m})$ denotes the grading on $D_{m-1}$ satisfying the conditions~\eqref{eq:V restriction} and~\eqref{eq:H restriction} of Definition~\ref{def:agregate grading}.
\end{remark}
\begin{remark}
  \label{rem:alternate restrictions}
  When considering piecewise compatible gradings $\omega:E'_m\to\ZZ_{\ge0}$ of $D'_m$, we will instead make the following assumptions on the gradings $\omega_{H',d'_m-\delta'_m}$ and $\omega_{V',d'_m-\delta'_m}$ of $D'_{m-1}$:
  \[\omega_{H',d'_m-\delta'_m}(h')=0\]
  for $h'$ in the first $D'_{m-2-\delta'_m}$ subpath of $D'_{m-1}$ and
  \[\omega_{V',d'_m-\delta'_m}(v')=d\]
  for $v'$ in the first $D'_{m-2-\delta'_m}$ subpath of $D'_{m-1}$ if $v'$ is immediately preceded by exactly $d$ horizontal edges inside $D'_{m-2-\delta'_m}$.  
\end{remark}
\begin{example}
  \label{ex:piecewise not compatible}
  For $d_1=3$ and $d_2=2$, consider the following grading $\omega$ of $D_4$:
  \[
    \begin{tikzpicture}
      \begin{scope}[scale=0.5]
      \draw[step=1.0, black, thin] (0,0) grid (8,5);
      \draw[black, thin] (0,0) -- (8,5);
      \draw[black, fill=black] (0,0) circle (3pt);
      \node at (0.5,0.3) {$3$};
      \draw[black, fill=black] (1,0) circle (3pt);
      \node at (1.5,0.3) {$2$};
      \draw[black, fill=black] (2,0) circle (3pt);
      \node at (2.25,0.5) {$0$};
      \draw[black, fill=black] (2,1) circle (3pt);
      \node at (2.5,1.3) {$0$};
      \draw[black, fill=black] (3,1) circle (3pt);
      \node at (3.5,1.3) {$0$};
      \draw[black, fill=black] (4,1) circle (3pt);
      \node at (4.25,1.5) {$2$};
      \draw[black, fill=black] (4,2) circle (3pt);
      \node at (4.5,2.3) {$0$};
      \draw[black, fill=black] (5,2) circle (3pt);
      \node at (5.25,2.5) {$2$};
      \draw[black, fill=black] (5,3) circle (3pt);
      \node at (5.5,3.3) {$0$};
      \draw[black, fill=black] (6,3) circle (3pt);
      \node at (6.5,3.3) {$0$};
      \draw[black, fill=black] (7,3) circle (3pt);
      \node at (7.25,3.5) {$2$};
      \draw[black, fill=black] (7,4) circle (3pt);
      \node at (7.5,4.3) {$0$};
      \draw[black, fill=black] (8,4) circle (3pt);
      \node at (8.25,4.5) {$2$};
      \draw[black, fill=black] (8,5) circle (3pt);
      \end{scope}
    \end{tikzpicture}
  \]
  Its component gradings $\omega_1$ and $\omega_2$, given by
  \[
    \omega_1=
    \raisebox{-1.9em}{
    \begin{tikzpicture}
      \begin{scope}[scale=0.5]
      \draw[step=1.0, black, thin] (0,0) grid (5,3);
      \draw[black, thin] (0,0) -- (5,3);
      \draw[black, fill=black] (0,0) circle (3pt);
      \node at (0.5,0.3) {$3$};
      \draw[black, fill=black] (1,0) circle (3pt);
      \node at (1.5,0.3) {$2$};
      \draw[black, fill=black] (2,0) circle (3pt);
      \node at (2.25,0.5) {$0$};
      \draw[black, fill=black] (2,1) circle (3pt);
      \node at (2.5,1.3) {$0$};
      \draw[black, fill=black] (3,1) circle (3pt);
      \node at (3.5,1.3) {$0$};
      \draw[black, fill=black] (4,1) circle (3pt);
      \node at (4.25,1.5) {$2$};
      \draw[black, fill=black] (4,2) circle (3pt);
      \node at (4.5,2.3) {$0$};
      \draw[black, fill=black] (5,2) circle (3pt);
      \node at (5.25,2.5) {$2$};
      \draw[black, fill=black] (5,3) circle (3pt);
      \end{scope}
    \end{tikzpicture}}
    \qquad
    \omega_2=
    \raisebox{-1.9em}{
    \begin{tikzpicture}
      \begin{scope}[scale=0.5]
      \draw[step=1.0, black, thin] (0,0) grid (5,3);
      \draw[black, thin] (0,0) -- (5,3);
      \draw[black, fill=black] (0,0) circle (3pt);
      \node at (0.5,0.3) {$0$};
      \draw[black, fill=black] (1,0) circle (3pt);
      \node at (1.5,0.3) {$1$};
      \draw[black, fill=black] (2,0) circle (3pt);
      \node at (2.25,0.5) {$0$};
      \draw[black, fill=black] (2,1) circle (3pt);
      \node at (2.5,1.3) {$0$};
      \draw[black, fill=black] (3,1) circle (3pt);
      \node at (3.5,1.3) {$0$};
      \draw[black, fill=black] (4,1) circle (3pt);
      \node at (4.25,1.5) {$2$};
      \draw[black, fill=black] (4,2) circle (3pt);
      \node at (4.5,2.3) {$0$};
      \draw[black, fill=black] (5,2) circle (3pt);
      \node at (5.25,2.5) {$2$};
      \draw[black, fill=black] (5,3) circle (3pt);
      \end{scope}
    \end{tikzpicture}}
    \]
    are both compatible (c.f. Example~\ref{ex:combinatorial cluster variables}) so that $\omega$ is piecewise compatible, but $\omega$ is not compatible since neither of the conditions (HGC) nor (VGC) are satisfied for the subpath $h_1v_5$ of $D_4$.
\end{example}

The next result shows that only the final edge of $D_m$ needs to be considered in order to verify (global) compatibility of a piecewise compatible grading.
\begin{lemma}
  \label{le:compatibility check}
  Let $\omega:E_m\to\ZZ_{\ge0}$ be a piecewise compatible grading of $D_m$, $m\ge3$.
  Then one of the compatibility conditions \eqref{eq:hgc} or \eqref{eq:vgc} is satisfied for every $h\in H_m$ and every $v\in V_m\setminus\{v_{u_{m-1,2}}\}$.
  In particular, a piecewise compatible grading on $D_m$ is compatible if and only if one of the compatibility conditions \eqref{eq:hgc} or \eqref{eq:vgc} holds for all paths $hv_{u_{m-1,2}}$ with $h\in\bigsqcup\limits_{r=1}^{d_m-1-\delta_m} H_{m,r}$.
\end{lemma}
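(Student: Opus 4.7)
The plan is to prove the first assertion by case analysis on the positions of $h$ and $v$ within the piecewise decomposition of $D_m$ into $d_m-1-\delta_m$ copies of $D_{m-1}$ followed by a copy of $D_{m-1}\setminus D_{m-2-\delta_m}$, as given by Corollary~\ref{cor:recursive dyck path structure}(c). The compatibility of each piece $\omega_r$ on the virtual $D_{m-1}$ provides candidate witnesses, and I need to show these transfer to witnesses for $(h,v)$ in $D_m$ whenever $v\ne v_{u_{m-1,2}}$.

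When $h\in H_{m,r}$ and $v\in V_{m,r}$ lie in the same piece, the path $hv$ in $D_m$ coincides with the corresponding subpath of the virtual $D_{m-1}$ identified with piece $r$ (both endpoints being real edges of the latter portion when $r=d_m-\delta_m$), so any witness guaranteed by compatibility of $\omega_r$ transfers directly. When $h\in H_{m,r}$ and $v\in V_{m,s}$ with $r<s$, I would first attempt to use compatibility of $\omega_r$ at the pair $(h,v_r^{\text{last}})$, where $v_r^{\text{last}}$ denotes the last vertical edge of piece $r$: if the resulting witness $e_r$ satisfies \eqref{eq:hgc}, then $e_r$ lies in piece $r$ so $e_r\ne v$, and the equality $|he_r|_V=\sum_{h'\in(he_r)_H}\omega(h')$ depends only on the subpath $he_r$, so it transfers to a \eqref{eq:hgc} witness for $(h,v)$ in $D_m$. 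Symmetrically, if compatibility of $\omega_s$ on the virtual $D_{m-1}$ at the pair $(h_s^{\text{first}},v)$ (with $h_s^{\text{first}}$ the first horizontal edge of the virtual piece $s$) yields a \eqref{eq:vgc} witness $e_s$ lying in the real part of piece $s$, then $e_s\ne h$ automatically and the condition $|e_sv|_H=\sum_{v'\in(e_sv)_V}\omega(v')$ transfers to a witness for $(h,v)$ in $D_m$.

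The main obstacle is the residual subcase in which both attempts fail: the only available witness from $\omega_r$ at $(h,v_r^{\text{last}})$ is of type \eqref{eq:vgc}, and the only witness from $\omega_s$ at $(h_s^{\text{first}},v)$ is of type \eqref{eq:hgc} (or lies in the removed initial $D_{m-2-\delta_m}$ when $s=d_m-\delta_m$). Here the hypothesis $v\ne v_{u_{m-1,2}}$ becomes essential, as it guarantees the existence of a vertical edge $v^+$ strictly following $v$ in $D_m$. I would argue by contradiction: the absence of any witness for $(h,v)$ in $D_m$ forces $f_{\omega_H}(hv)\ge0$ and $f_{\omega_V}(hv)\ge0$ via Lemma~\ref{le:compatibility inequality} (whose proof uses only the definition of $f_\omega$ and thus applies to arbitrary gradings). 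Combining these inequalities with the additivity formula~\eqref{eq:shadow statistic concatenation}, the structural constraints from compatibility within pieces $r$ and $s$, and the extra flexibility supplied by $v^+$ (via compatibility of $\omega_s$ or a later piece at pairs involving $v^+$) should then yield the desired contradiction, producing a witness after all.

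For the ``in particular'' statement, once the first assertion is established, the only pairs remaining to ensure global compatibility are those of the form $(h,v_{u_{m-1,2}})$. If $h\in H_{m,d_m-\delta_m}$, then both $h$ and $v_{u_{m-1,2}}$ lie in the final piece and compatibility of $\omega_{d_m-\delta_m}$ on the virtual $D_{m-1}$ provides the required witness, necessarily in the real portion since the path $hv_{u_{m-1,2}}$ is entirely real. The remaining pairs are exactly those with $h\in\bigsqcup_{r=1}^{d_m-1-\delta_m}H_{m,r}$, which are the ones that must be verified by hand as in the statement.
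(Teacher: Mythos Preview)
Your overall strategy---treating the same-piece case via the compatibility of $\omega_r$, and the cross-piece case by transporting witnesses from the endpoints' respective pieces---is the right shape, and your handling of the ``in particular'' clause is fine once the first assertion is in hand. But the heart of the argument, your ``residual subcase,'' is not a proof: you write that combining the inequalities with \eqref{eq:shadow statistic concatenation} and ``the extra flexibility supplied by $v^+$'' \emph{should then yield the desired contradiction}, without indicating what that contradiction actually is or how $v^+$ enters. The mere existence of a later vertical edge $v^+$ does not by itself force either shadow statistic to become negative, and I do not see a mechanism in what you wrote that closes the gap. This subcase is exactly where the structure of \emph{maximal} Dyck paths must be used, and you have not invoked it.

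The paper's proof is much shorter because it outsources precisely this difficulty. It cites a ``principle of non-interaction'' between adjacent $D_{m-1}$ pieces (from \cite[Remark~2.22]{rupel2}) to dispose of all cross-piece pairs with $1\le r<s\le d_m-1-\delta_m$ at once. For the remaining case $s=d_m-\delta_m$, rather than working with the truncated piece $D_{m-1}\setminus D_{m-2-\delta_m}$ directly, it invokes Corollary~\ref{cor:initial subpath}: removing the final edge $v_{u_{m-1,2}}$ from $D_m$ yields an initial subpath of the concatenation $C_m$ of $d_m$ full copies of $D_{m-1}$, so the same non-interaction principle applies there as well and handles every $v\ne v_{u_{m-1,2}}$. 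This is the key structural input you are missing; your use of $v^+$ is a gesture toward it but does not capture it. If you want a self-contained argument, you would effectively need to reprove the non-interaction principle, and Corollary~\ref{cor:initial subpath} is still the natural tool for uniformizing the final piece.
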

\begin{proof}
  Following \cite[Remark 2.22]{rupel2}, we have a principle of non-interaction between adjacent $D_{m-1}$ subpaths of $D_m$.  
  More precisely, one of the compatibility conditions \eqref{eq:hgc} or \eqref{eq:vgc} will always be satisfied for paths $hv$ with $h\in H_{m,r}$ and $v\in V_{m,s}$ for $1\le r<s\le d_m-1-\delta_m$.  
  Since each pair $(\omega_{H,r},\omega_{V,r})$ is compatible, it only remains to verify a compatibility condition for $h\in\bigsqcup\limits_{r=1}^{d_m-1-\delta_m} H_{m,r}$ and $v\in V_{m,d_m-\delta_m}$.  

  By Corollary~\ref{cor:initial subpath}, we may again apply \cite[Remark 2.22]{rupel2} to see that one of the compatibility conditions will always be satisfied for all $v\in V_{m,d_m-\delta_m}$ with $v\ne v_{u_{m-1,2}}$.
  Thus a compatibility conditions only needs to be verified for paths $hv_{u_{m-1,2}}$ with $h\in\bigsqcup\limits_{r=1}^{d_m-1-\delta_m} H_{m,r}$ to verify the compatibility of $\omega$.
\end{proof}
\begin{corollary}
  \label{cor:automatic compatibility}
  When $d_m=1$, every piecewise compatible grading of $D_m$, $m\ge3$, is compatible.
\end{corollary}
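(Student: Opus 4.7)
The plan is to invoke Lemma~\ref{le:compatibility check} directly and exploit the fact that its residual compatibility check involves an empty indexing set once $d_m=1$. That lemma reduces the verification of (global) compatibility of a piecewise compatible grading $\omega$ to checking that, for every horizontal edge $h\in\bigsqcup_{r=1}^{d_m-1-\delta_m} H_{m,r}$, one of the conditions \eqref{eq:hgc} or \eqref{eq:vgc} is satisfied along the path $hv_{u_{m-1,2}}$. So I only need to argue that this indexing set of horizontal edges is empty when $d_m=1$.

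The key bookkeeping step is to show that $\delta_m=0$ under our hypotheses. Recall that $d_{m-1}$ is the other of $d_1,d_2$ from $d_m$, and the standing assumption $d_1d_2\ge4$ combined with $d_m=1$ forces $d_{m-1}\ge4$. In particular $d_{m-1}\ne1$, so directly from the definition we have $\delta_m=0$. Hence $d_m-1-\delta_m=0<1$, and the disjoint union $\bigsqcup_{r=1}^{d_m-1-\delta_m} H_{m,r}=\emptyset$. With no horizontal edges left to examine, the criterion from Lemma~\ref{le:compatibility check} is vacuously satisfied, and every piecewise compatible grading of $D_m$ is compatible. There is essentially no obstacle here; the only substantive content is the short $\delta_m=0$ computation, which is immediate from $d_1d_2\ge 4$.
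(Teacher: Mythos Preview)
Your proof is correct and follows the same approach as the paper: invoke Lemma~\ref{le:compatibility check} and observe that the residual set of horizontal edges $\bigsqcup_{r=1}^{d_m-1-\delta_m} H_{m,r}$ is empty when $d_m=1$. Your explicit verification that $\delta_m=0$ (via $d_1d_2\ge4$) is a nice added detail, though strictly speaking the union is empty regardless of whether $\delta_m=0$ or $\delta_m=1$, since $d_m-1-\delta_m\le0$ either way.
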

\begin{proof}
  When $d_m=1$, the set of horizontal edges $\bigsqcup\limits_{r=1}^{d_m-1-\delta_m} H_{m,r}$ is empty and thus the compatibility condition of Lemma~\ref{le:compatibility check} is trivially satisfied.
\end{proof}

Next we observe that piecewise compatible gradings are well-behaved under the operations $\varphi^*$ and $\Omega$ introduced in Section~\ref{sec:recursions}.
\begin{proposition}
  \label{prop:piecewise equivalence}
  Let $\omega:E_m\to\ZZ_{\ge0}$ be a $(d_1,d_2)$-bounded grading on $D_m$ for $m\ge3$.  
  Then $\omega$ is piecewise compatible if and only if $(\Omega(\omega_V),\varphi^*\omega_H)$ is piecewise compatible.
\end{proposition}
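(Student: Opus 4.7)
The plan is to match the block decomposition of $D_m$ (into copies of $D_{m-1}$ per Corollary~\ref{cor:recursive dyck path structure}(c)) against the block decomposition of $D'_{m+1}$ (into copies of $D'_m$) via the bijections $\varphi$ and $\Omega$, then deduce the statement block-by-block from Proposition~\ref{prop:compatibility equivalence}.

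First, I would verify at the level of Dyck paths that $\varphi:V'_{m+1}\to H_m$ intertwines the decomposition of Definition~\ref{def:subpath edges}: the vertical edges of the $r$-th copy of $D'_m$ inside $D'_{m+1}$ map under $\varphi$ precisely to the horizontal edges of the $r$-th copy of $D_{m-1}$ inside $D_m$. Since both decompositions arise by iterating Lemma~\ref{le:Dyck path recursion}, this reduces to observing that the edge-replacement procedure of that lemma commutes with concatenation of Dyck paths, which is a direct check on counts.

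Next, I would show that $\Omega=\Omega_m$ similarly respects the blocks: the image $\Omega(\omega_V)$, restricted to the horizontal edges of the $r$-th copy of $D'_m$, coincides (for $r<d_m-\delta_m$) with $\Omega_{m-1}$ applied to the restriction $\omega_{V,r}$. The essential input is that if $v\in V_{m,r}\cap\rsh(\omega_H)$ and $v\in\rsh(\omega_H)_{j;d}$, then $h_j$ necessarily lies in $H_{m,r}$. This is the non-interaction principle of \cite[Remark 2.22]{rupel2} invoked already in the proof of Lemma~\ref{le:compatibility check}: a local shadow originating in an earlier block cannot reach into $V_{m,r}$, so the bijection $\theta_{j;d}$ decomposes into bijections between edge-subsets of each paired $(D_{m-1},D'_m)$ block. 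With the two decompositions so identified, Proposition~\ref{prop:compatibility equivalence} applied to the $r$-th block asserts that $\omega_r$ is compatible on $D_{m-1}$ if and only if $(\Omega_{m-1}(\omega_{V,r}),\varphi^*_{m-1}\omega_{H,r})$ is compatible on $D'_m$; running over $r$ then yields the equivalence.

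The main obstacle is the terminal block $r=d_m-\delta_m$. There one must verify that the prescribed values \eqref{eq:V restriction} and \eqref{eq:H restriction} on the initial $D_{m-2-\delta_m}$ of the last $D_{m-1}$ subpath translate, under $\Omega$ and $\varphi^*$, into the alternate terminal conditions of Remark~\ref{rem:alternate restrictions} on $D'_{m+1}$. The assignment $\omega_{H}=\ell$ at a horizontal edge immediately followed by $\ell$ verticals inside $D_{m-2-\delta_m}$ becomes, under $\varphi^*$, the assignment $\omega_{V'}=d_1-\ell$ on the image vertical edge; by the very definition of Lemma~\ref{le:Dyck path recursion}, the resulting value $d_1-\ell$ is precisely the number of horizontal edges immediately preceding that vertical edge inside the corresponding $D'_{m-1-\delta'_{m+1}}$ piece, which is the condition of Remark~\ref{rem:alternate restrictions}. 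Similarly, $\omega_V=0$ on the initial $D_{m-2-\delta_m}$ maps under $\Omega$ to vanishing on the corresponding $D'_{m-1-\delta'_{m+1}}$. The fiddliest step is checking that $\delta_m$ and $\delta'_{m+1}$ agree in the range $m\ge 4$, while the exceptional case $m=3$ (where they can differ if $d_2=1$) needs separate handling via the explicit structure of $D_3$ and $D'_4$.
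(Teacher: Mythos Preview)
Your strategy differs substantially from the paper's. The paper does \emph{not} decompose $\Omega_m$ block-wise and invoke Proposition~\ref{prop:compatibility equivalence}; instead it verifies the compatibility conditions on $D'_{m+1}$ directly. Given $h'\in H'_{m+1,r}$ and $v'_t\in V'_{m+1,r}$ with $h'<v'_t$, the paper assumes $h'\in D(v'_t;\varphi^*\omega_H)\cap\supp(\Omega(\omega_V))$, writes $h'=\theta_{j;d}(v)$, observes that $h_j,v$ lie in the $r$-th $D_{m-1}$ block of $D_m$, and then uses piecewise compatibility of $\omega$ to force $f_{\omega_V}(h_jv)<0$. The key tool is Lemma~\ref{le:f_and_Omega}, which transports this inequality to $f_{\Omega(\omega_V)}(h'v'_d)<0$, whence Lemma~\ref{le:compatibility inequality} gives \eqref{eq:hgc} for $h'v'_t$. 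Proposition~\ref{prop:compatibility equivalence} is never used.

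Your route has two genuine gaps. First, Proposition~\ref{prop:compatibility equivalence} is stated only for $\omega_V\in\cG_\rsh(\omega_H)$, but a block restriction $\omega_{V,r}$ of a piecewise compatible grading need not lie in $\cG_\rsh(\omega_{H,r})$: take $\omega_{H,r}\equiv 0$, so $\rsh(\omega_{H,r})=\emptyset$, yet any $\omega_{V,r}$ is compatible with it. You would need an auxiliary argument (e.g.\ that zeroing $\omega_{V,r}$ outside $\rsh(\omega_{H,r})$ preserves compatibility and leaves $\Omega_{m-1}(\omega_{V,r})$ unchanged). Second, and more seriously, the assertion that $\Omega_m$ decomposes block-wise is not justified by the non-interaction principle you cite. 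That principle (as used in Lemma~\ref{le:compatibility check}) says only that \emph{one} of \eqref{eq:hgc} or \eqref{eq:vgc} holds for cross-block pairs; it does not prevent a local shadow $D(h_j;\omega_H)$ with $h_j\in H_{m,r}$ from extending into $V_{m,s}$ for $s>r$. When this happens, a vertical edge $v\in V_{m,s}\cap\supp(\omega_V)$ can lie in $\rsh(\omega_H)_{j;d}$ with $h_j\in H_{m,r}$, and then $\theta_{j;d}(v)$ lands in $H'_{m+1,r}$; so $\Omega_m(\omega_V)$ restricted to block $r$ depends on $\omega_V$-values in block $s$, and cannot equal $\Omega_{m-1}(\omega_{V,r})$. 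The paper's argument sidesteps both issues by working one edge-pair at a time with the shadow statistic.
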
  
\begin{proof}
  We prove the forward implication, the other direction can be obtained by reversing the argument.  

  Assume $\omega$ is piecewise compatible and, for $1\le r\le d_m-\delta_m$,  consider $h'\in H'_{m+1,r}$ and $v'_t\in V'_{m+1,r}$ with $h'<v'_t$.  
  If $h'\notin D(v'_t;\varphi^*\omega_H)$, then the vertical grading condition \eqref{eq:vgc} is satisfied for the path $h'v'_t$.  
  So we assume $h'\in D(v'_t;\varphi^*\omega_H)$ and need to show that the horizontal grading condition \eqref{eq:hgc} is satisfied for the path $h'v'_t$.

  For $\Omega(\omega_V)(h')=0$, there is nothing to check so assume $h'\in\supp(\Omega(\omega_V))$.
  Set $j-1=\hgt(h')$ so that $h'\in\rsh(\varphi^*\omega_H)_{d;j-1}$ with $j<d\le t$.
  Then $\Omega(\omega_V)(h')=\omega_V(v)$, where $v\in\rsh(\omega_H)_{j;d}\cap\supp(\omega_V)$ with $h'=\theta_{j;d}(v)$.

  Note that $h_j\in H_{m,r}$ and $v\in V_{m,r}$, thus by piecewise compatibility the vertical grading condition \eqref{eq:vgc} is satisfied for the path $h_jv$. 
  That is, there exists $e\in\overline{h}_jv$ so that $f_{\omega_V}(ev)=0$.
  By piecewise compatibility, each vertical edge in $h_j\overline{e}$ also satisfies the vertical grading condition with $h_j$.
  It follows that $f_{\omega_V}(h_jv)<0$.

  By Lemma~\ref{le:f_and_Omega}, we thus have $f_{\Omega(\omega_V)}(h'v'_d)<0$ and so the horizontal grading condition is satisfied for the path $h'v'_d$ by Lemma~\ref{le:compatibility inequality}.
  Since $h'v'_d$ is an initial subpath of $h'v'_t$, the horizontal grading condition is also satisfied for the path $h'v'_t$.  
  Since $h'$ and $v'_t$ were arbitrary, we see that $(\Omega(\omega_V),\varphi^*\omega_H)$ is piecewise compatible.
\end{proof}  

We aim now to understand precisely when compatibility fails for a piecewise compatible grading.  
The definition below provides the necessary conditions for a piecewise compatible grading $\omega$ constructed as in Definition~\ref{def:agregate grading} to be incompatible.
\begin{definition}
  \label{def:blocking and justification}
  Let $\omega_H:H_m\to[0,d_1]$ be a horizontal grading on $D_m$, $m\ge3$.  We say a horizontal edge $h\in H_m$ is \emph{blocking for $\omega_H$} if the following hold:
  \begin{itemize}
    \item $h\in H_m\setminus H_{m,d_m-\delta_m}$;
    \item $D(h;\omega_H)=hv_{u_{m-1,2}}$;
    \item $h$ is the maximal (i.e.\ furthest to the right) horizontal edge with these properties.
  \end{itemize}
  We call $\omega_H$ \emph{left-justified} at a blocking edge $h_i\in H_m$ if there exists $k\ge i$ so that $\omega_H(h_j)>0$ for $i\le j\le k$ and $\omega_H(h_j)=0$ for $j>k$.  
  Such a horizontal grading is \emph{strongly left-justified at $h_i$} if in addition the following hold:
  \begin{itemize}
    \item $\omega_H(h_j)=d_1$ for $i\le j<k$;
    \item $f_{\omega_H}(h_iv_{u_{m-1,2}})=0$.
  \end{itemize}
  Let $\omega_V:V_m\to[0,d_2]$ be a vertical grading on $D_m$, $m\ge3$.  
  For a horizontal edge $h_i\in H_m$, $\omega_V$ is called \emph{right-justified with respect to $h_i$} if there is a vertical edge $v_s\in h_iv_{u_{m-1,2}}$ so that $\omega_V(v_t)>0$ for $s\le t\le u_{m-1,2}$ and $\omega_V(v_t)=0$ for all vertical edges $v_t\in(h_i\overline{v}_s)_V$.  
  Such a vertical grading is \emph{strongly right-justified with respect to $h_i$} if in addition the following hold:
  \begin{itemize}
    \item $\omega_V(v_t)=d_2$ for $s<t\le u_{m-1,2}$;
    \item $D(v_{u_{m-1,2}};\omega_V)=h_iv_{u_{m-1,2}}$ with $f_{\omega_V}(h_iv_{u_{m-1,2}})=0$.
  \end{itemize}
\end{definition}
\begin{example}
  The horizontal grading $\omega_H$ of $D_4$ in Example~\ref{ex:piecewise not compatible} is strongly left-justified with respect to the blocking edge $h_1$ and the vertical grading $\omega_V$ of $D_4$ is strongly right-justified with respect to $h_1$.
  The horizontal grading $\omega_{H,1}$ of $D_3$ in Example~\ref{ex:piecewise not compatible} is left-justified with respect to the blocking edge $h_1$, but it is not strongly left-justified since $f_{\omega_{H,1}}(h_1v_3)=2>0$.
  On the other hand, the vertical grading $\omega_{V,1}$ of $D_3$ in Example~\ref{ex:piecewise not compatible} is strongly right-justified with respect to $h_2$.
\end{example}

\begin{proposition}
  \label{prop:strong justification implication}
  Let $\omega:E_m\to\ZZ_{\ge0}$ be a piecewise compatible grading of $D_m$, $m\ge3$, for which $\omega_H$ admits the blocking edge $h_i\in H_m$ and with $\omega_V$ strongly right-justified with respect to $h_i$.
  Then $\omega_H$ is left-justified at $h_i$ and $\supp(\omega_H)\cap h_iv_{u_{m-1,2}}=\rsh(\omega_V)\cap h_iv_{u_{m-1,2}}$.
\end{proposition}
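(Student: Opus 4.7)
Plan: My first step is to observe that $\omega_H(h_i)>0$, since $\omega_H(h_i)=0$ would give $f_{\omega_H}(h_ih_i)=0$, forcing $D(h_i;\omega_H)=h_ih_i$ and contradicting the blocking condition $D(h_i;\omega_H)=h_iv_{u_{m-1,2}}$. I then unpack the strongly right-justified hypothesis: by the shortest-path characterization of $D(v_{u_{m-1,2}};\omega_V)=h_iv_{u_{m-1,2}}$ combined with equation~\eqref{eq:shadow statistic concatenation}, one obtains $f_{\omega_V}(h_iv_{u_{m-1,2}})=0$ and $f_{\omega_V}(ev_{u_{m-1,2}})>0$ for every $e\in\overline{h}_iv_{u_{m-1,2}}$. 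Symmetrically the blocking hypothesis forces $f_{\omega_H}(h_ie)>0$ for every proper initial subpath $h_ie$ of $h_iv_{u_{m-1,2}}$. These two families of inequalities are the main technical engine of the argument.

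Next, since $D(v_{u_{m-1,2}};\omega_V)=h_iv_{u_{m-1,2}}$, the local horizontal shadow $D_H(v_{u_{m-1,2}};\omega_V)$ equals the full set of horizontal edges of $h_iv_{u_{m-1,2}}$, so every such edge lies in $\sh(\omega_V)$. I would then compute $\rsh(\omega_V)\cap h_iv_{u_{m-1,2}}$ from the exclusion rule: for each $v_t$ with $\omega_V(v_t)>0$ one removes the up-to-$\omega_V(v_t)$ horizontal edges immediately preceding $v_t$. Using $\omega_V(v_t)=d_2$ for $s<t\le u_{m-1,2}$ together with the structural information on $D_m$ (in particular the length bounds on maximal runs of consecutive horizontal edges from Corollary~\ref{cor:edge restrictions}), this exclusion removes every horizontal edge strictly between $v_s$ and $v_{u_{m-1,2}}$, while the exclusion for $v_s$ further removes the last $\omega_V(v_s)$ horizontal edges before $v_s$. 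What survives is a contiguous initial block $\{h_i,h_{i+1},\ldots,h_k\}$ for some $k\ge i$.

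Finally, I would identify $\supp(\omega_H)\cap h_iv_{u_{m-1,2}}$ with $\{h_i,\ldots,h_k\}$. For the inclusion $\subseteq$, I apply piecewise compatibility: within each $D_{m-1}$-block $D_{m,r}$ meeting our range, Remark~\ref{rem:remote shadows} combined with the matching of blockwise and global shadow data on $h_iv_{u_{m-1,2}}$ forces $\omega_H(h)=0$ whenever $h\in\sh(\omega_{V,r})\setminus\rsh(\omega_{V,r})$, killing $\omega_H$ outside $\{h_i,\ldots,h_k\}$. For the reverse inclusion, I combine the equality $|h_iv_{u_{m-1,2}}|_H=\omega_V(v_s)+d_2(u_{m-1,2}-s)$ coming from $f_{\omega_V}(h_iv_{u_{m-1,2}})=0$ with the strict positivity $f_{\omega_H}(h_ie)>0$ (from blocking) to force $\omega_H>0$ on exactly $\{h_i,\ldots,h_k\}$. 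The main obstacle I anticipate is the delicate matching of blockwise and global shadow data at the interfaces between consecutive $D_{m-1}$-blocks, which requires tracking how the local shadows of the various $v_t$ cross these boundaries; for this I would invoke the non-interaction principle from \cite[Remark~2.22]{rupel2} (already used in Lemma~\ref{le:compatibility check}) to reduce global claims to block-level ones, together with the maximality clause in the definition of blocking edge to ensure no gap can occur in $\supp(\omega_H)\cap h_iv_{u_{m-1,2}}$.
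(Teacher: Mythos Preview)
Your overall structure is reasonable and several steps are on solid ground: the observation $\omega_H(h_i)>0$, the inequality $f_{\omega_H}(h_ie)>0$ for proper initial subpaths, the containment $\supp(\omega_H)\cap h_iv_{u_{m-1,2}}\subset\rsh(\omega_V)\cap h_iv_{u_{m-1,2}}$ via piecewise compatibility, and the identification of $\rsh(\omega_V)\cap h_iv_{u_{m-1,2}}$ as a contiguous initial block $\{h_i,\ldots,h_k\}$ all hold (though the bound on horizontal run lengths comes from Lemma~\ref{le:height and depth}, not Corollary~\ref{cor:edge restrictions}). The real problem is your step~6, the reverse inclusion. Neither of the ingredients you invoke does the work: the positivity $f_{\omega_H}(h_ie)>0$ only gives $d_1\cdot|\supp(\omega_H)\cap(h_ie)_H|>|h_ie|_V$, which is perfectly consistent with gaps in the support (e.g.\ $\omega_H(h_i)=d_1$, $\omega_H(h_{i+1})=0$, $\omega_H(h_{i+2})$ small and positive), and the maximality clause only rules out later edges $h_j$ with $D(h_j;\omega_H)=h_jv_{u_{m-1,2}}$, which a small positive value of $\omega_H(h_j)$ easily avoids. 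So nothing you have written forces $\omega_H(h_j)>0$ for \emph{every} $j\in[i,k]$.

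The paper closes this gap by a completely different route: it does an explicit cardinality computation. Using the strong right-justification it locates the index $s$ and computes $|\rsh(\omega_V)\cap h_iv_{u_{m-1,2}}|$ in closed form as a nested floor/ceiling expression in the two-parameter Chebyshev numbers $u_{m,k}$; then, via the identities of Remark~\ref{rem:chebyshev} and crucially the inequality of Lemma~\ref{le:Dyck path inequality}, it shows this cardinality is at most $\lceil |h_iv_{u_{m-1,2}}|_V/d_1\rceil$. On the other hand the blocking condition together with $d_1$-boundedness gives $|\supp(\omega_H)\cap h_iv_{u_{m-1,2}}|\ge\lceil |h_iv_{u_{m-1,2}}|_V/d_1\rceil$. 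Combined with the containment $\subseteq$, both inequalities are forced to be equalities, which simultaneously yields the set equality and left-justification. The arithmetic inequality from Lemma~\ref{le:Dyck path inequality} is thus the key missing idea in your proposal; a purely structural argument along your lines would have to reproduce it in some disguised form.
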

\begin{proof}
  Since $\omega_V$ is strongly right-justified with respect to $h_i$, we have $D(v_{u_{m-1,2}};\omega_V)=h_iv_{u_{m-1,2}}$ and thus $\supp(\omega_H)\cap h_iv_{u_{m-1,2}}\subset\rsh(\omega_V)\cap h_iv_{u_{m-1,2}}$.
  To see equality of these sets, we show that $\supp(\omega_H)\cap h_iv_{u_{m-1,2}}$ contains at least as many edges as $\rsh(\omega_V)\cap h_iv_{u_{m-1,2}}$.

  Since $\omega_V$ is strongly right-justified with respect to $h_i$, we have $|D_H(v_{u_{m-1,2}};\omega_V)|=u_{m,1}-i+1$ and so $\omega_V(v)=d_2$ for each vertical edge $v\in\overline{v}_sv_{u_{m-1,2}}$, where $s=u_{m-1,2}-\left\lfloor\frac{u_{m,1}-i+1}{d_2}\right\rfloor$.
  Moreover, this also shows $\omega_V(v_s)=u_{m,1}-i+1-d_2\left\lfloor\frac{u_{m,1}-i+1}{d_2}\right\rfloor$.
  It follows that $\omega_H(h)=0$ for each horizontal edge $h\in\overline{v}_sv_{u_{m-1,2}}$ and for each of the $u_{m,1}-i+1-d_2\left\lfloor\frac{u_{m,1}-i+1}{d_2}\right\rfloor$ horizontal edges $h$ immediately preceding $v_s$.
  Otherwise both grading conditions would fail for the path $hv$, where $v$ is the first vertical edge after $h$, in contradiction to the piecewise compatibility of $\omega$.

  Observe that the vertical edge $v_s$ has depth $\left\lceil\frac{\left(u_{m-1,2}-\left\lfloor\frac{u_{m,1}-i+1}{d_2}\right\rfloor\right)u_{m,1}}{u_{m-1,2}}\right\rceil$ by Lemma~\ref{le:height and depth} and thus we may conclude more precisely that $\omega_H(h_j)=0$ whenever $j$ is larger than the following quantity:
  \begin{align*}
    &\left\lceil\frac{\left(u_{m-1,2}-\left\lfloor\frac{u_{m,1}-i+1}{d_2}\right\rfloor\right)u_{m,1}}{u_{m-1,2}}\right\rceil-\left(u_{m,1}-i+1-d_2\left\lfloor\frac{u_{m,1}-i+1}{d_2}\right\rfloor\right)\\
    &\quad=i-1+\left\lceil\frac{-\left\lfloor\frac{u_{m,1}-i+1}{d_2}\right\rfloor u_{m,1}}{u_{m-1,2}}\right\rceil+d_2\left\lfloor\frac{u_{m,1}-i+1}{d_2}\right\rfloor\\
    &\quad=i-1+\left\lceil\frac{\left\lfloor\frac{u_{m,1}-i+1}{d_2}\right\rfloor u_{m-2,1}}{u_{m-1,2}}\right\rceil,
  \end{align*}
  where both equalities follow from the identity $\lceil n+x\rceil=n+\lceil x\rceil$ which holds for all real numbers $x$ and all integers $n$.
  This discussion also shows that $\big(\sh(\omega_V)\setminus\rsh(\omega_V)\big)\cap h_iv_{u_{m-1,2}}=(h_{i+d}v_{u_{m-1,2}})_H$, where $d=\left\lceil\frac{\left\lfloor\frac{u_{m,1}-i+1}{d_2}\right\rfloor u_{m-2,1}}{u_{m-1,2}}\right\rceil$.  
  Since $D(v_{u_{m-1,2}};\omega_V)=h_iv_{u_{m-1,2}}$, it follows that 
  \[\rsh(\omega_V)\cap h_iv_{u_{m-1,2}}=\{h_i,h_{i+1},\ldots,h_{i+d-1}\}\]
  and so 
  \[|\rsh(\omega_V)\cap h_iv_{u_{m-1,2}}|=\left\lceil\frac{\left\lfloor\frac{u_{m,1}-i+1}{d_2}\right\rfloor u_{m-2,1}}{u_{m-1,2}}\right\rceil.\]

  Now observe the inequality 
  \[\left\lceil\frac{\left\lfloor\frac{u_{m,1}-i+1}{d_2}\right\rfloor u_{m-2,1}}{u_{m-1,2}}\right\rceil\le\left\lceil\frac{(u_{m,1}-i+1)u_{m-2,1}}{d_2u_{m-1,2}}\right\rceil=\left\lceil\frac{(u_{m,1}-i+1)u_{m-2,2}}{d_1u_{m-1,1}}\right\rceil,\] 
  where the equality can be deduced from the identities in Remark~\ref{rem:chebyshev}.
  By Lemma~\ref{le:Dyck path inequality}, the last expression above is not larger than  
  \begin{equation}
    \label{eq:strongly justified support}
    \left\lceil\frac{(u_{m,1}-i+1)u_{m-1,2}}{d_1u_{m,1}}\right\rceil=\left\lceil\frac{u_{m-1,2}-\left\lfloor\frac{(i-1)u_{m-1,2}}{u_{m,1}}\right\rfloor}{d_1}\right\rceil,
  \end{equation}
  where the equality follows from right to left using the identities $-\lfloor x\rfloor=\lceil-x\rceil$, $\lceil n+x\rceil=n+\lceil x\rceil$, and $\left\lceil\frac{\lceil x\rceil}{n}\right\rceil=\left\lceil\frac{x}{n}\right\rceil$ which hold for all real numbers $x$ and all positive integers $n$.
  
  But $h_i$ is blocking and $\omega_H$ is $d_1$-bounded so that
  \[|\supp(\omega_H)\cap h_iv_{u_{m-1,2}}|\ge\left\lceil\frac{|h_iv_{u_{m-1,2}}|_V}{d_1}\right\rceil=\left\lceil\frac{u_{m-1,2}-\left\lfloor\frac{(i-1)u_{m-1,2}}{u_{m,1}}\right\rfloor}{d_1}\right\rceil.\]
  Combining this observation with the inequalities leading up to equation~\eqref{eq:strongly justified support}, we see that
  \[|\supp(\omega_H)\cap h_iv_{u_{m-1,2}}|\ge\left\lceil\frac{u_{m-1,2}-\left\lfloor\frac{(i-1)u_{m-1,2}}{u_{m,1}}\right\rfloor}{d_1}\right\rceil\ge\left\lceil\frac{\left\lfloor\frac{u_{m,1}-i+1}{d_2}\right\rfloor u_{m-2,1}}{u_{m-1,2}}\right\rceil=|\rsh(\omega_V)\cap h_iv_{u_{m-1,2}}|.\]
  But either inequality being strict is impossible since $\supp(\omega_H)\cap h_iv_{u_{m-1,2}}\subset\rsh(\omega_V)\cap h_iv_{u_{m-1,2}}$. 
  Thus we have
  \begin{equation}
    \label{eq:incompatible support condition}
    |\supp(\omega_H)\cap h_iv_{u_{m-1,2}}|=\left\lceil\frac{u_{m-1,2}-\left\lfloor\frac{(i-1)u_{m-1,2}}{u_{m,1}}\right\rfloor}{d_1}\right\rceil=\left\lceil\frac{\left\lfloor\frac{u_{m,1}-i+1}{d_2}\right\rfloor u_{m-2,1}}{u_{m-1,2}}\right\rceil=|\rsh(\omega_V)\cap h_iv_{u_{m-1,2}}|,
  \end{equation}
  in particular $\supp(\omega_H)\cap h_iv_{u_{m-1,2}}=\rsh(\omega_V)\cap h_iv_{u_{m-1,2}}=\{h_i,h_{i+1},\ldots,h_{i+d-1}\}$ which shows $\omega_H$ must be left-justified at $h_i$.
\end{proof}

\begin{remark}
  The middle equality of equation~\eqref{eq:incompatible support condition} does not generally hold for all $i$, this equality is a consequence of the hypotheses and thus provides a necessary condition for the existence of a piecewise compatible grading as in Proposition~\ref{prop:strong justification implication}.
  The next result will show that this condition is also sufficient and that such gradings are the only piecewise compatible gradings which are not compatible.
\end{remark} 

\begin{theorem}
  \label{th:blocking edge conditions}
  Let $\omega:E_m\to\ZZ_{\ge0}$ be a piecewise compatible grading of $D_m$, $m\ge3$.
  \begin{enumeratea}
    \item If $\omega_H$ does not admit a blocking edge, then $\omega$ is compatible.
    \item Suppose $\omega_H$ admits a blocking edge $h_i$, but $\omega$ is not compatible.
      Then the following hold:
      \begin{enumeratei}
        \item $D(v_{u_{m-1,2}};\omega_V)=h_iv_{u_{m-1,2}}$ with $f_{\omega_V}(h_iv_{u_{m-1,2}})=0$;
        \item $\omega_H$ is left-justified at $h_i$ and $\omega_V$ is strongly right-justified with respect to $h_i$.
      \end{enumeratei}
      If in addition $m\ge4$, the following also hold:
      \begin{enumeratei}\addtocounter{enumii}{2}
        \item $f_{\omega_H}(h_iv_{u_{m-1,2}})=0$;
        \item $\omega_H$ must be strongly left-justified at $h_i$;
        \item $\hgt(h_{i+1})=\hgt(h_i)+\delta_1$ when $|\supp(\omega_H)\cap h_iv_{u_{m-1,2}}|>1$.
      \end{enumeratei}
  \end{enumeratea}
\end{theorem}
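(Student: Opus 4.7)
I plan to apply Lemma~\ref{le:compatibility check}, which reduces compatibility of the piecewise compatible grading $\omega$ to verifying HGC or VGC on each path $hv_{u_{m-1,2}}$ with $h\in\bigsqcup_{r=1}^{d_m-1-\delta_m}H_{m,r}$. Absence of a blocking edge means that for every such $h$ either $\omega_H(h)=0$ (so HGC holds trivially with $e=h$) or $D(h;\omega_H)$ terminates before $v_{u_{m-1,2}}$, giving an edge $e<v_{u_{m-1,2}}$ that witnesses HGC on $hv_{u_{m-1,2}}$. Either way $\omega$ is compatible.

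\textbf{Localizing failure in (b).} Assume $\omega_H$ admits the blocking edge $h_i$ but $\omega$ is not compatible. Lemma~\ref{le:compatibility check} yields some path $h'v_{u_{m-1,2}}$ with $h'\in\bigsqcup_{r=1}^{d_m-1-\delta_m}H_{m,r}$ on which both HGC and VGC fail. Failure of HGC forces $D(h';\omega_H)=h'v_{u_{m-1,2}}$, so $h'$ itself satisfies the first two bullets of the blocking-edge definition, whence $h'\le h_i$ by maximality. When $h'<h_i$, every edge $e\in h_iv_{u_{m-1,2}}$ with $e\ne h_i$ also lies in $h'v_{u_{m-1,2}}$ and satisfies $e\ne h'$, so VGC failure at $h'v_{u_{m-1,2}}$ propagates to VGC failure at $h_iv_{u_{m-1,2}}$. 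Combined with HGC failure at $h_iv_{u_{m-1,2}}$ (by the definition of blocking), this localizes the failure precisely to the path $h_iv_{u_{m-1,2}}$.

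\textbf{Proving (i) and (ii).} My plan is to establish strong right-justification of $\omega_V$ with respect to $h_i$ as a single package, since this contains (i) as its last condition and supplies the hypothesis of Proposition~\ref{prop:strong justification implication} needed for (ii). The analog of Lemma~\ref{le:compatibility inequality}(b) for the vertical statistic gives $f_{\omega_V}(h_iv_{u_{m-1,2}})\ge0$, and VGC failure at $h_iv_{u_{m-1,2}}$ also forces $\omega_V(v_{u_{m-1,2}})>0$. I would then argue: any nonzero $\omega_V(v_t)$ before the leftmost supported edge, any interior deficiency $\omega_V(v_t)<d_2$ past that edge, or any surplus $f_{\omega_V}(h_iv_{u_{m-1,2}})>0$, would let the vertical shadow statistic take the value zero at some $e$ strictly between $h_i$ and $v_{u_{m-1,2}}$, contradicting VGC failure at $h_i$. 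The argument uses piecewise compatibility of each $\omega_r$ and the $d_2$-boundedness of $\omega_V$ to control how $f_{\omega_V}(ev_{u_{m-1,2}})$ evolves. This forces strong right-justification, yielding (i) and the $\omega_V$-half of (ii); Proposition~\ref{prop:strong justification implication} then immediately delivers left-justification of $\omega_H$ at $h_i$.

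\textbf{Proving (iii)--(v) when $m\ge 4$; main obstacle.} For (iii), I would deduce $f_{\omega_H}(h_iv_{u_{m-1,2}})=0$ by showing the inequality $|\supp(\omega_H)\cap h_iv_{u_{m-1,2}}|\ge\lceil|h_iv_{u_{m-1,2}}|_V/d_1\rceil$ from Proposition~\ref{prop:strong justification implication} must be tight, exploiting the exact support identity~\eqref{eq:incompatible support condition} together with $d_1$-boundedness. For (iv), strong left-justification requires $\omega_H(h_j)=d_1$ for $i\le j<k$; any such deficiency would combine with the support count to let $D(h_i;\omega_H)$ terminate before $v_{u_{m-1,2}}$, contradicting that $h_i$ is blocking. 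For (v), the height shift $\hgt(h_{i+1})=\hgt(h_i)+\delta_1$ encodes that the second supported horizontal edge in $h_iv_{u_{m-1,2}}$ lies in the next $D_{m-2-\delta_m}$ sub-block determined by $\omega_V$, which is forced by the equality $\supp(\omega_H)\cap h_iv_{u_{m-1,2}}=\rsh(\omega_V)\cap h_iv_{u_{m-1,2}}$ of Proposition~\ref{prop:strong justification implication} together with Corollary~\ref{cor:recursive dyck path structure}(c) and Lemma~\ref{le:height and depth}. The main obstacle throughout (iii)--(v) is the arithmetic of the floor/ceiling identities linking heights, depths, and the Chebyshev recurrence across the $D_{m-1}$ substructure: collapsing the inequalities behind~\eqref{eq:strongly justified support} into the equalities needed for (v), and separating the $d_1=1$ versus $d_1>1$ cases that drive $\delta_1$, requires careful bookkeeping with Lemma~\ref{le:Dyck path inequality} that only functions cleanly once $m\ge 4$ provides enough recursive depth.
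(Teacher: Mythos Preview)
Your approach to (a) is correct and essentially identical to the paper's. The localization step at the start of (b) is fine and a bit cleaner than the paper's implicit treatment.

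The substantive divergence is in how you handle strong right-justification of $\omega_V$ in (ii) and strong left-justification of $\omega_H$ in (iv). You propose a \emph{direct} argument: any deficiency $\omega_V(v_t)<d_2$ past the leftmost supported vertical edge, or any deficiency $\omega_H(h_j)<d_1$ among the supported horizontal edges, should force the relevant shadow statistic to vanish somewhere and thereby restore compatibility. The paper instead proves (ii) and (iv) by \emph{induction on $m$}, using the maps $\varphi^*_{m-1}$ and $\Omega_{m-1}$ (Propositions~\ref{prop:compatibility equivalence} and~\ref{prop:piecewise equivalence}) to transfer the problem to a piecewise compatible, non-compatible grading on $D'_{m-1}$; applying the inductive hypothesis there gives strong justification on $D'_{m-1}$, and the definitions of $\varphi^*$ and the order-preserving bijections $\theta_{j;d}$ (Lemma~\ref{le:remote shadow cardinalities}) then push strong justification back up to $D_m$. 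The base case $m=3$ is handled directly because there $\supp(\omega_H)\cap h_iv_{u_{m-1,2}}=\{h_i\}$.

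Your direct argument has a genuine gap at (iv). Knowing $f_{\omega_H}(h_iv_{u_{m-1,2}})=0$, that $\omega_H$ is left-justified with support $\{h_i,\ldots,h_k\}$, and that $k-i+1=\lceil |h_iv_{u_{m-1,2}}|_V/d_1\rceil$ does \emph{not} force $\omega_H(h_j)=d_1$ for $i\le j<k$: when $d_1\nmid |h_iv_{u_{m-1,2}}|_V$ there is slack in the sum, and the blocking condition $f_{\omega_H}(h_ie)>0$ for $e<v_{u_{m-1,2}}$ constrains the partial sums but does not by itself pin down the individual values without detailed control of where the vertical edges sit between $h_i,\ldots,h_k$. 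The same issue afflicts your direct argument for strong right-justification of $\omega_V$ in (ii): an interior deficiency $\omega_V(v_t)<d_2$ does not automatically create a zero of $e\mapsto f_{\omega_V}(ev_{u_{m-1,2}})$ on $\overline{h}_iv_{u_{m-1,2}}$, because the horizontal edges between consecutive $v_t$'s are distributed according to the Dyck path structure, not uniformly. The paper sidesteps this entirely by the inductive transfer; if you want to avoid induction you would need a much finer structural argument than the one you sketched.
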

\begin{remark}
  When $d_m=1$, the hypotheses of Theorem~\ref{th:blocking edge conditions}(b) cannot apply by Corollary~\ref{cor:automatic compatibility}.
\end{remark}
\begin{proof}
  If $\omega_H$ does not admit a blocking edge, any horizontal edge $h\in H_m$ has a local shadow path of the form $D(h;\omega_H)=he$ with $e<v_{u_{m-1,2}}$, i.e.\ the horizontal grading condition is satisfied for $h$ and $v_{u_{m-1,2}}$.  By Lemma~\ref{le:compatibility check}, this implies $\omega$ is compatible, establishing (a).  
  
  Now assume $\omega_H$ admits a blocking edge $h_i$ and $\omega$ is not compatible.
  It follows that $d_m\ne1$ by Corollary~\ref{cor:automatic compatibility}.  
  
  First consider the case $\hgt(h_i)\ge u_{m-1,2}-d_1$.
  By definition $h_i$ is not contained in the final $D_{m-1}\setminus D_{m-2-\delta_m}$ subpath of $D_m$ (which contains at least one vertical edge) and so we must have $d_1\ge2$.
  Lemma~\ref{le:Dyck tail inequality} gives the conditions when such a blocking edge can exist.

  Let $\ell=\hgt(h_i)$.
  Since $d_1\ne1$, each vertical edge in $\overline{v}_{\ell+1+\delta_m}\overline{v}_{u_{m-1,2}}$ is immediately preceded by exactly $d_2$ horizontal edges inside $h_iv_{u_{m-1,2}}$ while $v_{u_{m-1,2}}$ is immediately preceded by exactly $d_2-1$ horizontal edges.
  In particular, we see that the vertical grading condition fails for the path $h_iv_{u_{m-1,2}}$ exactly when:
  \begin{itemize}
    \item $\omega_V(v_{\ell+1})=\dpt(v_{\ell+1})-i$ and $\omega_V(v)=d_2$ for $v\in(\overline{v}_{\ell+1}v_{u_{m-1,2}})_V$ if $d_2\ge2$;
    \item $\omega_V(v_{\ell+1})=0$, $\omega_V(v_{\ell+2})=0$, and $\omega_V(v)=d_2$ for $v\in(\overline{v}_{\ell+2}v_{u_{m-1,2}})_V$ if $d_2=1$;
  \end{itemize}
  In either case we have $D(v_{u_{m-1,2}};\omega_V)=h_iv_{u_{m-1,2}}$ with $f_{\omega_V}(h_iv_{u_{m-1,2}})=0$.
  Note that in each of the cases above, $\omega_H$ is left-justified at $h_i$ with $k=i$ in Definition~\ref{def:blocking and justification} and $\omega_V$ is strongly right-justified with respect to $h_i$.
  This establishes the claims in the first part of (b) for these cases.
  Observe that our assumptions when $m\ge4$ imply $f_{\omega_H}(h_iv_{u_{m-1,2}})=0$ and that $\omega_H$ is strongly left-justified at $h_i$.
  Since $\supp(\omega_H)\cap h_iv_{u_{m-1,2}}=\{h_i\}$, this establishes the second part of (b) in these cases.
  
  Next consider the case $\hgt(h_i)<u_{m-1,2}-d_1$ which requires $m\ge4$.  
  Then there must exist $j>i$ so that $D(h_j;\omega_H)=h_jv_{u_{m-1,2}-\ell}$ with $1\le\ell\le\omega_H(h_i)-\delta_1$
  (the extra $\delta_1$ must be included here since $d_2=1$ implies all horizontal edges of $D_m$ have different heights, in other words $d_2=1$ implies $h_i$ is immediately followed by a vertical edge). 
  Assume that $j$ is chosen so that $\ell$ is minimal, in particular when $d_1=1$ we must have $\ell=1$.

  By Lemma~\ref{le:compatibility check}, the vertical grading condition must be satisfied for the paths $h_iv$ with $v\in(h_i\overline{v}_{u_{m-1,2}})_V$.
  For each such $v$, we have $D(v;\omega_V)=h_{j(v)}v$ for some $j(v)>i$, in particular $f_{\omega_V}(h_{j(v)}v)=0$.  
  Since $h_i$ is blocking, it cannot be contained in the shadow of any of these vertical edges.
  Moreover, when $d_1=1$, the edge $h_j$ will also not be contained in the shadow of any of these vertical edges. 
  Thus we see that there are at least $1+\delta_2$ horizontal edges of the path $h_iv_{u_{m-1,2}-1}$ lying outside the shadows of its vertical edges and applying equation~\eqref{eq:shadow statistic concatenation} shows $f_{\omega_V}(h_iv_{u_{m-1,2}-1})\le-(1+\delta_2)$.
  But by Corollary~\ref{cor:recursive dyck path structure} there are $d_2-1-\delta_2$ horizontal edges immediately preceding $v_{u_{m-1,2}}$ and, since $\omega_V(v_{u_{m-1,2}})\le d_2$, we must have $f_{\omega_V}(h_iv_{u_{m-1,2}})\le0$.  
  We conclude that one of the following holds:
  \begin{itemize}
    \item $D(v_{u_{m-1,2}};\omega_V)$ is a proper subpath of $h_iv_{u_{m-1,2}}$ by Lemma~\ref{le:compatibility inequality} and thus $\omega$ is compatible;
    \item $D(v_{u_{m-1,2}};\omega_V)=h_iv_{u_{m-1,2}}$ with $f_{\omega_V}(h_iv_{u_{m-1,2}})=0$ and both compatibility conditions fail for the path $h_iv_{u_{m-1,2}}$.
  \end{itemize}
  This establishes claim (i) of (b) in this case.  
  When $d_1=1$, we must have $f_{\omega_H}(h_iv_{u_{m-1,2}})=0$ for otherwise $h_i$ could not be blocking.
  This gives claim (iii) of (b) when $d_1=1$.  
  To complete the proof of (iii) for $d_1>1$ and $m\ge4$, we observe that $h_i$ being a blocking edge implies $f_{\omega_H}(h_iv_{u_{m-1,2}})\ge0$.
  Our aim then is to show that $f_{\omega_H}(h_iv_{u_{m-1,2}})>0$ implies the second situation above is impossible.

  Indeed, $f_{\omega_H}(h_iv_{u_{m-1,2}})>0$ can only occur if we take $\ell\le\omega_H(h_i)-1-\delta_1$ above.
  But, assuming $d_1>1$ and $m\ge4$, there are $d_2$ horizontal edges of $D_m$ immediately preceding each of the $d_1-2-\delta_1$ vertical edges 
  \[v_{u_{m-1,2}-d_1+2+\delta_1},v_{u_{m-1,2}-d_1+3+\delta_1},\ldots,v_{u_{m-1,2}-1},\]
  and $d_2-1$ horizontal edges immediately preceding $v_{u_{m-1,2}}$ (by Corollary~\ref{cor:recursive dyck path structure}, the terminal subpath of $D_m$ containing all these edges identifies with the terminal subpath $D_3\setminus D_2$ inside $D_3$).  
  It follows that $D(v_{u_{m-1,2}};\omega_V)$ must be a subpath of $h_jv_{u_{m-1,2}}$ and so the vertical grading condition is satisfied for the path $h_iv_{u_{m-1,2}}$.  
  In particular, $\omega$ is compatible by Lemma~\ref{le:compatibility check}, this completes the proof of (iii).

  The arguments above also establish the following when $m\ge4$, $d_m\ne1$, and $\hgt(h_i)<u_{m-1,2}-d_1$:
  \begin{itemize}
    \item if $\omega_H(h_i)<d_1$ or $h_i$ is immediately followed by $1+\delta_1$ vertical edges, then either $d_1=1$ and $h_i$ cannot possibly be blocking or there must exist a horizontal edge $h_j$ as in the previous paragraph and compatibility again holds, this gives (v) once we have established (iv), i.e.\ once we know that $\omega_H$ is strongly left-justified at $h_i$; 
    \item if $\omega_V(v_{u_{m-1,2}-t})<d_2$ for any $0\le t\le d_1-\delta_1$, then the piecewise compatible grading $\omega$ must be compatible.
  \end{itemize}

  We prove (ii) and (iv) by induction on $m\ge3$, $d_m\ne1$.
  The base case $m=3$ of (ii) was established in the first part of the proof.
  Suppose $m\ge4$ and $\omega$ is not compatible.  
  By Proposition~\ref{prop:piecewise equivalence} the grading $\big((\varphi^*_{m-1})^{-1}\omega_V,\Omega_{m-1}^{-1}(\omega_H)\big)=:(\omega_{H'},\omega_{V'})$ of $D'_{m-1}$ is piecewise compatible, but not compatible by Proposition~\ref{prop:compatibility equivalence}.  
  By part (a), there must be a blocking edge $h'_j$ for $\omega_{H'}$.
  Applying (ii) to the grading $(\omega_{H'},\omega_{V'})$ we see that $\omega_{H'}$ is left-justified at $h'_j$ and $\omega_{V'}$ is strongly right-justified with respect to this blocking edge.

  When $m=4$, we have $\supp(\omega_{H'})\cap h'_jv'_{u'_{m-2,2}}=\{h'_j\}$ and from the definition of $\varphi^*_{m-1}$ we see that $\omega_V$ is strongly right-justified with respect to $h_i$.
  This requires the extra observation above that we had to take $k=i$ in the definition of left-justification for the case $m=3$.
  For $m\ge5$, claim (iv) applied to the grading $(\omega_{H'},\omega_{V'})$ shows that $\omega_{H'}$ is strongly left-justified at $h'_j$ and again the definition of $\varphi^*_{m-1}$ shows that $\omega_V$ is strongly right-justified with respect to $h_i$.  
  By Proposition~\ref{prop:strong justification implication}, we see that $\omega_H$ must be left-justified at $h_i$.  

  It remains to argue that $\omega_H$ is strongly left-justified at $h_i$, but this is immediate from Lemma~\ref{le:remote shadow cardinalities} and the definition of the maps $\theta$.  
  Indeed, since $\omega_{H'}$ is strongly left-justified at its blocking edge $h'_j$, the remote shadows of the horizontal edges in $h'_jv'_{u_{m-2,1}}$ are linearly ordered in the opposite order to the horizontal edges in $\supp(\omega_{H'})\cap h'_jv'_{u_{m-2,1}}$.  
  Since $\omega_V$ is strongly right-justified with respect to $h_i$, analogous statements can be made about the remote shadows of the vertical edges in $h_iv_{u_{m-1,2}}$.  
  But the maps $\theta$ are compatible with these orderings and so $\omega_{V'}$ being strongly right-justified with respect to $h'_j$ forces $\omega_H=\Omega_{m-1}(\omega_{V'})$ to be strongly left-justified at $h_i$.
  This completes the proof of (ii) and (iv).
\end{proof}

The next result severely restricts which horizontal edges can be blocking.
\begin{corollary}
  Let $\omega:E_m\to\ZZ_{\ge0}$ be a piecewise compatible grading of $D_m$, $m\ge5$, which is not compatible.
  Write $h_i\in H_m$ for the blocking edge of $\omega_H$.
  Then either $i=1$ or $h_i$ is immediately preceded by a vertical edge.
\end{corollary}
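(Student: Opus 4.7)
We argue by contradiction. The case $d_2=1$ is immediate from Corollary~\ref{cor:edge restrictions}(b), since $D_m$ then contains no consecutive horizontal edges; so assume $d_2\ne1$, whence $\delta_1=0$. Suppose $i\ge2$ and $\hgt(h_{i-1})=\hgt(h_i)$. By Theorem~\ref{th:blocking edge conditions}(b), $\omega_H$ is strongly left-justified at $h_i$, $\omega_V$ is strongly right-justified with respect to $h_i$, and $f_{\omega_V}(h_iv_{u_{m-1,2}})=0=f_{\omega_H}(h_iv_{u_{m-1,2}})$. The plan is to reduce to a non-compatible piecewise compatible grading on $D'_{m-1}$ and apply either induction (for $m\ge6$) or direct analysis (for $m=5$) to derive a contradiction.

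By Propositions~\ref{prop:compatibility equivalence} and~\ref{prop:piecewise equivalence}, the pair $(\omega_{H'},\omega_{V'})$ obtained from $(\omega_H,\omega_V)$ via $(\varphi^*_{m-1})^{-1}$ and $\Omega_{m-1}^{-1}$ is a piecewise compatible but non-compatible grading of $D'_{m-1}$. Since $m-1\ge4$, Theorem~\ref{th:blocking edge conditions}(b) applies and provides a blocking edge $h'_j$ of $\omega_{H'}$, at which $\omega_{H'}$ is strongly left-justified. Using the explicit formula $\omega_{H'}(h'_t)=d_2-\omega_V(v_t)$ together with the strong right-justification of $\omega_V$, one checks that $\omega_{H'}(h'_t)=d_2$ for $\hgt(h_i)<t<s$ and that $h'_{\hgt(h_i)+1}$ lies in the connected nonzero support block of $\omega_{H'}$; hence the leftmost edge of that block satisfies $j\le\hgt(h_i)+1$.

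For the inductive step $m\ge6$, applying the current corollary to $D'_{m-1}$ (which has $m-1\ge5$) forces either $j=1$ or $h'_j$ to be immediately preceded by a vertical edge on $D'_{m-1}$. By Lemma~\ref{le:Dyck path recursion}(b), the number $\ell$ of vertical edges immediately preceding $h'_{\hgt(h_i)+1}$ on $D'_{m-1}$ equals $d_2$ minus the length of row $\hgt(h_i)$ in $D_m$. The hypothesis that $h_{i-1}$ and $h_i$ both occupy row $\hgt(h_i)$ forces that row to contain at least two horizontal edges, so $\ell\le d_2-2$. Combining this bound with the inductive conclusion on $D'_{m-1}$, together with the constraint $j\le\hgt(h_i)+1$, gives the desired contradiction.

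The base case $m=5$ must be handled directly: Theorem~\ref{th:blocking edge conditions}(b) applied to $(\omega_{H'},\omega_{V'})$ on $D'_4$ together with the explicit recursive structure from Corollary~\ref{cor:recursive dyck path structure} restricts the admissible positions of $h_i$ to the row-starts. The main obstacle is the geometric bookkeeping that must accompany the inductive step: translating the ``row-start'' property on $D_m$ to its counterpart on $D'_{m-1}$ via Lemma~\ref{le:Dyck path recursion} and verifying that the inductive hypothesis forces the row at height $\hgt(h_i)$ in $D_m$ to collapse to a single horizontal edge, contradicting the existence of $h_{i-1}$ in the same row.
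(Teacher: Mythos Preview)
Your approach diverges from the paper's and carries a genuine gap. The paper gives a direct (non-inductive) argument: descending to $D'_{m-1}$ and letting $h'_j$ be the blocking edge of $\omega_{H'}$, one computes $|\rsh(h'_j;\omega_{H'})|=d_2-\ell$ (where $\ell$ is the number of vertical edges \emph{following} $h'_j$), then invokes Lemma~\ref{le:remote shadow cardinalities} to conclude that all $d_2-\ell$ horizontal edges of height $j-1$ in $D_m$ lie in $\rsh(\omega_V)$. Since $h_i$ is the leftmost edge of $D(v_{u_{m-1,2}};\omega_V)=h_iv_{u_{m-1,2}}$, it must be the leftmost horizontal edge of its height, which is the conclusion.

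Your inductive scheme does not close. First, there is a bookkeeping error: under Lemma~\ref{le:Dyck path recursion}(b), the length of row $\hgt(h_i)$ in $D_m$ is $d_2$ minus the number of vertical edges \emph{following} $h'_{\hgt(h_i)+1}$ in $D'_{m-1}$, not preceding it. More seriously, even with this corrected, the inductive conclusion ``$h'_j$ is immediately preceded by a vertical edge'' tells you about the verticals \emph{before} $h'_j$, which controls row $j-2$ in $D_m$, not row $j-1=\hgt(h_i)$; your bound $\ell\le d_2-2$ concerns a different quantity and no contradiction follows. You would need to show that the inductive hypothesis forces row $\hgt(h_i)$ in $D_m$ to have length $1$, and nothing in your argument does this. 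Finally, the base case $m=5$ is not proved at all---you describe it as ``the main obstacle'' and leave it as geometric bookkeeping to be done. The paper's direct use of Lemma~\ref{le:remote shadow cardinalities} avoids both the induction and the separate base case.
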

\begin{proof}
  By Proposition~\ref{prop:piecewise equivalence} and Proposition~\ref{prop:compatibility equivalence}, the grading $(\omega_{H'},\omega_{V'}):=\big((\varphi^*_{m-1})^{-1}\omega_V,\Omega_{m-1}^{-1}\omega_H)$ of $D'_{m-1}$ is piecewise compatible but not compatible.
  Let $h'_j\in H'_{m-1}$ denote the blocking edge of $\omega_{H'}$.
  Then since $m\ge5$, we have $|\rsh(h'_j;\omega_{H'})|=d_2-\ell$, where $\ell$ is the number of vertical edges immediately following $h'_j$.
  By Lemma~\ref{le:remote shadow cardinalities}, this implies there are $d_2-\ell$ horizontal edges of height $j-1$ in the remote shadow of $\omega_V$ and the leftmost of these is the leftmost edge in $\rsh(v_{u_{m-1,2}};\omega_V)$.
  But there are exactly $d_2-\ell$ horizontal edges of height $j-1$ inside $D_m$ by Lemma~\ref{le:Dyck path recursion}.
  Since $D(v_{u_{m-1,2}};\omega_V)=h_iv_{u_{m-1,2}}$, the edge $h_i$ is the leftmost horizontal edge of height $j-1$, this gives the result.
\end{proof}

We also obtain the following analogue of Proposition~\ref{prop:strong justification implication}.
\begin{corollary}
  \label{cor:support and remote shadow}
  Let $\omega:E_m\to\ZZ_{\ge0}$ be a piecewise compatible grading of $D_m$, $m\ge3$, which is not compatible.
  If $h_i\in H_m$ denotes the blocking edge for $\omega_H$, then $\supp(\omega_V)\cap h_iv_{u_{m-1,2}}=\rsh(\omega_H)\cap h_iv_{u_{m-1,2}}$.
\end{corollary}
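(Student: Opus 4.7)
The plan is to follow the template of the proof of Proposition~\ref{prop:strong justification implication}, dualized, using the structural information assembled in Theorem~\ref{th:blocking edge conditions}(b). That theorem gives that $\omega_H$ is left-justified at $h_i$ (and strongly so for $m\ge 4$), $\omega_V$ is strongly right-justified with respect to $h_i$, $D(v_{u_{m-1,2}};\omega_V)=h_iv_{u_{m-1,2}}$ with $f_{\omega_V}(h_iv_{u_{m-1,2}})=0$, and the definition of blocking itself supplies $D(h_i;\omega_H)=h_iv_{u_{m-1,2}}$. Proposition~\ref{prop:strong justification implication} has already delivered the companion identity $\supp(\omega_H)\cap h_iv_{u_{m-1,2}}=\rsh(\omega_V)\cap h_iv_{u_{m-1,2}}$ together with an explicit expression (equation~\eqref{eq:incompatible support condition}) for the common cardinality.

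I would first establish $\supp(\omega_V)\cap h_iv_{u_{m-1,2}}\subseteq\rsh(\omega_H)\cap h_iv_{u_{m-1,2}}$. Since $D(h_i;\omega_H)=h_iv_{u_{m-1,2}}$, every vertical edge of the subpath automatically lies in $D_V(h_i;\omega_H)\subseteq\sh(\omega_H)$; the remaining task is to show that no $v\in\supp(\omega_V)$ in this range is one of the $\omega_H(h_d)$ vertical edges of depth $d$ immediately following a supported horizontal edge $h_d$. For $m\ge 4$, strong left-justification of $\omega_H$ pins its support in $h_iv_{u_{m-1,2}}$ to the consecutive block $h_i,\ldots,h_k$, while strong right-justification of $\omega_V$ gives $\supp(\omega_V)\cap h_iv_{u_{m-1,2}}=\{v_s,\ldots,v_{u_{m-1,2}}\}$ with $s=u_{m-1,2}-\lfloor(u_{m,1}-i+1)/d_2\rfloor$, and the depth/height geometry described in Lemma~\ref{le:height and depth} (combined with the bound on consecutive vertical edges from Corollary~\ref{cor:edge restrictions}) confirms that these two blocks do not collide with the excluded ``immediately-following'' positions.

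Equality is then obtained by matching cardinalities, mirroring the count producing equation~\eqref{eq:incompatible support condition}: strong right-justification gives $|\supp(\omega_V)\cap h_iv_{u_{m-1,2}}|=\lfloor(u_{m,1}-i+1)/d_2\rfloor+1$ directly, and $|\rsh(\omega_H)\cap h_iv_{u_{m-1,2}}|$ is obtained by subtracting the excluded block-counts from the full count of vertical edges in $h_iv_{u_{m-1,2}}$; the Chebyshev identities of Remark~\ref{rem:chebyshev} (together with Lemma~\ref{le:Dyck path inequality} to tighten the ceilings) then force agreement with the same number. The boundary case $m=3$ is not covered by Theorem~\ref{th:blocking edge conditions}(b)(iv)--(v), but there the opening paragraphs of the proof of that theorem spell $\omega$ out completely (with $\omega_H$ supported only at $h_i$ and $\omega_V$ entirely determined on $h_iv_{u_{m-1,2}}$), so the identity follows by direct inspection.

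The main obstacle will be that cardinality step: the remote-shadow construction removes a potentially multi-edge block tied to each horizontal edge in $\supp(\omega_H)$, so carrying through the count---especially the ceiling/floor manipulations needed to pass between the two natural closed forms for the common cardinality---is the one arithmetic calculation not already performed elsewhere in the paper. Everything else then follows routinely from the machinery of Sections~\ref{sec:shadows}--\ref{sec:piecewise compatibility}.
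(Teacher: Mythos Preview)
Your approach is a genuine alternative to the paper's, and the inclusion step and the $m=3$ boundary case are handled correctly.  The paper, however, avoids the direct cardinality count entirely.  Instead it passes to $D'_{m+1}$ via $(\omega_{H'},\omega_{V'}):=(\Omega_m(\omega_V),\varphi^*_m\omega_H)$, which by Propositions~\ref{prop:compatibility equivalence} and~\ref{prop:piecewise equivalence} is again piecewise compatible but not compatible.  Theorem~\ref{th:blocking edge conditions} and Proposition~\ref{prop:strong justification implication} applied on $D'_{m+1}$ then give $\supp(\omega_{H'})\cap h'_jv'_{u'_{m,2}}=\rsh(\omega_{V'})\cap h'_jv'_{u'_{m,2}}$.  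The reverse inclusion on $D_m$ follows by contradiction: if some $v\in\rsh(\omega_H)\cap h_iv_{u_{m-1,2}}$ had $\omega_V(v)=0$, then Lemma~\ref{le:remote shadow cardinalities} and the definition of $\Omega$ would produce an $h'\in\rsh(\omega_{V'})\cap h'_jv'_{u'_{m,2}}$ with $\omega_{H'}(h')=0$, contradicting the equality just obtained.

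What each approach buys: the paper's route is short and uniform in $m$, recycling Proposition~\ref{prop:strong justification implication} rather than dualizing it, and it sidesteps exactly the arithmetic you flagged as the obstacle.  Your route is self-contained and would make the corollary logically independent of the $\Omega/\varphi^*$ machinery, but the remote-shadow count $|\rsh(\omega_H)\cap h_iv_{u_{m-1,2}}|$ is genuinely delicate: you must subtract, for each $h_j$ in $\supp(\omega_H)$, the number $\min(\omega_H(h_j),n_j)$ of immediately-following vertical edges of depth $j$, where $n_j\le 1+\delta_1$ by Corollary~\ref{cor:edge restrictions}.  When $d_1\ge 2$ and $m\ge 4$ strong left-justification makes this tractable, but the $d_1=1$ case (where $n_j$ can be $2$ while $\omega_H(h_j)=1$) needs separate handling, and matching the result against $\lfloor(u_{m,1}-i+1)/d_2\rfloor+1$ requires more than the identities in Remark~\ref{rem:chebyshev} alone.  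It can be done, but the paper's transfer argument is the cleaner proof.
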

\begin{proof}
  Since $\omega$ is not compatible, the grading $(\Omega_m(\omega_V),\varphi^*_m\omega_H)=:(\omega_{H'},\omega_{V'})$ of $D'_{m+1}$ is not compatible by Proposition~\ref{prop:compatibility equivalence}, but is piecewise compatible by Proposition~\ref{prop:piecewise equivalence}.
  By Theorem~\ref{th:blocking edge conditions} the grading $(\omega_{H'},\omega_{V'})$ satisfies the hypotheses of Proposition~\ref{prop:strong justification implication} and so $\supp(\omega_{H'})\cap h'_jv'_{u'_{m,2}}=\rsh(\omega_{V'})\cap h'_jv'_{u'_{m,2}}$, where $h'_j$ denotes the blocking edge of $\omega_{H'}$.

  By piecewise compatibility, we must have $\supp(\omega_V)\cap h_iv_{u_{m-1,2}}\subset\rsh(\omega_H)\cap h_iv_{u_{m-1,2}}$ since every vertical edge in $\supp(\omega_V)\cap h_iv_{u_{m-1,2}}$ is contained in the shadow of $\omega_H$.
  If there exists $v\in\rsh(\omega_H)\cap h_iv_{u_{m-1,2}}$ with $\omega_V(v)=0$, by Lemma~\ref{le:remote shadow cardinalities} there will be a horizontal edge $h'\in\rsh(\omega_{V'})\cap h'_jv'_{u'_{m,2}}$ with $\omega_{H'}(h')=0$, a contradiction.
  Therefore we must have $\supp(\omega_V)\cap h_iv_{u_{m-1,2}}=\rsh(\omega_H)\cap h_iv_{u_{m-1,2}}$.
\end{proof}

As a final consequence we show that the piecewise compatible gradings which are not compatible satisfy a certain upper bound property with respect to compatible gradings.
\begin{corollary}
  \label{cor:maximality}
  Suppose $\omega:E_m\to\ZZ_{\ge0}$ is a piecewise compatible grading of $D_m$, $m\ge3$, which is not compatible.
  Write $h_i$ for the blocking edge of $\omega_H$.
  Then the following hold:
  \begin{enumeratea}
    \item for any vertical grading $\chi_V\in\cC(\omega_H)$ and any edge $v\in(h_iv_{u_{m-1,2}})_V$, we have $\chi_V(v)\le\omega_V(v)$;
    \item for any horizontal grading $\chi_H\in\cC(\omega_V)$ and any edge $h\in(h_iv_{u_{m-1,2}})_H$, we have $\chi_H(h)\le\omega_H(h)$.
  \end{enumeratea}
\end{corollary}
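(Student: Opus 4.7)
The plan is to partition the vertical edges in $(h_iv_{u_{m-1,2}})_V$ according to the strong-right-justification structure of $\omega_V$ provided by Theorem~\ref{th:blocking edge conditions}(b)(ii), which produces an index $s$ with $\omega_V$ vanishing on $(h_i\overline{v}_s)_V$, $\omega_V(v_s)\in(0,d_2]$, and $\omega_V(v_t)=d_2$ for $s<t\le u_{m-1,2}$. For $v_t\in(h_i\overline{v}_s)_V$, Corollary~\ref{cor:support and remote shadow} gives $v_t\notin\rsh(\omega_H)$, whence $\chi_V(v_t)=0=\omega_V(v_t)$ by Remark~\ref{rem:remote shadows}. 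For $s<t\le u_{m-1,2}$, boundedness of $\chi_V$ yields $\chi_V(v_t)\le d_2=\omega_V(v_t)$.

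The bulk of the argument concerns the critical edge $v_s$. For $m\ge4$, Theorem~\ref{th:blocking edge conditions}(b)(iii,iv) tells us $\omega_H$ is strongly left-justified at $h_i$ with $f_{\omega_H}(h_iv_{u_{m-1,2}})=0$; for $m=3$ this can be checked directly from the two subcases identified in the proof of that theorem. Together with $h_i$ being blocking, I deduce $f_{\omega_H}(h_ie)>0$ for every $e\in h_i\overline{v}_{u_{m-1,2}}$, so HGC fails at $(h_i,v_s)$. Compatibility of $(\omega_H,\chi_V)$ then forces VGC at $(h_i,v_s)$, producing $e_s\in\overline{h}_iv_s$ with $f_{\chi_V}(e_sv_s)=0$. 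Combined with the vanishing of $\chi_V$ on $(h_i\overline{v}_s)_V$, this rearranges to $\chi_V(v_s)=|(e_sv_s)_H|$, which is bounded above by $|(h_iv_s)_H|-1$ since $e_s\ne h_i$.

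To close, I would compute $\omega_V(v_s)=|h_iv_{u_{m-1,2}}|_H-d_2(u_{m-1,2}-s)$ directly from $f_{\omega_V}(h_iv_{u_{m-1,2}})=0$, and use Corollary~\ref{cor:recursive dyck path structure} to count the horizontals in the tail $(\overline{v}_sv_{u_{m-1,2}})_H$: generically each $v_t$ with $s<t<u_{m-1,2}$ is immediately preceded by exactly $d_2$ horizontal edges while $v_{u_{m-1,2}}$ is preceded by $d_2-1$, giving $|(\overline{v}_sv_{u_{m-1,2}})_H|=d_2(u_{m-1,2}-s)-1$ and hence $\chi_V(v_s)\le\omega_V(v_s)$. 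Part (b) is handled by the symmetric argument, invoking Proposition~\ref{prop:strong justification implication} in place of Corollary~\ref{cor:support and remote shadow} and an HGC condition at the rightmost horizontal edge of $\supp(\omega_H)$ in place of VGC at $v_s$; equivalently, one may transport (a) through the compatibility-preserving bijection of Propositions~\ref{prop:piecewise equivalence} and~\ref{prop:compatibility equivalence}.

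The main obstacle is the $v_s$ step: matching the VGC-derived bound $|(h_iv_s)_H|-1$ with $\omega_V(v_s)$ rests on a tight count of horizontals in the Dyck-path tail past $v_s$, which is affected by the $\delta$-corrections in Corollary~\ref{cor:recursive dyck path structure}. The degenerate cases (notably $d_1=1$) will likely require separate verification that exploits the more restrictive structure forced by $1$-bounded horizontal gradings.
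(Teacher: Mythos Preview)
Your partition into three regimes is sound, and the arguments for $v_t$ with $t<s$ (via Corollary~\ref{cor:support and remote shadow} and Remark~\ref{rem:remote shadows}) and for $t>s$ (via boundedness) are correct. The gap is exactly where you flag it: the critical edge $v_s$. The tail count $|(\overline{v}_sv_{u_{m-1,2}})_H|=d_2(u_{m-1,2}-s)-1$ is simply false in general, not merely off by $\delta$-corrections. Take $d_1=d_2=2$, $m=5$, so $D_5=HHVHVHVHV$; with blocking edge $h_i=h_1$ one gets $\omega_H=(2,2,0,0,0)$ and $\omega_V=(0,1,2,2)$, hence $s=2$. Then $(\overline{v}_2v_4)_H=\{h_4,h_5\}$ has two elements, not $d_2(4-2)-1=3$. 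Your VGC bound at $(h_1,v_2)$ gives $\chi_V(v_2)\le|(h_1v_2)_H|-1=2$, but $\omega_V(v_2)=1$, so the inequality you derive is genuinely too weak. The point is that the Dyck path past $v_s$ does not have the uniform ``$d_2$ horizontals before each vertical'' shape once $u_{m-1,2}-s$ exceeds the size of the terminal $D_3$-tail.

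The paper's proof avoids this counting entirely by testing compatibility at a different pair: instead of $(h_i,v_s)$ it uses $(h_{i-1+d},v_s)$, where $h_{i-1+d}$ is the \emph{rightmost} element of $\supp(\omega_H)\cap h_iv_{u_{m-1,2}}$. Because $\omega_H$ is strongly left-justified, one can locate $v_s$ relative to $\rsh(h_{i-1+d};\omega_H)$, and when $v_s$ lies in that remote shadow the HGC failure at $(h_{i-1+d},v_s)$ combines with $D(v_s;\omega_V)=h_{i+d}v_s$ to force a VGC failure for any $\chi_V$ with $\chi_V(v_s)>\omega_V(v_s)$. In the example above this is $(h_2,v_2)$, and indeed $\chi_V(v_2)=2$ violates VGC there. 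A residual subcase (when $h_{i-1+d}$ lies beyond the shadow of $v_s$) is not handled directly but is transported via $\Omega_{m-1}^{-1}$ and $(\varphi^*_{m-1})^{-1}$ to part~(b) for $D'_{m-1}$, so parts (a) and (b) are proven by a joint induction on $m$ rather than independently. Your suggestion to transport (a) through Propositions~\ref{prop:piecewise equivalence} and~\ref{prop:compatibility equivalence} to obtain (b) is the right idea and matches the paper's reduction, but note that the paper also needs the reverse direction to close the induction for (a).
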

\begin{proof}
  We begin by making a few basic observations which allow to deduce part (b) for $D_m$ from part (a) for $D'_{m-1}$.
 
  Consider a horizontal grading $\chi_H\in\cC(\omega_V)$ and suppose $\omega_H(h)<\chi_H(h)$ for some $h\in(h_iv_{u_{m-1,2}})_H$.
  This implies $\omega_H(h)<d_1$ since we only consider $d_1$-bounded horizontal gradings.
  By Theorem~\ref{th:blocking edge conditions}, we have $D(v_{u_{m-1,2}};\omega_V)=h_iv_{u_{m-1,2}}$ and so every edge of $h_iv_{u_{m-1,2}}$ is in the shadow of $\omega_V$.
  Thus we have 
  \[\supp(\chi_H)\cap h_iv_{u_{m-1,2}}\subset\rsh(\omega_V)\cap h_iv_{u_{m-1,2}}=\supp(\omega_H)\cap h_iv_{u_{m-1,2}},\]
  where the equality comes from Proposition~\ref{prop:strong justification implication}.
  By Theorem~\ref{th:blocking edge conditions}, $\omega_H$ is strongly left-justified at $h_i$ and so the only edge $h\in\supp(\omega_H)\cap h_iv_{u_{m-1,2}}$ which could satisfy $\omega_H(h)<d_1$ is $h=h_{i-1+d}$, where $d=|\supp(\omega_H)\cap h_iv_{u_{m-1,2}}|$.

  For $m=3$, we have $\supp(\omega_H)\cap h_iv_{u_{m-1,2}}=\{h_i\}$.
  Since the horizontal grading condition~\eqref{eq:hgc} of $\omega_H$ is not satisfied for the path $h_iv_{u_{m-1,2}}$, the inequality $\omega_H(h_i)<\chi_H(h_i)$ implies the horizontal grading condition of $\chi_H$ is also not satisfied for the path $h_iv_{u_{m-1,2}}$.
  In particular, $(\chi_H,\omega_V)$ is not compatible, a contradiction. 
 
  For $m\ge4$, consider the compatible grading $(\omega_{H'},\chi_{V'}):=\big((\varphi^*_{m-1})^{-1}\omega_V,\Omega_{m-1}^{-1}\chi_H\big)$ of $D'_{m-1}$ (see Proposition~\ref{prop:compatibility equivalence}) and the piecewise compatible grading $(\omega_{H'},\omega_{V'}):=\big((\varphi^*_{m-1})^{-1}\omega_V,\Omega_{m-1}^{-1}\omega_H\big)$ of $D'_{m-1}$ (see Proposition~\ref{prop:piecewise equivalence}).
  By the definition of $\Omega$, we have
  \[\chi_{V'}(\theta^{-1}h_{i-1+d})=\chi_H(h_{i-1+d})>\omega_H(h_{i-1+d})=\omega_{V'}(\theta^{-1}h_{i-1+d}).\]
  This contradicts part (a) applied to the grading $(\omega_{H'},\omega_{V'})$ of $D'_{m-1}$ and so there can be no grading $\chi_H$ as above.
  Thus part (b) holds for $m$ once we have established part (a) for $m-1$, $m\ge4$.

  To continue we suppose there exists a vertical grading $\chi_V\in\cC(\omega_H)$ such that $\chi_V(v)>\omega_V(v)$ for some $v\in(h_iv_{u_{m-1,2}})_V$.
  As above, this implies $0<\omega_V(v)<d_2$ and thus $v=v_{u_{m-1,2}-t+1}$, where $t=|\supp(\omega_V)\cap h_iv_{u_{m-1,2}}|$.
  In particular, we must have $d_2\ge2$ and by Corollary~\ref{cor:edge restrictions} the Dyck path $D_m$ has no consecutive vertical edges.
  
  Note that, by Proposition~\ref{prop:strong justification implication}, there are only two possibilities for the height of the edge $h_{i-1+d}$.
  Either $\hgt(h_{i-1+d})=u_{m-1,2}-t$ so that $v_{u_{m-1,2}-t+1}\in\rsh(h_{i-1+d};\omega_H)$ or $\hgt(h_{i-1+d})=u_{m-1,2}-t-1$ with $h_{i-1+d}$ immediately followed by a single vertical edge.
  In the latter case, $\omega_H(h_{i-1+d})>1$ also implies $v_{u_{m-1,2}-t+1}\in\rsh(h_{i-1+d};\omega_H)$.

  If $v_{u_{m-1,2}-t+1}\in\rsh(h_{i-1+d};\omega_H)$, the horizontal grading condition \eqref{eq:hgc} is not satisfied for the path $h_{i-1+d}v_{u_{m-1,2}-t+1}$ and we have $D(v_{u_{m-1,2}-t+1};\omega_V)=h_{i+d}v_{u_{m-1,2}-t+1}$ by Proposition~\ref{prop:strong justification implication}.
  But then for $\chi_V$ as above, the vertical grading condition \eqref{eq:vgc} is not satisfied for the path $h_{i-1+d}v_{u_{m-1,2}-t+1}$.
  In particular, this implies $(\omega_H,\chi_V)$ is not compatible, a contradiction.

  Thus we must have $\hgt(h_{i-1+d})=u_{m-1,2}-t-1$ with $h_{i-1+d}$ immediately followed by exactly one vertical edge and $\omega_H(h_{i-1+d})=1$.
  Then, since $\omega_H$ is strongly left-justified at $h_i$, we have $v_{u_{m-1,2}-t+1}\in\rsh(h_{i-2+d};\omega_H)$ and so the horizontal grading condition \eqref{eq:hgc} is not satisfied for the path $h_{i-2+d}v_{u_{m-1,2}-t+1}$.
  If $h_{i-1+d}\in\rsh(v_{u_{m-1,2}-t+1};\omega_V)$, we must have $D(v_{u_{m-1,2}-t+1};\omega_V)=h_{i-1+d}v_{u_{m-1,2}-t+1}$.
  But then for $\chi_V$ as above, the vertical grading condition \eqref{eq:vgc} is not satisfied for the path $h_{i-2+d}v_{u_{m-1,2}-t+1}$.
  In particular, this implies $(\omega_H,\chi_V)$ is not compatible, a contradiction.

  Thus the horizontal edge $h_{i-1+d}$ must lie beyond the shadow of $v_{u_{m-1,2}-t+1}$.
  By Proposition~\ref{prop:strong justification implication}, there can be no horizontal edges of height $u_{m-1,2}-t$ in the remote shadow of $\omega_V$ and so we must have $\omega_V(v_{u_{m-1,2}-t+1})=\ell$, where $\ell<d_2$ is the number of horizontal edges immediately preceding $v_{u_{m-1,2}-t+1}$. 
  For $m=3$, this can only occur for $t=1$, but $v_{u_{m-1,2}}$ is immediately preceded by $d_2-1$ horizontal edges inside $D_3$ and thus $\omega$ is compatible, a contradiction.

  So we must have $m\ge4$.
  Consider the piecewise compatible grading $(\omega_{H'},\omega_{V'}):=\big((\varphi^*_{m-1})^{-1}\omega_V,\Omega_{m-1}^{-1}\omega_H\big)$ of $D'_{m-1}$ (see Proposition~\ref{prop:piecewise equivalence}).
  Since $\omega_V$ is strongly right-justified and $\omega_V(v_{u_{m-1,2}-t+1})<d_2$, the last horizontal edge in $\supp(\omega_{H'})$ must be $h'_{u_{m-1,2}-t+1}$ with $\omega_{H'}(h'_{u_{m-1,2}-t+1})=d_2-\ell$, this being exactly the number of vertical edges immediately following $h'_{u_{m-1,2}-t+1}$ by Lemma~\ref{le:Dyck path recursion}.
  Moreover, by Lemma~\ref{le:remote shadow cardinalities}, the first vertical edge $v'$ in $\rsh(\omega_{H'})$ lies in the remote shadow of $h'_{u_{m-1,2}-t}$ and $\omega_{V'}(v')=1$.
  By Corollary~\ref{cor:support and remote shadow}, $v'$ cannot be immediately preceded by a vertical edge.
  But then there exists $\chi_{V'}$ with $\chi_{V'}(v')=2$ compatible with $\omega_{H'}$, a contradiction with part (b) for $D'_{m-1}$.
  
  This contradiction shows there can be no vertical $\chi_V$ as above and thus proves (a).
\end{proof}

\section{Proof of Main Theorem}
\label{sec:proof of main}

We begin this section with a general statement about non-commutative weights associated to certain gradings of an arbitrary (i.e.\ not necessarily maximal) Dyck path, here we make no boundedness assumptions on the gradings.

\begin{proposition}
  \label{prop:noncommutative collapse}
  Let $D$ be any Dyck path with edges $E=H\sqcup V$, where $H=\{h_1,\ldots,h_{a_1}\}$ with $a_1\ge1$ and $V=\{v_1,\ldots,v_{a_2}\}$ denote the sets of horizontal and vertical edges of $D$.  
  Write $E=\{1,2,\ldots,a_1+a_2\}$ for the edges of $D$ taken in the natural order.
  Let $\omega:E\to\ZZ_{\ge0}$ be any grading of $D$.
  Given $q_{i,j}\in\kk$ for $i\in\{1,2\}$ and $j\in\ZZ_{\ge0}$, define non-commutative weights
  \begin{equation}
    \label{eq:general edge weights}
    \wt_\omega(e)=\begin{cases}
                    q_{1,\omega(e)}Y^{\omega(e)}X^{-1} & \text{if $e\in H$;}\\
                    q_{2,\omega(e)}X^{\omega(e)+1}Y^{-1}X^{-1} & \text{if $e\in V$;}\\
                  \end{cases}
  \end{equation}
  and let $Y_D(\omega)=\wt_\omega(1)\wt_\omega(2)\cdots\wt_\omega(a_1+a_2)$.
  Assume $\omega$ is compatible and satisfies the following:
  \begin{enumeratea}
    \item the local shadow path $D(h_1;\omega_H)=D$ with $f_{\omega_H}(D)=0$;
    \item for any other vertical grading $\chi_V\in\cC(\omega_H)$ and any vertical edge $v_t\in V$ so that $\chi_V(v_s)=\omega_V(v_s)$ for $s<t$, we have $\chi_V(v_t)\le\omega_V(v_t)$.
  \end{enumeratea}
  Then $Y_D(\omega)=pX^{-1}$, where $p=\prod_{i=1}^{a_1}q_{1,\omega(h_i)}\cdot\prod_{t=1}^{a_2}q_{2,\omega(v_t)}$.
\end{proposition}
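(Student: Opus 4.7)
My plan is to prove the statement by induction on the total edge count $a_1+a_2$ of $D$. The base case $a_1=1$, $a_2=0$ is immediate: condition (a) forces $f_{\omega_H}(D)=\omega(h_1)=0$, so $Y_D(\omega)=q_{1,0}X^{-1}=pX^{-1}$.

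For the inductive step, the main computational engine is the non-commutative identity
\[Y^aX^{-1}\cdot X^{b+1}Y^{-1}X^{-1}=Y^aX^bY^{-1}X^{-1},\]
and in particular its $b=0$ collapse $YX^{-1}\cdot XY^{-1}X^{-1}=X^{-1}$. The strategy is to decompose $D$ into maximal \emph{blocks}, each consisting of a run of consecutive horizontal edges followed by a single vertical edge. Iterating the identity above through one block shows that its contribution to $Y_D(\omega)$ simplifies to a scalar multiple of $Y^jX^{-k}$ for exponents controlled by the local shadow statistics, and these residual factors should then combine cleanly with the leading $X^{-1}$ and $X^{\omega(v)+1}$ factors of the next block via the same identity. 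By induction on the number of remaining blocks, the whole product telescopes to $pX^{-1}$.

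The two hypotheses play complementary roles. Condition (a), combined with Lemma~\ref{le:shadows and degrees}, forces the total $Y$-degree of $Y_D(\omega)$ to be zero and keeps the running shadow statistic $f_{\omega_H}(h_1e)$ strictly positive on proper initial subpaths, which pins down enough of $\omega_H$ to control the leading $Y$-powers in each block. Condition (b), the lexicographic maximality of $\omega_V$, forces each $\omega(v_t)$ to be precisely the largest value compatible with $\omega_H$ and the previously-fixed values $\omega(v_s)$, $s<t$. This maximality is exactly what is needed so that the $X^{\omega(v_t)+1}$ factor in $\wt_\omega(v_t)$ absorbs all of the $X^{-1}$'s accumulated from the horizontal edges of its block: any smaller $\omega(v_t)$ would leave uncancelled $X^{-1}$ or $Y$ factors preventing the telescoping, while larger values are ruled out by compatibility (cf.\ Corollary~\ref{cor:maximality}).

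The main obstacle is honesty about the induction: after peeling off an initial block, the remaining subpath is a shorter Dyck path, but the reduced grading on it needs to still satisfy (suitable analogues of) (a) and (b) in order to apply the inductive hypothesis. I expect this will require strengthening the inductive statement to carry explicit information about the form of the partial product up to the current block boundary, together with a check that the conditions propagate. Verifying this propagation cleanly is the delicate point, since compatibility of a grading interacts globally with the shape of $D$ through the shadow path $D(h_1;\omega_H)$; the shadow-statistic framework of Section~\ref{sec:compatible pairs}, especially the additivity property~\eqref{eq:shadow statistic concatenation}, should be the right tool to decouple the block contributions.
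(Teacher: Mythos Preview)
Your block-by-block telescoping picture contains a concrete error. You assert that condition~(b) forces $\omega_V(v_t)$ to equal $d-1$, where $d$ is the number of horizontal edges in the block ending at $v_t$, so that $X^{\omega_V(v_t)+1}$ absorbs exactly the $X^{-1}$'s from those $d$ horizontals. This is false. Take $D=h_1h_2v_1h_3v_2v_3$ with $\omega_H=(3,0,0)$; then $f_{\omega_H}$ along $D$ reads $3,3,2,2,1,0$, so (a) holds. Compatibility plus the lex-maximality~(b) force $\omega_V=(1,1,0)$. The second block is $h_3v_2$ with a single horizontal edge, yet $\omega_V(v_2)=1\ne 0$: the factor $X^{\omega_V(v_2)+1}=X^2$ absorbs not only the $X^{-1}$ from $h_3$ but also the trailing $X^{-1}$ carried over from the first block's contribution $Y^2X^{-1}$. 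Absorption is global, not block-local, so peeling off an initial block does not leave a smaller instance satisfying~(a) --- indeed, in this example the residual path $h_3v_2v_3$ has $\omega_H(h_3)=0$, so its first-edge shadow path is the single edge $h_3$, not the whole residual path. No naive strengthening of the inductive hypothesis repairs this, because the information needed to predict $\omega_V(v_t)$ depends on the entire history of $f_{\omega_H}$ up to $v_t$.

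The paper's argument collapses in the interior rather than at the front. It locates the \emph{last} horizontal edge $h_i\in\supp(\omega_H)$ and the vertical edge $v_r$ closing the run $h_ih_{i+1}\cdots v_r$ of $d$ consecutive horizontals followed by one vertical. Since every horizontal after $h_i$ has grade zero, the compatibility condition for the pair $(h_i,v_r)$ reduces to~\eqref{eq:vgc} alone and gives $\omega_V(v_r)\le d-1$; lex-maximality~(b) then pins $\omega_V(v_r)=d-1$. Directly, $Y_{h_iv_r}(\omega)=Y^{\omega_H(h_i)-1}X^{-1}$, which is the weight of a single horizontal edge of grade $\omega_H(h_i)-1$. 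Replacing $h_iv_r$ by such an edge produces a path $\widetilde D$ with one fewer vertical edge, and the induced grading $\widetilde\omega$ again satisfies (a) and (b): all values $f_{\omega_H}(h_1e)$ for $e$ outside $h_iv_r$ are preserved verbatim, while at the new edge one has $f_{\widetilde\omega_H}(\tilde h_1\tilde h_i)=f_{\omega_H}(h_1v_r)$. Induction on $a_2$ then finishes. The point you are missing is that only at the \emph{last} supported horizontal edge is the local picture rigid enough (all later $\omega_H$ values zero) for the collapse to be self-contained.
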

\begin{proof}
  We first note that the coefficient $p$ is immediate from the definition of the non-commutative edge weights in equation~\eqref{eq:general edge weights}.  
  Thus we assume all $q_{i,j}=1$ for the remainder of the proof.

  We work by induction on $a_2$.  
  For $a_2=0$, assumption (a) implies $a_1=1$ and $\omega_H(h_1)=0$.  
  The claim follows in this case directly from the definition of the non-commutative edge weights in equation~\eqref{eq:general edge weights}.

  Suppose $a_2\ge1$ and consider $h_i\in\supp(\omega_H)$ with $i$ maximal.
  Let $v_r\in V$ denote the next vertical edge after $h_i$, i.e.\ the path $h_iv_r$ consists of several consecutive horizontal edges, say $d$ of them, followed by a single vertical edge.
  By assumption (a), we have $r<a_2$.
  By assumption (b), we have $\omega_V(v_r)=d-1$ so that 
  \[\tag{$\dagger$}Y_{h_iv_r}(\omega)=\big(Y^{\omega_H(h_i)}X^{-1}\big)\big(X^{-1}\big)^{d-1}\big(X^dY^{-1}X^{-1}\big)=Y^{\omega_H(h_i)-1}X^{-1}.\]
  Let $\widetilde D$ be the Dyck path obtained from $D$ by replacing the path $h_iv_r$ by a single horizontal edge.
  Write $\widetilde E=\widetilde H\sqcup\widetilde V$ for the edges of $\widetilde D$, where $\widetilde H=\{\tilde h_1,\ldots,\tilde h_{a_1-d+1}\}$ and $\widetilde V=\{\tilde v_1,\ldots,\tilde v_{a_2-1}\}$ denote the horizontal and vertical edges of $\widetilde D$.
  Define a grading $\widetilde\omega:\widetilde E\to\ZZ_{\ge0}$ by
  \[\widetilde\omega_H(\tilde h_j)=\begin{cases} \omega_H(h_j) & \text{if $j<i$;}\\ \omega_H(h_i)-1 & \text{if $j=i$;}\\ 0 & \text{if $j>i$;}\end{cases}
    \hspace{1in}
    \widetilde\omega_V(\tilde v_s)=\begin{cases} \omega_V(v_s) & \text{if $s<r$;}\\ \omega_V(v_{s+1}) & \text{if $s\ge r$.}\end{cases}\]
  It is not hard to see that $\widetilde\omega$ satisfies assumptions (a) and (b), thus by induction we have $Y_{\widetilde D}(\widetilde\omega)=X^{-1}$.
  By ($\dagger$), we have $Y_D(\omega)=Y_{\widetilde D}(\widetilde\omega)$ and so $Y_D(\omega)=X^{-1}$ as desired.
\end{proof}

Now we turn to the proof of Theorem~\ref{th:combinatorial construction} and return to our standard boundedness assumptions on gradings.
\begin{lemma}
  \label{le:noncommutative collapse}
  Let $\omega:E_m\to\ZZ_{\ge0}$ be a piecewise compatible grading of $D_m$, $m\ge3$, which is not compatible.
  Denote by $h_i\in H_m$ the blocking edge of $\omega_H$.  
  Set 
  \[d=|\supp(\omega_H)\cap h_iv_{u_{m-1,2}}|\quad\text{ and }\quad t=|\supp(\omega_V)\cap h_iv_{u_{m-1,2}}|.\]
  Then for any $h\in(\overline{h}_ih_{i-1+d})_H$, we have $Y_{D(h;\omega_H)}(\omega)=pX^{-1}$, where $p=p_{1,\omega_H(h_{i-1+d})}p_{2,d_2-\omega_V(v_{u_{m-1,2}-t+1})}$.
\end{lemma}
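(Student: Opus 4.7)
The plan is to apply Proposition~\ref{prop:noncommutative collapse} to the local shadow path $D(h;\omega_H)$ equipped with the inherited grading $\omega$. First dispose of $m=3$: the first case of the proof of Theorem~\ref{th:blocking edge conditions}(b) shows $\supp(\omega_H)\cap h_iv_{u_{m-1,2}}=\{h_i\}$, so $d=1$ and the claim is vacuous. Assume then $m\ge 4$. Theorem~\ref{th:blocking edge conditions}, Proposition~\ref{prop:strong justification implication}, and Corollary~\ref{cor:support and remote shadow} pin down the structure of $\omega$ on $h_iv_{u_{m-1,2}}$: $\omega_H(h_{i+k})=d_1$ for $0\le k\le d-2$, $\omega_H(h_{i-1+d})\in[1,d_1]$, and $\omega_H=0$ on later horizontals in the path; symmetrically $\omega_V=0$ on verticals preceding $v_{u_{m-1,2}-t+1}$, $\omega_V(v_{u_{m-1,2}-t+1})\in[1,d_2]$, and $\omega_V=d_2$ on subsequent verticals; and $f_{\omega_H}(h_iv_{u_{m-1,2}})=f_{\omega_V}(h_iv_{u_{m-1,2}})=0$.

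For $h=h_{i+j}$ with $1\le j\le d-1$, I would next locate the local shadow path $D(h;\omega_H)$ explicitly. Using the additivity~\eqref{eq:shadow statistic concatenation} and $f_{\omega_H}(h_iv_{u_{m-1,2}})=0$ one gets $f_{\omega_H}(hv_{u_{m-1,2}})=\bigl(\hgt(h_{i+j})-\hgt(h_i)\bigr)-jd_1<0$ (since $h$ is not blocking), so $D(h;\omega_H)=he^*$ for a vertical edge $e^*<v_{u_{m-1,2}}$. Tracking partial values of $f_{\omega_H}$ along the path from $h$ and invoking the strict positivity of $f_{\omega_H}(h_ih_{i+k})$ for $0\le k\le d-1$ forced by the blocking property of $h_i$, I would show that $D(h;\omega_H)$ contains $h_{i-1+d}$; the symmetric analysis of $f_{\omega_V}$ using $f_{\omega_V}(h_iv_{u_{m-1,2}})=0$ together with the strong right-justification of $\omega_V$ forces $e^*\ge v_{u_{m-1,2}-t+1}$, so that $v_{u_{m-1,2}-t+1}\in D_V(h;\omega_H)$.

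With the local shadow path identified, Proposition~\ref{prop:noncommutative collapse} applies with $q_{1,k}=p_{1,k}$ and $q_{2,k}=p_{2,d_2-k}$. Condition~(a) follows from the definition of the local shadow path. For condition~(b) I would verify pointwise maximality of $\omega_V|_{D_V(h;\omega_H)}$ directly from the explicit structure: the null, transitional, and saturated segments of $\omega_V|_{D_V(h;\omega_H)}$ leave no room for a compatible alternative $\chi_V$ with $\chi_V(v)>\omega_V(v)$, as a case check using the grading conditions~\eqref{eq:hgc} and~\eqref{eq:vgc} on the sub-Dyck path together with $d_2$-boundedness confirms. Pointwise maximality implies the lex-type bound required in condition~(b). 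Proposition~\ref{prop:noncommutative collapse} then yields $Y_{D(h;\omega_H)}(\omega)=p'X^{-1}$, where $p'$ is the product of the coefficients along $D(h;\omega_H)$. Since $p_{1,0}=p_{1,d_1}=p_{2,0}=p_{2,d_2}=1$, the only nontrivial contributions are from $h_{i-1+d}$ and $v_{u_{m-1,2}-t+1}$, producing $p'=p_{1,\omega_H(h_{i-1+d})}\,p_{2,d_2-\omega_V(v_{u_{m-1,2}-t+1})}=p$.

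The main obstacle is the direct verification of condition~(b) of Proposition~\ref{prop:noncommutative collapse}. Because $\omega$ itself is not globally compatible, extending a locally compatible $\chi_V$ by $\omega_V$ outside $D(h;\omega_H)$ need not produce a globally compatible grading, so Corollary~\ref{cor:maximality}(a) cannot be invoked by a naive extension. The argument must proceed intrinsically on the sub-Dyck path, exploiting the strong-justification rigidity enforced by Theorem~\ref{th:blocking edge conditions} and Proposition~\ref{prop:strong justification implication}.
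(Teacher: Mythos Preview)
Your proof sketch is correct, but it works harder than necessary and the obstacle you flag at the end is not a genuine obstacle. The paper applies Proposition~\ref{prop:noncommutative collapse} exactly as you do; the difference lies entirely in how condition~(b) is obtained.

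You assert that Corollary~\ref{cor:maximality}(a) cannot be invoked because a vertical grading $\chi_V$ compatible with $\omega_H$ on the sub-path $D(h;\omega_H)$ need not extend to a globally compatible grading by gluing with $\omega_V$ outside. That is true for the extension by $\omega_V$. But the extension by \emph{zero} works: set $\widetilde\chi_V=\chi_V$ on $D_V(h;\omega_H)$ and $\widetilde\chi_V=0$ elsewhere. For any pair $h'\in H_m$, $v'\in V_m$: if $\widetilde\chi_V(v')=0$ then~\eqref{eq:vgc} holds trivially with $e=v'$; if $v'\in D_V(h;\omega_H)$ and $h'\ge h$ the pair lies in $D(h;\omega_H)$ and local compatibility applies; if $v'\in D_V(h;\omega_H)$ and $h'<h$, then~\eqref{eq:hgc} fails for the path $hv'$ (by the defining positivity of $f_{\omega_H}$ along $D(h;\omega_H)$), so~\eqref{eq:vgc} holds for $hv'$ with some witness $e>h>h'$, and the same $e$ witnesses~\eqref{eq:vgc} for $h'v'$. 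Hence $\widetilde\chi_V\in\cC(\omega_H)$ on all of $D_m$, and Corollary~\ref{cor:maximality}(a) gives $\chi_V(v)=\widetilde\chi_V(v)\le\omega_V(v)$ for every $v\in D_V(h;\omega_H)\subset(h_iv_{u_{m-1,2}})_V$. This is stronger than condition~(b) and is exactly what the paper means by ``follows directly from Corollary~\ref{cor:maximality}.''

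Your explicit location of $D(h;\omega_H)$ and the intrinsic case check would also succeed, and they make the identification of $h_{i-1+d}$ and $v_{u_{m-1,2}-t+1}$ as the only edges with nontrivial coefficients more transparent than the paper's one-line assertion. But the detour is avoidable once you see the zero-extension.
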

\begin{proof}
  Since $h_i$ is blocking, no local shadow path $D(h;\omega_H)$ for $h\in(\overline{h}_ih_{i-1+d})_H$ contains $v_{u_{m-1,2}}$.
  Thus Lemma~\ref{le:compatibility check} shows $\omega|_{D(h;\omega_H)}$ is compatible.
  By definition of local shadow paths, $\omega|_{D(h;\omega_H)}$ satisfies condition (a) of Proposition~\ref{prop:noncommutative collapse}.
  Condition (b) follows directly from Corollary~\ref{cor:maximality}.
  The conclusion immediately follows since the only edges in $D(h;\omega_H)$ for $h\in(\overline{h}_ih_{i-1+d})_H$ whose non-commutative weights have nontrivial coefficients are $h_{i-1+d}$ and $v_{u_{m-1,2}-t+1}$.
\end{proof}

This leads to the following result which is key to our induction argument.
\begin{corollary}
  \label{cor:incompatible collapse}
  Let $\omega:E_m\to\ZZ_{\ge0}$ be a piecewise compatible grading of $D_m$, $m\ge3$, which is not compatible.  
  Write $h_i$ for the blocking edge of $\omega_H$ and assume $f_{\omega_H}(h_iv_{u_{m-1,2}})=0$. 
  Set 
  \[d=|\supp(\omega_H)\cap h_iv_{u_{m-1,2}}|\quad\text{ and }\quad t=|\supp(\omega_V)\cap h_iv_{u_{m-1,2}}|.\]
  Then $Y_{h_iv_{u_{m-1,2}}}(\omega)=pYXY^{-1}X^{-1}$, where $p=p_{1,\omega_H(h_{i-1+d})}p_{2,d_2-\omega_V(v_{u_{m-1,2}-t+1})}$.
\end{corollary}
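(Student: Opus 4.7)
The plan is to combine the structural rigidity supplied by Theorem~\ref{th:blocking edge conditions} with the collapse identity of Lemma~\ref{le:noncommutative collapse} and a direct non-commutative computation. First, by Theorem~\ref{th:blocking edge conditions}, since $\omega$ is piecewise compatible but not compatible with blocking edge $h_i$ and $f_{\omega_H}(h_iv_{u_{m-1,2}}) = 0$, the grading $\omega_H$ is strongly left-justified at $h_i$, $\omega_V$ is strongly right-justified with respect to $h_i$, and $f_{\omega_V}(h_iv_{u_{m-1,2}}) = 0$. Concretely, $\omega_H(h_{i+j}) = d_1$ for $0 \le j \le d-2$ and $\omega_H(h_j) = 0$ for $j > i-1+d$, with the symmetric conditions on $\omega_V$ near $v_{u_{m-1,2}}$. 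Since $p_{1,d_1} = p_{2,0} = 1$, the only nontrivial coefficient contributions on this path come from $h_{i-1+d}$ and $v_{u_{m-1,2}-t+1}$, accounting exactly for the scalar $p$.

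For the monomial part, when $d \ge 2$ the local shadow path $D(h_{i+1};\omega_H)$ begins at $h_{i+1}$ (possibly preceded by a single vertical edge when $\delta_1 = 1$) and terminates strictly inside $h_iv_{u_{m-1,2}}$; by Lemma~\ref{le:noncommutative collapse} it collapses to $pX^{-1}$. The tail after this shadow path consists of weight-zero horizontal edges (each contributing $X^{-1}$) and maximal-weight vertical edges (each contributing $XY^{-1}X^{-1}$), terminating in $\wt_\omega(v_{u_{m-1,2}}) = X^{d_2+1}Y^{-1}X^{-1}$. Combining $\wt_\omega(h_i) = Y^{d_1}X^{-1}$, the collapsed middle $pX^{-1}$, and the tail, and using the identities $Y^aX^{-1} \cdot XY^{-1}X^{-1} = Y^{a-1}X^{-1}$ and $X^{-1} \cdot X^{k+1}Y^{-1}X^{-1} = X^kY^{-1}X^{-1}$, the product telescopes to $pYXY^{-1}X^{-1}$. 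The case $d = 1$ is handled by a direct multiplication along the same lines, with only the single horizontal contribution $\wt_\omega(h_i) = p_{1,\omega_H(h_i)}Y^{\omega_H(h_i)}X^{-1}$ and the analogous tail. In every case the degree conditions $f_{\omega_H}(h_iv_{u_{m-1,2}}) = 0 = f_{\omega_V}(h_iv_{u_{m-1,2}})$ (which by Lemma~\ref{le:shadows and degrees} give total $Y$- and $X$-degrees of $0$) guarantee that the telescoping yields exactly the characteristic element $YXY^{-1}X^{-1}$ rather than some related monomial.

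The main obstacle is the careful non-commutative bookkeeping across the several edge cases ($d = 1$, $t = 1$, $\delta_1 = 1$, and the relative positions of $h_{i-1+d}$ and $v_{u_{m-1,2}-t+1}$ inside the path). The rigidity imposed by strong justification and the recursive Dyck-path structure of Corollary~\ref{cor:recursive dyck path structure} ensures that configurations which would obstruct the telescoping (e.g.\ consecutive horizontal edges appearing at the wrong place) cannot arise. The upshot is that the $Y$-power $Y^{\omega_H(h_i)}$ contributed by $h_i$ is consumed by exactly the right number of downstream maximal-weight vertical edges via the identity $Y^aX^{-1}\cdot XY^{-1}X^{-1} = Y^{a-1}X^{-1}$, leaving a single residual $Y$ that combines with the $X^{\omega_V(v_{u_{m-1,2}})+1}$ from $\wt_\omega(v_{u_{m-1,2}})$ to form the $YX\cdot Y^{-1}X^{-1}$ pattern.
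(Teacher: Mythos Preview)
Your proposal is correct and follows essentially the same route as the paper: a case split on $d=1$ versus $d\ge2$, with the latter invoking Lemma~\ref{le:noncommutative collapse} to collapse $Y_{D(h_{i+1};\omega_H)}(\omega)$ to $pX^{-1}$ and both cases finished by explicit telescoping of the remaining edge weights using the structural constraints from Theorem~\ref{th:blocking edge conditions}. One small slip worth fixing: a single maximal-weight vertical edge contributes $X^{d_2+1}Y^{-1}X^{-1}$, not $XY^{-1}X^{-1}$---it is the block of $d_2$ zero-weight horizontal edges together with the following vertical edge that simplifies to $XY^{-1}X^{-1}$.
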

\begin{proof}
  We distinguish two cases as in the proof of Theorem~\ref{th:blocking edge conditions}.
  First consider the case $\hgt(h_i)\ge u_{m-1,2}-d_1$.
  In each of the possible cases from Lemma~\ref{le:Dyck tail inequality} we have $\supp(\omega_H)\cap h_iv_{u_{m-1,2}}=\{h_i\}$ and by assumption $\omega_H(h_i)=u_{m-1,2}-\hgt(h_i)$.
  We use the description of $\omega$ from the proof of Theorem~\ref{th:blocking edge conditions} in each case.

  For $m=3$, set $\delta=1$ if $h_i$ is immediately followed by a vertical edge and $\delta=0$ otherwise.  
  Then we have
  \begin{align*}
    Y_{h_iv_{u_{2,2}}}(\omega)
    &=\big(p_{1,\omega_H(h_i)}Y^{\omega_H(h_i)} X^{-1}\big)\big(XY^{-1}X^{-1}\big)^\delta\big(X^{-1}\big)^{\omega_V(v_{u_{m-1,2}-t+1})}\times\\
    &\quad\times\big(p_{2,d_2-\omega_V(v_{u_{m-1,2}-t+1})}X^{\omega_V(v_{u_{m-1,2}-t+1})+1}Y^{-1}X^{-1}\big)\Big[\big(X^{-1}\big)^{d_2}\big(X^{d_2+1}Y^{-1}X^{-1}\big)\Big]^{\omega_H(h_i)-\delta-2}\times\\
    &\quad\times\big(X^{-1}\big)^{d_2-1}\big(X^{d_2+1}Y^{-1}X^{-1}\big)\\
    &=p\big(Y^{\omega_H(h_i)-\delta-1}X^{-1}\big)\Big[XY^{-\omega_H(h_i)+\delta+2}X^{-1}\Big]\big(X^2Y^{-1}X^{-1}\big)\\
    &=pYXY^{-1}X^{-1}.
  \end{align*}
  For $m=4$, set $\delta=1$ if $h_i$ is immediately followed by a vertical edge and $\delta=0$ otherwise.
  Then we have
  \begin{align*}
    Y_{h_iv_{u_{3,2}}}(\omega)
    &=\big(Y^{d_1} X^{-1}\big)\big(XY^{-1}X^{-1}\big)^\delta\times\\
    &\quad\times\Big[\big(X^{-1}\big)^{\omega_V(v_{u_{m-1,2}-t+1})}\big(p_{2,d_2-\omega_V(v_{u_{m-1,2}-t+1})}X^{\omega_V(v_{u_{m-1,2}-t+1})+1}Y^{-1}X^{-1}\big)\Big]^{1-\delta}\times\\
    &\quad\times\Big[\big(X^{-1}\big)^{d_2}\big(X^{d_2+1}Y^{-1}X^{-1}\big)\Big]^{d_1-2}\big(X^{-1}\big)^{d_2-1}\big(X^{d_2+1}Y^{-1}X^{-1}\big)\\
    &=p\big(Y^{d_1-1}X^{-1}\big)\Big[XY^{-d_1+2}X^{-1}\Big]\big(X^2Y^{-1}X^{-1}\big)\\
    &=pYXY^{-1}X^{-1}.
  \end{align*}
  For $m\ge5$ in Lemma~\ref{le:Dyck tail inequality}, we have $p=1$ and so
  \begin{align*}
    Y_{h_iv_{u_{m-1,2}}}(\omega)
    &=\big(Y^{d_1}X^{-1}\big)\big(XY^{-1}X^{-1}\big)^{1+\delta_1}\Big[\big(X^{-1}\big)^{d_2}\big(X^{d_2+1}Y^{-1}X^{-1}\big)\Big]^{d_1-2-\delta_1}\big(X^{-1}\big)^{d_2-1}\big(X^{d_2+1}Y^{-1}X^{-1}\big)\\
    &=\big(Y^{d_1-1-\delta_1}X^{-1}\big)\Big[XY^{-d_1+2+\delta_1}X^{-1}\Big]\big(X^2Y^{-1}X^{-1}\big)\\
    &=YXY^{-1}X^{-1}.
  \end{align*}

  Now suppose $\hgt(h_i)<u_{m-1,2}-d_1$ so that $|\supp(\omega_H)\cap h_iv_{u_{m-1,2}}|>1$ and, by Theorem~\ref{th:blocking edge conditions}(c), $\hgt(h_{i+1})=\hgt(h_i)+\delta_1$.
  By Lemma~\ref{le:noncommutative collapse}, we have $Y_{D(h_{i+1};\omega_H)}(\omega)=pX^{-1}$.
  As in the proof of Theorem~\ref{th:blocking edge conditions}, we have $\omega_H(h_i)=d_1$ and $\omega_V(v_{u_{m-1,2}-\ell})=d_2$ for $0\le\ell\le d_1-\delta_1$.
  Combining these observations, we get
  \begin{align*}
    Y_{h_iv_{u_{m-1,2}}}(\omega)  
    &=\big(Y^{d_1}X^{-1}\big)\big(XY^{-1}X^{-1}\big)^{\delta_1}\big[pX^{-1}\big]\Big[\big(X^{-1}\big)^{d_2-1}\big(X^{d_2+1}Y^{-1}X^{-1}\big)\Big]\times\\
    &\qquad\times\Big[\big(X^{-1}\big)^{d_2}\big(X^{d_2+1}Y^{-1}X^{-1}\big)\Big]^{d_1-2-\delta_1}\big(X^{-1}\big)^{d_2-1}\big(X^{d_2+1}Y^{-1}X^{-1}\big)\\
    &=p\big(Y^{d_1-1-\delta_1}X^{-1}\big)\Big[XY^{-d_1+2+\delta_1}X^{-1}\Big]\big(X^2Y^{-1}X^{-1}\big)\\
    &=pYXY^{-1}X^{-1}.
  \end{align*}
\end{proof}

For $m\ge1$, we consider summands of $Y_{D_m}$ given as follows:
\begin{equation}
  \label{eq:Y decomposition}
  Y_{D_m}=\sum_{\omega_H:H_m\to[0,d_1]}Y_{D_m}(\omega_H),\qquad Y_{D_m}(\omega_H):=\sum_{\omega_V\in\cC(\omega_H)}Y_{D_m}(\omega_H,\omega_V).
\end{equation}
Our goal will be to understand the action of $F_{P_0}$ on each of these summands.
The first step is given by the following factorization results which allow for an induction argument.
\begin{lemma}
  \label{le:piecewise compatible factorization 1}
  Let $\omega_H:H_m\to[0,d_1]$ be a horizontal grading of $D_m$, $m\ge3$.
  Write 
  \[Y_{D_m}^{pc}(\omega_H)=\sum\limits_{\omega:E_m\to\ZZ_{\ge0}}Y_{D_m}(\omega),\]
  where the sum ranges over piecewise compatible gradings $\omega$ of $D_m$ for which $\omega|_{H_m}=\omega_H$.
  Then there is the following factorization:
  \begin{equation}
    \label{eq:piecewise compatible factorization 1}
    Y_{D_m}^{pc}(\omega_H)=Y_{D_{m-1}}(\omega_{H,1})Y_{D_{m-1}}(\omega_{H,2})\cdots Y_{D_{m-1}}(\omega_{H,d_m-1-\delta_m})pX^{|H_{m-2-\delta_m}|} Y_{D_{m-1}}(\omega_{H,d_m-\delta_m}),
  \end{equation}
  where 
  \[p=\begin{cases} p_{1,1}^{|V_{m-2-\delta_m}|-2|H_{m-2-\delta_m}|}p_{1,2}^{|H_{m-2-\delta_m}|-|V_{m-2-\delta_m}|} & \text{if $d_2=1$ and $m>3$;}\\ p_{1,1}^{-|V_{m-2-\delta_m}|} & \text{if $d_2>1$ or $m=3$.}\end{cases}\]
\end{lemma}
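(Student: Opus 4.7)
The plan is to combine the recursive decomposition of $D_m$ from Corollary \ref{cor:recursive dyck path structure}(c) with the structure of piecewise compatible gradings from Definition \ref{def:agregate grading}. Since $Y_{D_m}(\omega)$ is a product of edge weights taken in the natural order along $D_m$, I would first split this product according to the decomposition of $D_m$ into $d_m-1-\delta_m$ consecutive copies of $D_{m-1}$ followed by $D_{m-1}\setminus D_{m-2-\delta_m}$. When summing over piecewise compatible extensions of $\omega_H$, each $\omega_{V,r}$ varies independently over compatible extensions of $\omega_{H,r}$ and only affects one factor in the product, so the sum factors in the (non-commutative) order given by the path, yielding the first $d_m-1-\delta_m$ factors of \eqref{eq:piecewise compatible factorization 1} multiplied by $\sum_{\omega_{V,d_m-\delta_m}}Y_{D_{m-1}\setminus D_{m-2-\delta_m}}(\omega_{d_m-\delta_m})$.

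To identify this last sum with $pX^{|H_{m-2-\delta_m}|}Y_{D_{m-1}}(\omega_{H,d_m-\delta_m})$, I would factor $Y_{D_{m-1}}(\omega_{d_m-\delta_m})$ into the weight on the first $D_{m-2-\delta_m}$ subpath times the weight on $D_{m-1}\setminus D_{m-2-\delta_m}$, then observe that the first factor is completely determined by the prescribed values in Definition \ref{def:agregate grading}. Using the identity $X^{-1}(XY^{-1}X^{-1})^\ell=Y^{-\ell}X^{-1}$ (a routine iterative cancellation) together with $p_{2,d_2}=1$, each block consisting of a horizontal edge of grading $\ell$ followed by $\ell$ vertical edges of grading $0$ contributes $p_{1,\ell}Y^\ell X^{-1}(XY^{-1}X^{-1})^\ell=p_{1,\ell}X^{-1}$. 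Multiplying over all blocks of $D_{m-2-\delta_m}$ yields $Y_{D_{m-2-\delta_m}}(\omega_{d_m-\delta_m}|_{D_{m-2-\delta_m}})=c\cdot X^{-|H_{m-2-\delta_m}|}$ for a scalar $c$, whose inverse $c^{-1}X^{|H_{m-2-\delta_m}|}$ I identify with $pX^{|H_{m-2-\delta_m}|}$.

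To justify the two case-by-case formulas for $p$, I would invoke Corollary \ref{cor:edge restrictions}: if $d_2>1$ or $m=3$ there are no consecutive vertical edges so each block length $\ell_j\in\{0,1\}$, giving $c=p_{1,1}^{|V_{m-2-\delta_m}|}$; if $d_2=1$ and $m>3$ there are no consecutive horizontal edges so each $\ell_j\in\{1,2\}$, and solving the linear system $a+b=|H_{m-2-\delta_m}|$, $a+2b=|V_{m-2-\delta_m}|$ yields $c=p_{1,1}^{2|H_{m-2-\delta_m}|-|V_{m-2-\delta_m}|}p_{1,2}^{|V_{m-2-\delta_m}|-|H_{m-2-\delta_m}|}$. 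Inverting each expression matches the stated $p$.

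The main obstacle will be justifying that the sum over valid $\omega_{V,d_m-\delta_m}$ inside $Y_{D_m}^{pc}(\omega_H)$ really does agree with the sum over all compatible vertical extensions of $\omega_{H,d_m-\delta_m}$ on the full $D_{m-1}$, i.e., that the condition $\omega_V=0$ on the first $D_{m-2-\delta_m}$ from \eqref{eq:V restriction} is automatic rather than a genuine additional restriction. I would verify this via Remark \ref{rem:remote shadows}: the prescribed value $\omega_H(h)=\ell(h)$ is exactly the one for which the local shadow $D(h;\omega_H)$ remains inside the block of $h$ in $D_{m-2-\delta_m}$, and the $\omega_H(h)$ excluded vertical edges then exhaust that block, so every vertical edge of the first $D_{m-2-\delta_m}$ lies in $\sh(\omega_H)\setminus\rsh(\omega_H)$ and must receive grading $0$ in any compatible extension.
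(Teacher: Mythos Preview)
Your proposal is correct and follows essentially the same route as the paper's proof: split $Y_{D_m}(\omega)$ along the recursive decomposition of $D_m$, observe that the piecewise compatible sum factors in the non-commutative order, and compute the fixed initial $D_{m-2-\delta_m}$ contribution to identify the scalar $p$. Your case analysis for $p$ via Corollary~\ref{cor:edge restrictions} matches the paper's exactly, and your final paragraph supplies a justification (via Remark~\ref{rem:remote shadows}) for why condition~\eqref{eq:V restriction} is automatic for compatible extensions of $\omega_{H,d_m-\delta_m}$---a point the paper leaves implicit in the phrase ``immediately follows from the definition of piecewise compatible gradings.''
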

\begin{proof}
  By the assumptions on the horizontal grading $\omega_{H,d_m-\delta_m}$ of $D_{m-1}$ from Definition~\ref{def:agregate grading}, each term contributing to $Y_{D_{m-1}}(\omega_{H,d_m-\delta_m})$ begins with the monomial $p^{-1}X^{-|H_{m-2-\delta_m}|}$ associated to the initial $D_{m-2-\delta_m}$ subpath of $D_{m-1}$. 
  To see the coefficient $p^{-1}$, we observe the following:
  \begin{itemize}
    \item when $d_2>1$, there are $|V_{m-2-\delta_m}|$ horizontal edges of $D_{m-2-\delta_m}$ which are immediately followed by a single vertical edge and all other horizontal edges are not immediately followed by any vertical edges;
    \item when $d_2=1$, each horizontal edge is immediately followed by a vertical edge and so there are $|V_{m-2-\delta_m}|-|H_{m-2-\delta_m}|$ horizontal edges of $D_{m-2-\delta_m}$ which are immediately followed by exactly two vertical edges (see Corollary~\ref{cor:edge restrictions}) and the remaining $2|H_{m-2-\delta_m}|-|V_{m-2-\delta_m}|$ horizontal edges are immediately followed by a single vertical edge.
  \end{itemize}

  Using the notation of Definition~\ref{def:subpath edges}, for any grading $\omega:E_m\to\ZZ_{\ge0}$ there is the factorization
  \[Y_{D_m}(\omega)=Y_{h_{1,1}v_{u_{m-2,2},1}}(\omega)\cdots Y_{h_{1,d_m-1-\delta_m}v_{u_{m-2,2},d_m-1-\delta_m}}(\omega)Y_{h_{u_{m-2-\delta_m,1}+1,d_m-\delta_m}v_{u_{m-2,2},d_m-\delta_m}}(\omega).\]
  The result then immediately follows from the definition of piecewise compatible gradings in Definition~\ref{def:agregate grading}.
\end{proof}

Using Remark~\ref{rem:alternate restrictions} instead of Definition~\ref{def:agregate grading}, we obtain a similar factorization for piecewise compatible gradings of $D'_{m+1}$.
Below we use the notation $Y'_{D'_m}(\omega_{V'}):=\sum_{\omega_{H'}\in\cC(\omega_{V'})}Y'_{D'_m}(\omega_{H'},\omega_{V'})$ for a vertical grading $\omega_{V'}:V'_m\to[0,d_1]$.
Note that $d'_{m+1}=d_m$, $d'_m=d_{m-1}$, and so $\delta'_{m+1}=\delta_m$ when $m\ge3+\delta'_{m+1}$.
\begin{lemma}
  \label{le:piecewise compatible factorization 2}
  Let $\omega_{V'}:V'_{m+1}\to[0,d_1]$ be a vertical grading of $D'_{m+1}$ for $m\ge3+\delta'_{m+1}$.
  Write 
  \[{Y'}_{D'_{m+1}}^{pc}(\omega_{V'})=\sum\limits_{\omega':E'_{m+1}\to\ZZ_{\ge0}} Y'_{D'_{m+1}}(\omega'),\]
  where the sum ranges over piecewise compatible gradings $\omega'$ of $D'_{m+1}$ for which $\omega'|_{V'_{m+1}}=\omega_{V'}$.
  Then there is the following factorization:
  \begin{equation}
    \label{eq:piecewise compatible factorization 2}
    {Y'}_{D'_{m+1}}^{pc}(\omega_{V'})=Y'_{D'_m}(\omega_{V',1})Y'_{D'_m}(\omega_{V',2})\cdots Y'_{D'_m}(\omega_{V',d_m-1-\delta_m})pXY^{|V'_{m-1-\delta_m}|}X^{-1} Y'_{D'_m}(\omega_{V',d_m-\delta_m}),
  \end{equation}
  where 
  \[p=\begin{cases} p_{1,1}^{|H_{m-2-\delta_m}|-2|H_{m-2-\delta_m}|}p_{1,2}^{|H_{m-2-\delta_m}|-|V_{m-2-\delta_m}|} & \text{if $d_2=1$;}\\ p_{1,1}^{-|V_{m-2-\delta_m}|} & \text{if $d_2>1$.}\end{cases}\]
\end{lemma}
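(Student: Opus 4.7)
The plan is to mirror the proof of Lemma~\ref{le:piecewise compatible factorization 1} in the primed setting. Via Remark~\ref{rem:primed recursive structure}, Corollary~\ref{cor:recursive dyck path structure}(c) exhibits $D'_{m+1}$ as the concatenation of $d_m-1-\delta_m$ full copies of $D'_m$ followed by one copy of $D'_m$ with its initial $D'_{m-1-\delta_m}$ block removed. Any piecewise compatible grading $\omega'$ of $D'_{m+1}$ with $\omega'|_{V'_{m+1}}=\omega_{V'}$ thus restricts (using the primed analogue of Definition~\ref{def:subpath edges}) to a tuple $(\omega'_1,\ldots,\omega'_{d_m-\delta_m})$ of compatible gradings of $D'_m$, the last of which satisfies the Remark~\ref{rem:alternate restrictions} boundary conditions on its initial $D'_{m-1-\delta_m}$ block. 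The definition of $Y'_{D'_{m+1}}(\omega')$ as an ordered monomial in edge weights then yields the block factorization
\[Y'_{D'_{m+1}}(\omega')=Y'_{D'_m}(\omega'_1)\cdots Y'_{D'_m}(\omega'_{d_m-1-\delta_m})\cdot W^{-1}\cdot Y'_{D'_m}(\omega'_{d_m-\delta_m}),\]
where $W$ is the non-commutative weight of the removed initial $D'_{m-1-\delta_m}$ subpath under the Remark~\ref{rem:alternate restrictions} boundary grading (the inverse appears because those weights are absorbed inside $Y'_{D'_m}(\omega'_{d_m-\delta_m})$ but must not contribute to $Y'_{D'_{m+1}}(\omega')$).

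The main computational step is to identify $W^{-1}$ with the claimed prefactor $p\cdot XY^{|V'_{m-1-\delta_m}|}X^{-1}$. Under the primed specialization $p'_{1,j}=p_{2,d_2-j}$ and $p'_{2,j}=p_{1,j}$, Remark~\ref{rem:alternate restrictions} assigns weight $X^{-1}$ to each horizontal edge of $D'_{m-1-\delta_m}$ (grading zero together with $p_{2,d_2}=1$) and weight $p_{1,d_1-d_j}X^{d_j+1}Y^{-1}X^{-1}$ to the $j$th vertical edge $v'_j$, where $d_j$ counts the horizontal edges immediately preceding $v'_j$ inside the subpath. The $X$-factors telescope across each ``$d_j$ horizontals, then one vertical'' block since $X^{-d_j}\cdot X^{d_j+1}=X$, and an induction on the number of vertical edges processed gives
\[W=\Bigl(\prod_{j=1}^{|V'_{m-1-\delta_m}|} p_{1,d_1-d_j}\Bigr)XY^{-|V'_{m-1-\delta_m}|}X^{-1}.\]
Inversion yields the desired prefactor with $p=\bigl(\prod_j p_{1,d_1-d_j}\bigr)^{-1}$.

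It remains to check that this scalar $p$ matches the case-by-case expression in the statement, and then to conclude by summation. The scalar identification is a counting problem for the tuple $(d_j)$, controlled by Corollary~\ref{cor:edge restrictions} applied to the primed path: when $d_2>1$ the primed path admits consecutive vertical edges while forbidding runs of horizontals of length $>1$, so every $d_j\in\{0,1\}$ and $p$ collapses to a single power of one coefficient; when $d_2=1$ the primed path forbids consecutive verticals but may allow pairs of consecutive horizontals, so $d_j\in\{1,2\}$ and counting each type (using the bound from Corollary~\ref{cor:edge restrictions}(a)) yields the mixed two-coefficient product. With the scalar identified, summing the block factorization over all compatible extensions of the individual $\omega'_{V',r}$ distributes independently across the $d_m-\delta_m$ blocks to produce equation~\eqref{eq:piecewise compatible factorization 2}.

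The main obstacle will be purely bookkeeping: tracking the primed/unprimed interchange of $(d_1,d_2)$, the reindexing of the polynomial coefficients under $p'_{1,j}=p_{2,d_2-j}$, and the Chebyshev identity $|V'_{m-1-\delta_m}|=|H_{m-2-\delta_m}|$ (which follows from the relations in Remark~\ref{rem:chebyshev}) needed to reconcile the exponent on $Y$ in the statement with the count of vertical edges appearing in $W$. The non-commutative telescoping itself is essentially algebraic and parallels the calculation implicit in the proof of Lemma~\ref{le:piecewise compatible factorization 1}; once the formula for $W$ is in hand, the remainder of the argument is a formal repackaging.
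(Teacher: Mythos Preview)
Your overall strategy is exactly the paper's: factor $Y'_{D'_{m+1}}(\omega')$ along the recursive decomposition of $D'_{m+1}$, note that every term of $Y'_{D'_m}(\omega_{V',d_m-\delta_m})$ begins with the fixed monomial $W=p^{-1}XY^{-|V'_{m-1-\delta_m}|}X^{-1}$ coming from the Remark~\ref{rem:alternate restrictions} boundary data on the initial $D'_{m-1-\delta_m}$ block, and then sum independently over the pieces. Your telescoping computation of $W$ and your formula $p=\bigl(\prod_j p_{1,d_1-d_j}\bigr)^{-1}$ are correct.

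The gap is in your identification of the scalar $p$. Your appeal to Corollary~\ref{cor:edge restrictions} on the primed path is misapplied: in the primed setting the roles of $d_1$ and $d_2$ are swapped, so that corollary yields a case split on $d_1$, not on $d_2$, and in any event it does not say what you claim. In particular your assertion that $d_j\in\{0,1\}$ when $d_2>1$ (respectively $d_j\in\{1,2\}$ when $d_2=1$) is false: for example when $d_1=d_2=2$ one has $d_j\in\{1,2\}=\{d_1-1,d_1\}$. The paper gets the correct case split on $d_2$ by a different route: it invokes Lemma~\ref{le:Dyck path recursion}(a) to read the structure of $D'_{m-1-\delta_m}$ directly off the structure of $D_{m-2-\delta_m}$ already tabulated in the proof of Lemma~\ref{le:piecewise compatible factorization 1}. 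That gives $d_j\in\{d_1-1,d_1\}$ when $d_2>1$ (with $|V_{m-2-\delta_m}|$ edges at $d_1-1$) and $d_j\in\{d_1-2,d_1-1\}$ when $d_2=1$, and since $p_{1,d_1-d_j}$ then takes values in $\{p_{1,0},p_{1,1}\}=\{1,p_{1,1}\}$ or $\{p_{1,1},p_{1,2}\}$ respectively, your product formula for $p$ recovers the stated expression. Replace your Corollary~\ref{cor:edge restrictions} paragraph with this Lemma~\ref{le:Dyck path recursion}(a) argument and the proof goes through.
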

\begin{proof}
  By the assumptions on the vertical grading $\omega_{V',d_m-\delta_m}$ from Remark~\ref{rem:alternate restrictions}, each term contributing to $Y'_{D'_m}(\omega_{V',d_m-\delta_m})$ begins with the monomial $p^{-1}XY^{-|V'_{m-1-\delta_m}|}X^{-1}$ associated to the initial $D'_{m-1-\delta_m}$ subpath of $D'_m$.
  The coefficient $p^{-1}$ here can be seen as follows.  
  Applying Lemma~\ref{le:Dyck path recursion}(a), we see that the structure of $D'_{m-1-\delta_m}$ is determined by the structure of $D_{m-2-\delta_m}$ observed in the last part of the previous proof.
  More precisely, we have the following:
  \begin{itemize}
    \item when $d_2>1$, there are $|V_{m-2-\delta_m}|$ vertical edges of $D'_{m-1-\delta_m}$ which are immediately preceded by $d_1-1$ horizontal edges and all other vertical edges are immediately followed by $d_1$ horizontal edges;
    \item when $d_2=1$, there are $|V_{m-2-\delta_m}|-|H_{m-2-\delta_m}|$ vertical edges of $D'_{m-1-\delta_m}$ which are immediately preceded by $d_1-2$ horizontal edges and the remaining $2|H_{m-2-\delta_m}|-|V_{m-2-\delta_m}|$ vertical edges are immediately preceded by $d_1-1$ horizontal edges.
  \end{itemize}
  Then observe that in the computation of $Y'_{D'_m}(\omega_{V',d_m-\delta_m})$ the coefficients are given by $p'_{2,d_1-k}=p_{1,k}$ for $k=1,2$.
\end{proof}

The analogous factorization in the special case where $m=3$ and $\delta'_4=1$ is handled in the following result which is proven exactly as Lemma~\ref{le:piecewise compatible factorization 1}.
\begin{lemma}
  \label{le:piecewise compatible factorization 3}
  Suppose $d_2=1$.  Let $\omega_{V'}:V'_4\to[0,d_1]$ be a vertical grading of $D'_4$.
  Write 
  \[{Y'}_{D'_4}^{pc}(\omega_{V'})=\sum\limits_{\omega':E'_4\to\ZZ_{\ge0}} Y'_{D'_4}(\omega'),\]
  where the sum ranges over piecewise compatible gradings $\omega'$ of $D'_4$ for which $\omega'|_{V'_4}=\omega_{V'}$.
  Then there is the following factorization:
  \begin{equation}
    \label{eq:piecewise compatible factorization 3}
    {Y'}_{D'_4}^{pc}(\omega_{V'})=Y'_{D'_3}(\omega_{V',1})Y'_{D'_3}(\omega_{V',2})\cdots Y'_{D'_3}(\omega_{V',d_1-2})X Y'_{D'_3}(\omega_{V',d_1-1}).
  \end{equation}
\end{lemma}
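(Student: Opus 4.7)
The plan is to mirror the proof of Lemma~\ref{le:piecewise compatible factorization 1} verbatim, adapted to the primed setting and to the special truncation pattern occurring when $d_2=1$ and $m=3$ (so $\delta'_4=1$). First I would unpack the recursive structure: by Corollary~\ref{cor:recursive dyck path structure}(c) (applied to $D'_{m'}$ with the roles of $d_1,d_2$ interchanged as in Remark~\ref{rem:primed recursive structure}), the path $D'_4$ is the concatenation of $d'_4-1-\delta'_4=d_1-2$ copies of $D'_3$, followed by a terminal copy of $D'_3\setminus D'_1$. This decomposition gives the multiplicative splitting
\[
Y'_{D'_4}(\omega')=Y'_{D'_{3,1}}(\omega')\cdots Y'_{D'_{3,d_1-2}}(\omega')\cdot Y'_{(D'_3\setminus D'_1)_{d_1-1}}(\omega'),
\]
for any grading $\omega'$ of $D'_4$, where the subscripts indicate restriction to the corresponding subpath.

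Next I would invoke the definition of piecewise compatibility (in the form of Remark~\ref{rem:alternate restrictions}): the sum over piecewise compatible $\omega'$ restricting to $\omega_{V'}$ on $V'_4$ decouples into independent sums over compatible restrictions $\omega'_r$ on each of the $d_1-2$ full $D'_3$-copies and the final truncated copy. For the full copies the resulting contribution is exactly $Y'_{D'_3}(\omega_{V',r})$ for $1\le r\le d_1-2$. It remains to identify the contribution of the last truncated copy with $X\cdot Y'_{D'_3}(\omega_{V',d_1-1})$.

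For that final factor I would compute the weight of the single horizontal edge removed from the last $D'_3$. By Remark~\ref{rem:alternate restrictions}, the grading $\omega_{H',d_1-1}$ assigns the value $0$ to this edge, so under the primed edge-weight convention $p'_{1,0}=p_{2,d_2}=1$ (using $d_2=1$ and $p_{2,d_2}=1$) the removed edge carries the monomial $X^{-1}$ with trivial coefficient. Writing every term of $Y'_{D'_3}(\omega_{V',d_1-1})$ in the form $X^{-1}\cdot W$ where $W$ denotes the remaining weight along $D'_3\setminus D'_1$, left-multiplication by $X$ cancels the initial $X^{-1}$ and recovers the contribution $Y'_{(D'_3\setminus D'_1)_{d_1-1}}$, yielding the claimed factor $X\cdot Y'_{D'_3}(\omega_{V',d_1-1})$. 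No additional coefficient $p$ arises here, in contrast with Lemmas~\ref{le:piecewise compatible factorization 1} and~\ref{le:piecewise compatible factorization 2}, because $D'_1$ consists of just one horizontal edge whose $\omega_{H',d_1-1}$-value forces the trivial coefficient $p_{2,d_2}=1$.

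The only step requiring care is the last: tracking that the coefficient contributed by the removed $D'_1$ subpath really is $1$, in particular verifying that the primed weight conventions $p'_{1,j}=p_{2,d_2-j}$ together with the standing normalization $p_{2,d_2}=1$ produce no hidden factors. Once this is observed, collecting the $d_1-2$ full factors with the truncated factor gives equation~\eqref{eq:piecewise compatible factorization 3}.
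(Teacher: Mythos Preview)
Your proposal is correct and follows exactly the approach the paper indicates: the paper simply states that the result ``is proven exactly as Lemma~\ref{le:piecewise compatible factorization 1},'' and you have carried out precisely that adaptation, correctly identifying that $D'_1$ consists of a single horizontal edge whose forced grading value $0$ under Remark~\ref{rem:alternate restrictions} yields the trivial coefficient $p'_{1,0}=p_{2,d_2}=1$, so the factor $p$ degenerates to $1$ and only the monomial $X$ survives.
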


The factorizations above concerned sums over piecewise compatible gradings.  Our goal is to understand sums over compatible gradings, however it will be easier to first focus on piecewise compatible gradings which are not compatible.
\begin{lemma}
  \label{le:incompatible factorization 1}
  Let $\omega_H:H_m\to[0,d_1]$ be a horizontal grading of $D_m$, $m\ge3$, for which there exists a vertical grading $\omega^*_V:V_m\to[0,d_2]$ of $D_m$ so that $(\omega_H,\omega^*_V)$ is piecewise compatible but not compatible.
  Write 
  \[Y_{D_m}^{nc}(\omega_H)=\sum\limits_{\omega:E_m\to\ZZ_{\ge0}}Y_{D_m}(\omega),\]
  where the sum ranges over piecewise compatible gradings $\omega$ of $D_m$ which are not compatible and satisfy $\omega|_{H_m}=\omega_H$.
  Let $h_i\in H_m$ denote the blocking edge of $\omega_H$ and set 
  \[d=|\supp(\omega_H)\cap h_iv_{u_{m-1,2}}|\quad\text{ and }\quad t=|\supp(\omega^*_V)\cap h_iv_{u_{m-1,2}}|.\]
  Let $s=\left\lceil\frac{i}{u_{m-1,1}}\right\rceil$ denote the index so that $h_i\in H_{m,s}$.
  Define a horizontal grading $\chi_H:H_{m-1}\to[0,d_1]$ of $D_{m-1}$ with $\chi_H(h)=\omega_{H,s}(h)$ for $h\in (h_{1,s}\overline{h}_i)_H$ and $\chi_H(h)=\ell$ for $h\in (h_iv_{u_{m-2,2},s})_H$ if $h$ is immediately followed by exactly $\ell$ vertical edges in this copy of $D_{m-1}$.
  Then there is the following factorization:
  \begin{equation}
    \label{eq:incompatible factorization 1}
    Y_{D_m}^{nc}(\omega_H)=Y_{D_{m-1}}(\omega_{H,1})\cdots Y_{D_{m-1}}(\omega_{H,s-1})Y_{D_{m-1}}(\chi_H)p_1X^{|h_iv_{u_{m-2,2},s}|_H} p_2YXY^{-1}X^{-1},
  \end{equation}
  where $p_2=p_{1,\omega_H(h_{i-1+d})}p_{2,d_2-\omega^*_V(v_{u_{m-1,2}-t+1})}$ and
  \[p_1=\begin{cases} p_{1,1}^{|h_iv_{u_{m-2,2},s}|_V-2|h_iv_{u_{m-2,2},s}|_H}p_{1,2}^{|h_iv_{u_{m-2,2},s}|_H-|h_iv_{u_{m-2,2},s}|_V} & \text{ if $d_2=1$;}\\ 
    p_{1,1}^{-|h_iv_{u_{m-2,2},s}|_V} & \text{if $d_2>1$.}\end{cases}\]
\end{lemma}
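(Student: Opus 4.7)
The plan is to decompose each piecewise compatible but non-compatible grading $\omega$ with $\omega|_{H_m}=\omega_H$ according to the blocking edge $h_i$ and then sum. By Theorem~\ref{th:blocking edge conditions}, every such $\omega$ has $\omega_H$ strongly left-justified at $h_i$ and $\omega_V$ strongly right-justified with respect to $h_i$, with $D(v_{u_{m-1,2}};\omega_V)=h_iv_{u_{m-1,2}}$ and $f_{\omega_V}(h_iv_{u_{m-1,2}})=0$ (the hypothesis $f_{\omega_H}(h_iv_{u_{m-1,2}})=0$ needed for Corollary~\ref{cor:incompatible collapse} follows from Theorem~\ref{th:blocking edge conditions}(b)(iii) when $m\ge4$, and directly from $\omega_H(h_i)=|h_iv_{u_{m-1,2}}|_V$ when $m=3$). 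Strong right-justification together with $f_{\omega_V}(h_iv_{u_{m-1,2}})=0$ uniquely determines $\omega_V$ on every vertical edge of $h_iv_{u_{m-1,2}}$ (values $d_2$ after some break edge $v_{s^{sr}}$ and the single forced value at $v_{s^{sr}}$), so the coefficient $p_2$ does not depend on $\omega$ and agrees with the value defined from $\omega^*_V$.

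Using the natural splitting of $D_m$ at $h_i$, I would factor
\[Y_{D_m}(\omega)=\prod_{r=1}^{s-1}Y_{D_{m,r}}(\omega_r)\cdot Y_{(h_{1,s}\overline{h}_i)}(\omega_s)\cdot Y_{h_iv_{u_{m-1,2}}}(\omega),\]
and invoke Corollary~\ref{cor:incompatible collapse} to replace the last factor by $p_2\,YXY^{-1}X^{-1}$. For each $r<s$, summing over vertical gradings $\omega_{V,r}$ compatible with the fixed $\omega_{H,r}$ produces $Y_{D_{m-1}}(\omega_{H,r})$, accounting for the initial $s-1$ factors in the claimed formula.

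The middle sum $\sum_{\omega_{V,s}|_{(h_{1,s}\overline{h}_i)_V}}Y_{(h_{1,s}\overline{h}_i)}(\omega_s)$ must then be matched against $Y_{D_{m-1}}(\chi_H)\cdot p_1X^{|h_iv_{u_{m-2,2},s}|_H}$. The defining property $\chi_H(h)=\ell_h$ on $(h_iv_{u_{m-2,2},s})_H$ means each local shadow path $D(h;\chi_H)$ is contained in $h$ together with its immediately-following block of vertical edges; by Remark~\ref{rem:remote shadows}, every $\chi_V\in\cC(\chi_H)$ therefore vanishes on $(h_iv_{u_{m-2,2},s})_V$, and the elementary telescoping $Y^{\ell_h}X^{-1}(XY^{-1}X^{-1})^{\ell_h}=X^{-1}$ shows each such block contributes $p_{1,\ell_h}X^{-1}$. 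The case distinction between $d_2=1$ and $d_2>1$ in the formula for $p_1$ arises because Corollary~\ref{cor:edge restrictions} permits blocks of size up to two only in the former case, giving the stated counts of edges with $\ell_h\in\{1\}$ versus $\ell_h\in\{1,2\}$. Factoring this uniform contribution out of $Y_{D_{m-1}}(\chi_H)$ rearranges into the required middle factor.

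The main obstacle is identifying the summation index set on the $\omega$-side with the one on the $\chi$-side, since $\omega_{V,s}$ must make the full pair $(\omega_{H,s},\omega_{V,s})$ compatible on $D_{m-1}$ whereas $\chi_V$ must satisfy compatibility with the different horizontal grading $\chi_H$. I would resolve this by invoking the non-interaction principle of \cite[Remark~2.22]{rupel2} used in the proof of Lemma~\ref{le:compatibility check}, together with Proposition~\ref{prop:strong justification implication} and Corollary~\ref{cor:support and remote shadow}, to see that in both settings the cross-path compatibility conditions between $(h_{1,s}\overline{h}_i)$ and $(h_iv_{u_{m-2,2},s})$ are automatic (either because $\chi_V=0$ on the far block or because the strongly justified data forces local shadows from $(h_{1,s}\overline{h}_i)_H$ to terminate before $h_i$), so that the effective constraint on the varying vertical data reduces in each case to compatibility with the common horizontal grading $\omega_{H,s}|_{(h_{1,s}\overline{h}_i)_H}$ on the subpath $(h_{1,s}\overline{h}_i)$.
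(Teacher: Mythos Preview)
Your approach is the same as the paper's: factor $Y_{D_m}(\omega)$ at the blocking edge $h_i$, use Theorem~\ref{th:blocking edge conditions} to see that every piecewise compatible but non-compatible $\omega$ with $\omega|_{H_m}=\omega_H$ agrees with $(\omega_H,\omega^*_V)$ on all of $h_iv_{u_{m-1,2}}$, apply Corollary~\ref{cor:incompatible collapse} to collapse the tail to $p_2\,YXY^{-1}X^{-1}$, and identify the $s$-th block's contribution with $Y_{D_{m-1}}(\chi_H)\,p_1X^{|h_iv_{u_{m-2,2},s}|_H}$. The paper is terser on your ``main obstacle'': it simply asserts that every term of $Y_{D_{m-1}}(\chi_H)$ ends in the monomial $p_1^{-1}X^{-|h_iv_{u_{m-2,2},s}|_H}$ and cites the definition of piecewise compatibility, whereas your unpacking via Remark~\ref{rem:remote shadows} and the non-interaction principle is the natural way to justify that assertion.

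One caveat: for $m=3$ your claim that $\omega_H(h_i)=|h_iv_{u_{m-1,2}}|_V$ does not follow from the stated hypotheses. The blocking condition gives only $\omega_H(h_i)\ge|h_iv_{u_{m-1,2}}|_V$, and Theorem~\ref{th:blocking edge conditions}(b)(iii) is asserted only for $m\ge4$, so the hypothesis $f_{\omega_H}(h_iv_{u_{m-1,2}})=0$ of Corollary~\ref{cor:incompatible collapse} needs a separate argument when $m=3$. The paper's proof is equally silent on this point, so this is not a divergence from the paper but a shared loose end you should be aware of.
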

\begin{proof}
  Using the notation of Definition~\ref{def:subpath edges}, for any grading $\omega:E_m\to\ZZ_{\ge0}$ there is the factorization
  \[Y_{D_m}(\omega)=Y_{h_{1,1}v_{u_{m-2,2},1}}(\omega)\cdots Y_{h_{1,s-1}v_{u_{m-2,2},s-1}}(\omega)Y_{h_{1,s}\overline{h}_i}(\omega)Y_{h_iv_{u_{m-1,2}}}(\omega).\]
  By definition of $\chi_H$, every term of $Y_{D_{m-1}}(\chi_H)$ ends with the monomial $p_1^{-1}X^{-|h_iv_{u_{m-2,2},s}|_H}$.
  Theorem~\ref{th:blocking edge conditions} shows that any piecewise compatible grading $\omega:E_m\to\ZZ_{\ge0}$ of $D_m$ which is not compatible agrees with $(\omega_H,\omega^*_V)$ on the path $h_iv_{u_{m-1,2}}$.
  The result then immediately follows from Corollary~\ref{cor:incompatible collapse} and the definition of piecewise compatible gradings in Definition~\ref{def:agregate grading}.
\end{proof}

The next result gives an analogous factorization for sums over piecewise compatible gradings of $D'_{m+1}$ which are not compatible.
\begin{lemma}
  \label{le:incompatible factorization 2}
  Let $\omega_{V'}:V'_{m+1}\to[0,d_1]$ be a vertical grading of $D'_{m+1}$, $m\ge3$, for which there exists a horizontal grading $\omega^*_{H'}:H'_{m+1}\to[0,d_2]$ of $D'_{m+1}$ so that $(\omega^*_{H'},\omega_{V'})$ is piecewise compatible but not compatible.
  Write 
  \[{Y'}_{D'_{m+1}}^{nc}(\omega_{V'})=\sum\limits_{\omega:E'_{m+1}\to\ZZ_{\ge0}}Y'_{D'_{m+1}}(\omega),\]
  where the sum ranges over piecewise compatible gradings $\omega$ of $D'_{m+1}$ which are not compatible and satisfy $\omega|_{V'_{m+1}}=\omega_{V'}$.
  Let $h'_j\in H'_{m+1}$ denote the blocking edge of $\omega^*_{H'}$, where $\hgt(h'_j)=i-1$, and set 
  \[d=|\supp(\omega_{V'})\cap h'_jv'_{u'_{m,2}}|\quad\text{ and }\quad t=|\supp(\omega^*_H)\cap h'_jv'_{u'_{m,2}}|.\]
  Let $s=\left\lceil\frac{i}{u_{m-1,1}}\right\rceil$ denote the index so that $h'_j\in H'_{m+1,s}$.
  Define a horizontal grading $\chi_{V'}:V'_m\to[0,d_1]$ of $D'_m$ with $\chi_{V'}(v')=\omega_{V',s}(v')$ for $v'\in (h'_{1,s}\overline{h}'_j)_V$ and $\chi_{V'}(v')=\ell$ for $v'\in (h'_jv'_{u'_{m-1,2},s})_V$ if $v'$ is immediately preceded by exactly $\ell$ horizontal edges in this copy of $D'_m$.
  Then there is the following factorization:
  \begin{equation}
    \label{eq:incompatible factorization 2}
    {Y'}_{D'_{m+1}}^{nc}(\omega_{V'})=Y'_{D'_m}(\omega_{V',1})\cdots Y'_{D'_m}(\omega_{V',s-1})Y'_{D'_m}(\chi_{V'})p_1XY^{|h'_jv_{u'_{m-1,2},s}|_V}X^{-1} p_2YXY^{-1}X^{-1},
  \end{equation}
  where $p_2=p_{1,d_1-\omega_V(v'_{u'_{m,2}-d+1})}p_{2,d_2-\omega^*_H(h'_{j-1+t})}$ and
  \[p_1=\begin{cases} p_{1,1}^{|h_iv_{u_{m-2,2},s}|_V-2|h_iv_{u_{m-2,2},s}|_H}p_{1,2}^{|h_iv_{u_{m-2,2},s}|_H-|h_iv_{u_{m-2,2},s}|_V} & \text{ if $d_2=1$;}\\ 
    p_{1,1}^{-|h_iv_{u_{m-2,2},s}|_V} & \text{if $d_2>1$;}\end{cases}\]
  with $h_iv_{u_{m-2,2,s}}$ being the subpath in the $s$-th copy of $D_{m-1}$ inside $D_m$.
\end{lemma}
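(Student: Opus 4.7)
The plan is to mirror the proof of Lemma~\ref{le:incompatible factorization 1}, using the duality between $D_m$ and $D'_{m+1}$ from Section~\ref{sec:recursions}: here $\omega_{V'}$ plays the role formerly played by $\omega_H$ and $\omega^*_{H'}$ plays the role of $\omega^*_V$. The combinatorial inputs we need (Theorem~\ref{th:blocking edge conditions} and Corollary~\ref{cor:incompatible collapse}) remain valid for $D'_{m+1}$ after interchanging the roles of $d_1$ and $d_2$, as noted in Remark~\ref{rem:primed recursive structure}.

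First I would decompose the product along the recursive subpath structure of Corollary~\ref{cor:recursive dyck path structure} applied to $D'_{m+1}$ (using the primed analogue of Definition~\ref{def:subpath edges}): for any grading $\omega':E'_{m+1}\to\ZZ_{\ge0}$,
\[Y'_{D'_{m+1}}(\omega')=Y'_{h'_{1,1}v'_{u'_{m-1,2},1}}(\omega')\cdots Y'_{h'_{1,s-1}v'_{u'_{m-1,2},s-1}}(\omega')\cdot Y'_{h'_{1,s}\overline{h}'_j}(\omega')\cdot Y'_{h'_jv'_{u'_{m,2}}}(\omega').\]
Summing over admissible gradings of the first $s-1$ copies of $D'_m$ yields the initial factors $Y'_{D'_m}(\omega_{V',1})\cdots Y'_{D'_m}(\omega_{V',s-1})$.

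Next I would unpack the definition of $\chi_{V'}$, which forces the trivial compatible grading on the portion $h'_jv'_{u'_{m-1,2},s}$ (paralleling the restriction imposed by Remark~\ref{rem:alternate restrictions}). Consequently every term of $Y'_{D'_m}(\chi_{V'})$ ends with a monomial $p_1^{-1}XY^{-|h'_jv'_{u'_{m-1,2},s}|_V}X^{-1}$. The coefficient $p_1$ is the local analogue of the one computed in Lemma~\ref{le:piecewise compatible factorization 2}: Lemma~\ref{le:Dyck path recursion} identifies the relevant portion of $D'_m$ with the subpath $h_iv_{u_{m-2,2,s}}$ of the $s$-th copy of $D_{m-1}$ in $D_m$, and the translation $p'_{2,d_1-k}=p_{1,k}$ produces the exponents of $p_{1,1}$ and $p_{1,2}$ displayed in $p_1$.

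Finally, the primed versions of Theorem~\ref{th:blocking edge conditions} and Corollary~\ref{cor:incompatible collapse} show that every piecewise compatible but not compatible grading $\omega'$ of $D'_{m+1}$ agrees with $(\omega^*_{H'},\omega_{V'})$ on the terminal path $h'_jv'_{u'_{m,2}}$, and that $Y'_{h'_jv'_{u'_{m,2}}}(\omega')=p_2YXY^{-1}X^{-1}$. Under $p'_{1,k}=p_{2,d_2-k}$ and $p'_{2,k}=p_{1,k}$, the coefficient produced by the primed version of Corollary~\ref{cor:incompatible collapse} becomes exactly the stated $p_2=p_{1,d_1-\omega_{V'}(v'_{u'_{m,2}-d+1})}p_{2,d_2-\omega^*_{H'}(h'_{j-1+t})}$. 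Multiplying the three factors together yields the claimed factorization. The main obstacle I anticipate is the index and coefficient bookkeeping: carefully tracking how $\varphi^*$ and $\Omega$ reverse the natural orderings of local remote shadows, and verifying that the exponents of $p_{1,1}$ and $p_{1,2}$ appearing in $p_1$ really do match what one obtains by computing on the corresponding subpath of $D_m$.
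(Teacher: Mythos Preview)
Your proposal is correct and follows essentially the same approach as the paper's proof: decompose along the recursive subpath structure, use the definition of $\chi_{V'}$ together with Lemma~\ref{le:Dyck path recursion} to identify the terminal monomial $p_1^{-1}XY^{-|h'_jv'_{u'_{m-1,2},s}|_V}X^{-1}$ of each term of $Y'_{D'_m}(\chi_{V'})$, then invoke the primed forms of Theorem~\ref{th:blocking edge conditions} and Corollary~\ref{cor:incompatible collapse} to collapse the tail to $p_2YXY^{-1}X^{-1}$. Your anticipated obstacle about $\varphi^*$ and $\Omega$ is unnecessary here---those maps are not used in this argument, and the coefficient bookkeeping reduces to the single translation $p'_{2,d_1-k}=p_{1,k}$ you already identified.
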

\begin{proof}
  By definition of $\chi_{V'}$, every term of $Y'_{D'_m}(\chi_{V'})$ ends with the monomial $p_1^{-1}XY^{-|h'_jv'_{u'_{m-1,2},s}|_V}X^{-1}$.
  To see the coefficient $p_1^{-1}$, note that by Lemma~\ref{le:Dyck path recursion} the structure of $D'_m$ is determined by the structure of $D_{m-1}$.

  Theorem~\ref{th:blocking edge conditions} shows that any piecewise compatible grading $\omega:E'_{m+1}\to\ZZ_{\ge0}$ of $D'_{m+1}$ which is not compatible agrees with $(\omega^*_{H'},\omega_{V'})$ on the path $h'_jv'_{u'_{m,2}}$.
  The result then immediately follows from Corollary~\ref{cor:incompatible collapse} and the definition of piecewise compatible gradings in Definition~\ref{def:agregate grading}.
\end{proof}

\subsection{Proof of Main Theorem}
We work by induction on $m\ge1$.
From the definition of the non-commutative weights in equation~\eqref{eq:edge weights}, we immediately see 
\[Y_{D_1}=P_1(Y)X^{-1}=F_{P_1}(Y)=Y_1\]
and
\[Y_{D_2}=\sum\limits_{\ell=0}^{d_2} (P_1(Y)X^{-1})^{d_2-\ell}(X^{-1})^\ell(p_{2,d_2-\ell}X^{\ell+1}Y^{-1}X^{-1})=P_2(P_1(Y)X^{-1})XY^{-1}X^{-1}=F_{P_1}F_{P_2}(Y)=Y_2.\]

Write $Y_{D_2}=\sum\limits_{\omega_H:H_2\to[0,d_1]} Y_{D_2}(\omega_H)$.
We will show that $F_{P_0}(Y_{D_2}(\omega_H))=Y'_{D'_3}(\varphi^*_2\omega_H)$ for each horizontal grading $\omega_H:H_2\to[0,d_1]$.
Fix a horizontal grading $\omega_H:H_2\to[0,d_1]$.
If $\supp(\omega_H)=\emptyset$, then 
\[Y_{D_2}(\omega_H)=(X^{-1})^{d_2}P_0(X)XY^{-1}X^{-1}\]
and so 
\begin{align*}
  F_{P_0}(Y_{D_2}(\omega_H))
  &=(XY^{-1}X^{-1})^{d_2}(XP_0(Y)X^{-1})(XYX^{-1})(P_0(Y)X^{-1})^{-1}(XY^{-1}X^{-1})\\
  &=(XY^{-1}X^{-1})^{d_2}XYXY^{-1}X^{-1}\\
  &=(XY^{-1}X^{-1})^{d_2-1}X^2Y^{-1}X^{-1}\\
  &=Y'_{D'_3}(\varphi^*_2\omega_H).
\end{align*}
Suppose $\supp(\omega_H)\ne\emptyset$.
Let $h_i$ be the last horizontal edge in $\supp(\omega_H)$.
Then $(\omega_H,\omega_V)$ will be compatible if and only if $\omega_V(v_1)\le d_2-i$.
This gives
\[Y_{D_2}(\omega_H)=(p_{1,\omega_H(h_1)}Y^{\omega_H(h_1)}X^{-1})\cdots(p_{1,\omega_H(h_i)}Y^{\omega(h_i)}X^{-1})(X^{-1})^{d_2-i}\sum\limits_{\ell=0}^{d_2-i} p_{2,d_2-\ell}X^{\ell+1}Y^{-1}X^{-1}.\]
Applying $F_{P_0}$ gives
\begin{align*}
  F_{P_0}(Y_{D_2}(\omega_H))
  &=\big(p_{1,\omega_H(h_1)}(P_0(Y)X^{-1})^{\omega_H(h_1)}XY^{-1}X^{-1}\big)\cdots\big(p_{1,\omega_H(h_i)}(P_0(Y)X^{-1})^{\omega_H(h_i)}XY^{-1}X^{-1}\big)\times\\
  &\quad\times[XY^{-1}X^{-1}]^{d_2-i}\sum\limits_{\ell=0}^{d_2-i} p_{2,d_2-\ell}(XYX^{-1})^{\ell+1}(P_0(Y)X^{-1})^{-1}(XY^{-1}X^{-1})\\
  &=(P_0(Y)X^{-1})^{\omega_H(h_1)}(X^{-1})^{d_1-\omega_H(h_1)}(p_{1,\omega_H(h_1)}X^{d_1-\omega_H(h_1)+1}Y^{-1}X^{-1})\cdots (P_0(Y)X^{-1})^{\omega_H(h_i)-1}\times\\
  &\quad\times\left(\sum\limits_{\ell=0}^{d_2-i} p_{2,d_2-\ell} Y^\ell X^{-1}\right)(X^{-1})^{d_1-\omega_H(h_i)}(p_{1,\omega_H(h_i)}X^{d_1-\omega_H(h_i)+1}Y^{-1}X^{-1})\times\\
  &\quad\times\big[(X^{-1})^{d_1}(X^{d_1+1}Y^{-1}X^{-1})\big]^{d_2-i-1}(X^{-1})^{d_1-1}(X^{d_1+1}Y^{-1}X^{-1})\\
  &=Y'_{D'_3}(\varphi^*_2\omega_H).
\end{align*}

Suppose $m\ge3$ and let $\omega_H:H_m\to[0,d_1]$ be a horizontal grading of $D_m$.
Following Theorem~\ref{th:blocking edge conditions}, there are two cases to consider.
\begin{enumeratea}
  \item Suppose that $(\omega_H,\omega_V)$ is compatible for every piecewise compatible grading $(\omega_H,\omega_V)$ of $D_m$.
    Then Lemma~\ref{le:piecewise compatible factorization 1} shows there is the factorization
    \[\tag{$\dagger$}Y_{D_m}(\omega_H)=Y_{D_{m-1}}(\omega_{H,1})Y_{D_{m-1}}(\omega_{H,2})\cdots Y_{D_{m-1}}(\omega_{H,d_m-1-\delta_m})pX^{|H_{m-2-\delta_m}|} Y_{D_{m-1}}(\omega_{H,d_m-\delta_m}),\]
    where 
    \[p=\begin{cases} p_{1,1}^{|V_{m-2-\delta_m}|-2|H_{m-2-\delta_m}|}p_{1,2}^{|H_{m-2-\delta_m}|-|V_{m-2-\delta_m}|} & \text{if $d_2=1$;}\\ p_{1,1}^{-|V_{m-2-\delta_m}|} & \text{if $d_2>1$.}\end{cases}\]
    If $m\ge3+\delta'_{m+1}$, we may apply Lemma~\ref{le:piecewise compatible factorization 2} to conclude by induction that $F_{P_0}(Y_{D_m}(\omega_H))=Y'_{D'_{m+1}}(\varphi^*_m\omega_H)$.

    It remains to consider the case $m=3$ with $\delta'_4=1$, i.e. $d_2=1$.
    In this case $D_2$ consists of a single horizontal edge followed by a single vertical edge and, by Corollary~\ref{cor:recursive dyck path structure}(c), $D_3$ consists of $d_1-1$ copies of $D_2$ followed by a single vertical edge, in particular $D_3$ ends with two consecutive vertical edges.
    The factorization ($\dagger$) still holds in this case and by induction we have $F_{P_0}(Y_{D_2}(\omega_{H,r}))=Y'_{D'_3}(\varphi^*_2\omega_{H,r})$ for $1\le r\le d_1$.
    In particular, to see that $F_{P_0}(Y_{D_3}(\omega_H))=Y'_{D'_4}(\varphi^*_3\omega_H)$ it suffices by Lemma~\ref{le:piecewise compatible factorization 3} to compare $F_{P_0}\big(Y_{D_2}(\omega_{H,d_1-1})pXY_{D_2}(\omega_{H,d_1})\big)$ with $XY'_{D'_3}(\omega_{V',d_1-1})$, where we write $\omega_{V'}=\varphi^*_3\omega_H$.

    There are two cases to consider.  If $\omega(h_{d_1-1})=0$, we have $Y_{D_2}(\omega_{H,d_1-1})=(X^{-1})P_0(X)XY^{-1}X^{-1}$ and so
    \[F_{P_0}(Y_{D_2}(\omega_{H,d_1-1}))=(XY^{-1}X^{-1})(XP_0(Y)X^{-1})(XYX^{-1})(P_0(Y)X^{-1})^{-1}XY^{-1}X^{-1}=X^2Y^{-1}X^{-1}.\]
    The same calculation shows $F_{P_0}\big(Y_{D_2}(\omega_{H,d_1})\big)=X^2Y^{-1}X^{-1}$ by the assumptions on $\omega_{H,d_1}$ in Definition~\ref{def:agregate grading}.
    But then, since $p=1$ in this case, we have
    \begin{align*}
      F_{P_0}\big(Y_{D_2}(\omega_{H,d_1-1})pXY_{D_2}(\omega_{H,d_1})\big)
      &=(X^2Y^{-1}X^{-1})(XYX^{-1})(X^2Y^{-1}X^{-1})\\
      &=X^3Y^{-1}X^{-1}\\
      &=X(X^{-1})^{d_1-1}(X^{d_1+1}Y^{-1}X^{-1}),
    \end{align*}
    which is exactly $XY'_{D'_3}(\omega_{V',d_1-1})$.
    
    When $h_{d_1-1}\in\supp(\omega_H)$, we have $Y_{D_2}(\omega_{H,d_1-1})=p_{1,\omega(h_{d_1-1}}Y^{\omega(h_{d_1-1})-1}X^{-1}$ so that
    \[F_{P_0}\big(Y_{D_2}(\omega_{H,d_1-1})\big)=(P_0(Y)X^{-1})^{\omega(h_{d_1-1})-1}(X^{-1})^{d_1-\omega(h_{d_1-1})}p_{1,\omega(h_{d_1-1})}X^{d_1-\omega(h_{d_1-1})+1}Y^{-1}X^{-1}.\]
    We saw above that $F_{P_0}\big(Y_{D_2}(\omega_{H,d_1})\big)=X^2Y^{-1}X^{-1}$ and so
    \begin{align*}
      &F_{P_0}\big(Y_{D_2}(\omega_{H,d_1-1})pXY_{D_2}(\omega_{H,d_1})\big)\\
      &\qquad=(P_0(Y)X^{-1})^{\omega(h_{d_1-1})-1}(X^{-1})^{d_1-\omega(h_{d_1-1})-1}p_{1,\omega(h_{d_1-1})}X^{d_1-\omega(h_{d_1-1})+1}Y^{-1}X^{-1},
    \end{align*}
    which is exactly $XY'_{D'_3}(\omega_{V',d_1-1})$.
    The claim then follows by induction from Lemma~\ref{le:piecewise compatible factorization 3}.
  \item Suppose there exists a vertical grading $\omega_V^*:V_m\to[0,d_2]$ of $D_m$ so that $(\omega_H,\omega_V^*)$ is piecewise compatible, but not compatible. 
    By Theorem~\ref{th:blocking edge conditions}, there must exist a blocking edge $h_i$ for $\omega_H$.
    Set 
    \[d=|\supp(\omega_H)\cap h_iv_{u_{m-1,2}}|\quad\text{ and }\quad t=|\supp(\omega_V^*)\cap h_iv_{u_{m-1,2}}|.\]

    By Proposition~\ref{prop:compatibility equivalence} and Proposition~\ref{prop:piecewise equivalence}, $(\Omega_m\omega^*_V,\varphi^*_m\omega_H)$ is a piecewise compatible grading of $D'_{m+1}$ which is not compatible.
    Let $D(v'_{u'_{m,2}};\varphi^*_m\omega_H)=h'_jv'_{u'_{m,2}}$ and observe that $\hgt(h'_j)=i-1$ by definition of $\Omega_m$.
    By Proposition~\ref{prop:strong justification implication}, Lemma~\ref{le:remote shadow cardinalities}, and Corollary~\ref{cor:support and remote shadow}, we have
    \[|\supp(\Omega_m\omega^*_V)\cap h'_jv'_{u'_{m,2}}|=|\rsh(\varphi^*_m\omega_H)\cap h'_jv'_{u'_{m,2}}|=|\rsh(\omega_H)\cap h_iv_{u_{m-1,2}}|=|\supp(\omega_V)\cap h_iv_{u_{m-1,2}}|.\]
    Moreover, we have
    \[|\supp(\varphi^*_m\omega_H)\cap h'_jv'_{u'_{m,2}}|=u_{m,1}-i+1-|\supp(\omega_H)\cap h_iv_{u_{m-1,2}}|+\delta,\]
    where $\delta=0$ if $\omega_H(h_{i-1+d})=d_1$ and $\delta=1$ otherwise.
    If then follows from the definitions of $\varphi^*_m$ and $\Omega_m$ that the coefficients $p_2$ agree in Lemma~\ref{le:incompatible factorization 1} and Lemma~\ref{le:incompatible factorization 2}.

    Using the notation of Lemma~\ref{le:piecewise compatible factorization 1} and Lemma~\ref{le:incompatible factorization 1}, we have $Y_{D_m}(\omega_H)=Y_{D_m}^{pc}(\omega_H)-Y_{D_m}^{nc}(\omega_H)$.
    By induction we have $F_{P_0}(Y_{D_{m-1}}(\omega_{H,r}))=Y'_{D'_m}(\varphi^*_{m-1}\omega_{H,r})$ for $1\le r\le d_m-\delta_m$ and $F_{P_0}(Y_{D_{m-1}}(\chi_H))=Y'_{D'_m}(\chi_{V'})$.
    It follows that $F_{P_0}\big(Y_{D_m}^{pc}(\omega_H)\big)={Y'}_{D'_{m+1}}^{pc}(\varphi^*_m\omega_H)$ and $F_{P_0}\big(Y_{D_m}^{nc}(\omega_H)\big)={Y'}_{D'_{m+1}}^{nc}(\varphi^*_m\omega_H)$.
    Since $Y'_{D'_{m+1}}(\varphi^*_m\omega_H)={Y'}_{D'_{m+1}}^{pc}(\varphi^*_m\omega_H)-{Y'}_{D'_{m+1}}^{nc}(\varphi^*_m\omega_H)$, the result follows.
\end{enumeratea}

\begin{remark}
  Our proof of the Main Theorem developed a combinatorial model for the analogue \eqref{eq:non-commutative initial cluster mutation} of initial cluster mutations.
  It would be interesting and highly non-trivial to understand the direct combinatorial interpretation for the non-commutative exchange relations \eqref{eq:non-commutative exchange relation}.
\end{remark}

\section{Specializations}
\label{sec:specialization}

In this section we consider the specialization to quantum generalized cluster variables.
Assume $v\in\kk$ is transcendental over $\QQ$.
Define the quantum torus algebra $\cT:=\cT_v=\kk\langle Z_1,Z_2:Z_1Z_2=v^2Z_2Z_1\rangle$ and let $\cF$ denote the skew-field of fractions of $\cT$.
It will be convenient to consider elements $Z^\bfa:=v^{-a_1a_2}Z_1^{a_1}Z_2^{a_2}$ for $\bfa=(a_1,a_2)\in\ZZ^2$, these form a $\kk$-basis of $\cT$.

Recall the notation \eqref{eq:reversed polynomials} for the polynomials $P_k$, $k\in\ZZ$.
Consider \emph{quantum generalized cluster variables} $Z^{(\alpha)}_k\in\cF$, $\alpha,k\in\ZZ$, defined recursively by
\begin{equation}
  \label{eq:exchange relations}
  Z^{(\alpha)}_1=Z_1,\quad Z^{(\alpha)}_2=Z_2,\quad Z^{(\alpha)}_{k-1}Z^{(\alpha)}_{k+1}=P_{\alpha+k}(vZ^{(\alpha)}_k).
\end{equation}
Observe that equation~\eqref{eq:exchange relations} immediately implies $Z^{(\alpha)}_kZ^{(\alpha)}_{k+1}=v^2Z^{(\alpha)}_{k+1}Z^{(\alpha)}_k$ for all $\alpha,k\in\ZZ$.

For a fixed $\alpha\in\ZZ$, the \emph{quantum generalized cluster algebra} $\cA^{(\alpha)}_v(P_1,P_2)\subset\cF$ is the $\kk$-subalgebra generated by the $Z^{(\alpha)}_k$, $k\in\ZZ$.
Although they are defined as elements of $\cF$, the quantum generalized cluster variables actually live in $\cT$.
We give a direct proof here, however the combinatorial construction below provides an alternate proof.
See \cite{bai-chen-ding-xu} for a proof of this result in the special case when $P_1=\overline{P}_1$ and $P_2=\overline{P}_2$.
\begin{theorem}
  Each quantum generalized cluster variable $Z^{(\alpha)}_k$ is an element of $\cT\subset\cF$.
\end{theorem}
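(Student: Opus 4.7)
The plan is to prove by induction on $k$ that $Z_k^{(\alpha)}\in\cT$ for all $k\in\ZZ$. The base cases $k=1,2$ are tautological, and the cases $k\le 0$ follow by a symmetric induction on the reversed recurrence. For $k=3$ the claim is direct: $Z_3^{(\alpha)}=Z_1^{-1}P_{\alpha+2}(vZ_2)$ is a $\kk$-linear combination of the Laurent monomials $Z_1^{-1}Z_2^j$, each of which lies in $\cT$.

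For the inductive step, assume $Z_j^{(\alpha)}\in\cT$ for all $j\le k$. The exchange relation gives $Z_{k-1}^{(\alpha)}Z_{k+1}^{(\alpha)}=P_{\alpha+k}(vZ_k^{(\alpha)})$, whose right-hand side is manifestly in $\cT$ by the inductive hypothesis. It therefore suffices to exhibit some $W\in\cT$ with $Z_{k-1}^{(\alpha)}\cdot W=P_{\alpha+k}(vZ_k^{(\alpha)})$: such a $W$ must then equal $Z_{k+1}^{(\alpha)}$ by uniqueness of left division in $\cF$, giving $Z_{k+1}^{(\alpha)}\in\cT$. This left-divisibility of $P_{\alpha+k}(vZ_k^{(\alpha)})$ by $Z_{k-1}^{(\alpha)}$ inside $\cT$ is exactly the quantum rank-$2$ Laurent phenomenon. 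Observe that the naive attempt $W:=(Z_{k-1}^{(\alpha)})^{-1}P_{\alpha+k}(vZ_k^{(\alpha)})$ computed in $\cF$ does not immediately land in $\cT$, because $(Z_{k-1}^{(\alpha)})^{-1}$ is typically not Laurent in $Z_1,Z_2$ (already $(Z_3^{(\alpha)})^{-1}$ fails to be Laurent).

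To produce such a $W$, I would strengthen the inductive hypothesis to carry along the ``leading-monomial'' structure of $Z_k^{(\alpha)}$. Guided by the commutative limit $v\to 1$, the lowest-weight Laurent monomial of $Z_k^{(\alpha)}$ should equal $Z^{-\bfa_{k-1}}$ (up to a power of $v$), where $\bfa_{k-1}$ is the Chebyshev-like vector from equation~\eqref{eq:roots recursive}. Combined with the quasi-commutation $Z_{k-1}^{(\alpha)}Z_k^{(\alpha)}=v^2Z_k^{(\alpha)}Z_{k-1}^{(\alpha)}$, which moves $Z_{k-1}^{(\alpha)}$ past each $(Z_k^{(\alpha)})^j$ with controlled $v$-twists, this structure lets one rearrange $P_{\alpha+k}(vZ_k^{(\alpha)})$ termwise as $Z_{k-1}^{(\alpha)}\cdot(\text{element of }\cT)$; the normalization $P_{\alpha+k}(0)=1$ ensures the ``constant'' summand combines with the leading monomial of $Z_{k-1}^{(\alpha)}$ to give the leading monomial of $Z_{k+1}^{(\alpha)}$, while higher-order terms reduce by the strengthened induction hypothesis.

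The main obstacle will be the $v$-bookkeeping. Each time $Z_{k-1}^{(\alpha)}$ is moved across a power of $Z_k^{(\alpha)}$ a factor of $v^{\pm2}$ is picked up, and these factors must align exactly with the $v^j$ factors already present in $P_{\alpha+k}(vZ_k^{(\alpha)})=\sum_j p_{\alpha+k,j}v^j(Z_k^{(\alpha)})^j$ for the cancellation to be clean. This quantum-specific subtlety is the essential point beyond the classical rank-$2$ Laurent phenomenon of Fomin--Zelevinsky; it has already been handled in the palindromic case $P_1=\overline P_1$, $P_2=\overline P_2$ by \cite{bai-chen-ding-xu}, and the present generalization should proceed by the same template once the leading-monomial tracking is set up carefully.
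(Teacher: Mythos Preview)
Your proposal is not yet a proof: the central step---``this structure lets one rearrange $P_{\alpha+k}(vZ_k^{(\alpha)})$ termwise as $Z_{k-1}^{(\alpha)}\cdot(\text{element of }\cT)$''---is asserted but not carried out, and leading-monomial information alone does not obviously give left-divisibility by the full multi-term Laurent polynomial $Z_{k-1}^{(\alpha)}$. Knowing that $Z_{k-1}^{(\alpha)}$ has lowest monomial $Z^{-\bfa_{k-2}}$ tells you nothing about how the remaining monomials of $Z_{k-1}^{(\alpha)}$ interact with those of $P_{\alpha+k}(vZ_k^{(\alpha)})$; making this work would essentially require the full combinatorial expansion of both elements, which is the Main Theorem rather than a lemma toward it.

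The paper avoids this difficulty by a different and much shorter device, the standard ``four consecutive variables'' trick. One expands the product $v^{d_{\alpha+k}}Z^{(\alpha)}_{k-1}(Z^{(\alpha)}_{k+2})^{d_{\alpha+k}}$ by replacing $Z^{(\alpha)}_{k-1}$ with $P_{\alpha+k}(vZ^{(\alpha)}_k)(Z^{(\alpha)}_{k+1})^{-1}$, commutes factors using $Z^{(\alpha)}_{k+1}Z^{(\alpha)}_{k+2}=v^2Z^{(\alpha)}_{k+2}Z^{(\alpha)}_{k+1}$, and then splits off the term $(Z^{(\alpha)}_{k+1})^{-1}P_{\alpha+k+2}(vZ^{(\alpha)}_{k+2})=Z^{(\alpha)}_{k+3}$. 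The remaining terms contain factors of the form $\big[(Z^{(\alpha)}_k)^i(Z^{(\alpha)}_{k+2})^i-1\big](Z^{(\alpha)}_{k+1})^{-1}$, and since $Z^{(\alpha)}_kZ^{(\alpha)}_{k+2}=P_{\alpha+k+1}(vZ^{(\alpha)}_{k+1})$ is a polynomial in $Z^{(\alpha)}_{k+1}$ with constant term $1$, each such bracket is divisible by $Z^{(\alpha)}_{k+1}$. Hence $Z^{(\alpha)}_{k+3}\in\kk[Z^{(\alpha)}_{k-1},Z^{(\alpha)}_k,Z^{(\alpha)}_{k+1},Z^{(\alpha)}_{k+2}]$, and induction gives $Z^{(\alpha)}_k\in\kk[Z^{(\alpha)}_0,Z^{(\alpha)}_1,Z^{(\alpha)}_2,Z^{(\alpha)}_3]\subset\cT$. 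This sidesteps direct division by $Z^{(\alpha)}_{k-1}$ entirely and handles the $v$-bookkeeping in two lines; it also yields the stronger conclusion $\cA_v^{(\alpha)}(P_1,P_2)=\kk[Z^{(\alpha)}_{k-1},Z^{(\alpha)}_k,Z^{(\alpha)}_{k+1},Z^{(\alpha)}_{k+2}]$ for every $k$.
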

\begin{proof}
  Consider the monomial $v^{d_{\alpha+k}}Z^{(\alpha)}_{k-1}(Z^{(\alpha)}_{k+2})^{d_{\alpha+k}}$.
  Expanding $Z^{(\alpha)}_{k-1}$ in terms of $Z^{(\alpha)}_k$ and $Z^{(\alpha)}_{k+1}$ using equation~\eqref{eq:exchange relations} gives $v^{d_{\alpha+k}}Z^{(\alpha)}_{k-1}(Z^{(\alpha)}_{k+2})^{d_{\alpha+k}}$ as
  \begin{align*}
    &v^{d_{\alpha+k}}P_{\alpha+k}(vZ^{(\alpha)}_k)(Z^{(\alpha)}_{k+1})^{-1}(Z^{(\alpha)}_{k+2})^{d_{\alpha+k}}\\
    &\quad=v^{-d_{\alpha+k}}P_{\alpha+k}(vZ^{(\alpha)}_k)(Z^{(\alpha)}_{k+2})^{d_{\alpha+k}}(Z^{(\alpha)}_{k+1})^{-1}\\
    &\quad=\sum\limits_{i=0}^{d_{\alpha+k}}p_{\alpha+k,i}v^{-d_{\alpha+k}+i}\big[(Z^{(\alpha)}_k)^i(Z^{(\alpha)}_{k+2})^i-1\big](Z^{(\alpha)}_{k+2})^{d_{\alpha+k}-i}(Z^{(\alpha)}_{k+1})^{-1}+P_{\alpha+k+2}(v^{-1}Z^{(\alpha)}_{k+2})(Z^{(\alpha)}_{k+1})^{-1}\\
    &\quad=\sum\limits_{i=0}^{d_{\alpha+k}}p_{\alpha+k,i}v^{d_{\alpha+k}-i}\big[(Z^{(\alpha)}_k)^i(Z^{(\alpha)}_{k+2})^i-1\big](Z^{(\alpha)}_{k+1})^{-1}(Z^{(\alpha)}_{k+2})^{d_{\alpha+k}-i}+(Z^{(\alpha)}_{k+1})^{-1}P_{\alpha+k+2}(vZ^{(\alpha)}_{k+2}).
  \end{align*}
  But for $0\le i\le d_{\alpha_k}$, the term $(Z^{(\alpha)}_k)^i(Z^{(\alpha)}_{k+2})^i-1$ above is a polynomial in $Z^{(\alpha)}_{k+1}$ with no constant term and so $\big[(Z^{(\alpha)}_k)^i(Z^{(\alpha)}_{k+2})^i-1\big](Z^{(\alpha)}_{k+1})^{-1}$ is a polynomial in $Z^{(\alpha)}_{k+1}$.
  Thus we may solve for $Z^{(\alpha)}_{k+3}=(Z^{(\alpha)}_{k+1})^{-1}P_{\alpha+k+2}(vZ^{(\alpha)}_{k+2})$ above and see that this generalized cluster variable can be written as a polynomial in $\kk[Z^{(\alpha)}_{k-1},Z^{(\alpha)}_k,Z^{(\alpha)}_{k+1},Z^{(\alpha)}_{k+2}]$.
  A similar calculation shows $Z^{(\alpha)}_{k-2}\in\kk[Z^{(\alpha)}_{k-1},Z^{(\alpha)}_k,Z^{(\alpha)}_{k+1},Z^{(\alpha)}_{k+2}]$.
  Then by induction we see $Z^{(\alpha)}_k\in\kk[Z^{(\alpha)}_0,Z^{(\alpha)}_1,Z^{(\alpha)}_2,Z^{(\alpha)}_3]\subset\cT$ for all $\alpha,k\in\ZZ$.
\end{proof}
\begin{remark}
  The proof above actually shows more.
  We see from this proof that 
  \[\cA_v^{(\alpha)}(P_1,P_2)=\kk[Z^{(\alpha)}_{k-1},Z^{(\alpha)}_k,Z^{(\alpha)}_{k+1},Z^{(\alpha)}_{k+2}]\]
  for each $\alpha,k\in\ZZ$.
\end{remark}

Define the quantum specialization $\pi_v:\KK\to\cF$ by
\begin{equation}
  \label{eq:quantum specializations}
  \pi_v(X)=vZ_1,\quad\pi_v(Y)=v^{-1}Z_2.
\end{equation}
Note that for $Q=XYX^{-1}Y^{-1}$ we have $\pi_v(Q)=v^2$.
For $\alpha\in\ZZ$, set $X^{(\alpha)}_0=X$ and for $m\ge1$ define elements $X^{(\alpha)}_m,X^{(\alpha)}_{-m}\in\KK$ by
\[X^{(\alpha)}_m=F_{P_{\alpha+1}}F_{P_{\alpha+2}}\cdots F_{P_{\alpha+m}}(X)\quad\text{and}\quad X^{(\alpha)}_{-m}=F_{P_{\alpha}}^{-1}F_{P_{\alpha-1}}^{-1}\cdots F_{P_{\alpha-m+1}}^{-1}(X)\]
and observe that Theorem~\ref{th:combinatorial construction} provides a combinatorial construction of each $X^{(\alpha)}_m$.
The following specialization result will provide a combinatorial construction of the quantum generalized cluster variables $Z^{(\alpha)}_m$.
\begin{theorem}
  \label{th:quantum specialization}
  For $m,\alpha\in\ZZ$, we have $\pi_v(X^{(\alpha+1)}_m)=vZ^{(\alpha)}_{m+1}$.
\end{theorem}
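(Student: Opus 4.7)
The plan is to induct on $|m|$, leveraging a single three-term identity in $\KK$ valid for all $m\in\ZZ$ which, under the specialization $\pi_v$, descends to the quantum exchange relation~\eqref{eq:exchange relations}.

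For the base cases $m=0$ and $m=1$: $X_0^{(\alpha+1)}=X$ specializes to $vZ_1=vZ_1^{(\alpha)}$, while $X_1^{(\alpha+1)}=F_{P_{\alpha+2}}(X)=XYX^{-1}$ specializes via $\pi_v$ and the quasi-commutation $Z_1Z_2=v^2Z_2Z_1$ to $v^{-1}Z_1Z_2Z_1^{-1}=vZ_2=vZ_2^{(\alpha)}$.

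The central step is to establish, for every $m\in\ZZ$, the identity
\[
X_{m+2}^{(\alpha+1)}\,X_m^{(\alpha+1)}\;=\;Q^{-1}\,P_{\alpha+m+2}\!\big(Q\,X_{m+1}^{(\alpha+1)}\big)
\]
in $\KK$. Writing $\Psi_m$ for the composition of $F_{P_k}$'s (or their inverses) with $\Psi_m(X)=X_m^{(\alpha+1)}$, one has $\Psi_{m+1}=\Psi_m\circ F_{P_{\alpha+m+2}}$ for every integer $m$. Setting $Y_m^{(\alpha+1)}:=\Psi_m(Y)$, the relation $F_P(QX)=Y$ together with $F_P(Q)=Q$ (both recorded in the introduction) yields $Y_m^{(\alpha+1)}=Q\,X_{m+1}^{(\alpha+1)}$ for all $m$. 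Applying $\Psi_m$ to $F_{P_{\alpha+m+2}}(Y)=P_{\alpha+m+2}(Y)X^{-1}$, using that $\Psi_m$ is a $\kk$-algebra automorphism, gives $Y_{m+1}^{(\alpha+1)}=P_{\alpha+m+2}(Y_m^{(\alpha+1)})(X_m^{(\alpha+1)})^{-1}$; substituting both $Y$'s in terms of $X$'s and clearing $Q$ produces the displayed identity.

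For the inductive step, applying $\pi_v$ and using $\pi_v(Q)=v^{-2}$ gives
\[
\pi_v(X_{m+2}^{(\alpha+1)})\cdot\pi_v(X_m^{(\alpha+1)})\;=\;v^2\,P_{\alpha+m+2}\!\big(v^{-2}\pi_v(X_{m+1}^{(\alpha+1)})\big).
\]
On the quantum side, the quasi-commutation $Z_{m+2}^{(\alpha)}Z_{m+1}^{(\alpha)}=v^{-2}Z_{m+1}^{(\alpha)}Z_{m+2}^{(\alpha)}$ transforms the exchange $Z_{m+1}^{(\alpha)}Z_{m+3}^{(\alpha)}=P_{\alpha+m+2}(vZ_{m+2}^{(\alpha)})$ into $Z_{m+3}^{(\alpha)}Z_{m+1}^{(\alpha)}=P_{\alpha+m+2}(v^{-1}Z_{m+2}^{(\alpha)})$. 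Under the inductive hypothesis $\pi_v(X_j^{(\alpha+1)})=vZ_{j+1}^{(\alpha)}$ for $j=m,m+1$, both identities share the right-hand side $v^2P_{\alpha+m+2}(v^{-1}Z_{m+2}^{(\alpha)})$, so cancelling the invertible factor $vZ_{m+1}^{(\alpha)}$ yields $\pi_v(X_{m+2}^{(\alpha+1)})=vZ_{m+3}^{(\alpha)}$. Negative indices are reached by solving the same identity for $X_m^{(\alpha+1)}$ in terms of $X_{m+1}^{(\alpha+1)}$ and $X_{m+2}^{(\alpha+1)}$ and inducting downward. The main obstacle is pure bookkeeping: ensuring that the $v$-twist from $\pi_v(Q)=v^{-2}$ precisely cancels the $v$-twist generated when commuting $Z_{m+1}^{(\alpha)}$ through the polynomial $P_{\alpha+m+2}$ in the quantum torus, which is exactly what forces the normalization $\pi_v(X)=vZ_1$, $\pi_v(Y)=v^{-1}Z_2$.
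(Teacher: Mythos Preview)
Your proof is correct and takes a genuinely different route from the paper. The paper introduces an auxiliary $\kk$-linear automorphism $\mu_P$ of $\cF$ (sending $Z_1\mapsto Z_2$, $Z_2\mapsto Z_1^{-1}P(vZ_2)$), checks the intertwining relation $\pi_v\circ F_P=\mu_P\circ\pi_v$, and then proves separately that $\mu_{P_{\alpha+2}}$ acts on the cluster variables of $\cF$ as the shift $Z^{(\alpha+1)}_m\mapsto Z^{(\alpha)}_{m+1}$. You instead establish directly in $\KK$ the \emph{non-commutative exchange relation} $X_{m+2}^{(\alpha+1)}X_m^{(\alpha+1)}=Q^{-1}P_{\alpha+m+2}(QX_{m+1}^{(\alpha+1)})$, then observe that under $\pi_v$ (where $Q\mapsto v^{-2}$) it becomes exactly the reversed form $Z_{m+3}^{(\alpha)}Z_{m+1}^{(\alpha)}=P_{\alpha+m+2}(v^{-1}Z_{m+2}^{(\alpha)})$ of the quantum exchange relation. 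Your approach avoids the auxiliary automorphism altogether and has the pleasant side effect of exhibiting a clean three-term recurrence for the $X_m^{(\alpha)}$ in $\KK$ itself; the paper's approach is slightly more structural in that it packages the inductive step as a single functorial identity between automorphisms. Both arguments are of comparable length and difficulty.
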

\begin{proof}
  We work by induction on $m$.
  Since $X^{(\alpha)}_0=X$ and $X^{(\alpha)}_1=QY$ for all $\alpha\in\ZZ$, the cases $m=0,1$ follow immediately from equation~\eqref{eq:quantum specializations}.

  For any nonzero polynomial $P\in\kk[z]$, define a $\kk$-linear automorphism $\mu_P:\cF\to\cF$ given by $\mu_P(Z_1)=Z_2$ and $\mu_P(Z_2)=Z_1^{-1}P(vZ_2)$.
  These satisfy the functional identities $\pi_v\circ F_P=\mu_P\circ\pi_v$.
  Note that $\mu_P^{-1}(Z_1)=P(vZ_1)Z_2^{-1}$ and $\mu_P^{-1}(Z_2)=Z_1$ so that $\pi_v\circ F_P^{-1}=\mu_P^{-1}\circ\pi_v$.
  
  Moreover, observe that $\mu_{P_{\alpha+2}}(Z_2)=Z^{(\alpha)}_3$ and $\mu_{P_{\alpha+1}}^{-1}(Z_1)=Z^{(\alpha)}_0$ for $\alpha\in\ZZ$.
  By the symmetry of the exchange relations \eqref{eq:exchange relations}, these imply $\mu_{P_{\alpha+2}}(Z^{(\alpha+1)}_m)=Z^{(\alpha)}_{m+1}$ and $\mu_{P_{\alpha+1}}^{-1}(Z^{(\alpha-1)}_{m+2})=Z^{(\alpha)}_{m+1}$ for any $\alpha,m\in\ZZ$.
  Indeed, by induction on $m\ge3$ we have
  \[Z^{(\alpha)}_{m+1}=(Z^{(\alpha)}_{m-1})^{-1}P_{\alpha+m}(vZ^{(\alpha)}_m)=\mu_{P_{\alpha+2}}\big((Z^{(\alpha+1)}_{m-2})^{-1}P_{\alpha+m}(vZ^{(\alpha+1)}_{m-1})\big)=\mu_{P_{\alpha+2}}(Z^{(\alpha+1)}_m).\]
  Similarly, by induction on $m\le-2$ we have
  \[Z^{(\alpha)}_{m+1}=(Z^{(\alpha)}_{m-1})^{-1}P_{\alpha+m}(vZ^{(\alpha)}_m)=\mu_{P_{\alpha+1}}^{-1}\big((Z^{(\alpha-1)}_m)^{-1}P_{\alpha+m}(vZ^{(\alpha-1)}_{m+1})\big)=\mu_{P_{\alpha+1}}^{-1}(Z^{(\alpha-1)}_{m+2}).\]

  Thus, by induction on $m\ge1$ we see
  \[\pi_v(X^{(\alpha+1)}_m)=\pi_v(F_{P_{\alpha+2}}(X^{(\alpha+2)}_{m-1}))=\mu_{P_{\alpha+2}}(\pi_v(X^{(\alpha+2)}_{m-1}))=\mu_{P_{\alpha+2}}(vZ^{(\alpha+1)}_m)=vZ^{(\alpha)}_{m+1}\]
  and by induction on $m\le-1$ we see
  \[\pi_v(X^{(\alpha+1)}_m)=\pi_v(F_{P_{\alpha+1}}^{-1}(X^{(\alpha)}_{m+1}))=\mu_{P_{\alpha+1}}^{-1}(\pi_v(X^{(\alpha)}_{m+1}))=\mu_{P_{\alpha+1}}^{-1}(vZ^{(\alpha-1)}_{m+2})=vZ^{(\alpha)}_{m+1}.\]
\end{proof}
Applying the quantum specialization $\pi_v$ to Theorem~\ref{th:combinatorial construction}, Theorem~\ref{th:quantum specialization} gives the following combinatorial construction of the quantum generalized cluster variables $Z^{(\alpha)}_m$ as pseudo-positive Laurent polynomials.
For notational convenience, we restrict to the quantum generalized cluster variables $Z_m:=Z^{(3)}_m$.

\begin{corollary}\mbox{}
  \label{cor:quantum combinatorial}
  \begin{enumeratea}
    \item For $m\ge3$, the quantum generalized cluster variable $Z_m$ is computed as follows:
      \begin{equation}
        \label{eq:combinatorial quantum 1}
        Z_m=\sum\limits_{\omega:E_{m-2}\to\ZZ_{\ge0}}p_\omega v^{1-u_{m-2,1}-u_{m-3,2}+\gamma_\omega+\beta_\omega}Z^{(-u_{m-2,1}+|\omega_V|,-u_{m-3,2}+|\omega_H|)},
      \end{equation}
      where 
      \begin{itemize}
        \item the sum ranges over $(d_1,d_2)$-bounded compatible gradings $\omega$ of $D_{m-2}$;
        \item $p_\omega=\prod_{i=1}^{u_{m-2,1}}p_{1,\omega_H(h_i)}\prod_{t=1}^{u_{m-3,2}}p_{2,d_2-\omega_V(v_t)}$;
        \item $\gamma_\omega=\sum\limits_{e<e'\in E_{m-2}}\gamma_\omega(e,e')$ for
        \begin{equation}
          \label{eq:quantum weight 1}
          \gamma_\omega(e,e')=\begin{cases}
            0 & \text{if $e\in H_{m-2}\setminus\supp(\omega_H)$ or $e'\in V_{m-2}\setminus\supp(\omega_V)$;}\\ 
            -2\omega(e)\omega(e') & \text{if $e\in\supp(\omega_H)$ and $e'\in\supp(\omega_V)$;}\\ 
            2\omega(e) & \text{if $e\in\supp(\omega_H)$ and $e'\in H_{m-2}$;}\\ 
            2\omega(e') & \text{if $e\in V_{m-2}$ and $e'\in\supp(\omega_V)$;}\\ 
            -2 & \text{if $e\in V_{m-2}$ and $e'\in H_{m-2}$;}
          \end{cases}
        \end{equation}
        \item $\beta_\omega=\sum\limits_{e<e'\in E_{m-2}}\beta_\omega(e,e')$ for
        \begin{equation}
          \label{eq:quantum weight 2}
          \beta_\omega(e,e')=\begin{cases}
            \omega(e)\omega(e')+1 & \text{if $e\in H_{m-2}$ and $e'\in V_{m-2}$ or $e\in V_{m-2}$ and $e'\in H_{m-2}$;}\\ 
            -(\omega(e)+\omega(e')) & \text{if $e,e'\in H_{m-2}$ or $e,e'\in V_{m-2}$.} 
          \end{cases}
        \end{equation}
      \end{itemize}
    \item For $m\le0$, the quantum generalized cluster variable $Z_m$ is computed as follows:
      \begin{equation}
        \label{eq:combinatorial quantum 2}
        Z_m=\sum\limits_{\omega:E'_{-m+1}\to\ZZ_{\ge0}}p'_\omega v^{-1+u'_{-m+1,1}+u'_{-m,2}+\gamma'_\omega+\beta'_\omega}Z^{(-u'_{-m,2}+|\omega|_{H'},-u'_{-m+1,1}+|\omega|_{V'})},
      \end{equation}
      where 
      \begin{itemize}
        \item the sum ranges over $(d_2,d_1)$-bounded compatible gradings $\omega$ of $D'_{-m+1}$;
        \item $p'_\omega=\prod_{i=1}^{u'_{-m+1,1}}p_{2,d_2-\omega_{H'}(h'_i)}\prod_{t=1}^{u'_{-m,2}}p_{1,\omega_{V'}(v'_t)}$;
        \item $\gamma'_\omega=\sum\limits_{e<e'\in E'_{-m+1}}\gamma'_\omega(e,e')$ for
        \begin{equation*}
          \gamma'_\omega(e,e')=\begin{cases}
            0 & \text{if $e\in V'_{-m+1}\setminus\supp(\omega_{V'})$ or $e'\in H'_{-m+1}\setminus\supp(\omega_{H'})$;}\\ 
            -2\omega(e)\omega(e') & \text{if $e\in\supp(\omega_{V'})$ and $e'\in\supp(\omega_{H'})$;}\\ 
            2\omega(e) & \text{if $e\in\supp(\omega_{V'})$ and $e'\in V'_{-m+1}$;}\\ 
            2\omega(e') & \text{if $e\in H'_{-m+1}$ and $e'\in\supp(\omega_{H'})$;}\\ 
            -2 & \text{if $e\in H'_{-m+1}$ and $e'\in V'_{-m+1}$;}
          \end{cases}
        \end{equation*}
        \item $\beta'_\omega=\sum\limits_{e<e'\in E'_{-m+1}}\beta'_\omega(e,e')$ for
        \begin{equation*}
          \beta'_\omega(e,e')=\begin{cases}
            \omega(e)\omega(e')+1 & \text{if $e\in H'_{-m+1}$ and $e'\in V'_{-m+1}$ or $e\in V'_{-m+1}$ and $e'\in H'_{-m+1}$;}\\ 
            -(\omega(e)+\omega(e')) & \text{if $e,e'\in H'_{-m+1}$ or $e,e'\in V'_{-m+1}$.} 
          \end{cases}
        \end{equation*}
    \end{itemize}
  \end{enumeratea}
\end{corollary}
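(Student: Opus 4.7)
Both parts follow by specializing the combinatorial formula of Theorem~\ref{th:combinatorial construction} via $\pi_v$; the only real work is the edge-weight specialization, the $v$-twist bookkeeping in the quantum torus, and a pair-by-pair matching of that twist with $\gamma+\beta$.

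For part (a), the starting reduction is $Z_m = v\,\pi_v(Y_{D_{m-2}})$. This uses Theorem~\ref{th:quantum specialization} with $\alpha=3$ together with the 4-periodicity $P_{k+4}=P_k$ from~\eqref{eq:reversed polynomials} (which gives $X^{(4)}_{m-1}=X_{m-1}$), the identity $QX_{m+1}=Y_m$ from the introductory remark, and $\pi_v(Q)=v^{-2}$. A direct computation from~\eqref{eq:edge weights} and~\eqref{eq:quantum specializations} then gives
\[\pi_v(\wt_\omega(h)) = p_{1,\omega(h)}\,v^{-1}\,Z^{(-1,\omega(h))},\qquad \pi_v(\wt_\omega(v)) = p_{2,d_2-\omega(v)}\,v^{-1}\,Z^{(\omega(v),-1)}\]
for every edge of $D_{m-2}$. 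Multiplying in the natural order and using the universal identity $Z^{(a)}Z^{(b)}=v^{\Lambda(a,b)}Z^{(a+b)}$, where $\Lambda((\alpha,\beta),(\alpha',\beta')):=\alpha\beta'-\beta\alpha'$, produces a total $v$-twist $\Gamma_\omega=\sum_{e<e'}\Lambda(a_e,a_{e'})$ and a monomial of total degree $(-u_{m-2,1}+|\omega_V|,\,-u_{m-3,2}+|\omega_H|)$. A direct four-case check (HH, HV, VH, VV) verifies $\Lambda(a_e,a_{e'})=\gamma_\omega(e,e')+\beta_\omega(e,e')$ pair by pair: for example in the HV case $\Lambda=1-\omega(h)\omega(v)$ matches $\gamma+\beta=-2\omega(h)\omega(v)+\omega(h)\omega(v)+1$. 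Combining the initial $v$ with the $v^{-1}$ contributed by each of the $u_{m-2,1}+u_{m-3,2}$ edges and with $\Gamma_\omega$ yields~\eqref{eq:combinatorial quantum 1}.

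Part (b) is parallel once the negative iteration is transported to a positive one. For $m\le 0$, set $n=1-m$ and use the involution $\sigma\colon X\leftrightarrow Y$, which satisfies $F_P^{-1}=\sigma F_P\sigma$, to rewrite $X^{(4)}_{m-1}=\sigma\bigl(F_{S_1}F_{S_2}\cdots F_{S_n}(Y)\bigr)$, where the pair $(S_1,S_2):=(\bar P_2,\bar P_1)$ produces the sequence $S_k=P_{5-k}$ via~\eqref{eq:reversed polynomials}. Since the associated degree data $(d_2,d_1)$ coincides with $(d'_1,d'_2)$, the corresponding Chebyshev values and Dyck paths agree and the combinatorial object lives on $D'_{-m+1}$. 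Applying Theorem~\ref{th:combinatorial construction} to $(S_1,S_2)$ and then $\sigma$ produces edge weights $p_{2,d_2-k}X^kY^{-1}$ for horizontal and $p_{1,k}Y^{k+1}X^{-1}Y^{-1}$ for vertical, whose specializations under $\pi_v$ are $p_{2,d_2-k}\,v\,Z^{(k,-1)}$ and $p_{1,k}\,v\,Z^{(-1,k)}$ respectively. The same four-case $\Lambda$ analysis matches $\gamma'_\omega+\beta'_\omega$ in the Corollary (whose case definitions are those of part (a) with the roles of $H$ and $V$ swapped), and collecting all $v$-factors reproduces~\eqref{eq:combinatorial quantum 2}.

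The main obstacle is aligning the sign and $p$-coefficient conventions through the $\sigma$-conjugation and $\pi_v$ in part (b); once the edge-weight specialization is in hand, the case-by-case identification of the quantum twist with $\gamma+\beta$ is mechanical.
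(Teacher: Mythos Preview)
Your argument is correct and follows essentially the same route as the paper: specialize each edge weight via $\pi_v$ to $v^{-1}Z^{(-1,\omega(h))}$ or $v^{-1}Z^{(\omega(v),-1)}$, multiply using the rule $Z^{(a)}Z^{(b)}=v^{a_1b_2-a_2b_1}Z^{(a+b)}$, and identify the resulting twist with $\gamma_\omega+\beta_\omega$ pair by pair. Your treatment of part~(b) via the conjugation identity $F_P^{-1}=\sigma F_P\sigma$ is more explicit than the paper's one-line remark (``interchange the roles of $X$ and $Y$ in~\eqref{eq:edge weights}''), but it unpacks exactly the same mechanism.
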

\begin{proof}
  We prove part (a), the proof of part (b) is essentially the same where the roles of $X$ and $Y$ are interchanged in equation~\eqref{eq:edge weights}.

  First note that we have $X_{m-1}=QY_{m-2}$ so that $vZ_m=\pi_v(X_{m-1})=v^2\pi_v(Y_{m-2})$, in particular this accounts for the $1$ appearing in the exponent of $v$ in equation~\eqref{eq:combinatorial quantum 1}.
  By Theorem~\ref{th:combinatorial construction}, we may compute $Y_{m-2}$ by considering compatible gradings on the maximal Dyck path $D_{m-2}$ and thus $Z_m$ can be computed by applying the quantum projection $\pi_v$ to equation~\eqref{eq:total path weights}.
  Then the exponents of $Z_1$ and $Z_2$ in equation~\eqref{eq:combinatorial quantum 1} are immediate from Lemma~\ref{le:shadows and degrees}.
  The coefficient $p_\omega$ also follows directly from the definition of the non-commutative edge weights in equation~\eqref{eq:edge weights}, so for the remainder of the proof we assume $p_{i,j}=1$ for all $i$ and $j$.

  Note that 
  \begin{equation}
    \label{eq:monomial multiplication}
    Z^{(a_1,a_2)}Z^{(b_1,b_2)}=v^{a_1b_2-a_2b_1}Z^{(a_1+b_1,a_2+b_2)}
  \end{equation}
  for $a_i,b_i\in\ZZ$, $i=1,2$.
  The rest of the exponent of $v$ in equation~\eqref{eq:combinatorial quantum 1} can be seen as follows:
  \begin{enumeratei}
    \item for an edge $e\in E_{m-2}$ we have 
      \begin{align*}
        \pi_v(\wt_\omega(e))
        &=\begin{cases}\pi_v(Y^{\omega(e)}X^{-1}) & \text{if $e\in H_{m-2}$}\\ \pi_v(X^{\omega(e)+1}Y^{-1}X^{-1}) & \text{if $e\in V_{m-2}$} \end{cases}\\
        &=\begin{cases}v^{-\omega_H(e)-1}Z_2^{\omega_H(e)}Z_1^{-1}i & \text{if $e\in H_{m-2}$}\\ v^{\omega(e)+1}Z_1^{\omega(e)+1}Z_2^{-1}Z_1^{-1} & \text{if $e\in V_{m-2}$}\end{cases}\\
        &=\begin{cases}v^{-1}Z^{(-1,\omega_H(e))} & \text{if $e\in H_{m-2}$;}\\ v^{-1}Z^{(\omega(e),-1)} & \text{if $e\in V_{m-2}$;}\end{cases}
      \end{align*}
      The $v^{-1}$ in each possibility above accounts for the terms $-u_{m,1}$ and $-u_{m-1,2}$ in equation~\eqref{eq:combinatorial quantum 1}.
    \item for $e,e'\in E_{m-2}$, the quantity $\gamma_\omega(e,e')$ from equation~\eqref{eq:quantum weight 1} records the power of $v$ which appears when commuting powers of $Z_2$ appearing in $\pi_v(\wt_\omega(e))$ past powers of $Z_1$ appearing in $\pi_v(\wt_\omega(e'))$;
    \item for $e,e'\in E_{m-2}$, the quantity $\beta_\omega(e,e')$ from equation~\eqref{eq:quantum weight 2} records the power of $v$ so that 
      \[\pi_v(\wt_\omega(e))\pi_v(\wt_\omega(e'))=v^{\gamma_\omega(e,e')+\beta_\omega(e,e')-2}Z^{(a_1,a_2)}\]
      for appropriate $a_1,a_2\in\ZZ$ depending on $e,e'\in E_{m-2}$ (the $-2$ here accounts for part (i) above).
  \end{enumeratei}
  Since we have $Z_m=v\pi_v(Y_{D_{m-2}})$, the result follows by combining the observations above.
\end{proof}

Let $\kk=\QQ(v)$ for an indeterminate $v$.  
When $p_{i,j}=0$ for $i=1,2$ and $1\le j\le d_i-1$, the expansions of the quantum generalized cluster variables as elements of $\cT$ have been computed \cite{rupel0} using the representation theory of valued quivers as follows.
In this case, we drop the adjective ``generalized'' and refer to the $Z^{(\alpha)}_k$ simply as \emph{quantum cluster variables}.

Let $d=\gcd(d_1,d_2)$.
Consider the quiver $\Lambda$ with vertices $\Lambda_0=\{1,2\}$ with $d$ arrows $a_j:2\to1$, $1\le j\le d$.
Write $\FF_q$ for the finite field with $q$ elements and fix an algebraic closure $\overline{\FF}$ of $\FF_q$.
Let $\FF_{q^{d_1}},\FF_{q^{d_2}},\FF_{q^d}\subset\overline{\FF}$ denote the extension fields of $\FF_q$ of degree $d_1,d_2,d$, respectively.
Note that $\FF_{q^{d_1}}$ and $\FF_{q^{d_2}}$ are naturally identified as vector spaces over $\FF_{q^d}$.

A valued representation $V=(V_1,V_2,V_{a_j})$ of $\Lambda$ consists of $\FF_{q^{d_i}}$-vector spaces $V_i$ for $i=1,2$ and $\FF_{q^d}$-linear maps $V_{a_j}:V_2\to V_1$ for $1\le j\le d$.
For representations $V=(V_1,V_2,V_{a_j})$ and $W=(W_1,W_2,W_{a_j})$, a morphism $\theta:V\to W$ consists of $\FF_{q^{d_i}}$-linear maps $\theta_i:V_i\to W_i$ for $i=1,2$ such that the following diagram commutes for $1\le j\le d$:
\[\begin{tikzcd}
    V_1\arrow["\theta_1"']{d}& V_2 \arrow["V_{a_j}"']{l}\arrow["\theta_2"]{d}\\
    W_1 & W_2 \arrow["W_{a_j}"]{l}
  \end{tikzcd}\]
Thus the finite-dimensional valued representations of $\Lambda$ form a category $\rep(\Lambda)$.
In fact, this category is well-known to be abelian, $\FF_q$-linear, and Krull-Schmidt.
Write $\cK(\Lambda)$ for the Grothendieck group of the category $\rep(\Lambda)$, then $\cK(\Lambda)\cong\ZZ^2$ where the class $[V]=(\dim_{\FF_{q^{d_1}}}\!\!\!V_1,\dim_{\FF_{q^{d_2}}}\!\!\!V_2)$ of a valued representation $V$ of $\Lambda$ gives its \emph{dimension vector}.
Define a $\ZZ$-bilinear pairing $\langle\cdot,\cdot\rangle:\cK(\Lambda)\times\cK(\Lambda)\to\ZZ$ on the natural basis $\alpha_1=(1,0)$ and $\alpha_2=(0,1)$ by
\[\langle\alpha_i,\alpha_i\rangle=d_i,\qquad\langle\alpha_1,\alpha_2\rangle=0,\qquad\langle\alpha_2,\alpha_1\rangle=-d_1d_2.\]

For a valued representation $V$ of $\Lambda$ and a dimension vector $\bfe=(e_1,e_2)\in\cK(\Lambda)$, write $Gr_\bfe(V)$ for the \emph{Grassmannian of subrepresentations} of $V$ with dimension vector $\bfe$:
\[Gr_\bfe(V)=\{E\subset V:[E]=\bfe\}.\]
The \emph{quiver Grassmannian} $Gr_\bfe(V)$ naturally embeds as a closed subvariety in the product $Gr_{e_1}(V_1)\times Gr_{e_2}(V_2)$, in particular it is a projective variety.
When $V$ is rigid, i.e.\ $\Ext^1(V,V)=0$, Caldero and Reineke have shown \cite{caldero-reineke} that $Gr_\bfe(V)$ is smooth.

Since the field $\FF_q$ is finite, each Grassmannian $Gr_\bfe(V)$ is a finite set.
For $V$ rigid, a result of \cite{rupel3} shows that the number of points in $Gr_\bfe(V)$ can be computed by evaluating a polynomial $P_{\bfe,V}(t)\in\ZZ[t]$ at $q=|\FF_q|$.
Note that since $V$ is rigid, it is uniquely determined up to isomorphism by its dimension vector $[V]\in\cK(\Lambda)$. 
\begin{theorem}\cite[Corollary 1.2]{rupel3}
  Let $V$ be a rigid valued representations of $\Lambda$.
  For each dimension vector $\bfe\in\cK(\Lambda)$, there exists a polynomial $P_{\bfe,V}(t)\in\ZZ[t]$ depending only on the dimension vector of $V$ so that 
  \[|Gr_\bfe(V)|=P_{\bfe,V}(q).\]
\end{theorem}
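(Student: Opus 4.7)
The plan is to prove polynomiality by induction on the position of $V$ in the Auslander-Reiten quiver of $\rep(\Lambda)$.  Every rigid indecomposable valued representation of the rank $2$ quiver $\Lambda$ is either preprojective or preinjective, and its dimension vector is one of the Chebyshev-like vectors $\bfa_m$ from equation~\eqref{eq:roots recursive} (with $\bfa'_m$ accounting for the opposite orientation).  Rigidity together with indecomposability forces $V$ to be determined up to isomorphism by its dimension vector $[V]$, which justifies the assertion that $P_{\bfe,V}(t)$ depends only on $[V]$; for a reducible rigid $V$ one then reduces to the indecomposable case using that a direct sum of indecomposables is rigid exactly when the summands are pairwise $\Ext$-orthogonal.

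For the base case, if $V$ is simple then $Gr_\bfe(V)$ is either empty or a single point, so $P_{\bfe,V}(t)=0$ or $1$.  For the inductive step I would use a valued-quiver analog of the BGP reflection functor $\Sigma_i^+\colon\rep(\Lambda)\to\rep(\Lambda^{\sigma_i})$ at a sink $i$.  This functor is an equivalence on the full subcategory of representations without $S_i$ as a direct summand, and for our rank $2$ quiver $\Lambda^{\sigma_i}$ has the same underlying valued graph.  Iterating such reflections strictly decreases the level of any non-simple rigid indecomposable in the preprojective (respectively preinjective) component until a simple is reached.  The heart of the argument is a recursive identity that stratifies $Gr_\bfe(V)$ according to the $\FF_{q^{d_i}}$-dimension $r$ of the intersection of the subrepresentation with the $S_i$-isotypic component of the socle (or top) of $V$, and uses $\Sigma_i^+$ to identify each stratum with a bundle whose base is a Grassmannian of $\Sigma_i^+V$ and whose fibers are counted by Gaussian binomial coefficients in $q^{d_i}$.

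The main obstacle will be establishing this recursion rigorously.  Each stratum must be expressed as a fibration over $Gr_{\bfe'}(\Sigma_i^+V)$ for an explicit $\bfe'=\bfe'(\bfe,r)$ determined by the reflection, with fibers counted by products such as $\binom{a}{b}_{q^{d_i}}$, and one must ensure that the total fiber cardinality is genuinely polynomial in $q$ (not merely in $q^{d_i}$) and independent of the particular subrepresentation in the base.  Rigidity of $V$ is essential here: $\Ext^1(V,V)=0$ rules out deformations that would cause the stratification dimensions to jump with $q$, and it also guarantees that $\Sigma_i^+V$ is again rigid so that the inductive hypothesis applies.  Granted the recursion, polynomiality of $|Gr_\bfe(V)|$ in $q$ follows immediately from the inductive hypothesis together with the polynomiality of the Gaussian binomial factors in $q$.
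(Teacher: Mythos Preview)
The paper does not prove this statement at all: it is quoted verbatim as \cite[Corollary 1.2]{rupel3} and used as a black box.  So there is no ``paper's own proof'' to compare against; any comparison is to the argument in \cite{rupel3}, which is external to this paper.

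Your outline is a reasonable strategy and is in the spirit of how such polynomiality results are typically established, but as written it has two soft spots.  First, the passage from indecomposable rigid $V$ to arbitrary rigid $V$ is not as automatic as you suggest: knowing that the summands are pairwise $\Ext$-orthogonal does not by itself give a formula expressing $|Gr_\bfe(V\oplus W)|$ in terms of the $|Gr_{\bfe'}(V)|$ and $|Gr_{\bfe''}(W)|$.  One needs a stratification of $Gr_\bfe(V\oplus W)$ by the images in $V$ and $W$, and the fibers of that stratification are affine spaces of dimension $\dim\operatorname{Hom}(E',W/E'')$ for varying subrepresentations $E',E''$; showing that this dimension is constant on each stratum (so that the fiber count is a fixed power of $q$) again uses rigidity in a nontrivial way.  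Second, the ``main obstacle'' you flag is genuine: writing down the BGP stratification so that the base is honestly a quiver Grassmannian of $\Sigma_i^+V$ and the fiber count is a polynomial in $q$ independent of the base point requires an explicit linear-algebra description of subrepresentations at a sink, and one must check that the relevant $\operatorname{Hom}$-dimensions are constant along each stratum.  None of this is wrong, but your proposal is a plan rather than a proof; the actual work lies in the two constancy statements just mentioned.  (Your worry about $q$ versus $q^{d_i}$ is not a real issue: a Gaussian binomial in $q^{d_i}$ is already a polynomial in $q$.)
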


It was conjectured in \cite{rupel3} that for a rigid representation $V$ the counting polynomials $P_{\bfe,V}(t)$ have positive coefficients and are unimodal.
Corollary~\ref{cor:combinatorial polynomials} proves the positivity conjecture by giving a positive combinatorial construction of these counting polynomials.
It remains an interesting open question to see how this combinatorics can be used to establish unimodality.

Define the \emph{quantum cluster character} of a rigid valued representation $V$ of $\Lambda$ by
\[Z_V=\sum\limits_{\bfe\in\cK(\Lambda)} v^{-\langle\bfe,\bfv-\bfe\rangle}P_{\bfe,V}(v^2)Z^{(-v_1+d_2e_2,-v_2+d_1(v_1-e_1))},\]
where $[V]=\bfv=(v_1,v_2)$ and $\bfe=(e_1,e_2)$.
Write $P_m$ (resp. $I_m$), $m\ge1$, for the preprojective (resp. preinjective) valued representations of $\Lambda$ (definitions can be found in \cite{rupel0} where it is shown that $[P_m]=\bfa_m$ and $[I_m]=\bfa'_m$).
Then the Laurent expansions of the non-initial quantum cluster variables $Z_m$, $m\in\ZZ\setminus\{1,2\}$, can be computed as follows.
\begin{theorem}\cite{rupel0}
  \label{th:quantum categorification}
  Assume the intermediate exchange coefficients $p_{i,j}=0$ for $i=1,2$ and $1\le j\le d_i-1$.  Then the following hold:
  \begin{enumeratea}
    \item for $m\ge3$, the quantum cluster variable $Z_m$ is equal to $Z_{P_{m-2}}$;
    \item for $m\le0$, the quantum cluster variable $Z_m$ is equal to $Z_{I_{-m+1}}$.
  \end{enumeratea}
\end{theorem}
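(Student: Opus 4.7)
The plan is to match the combinatorial formula of Corollary~\ref{cor:quantum combinatorial} with the character formula defining $Z_V$, exploiting the drastic simplification caused by the vanishing hypothesis on intermediate exchange coefficients. First I would observe that $p_{i,j}=0$ for $1\le j\le d_i-1$ forces the coefficient $p_\omega$ in equation~\eqref{eq:combinatorial quantum 1} to vanish unless the grading $\omega$ takes only the extreme values, that is, $\omega_H(h)\in\{0,d_1\}$ on every horizontal edge and $\omega_V(v)\in\{0,d_2\}$ on every vertical edge; for such \emph{binary} gradings $p_\omega=1$ since $p_{i,0}=p_{i,d_i}=1$. Rescaling $\omega_H\mapsto\omega_H/d_1$ and $\omega_V\mapsto(d_2-\omega_V)/d_2$, the compatibility condition of Definition~\ref{def:compatibility} specializes exactly to the compatible-pair condition of \cite{lee-li-zelevinsky}, in the valued-quiver generalization recorded in \cite{rupel2}. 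Thus the sum in equation~\eqref{eq:combinatorial quantum 1} collapses to a sum over ordinary compatible pairs on $D_{m-2}$.

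Next I would invoke the categorification dictionary from \cite{rupel3,rupel0}: there is a bijection between compatible pairs on $D_{m-2}$ with prescribed totals and subrepresentations $E\subset P_{m-2}$ of prescribed dimension vector $\bfe$, and summed over all such pairs with $\bfe$ fixed the count reproduces the polynomial $P_{\bfe,P_{m-2}}(q)$ evaluated at $q=v^2$. Under this bijection the identities $|\omega_V|=d_2e_2$ and $|\omega_H|=d_1(v_1-e_1)$, combined with $[P_{m-2}]=\bfa_{m-2}=(u_{m-2,1},u_{m-3,2})$, show that the monomial $Z^{(-u_{m-2,1}+|\omega_V|,-u_{m-3,2}+|\omega_H|)}$ appearing in equation~\eqref{eq:combinatorial quantum 1} is precisely $Z^{(-v_1+d_2e_2,-v_2+d_1(v_1-e_1))}$, which is the monomial appearing in the defining formula for $Z_{P_{m-2}}$.

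The third and final step is to reconcile the exponent of $v$, that is, to verify the identity
\[1-u_{m-2,1}-u_{m-3,2}+\gamma_\omega+\beta_\omega=-\langle\bfe,\bfv-\bfe\rangle\]
for every binary compatible pair. Grouping the contributions to $\gamma_\omega+\beta_\omega$ in equations~\eqref{eq:quantum weight 1} and~\eqref{eq:quantum weight 2} according to the four possible types of ordered pairs $(e,e')\in E_{m-2}\times E_{m-2}$ and substituting the binary values, the $HH$, $VV$, $HV$, and $VH$ contributions should reduce respectively to multiples of the pairings $\langle\alpha_1,\alpha_1\rangle=d_1$, $\langle\alpha_2,\alpha_2\rangle=d_2$, $\langle\alpha_1,\alpha_2\rangle=0$, and $\langle\alpha_2,\alpha_1\rangle=-d_1d_2$ applied to the relevant components of $\bfe$ and $\bfv-\bfe$, producing $-\langle\bfe,\bfv-\bfe\rangle$ up to an additive constant absorbed into the offset $1-u_{m-2,1}-u_{m-3,2}$ and the balancing factor $v^{-a_1a_2}$ built into $Z^{(a_1,a_2)}$. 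Part (b) is then proved by the symmetric argument using part (b) of Corollary~\ref{cor:quantum combinatorial}, the maximal Dyck path $D'_{-m+1}$, Remark~\ref{rem:alternate restrictions}, and $[I_{-m+1}]=\bfa'_{-m+1}$.

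The principal obstacle is the third step. Each ingredient is elementary, but the Euler-form reconciliation demands careful bookkeeping of sign conventions, both for the balanced monomial $Z^{(a_1,a_2)}=v^{-a_1a_2}Z_1^{a_1}Z_2^{a_2}$ and for the fact that the vertical edge weight in equation~\eqref{eq:edge weights} depends on $d_2-\omega_V(v)$ rather than on $\omega_V(v)$, which under the binary specialization interchanges the roles of filled and unfilled vertical edges. Once this accounting is done, a double sum indexed by ordered pairs of edges in $E_{m-2}$ must be shown term by term to assemble into the bilinear pairing $\langle\bfe,\bfv-\bfe\rangle$ on $\cK(\Lambda)$, after which Theorem~\ref{th:quantum categorification} follows directly from Corollary~\ref{cor:quantum combinatorial}.
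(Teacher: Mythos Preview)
The paper does not prove Theorem~\ref{th:quantum categorification}; it is quoted from \cite{rupel0}, where it is established by representation-theoretic methods (quantum cluster characters and their behavior under mutation), with no reference to Dyck paths or compatible pairs. So there is no proof in the present paper to compare against, and any argument you give must stand on its own together with the external references actually available.

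Your proposed route has a genuine circularity. In step~2 you invoke a ``categorification dictionary from \cite{rupel3,rupel0}'' asserting that compatible pairs on $D_{m-2}$ with fixed totals enumerate subrepresentations of $P_{m-2}$ with the correct $v$-weighting. No such statement appears in those references: \cite{rupel0} matches cluster variables to quantum cluster characters without Dyck-path combinatorics, and \cite{rupel3} proves polynomiality of $|Gr_\bfe(V)|$ in $q$ without any combinatorial model. The identification of compatible pairs with the counting polynomials $P_{\bfe,P_m}$ is precisely Corollary~\ref{cor:combinatorial polynomials} of the present paper, and that corollary is \emph{deduced from} Theorem~\ref{th:quantum categorification} together with Corollary~\ref{cor:quantum combinatorial}. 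Using it to prove Theorem~\ref{th:quantum categorification} is circular.

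There is also a technical misstatement in step~3. The claimed identity $1-u_{m-2,1}-u_{m-3,2}+\gamma_\omega+\beta_\omega=-\langle\bfe,\bfv-\bfe\rangle$ cannot hold termwise: the left side depends on the individual grading $\omega$ (through the relative positions of supported edges entering $\gamma_\omega$), while the right side depends only on the totals $\bfe$. The computation in the proof of Corollary~\ref{cor:combinatorial polynomials} shows instead that the discrepancy is $2\overline\gamma_\omega$, which genuinely varies over gradings with the same $\bfe$; it is only after summing $v^{2\overline\gamma_\omega}$ over all such $\omega$ that one obtains $P_{\bfe,P_{m-2}}(v^2)$, and that last equality is exactly what requires Theorem~\ref{th:quantum categorification} as input.
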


Combining Corollary~\ref{cor:quantum combinatorial} with Theorem~\ref{th:quantum categorification}, we obtain a combinatorial construction of the counting polynomials for Grassmannians of subrepresentations in rigid valued quiver representations.
\begin{corollary}
  \label{cor:combinatorial polynomials}
  For $m\ge1$, the counting polynomials $P_{\bfe,P_m}(t)$ and $P_{\bfe,I_m}(t)$ are given by
  \begin{equation}
    \label{eq:preprojective counting polynomial}
    P_{\bfe,P_m}(t)=\sum\limits_{\omega:E_m\to\ZZ_{\ge0}} t^{\overline{\gamma}_\omega},
  \end{equation}
  where 
  \begin{itemize}
    \item the sum ranges over $(d_1,d_2)$-bounded compatible gradings $\omega$ of $D_m$ such that $\omega(H_m)\subset\{0,d_1\}$, $\omega(V_m)\subset\{0,d_2\}$, $|\supp(\omega_H)|=u_{m,1}-e_1$, and $|\supp(\omega_V)|=e_2$;
    \item $\overline{\gamma}_\omega=\sum\limits_{e<e'\in E_m}\overline{\gamma}_\omega(e,e')$ for
    \begin{equation}
      \label{eq:polynomial exponents}
      \overline{\gamma}_\omega(e,e')=\begin{cases}
        -d_1d_2 & \text{if $e\in\supp(\omega_H)$ and $e'\in\supp(\omega_V)$;}\\ 
        d_1 & \text{if $e\in\supp(\omega_H)$ and $e'\in H_m\setminus\supp(\omega_H)$;}\\ 
        d_2 & \text{if $e\in V_m\setminus\supp(\omega_V)$ and $e'\in\supp(\omega_V)$;}\\ 
        0 & \text{otherwise;}\\ 
      \end{cases}
    \end{equation} 
  \end{itemize}
  and
  \begin{equation}
    \label{eq:preinjective counting polynomial}
    P_{\bfe,I_m}(t)=\sum\limits_{\omega:E'_m\to\ZZ_{\ge0}} t^{\overline{\gamma}'_\omega},
  \end{equation}
  where
  \begin{itemize}
    \item the sum ranges over $(d_2,d_1)$-bounded compatible gradings $\omega$ of $D'_m$ such that $\omega(H'_m)\subset\{0,d_2\}$, $\omega(V'_m)\subset\{0,d_1\}$, $|\supp(\omega_{H'})|=e_2$, and $|\supp(\omega_{V'})|=u'_{m,1}-e_1$;
    \item $\overline{\gamma}'_\omega=\sum\limits_{e<e'\in E'_m}\overline{\gamma}'_\omega(e,e')$ for
    \begin{equation*}
      \overline{\gamma}'_\omega(e,e')=\begin{cases}
        -d_1d_2 & \text{if $e\in\supp(\omega_{V'})$ and $e'\in\supp(\omega_{H'})$;}\\ 
        d_1 & \text{if $e\in\supp(\omega_{V'})$ and $e'\in V'_m\setminus\supp(\omega_{V'})$;}\\ 
        d_2 & \text{if $e\in H'_m\setminus\supp(\omega_{H'})$ and $e'\in\supp(\omega_{H'})$;}\\ 
        0 & \text{otherwise.}
      \end{cases}
    \end{equation*}
  \end{itemize}
\end{corollary}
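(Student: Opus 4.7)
The strategy is to specialize Corollary~\ref{cor:quantum combinatorial} by setting $p_{i,j}=0$ for $1\le j\le d_i-1$, match the resulting Laurent expansion of $Z_{m+2}$ against the cluster character expression for $Z_{P_m}$ furnished by Theorem~\ref{th:quantum categorification}(a), and read off the counting polynomial $P_{\bfe,P_m}(v^2)$ coefficient by coefficient.

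First I would observe that, since the coefficient $p_\omega=\prod_i p_{1,\omega_H(h_i)}\prod_t p_{2,d_2-\omega_V(v_t)}$ appearing in equation~\eqref{eq:combinatorial quantum 1} vanishes unless $\omega_H$ takes values in $\{0,d_1\}$ and $\omega_V$ takes values in $\{0,d_2\}$ (and equals $1$ for such $\omega$), the combinatorial expansion of $Z_{m+2}=Z_{P_m}$ reduces to a sum over $(d_1,d_2)$-bounded compatible gradings $\omega$ of $D_m$ with $\omega(H_m)\subset\{0,d_1\}$ and $\omega(V_m)\subset\{0,d_2\}$. Writing $s_H=|\supp(\omega_H)|$ and $s_V=|\supp(\omega_V)|$, the exponent in the Laurent monomial $Z^{(-u_{m,1}+d_2 s_V,-u_{m-1,2}+d_1 s_H)}$ coincides with the exponent $(-v_1+d_2 e_2,-v_2+d_1(v_1-e_1))$ appearing in $Z_{P_m}$ exactly when $e_2=s_V$ and $u_{m,1}-e_1=s_H$; these are precisely the cardinality constraints placed on $\omega$ in the statement.

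Equating coefficients of $Z^{(-u_{m,1}+d_2 e_2,-u_{m-1,2}+d_1(u_{m,1}-e_1))}$ then yields
\[v^{-\langle\bfe,\bfa_m-\bfe\rangle}P_{\bfe,P_m}(v^2)=\sum_\omega v^{1-u_{m,1}-u_{m-1,2}+\gamma_\omega+\beta_\omega},\]
and the claim reduces to the term-by-term exponent identity
\[1-u_{m,1}-u_{m-1,2}+\gamma_\omega+\beta_\omega+\langle\bfe,\bfa_m-\bfe\rangle=2\overline{\gamma}_\omega.\]
I would verify this by decomposing each of $\gamma_\omega$, $\beta_\omega$, $\overline{\gamma}_\omega$ into contributions by pair-type (HH, HV, VH, VV) on $E_m$, substituting the restricted values of $\omega$, and expanding $\langle\bfe,\bfa_m-\bfe\rangle=d_1 s_H(u_{m,1}-s_H)+d_2 s_V(u_{m-1,2}-s_V)-d_1 d_2 s_H s_V$ from the definition of the pairing on $\cK(\Lambda)$. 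A direct calculation shows that the mixed and quadratic contributions in $s_H s_V$, $s_H^2$, $s_V^2$ and the pair-count contributions of type $(\supp\omega_H,\supp\omega_V)$ cancel systematically between the four terms.

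The main obstacle is the residual $\omega$-independent identity $2D=(u_{m,1}-1)(u_{m-1,2}-1)$, where $D=\sum_i\hgt(h_i)$ counts pairs $(v,h)\in V_m\times H_m$ with $v<h$; it is this identity that absorbs the $1-u_{m,1}-u_{m-1,2}$ constant together with the HH/VV constant contributions coming from $\beta_\omega$ and $\gamma_\omega$. I would dispatch it by the classical identity $\sum_{i=0}^{a-1}\lfloor ib/a\rfloor=(a-1)(b-1)/2$ valid for coprime positive integers $a,b$, after an inductive verification from equation~\eqref{eq:chebyshev} that $\gcd(u_{m,1},u_{m-1,2})=1$. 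The preinjective formula \eqref{eq:preinjective counting polynomial} then follows from the entirely symmetric argument using Corollary~\ref{cor:quantum combinatorial}(b) and Theorem~\ref{th:quantum categorification}(b), with $D_m,H_m,V_m$ replaced by their primed analogues and the roles of $d_1,d_2$ interchanged throughout.
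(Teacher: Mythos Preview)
Your approach is essentially the same as the paper's: specialize Corollary~\ref{cor:quantum combinatorial} at $p_{i,j}=0$ for $0<j<d_i$, match against Theorem~\ref{th:quantum categorification}, and verify the exponent identity $1-u_{m,1}-u_{m-1,2}+\gamma_\omega+\beta_\omega+\langle\bfe,\bfa_m-\bfe\rangle=2\overline{\gamma}_\omega$ by expanding each side in terms of the support sizes and pair-type counts.

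The one genuine difference is in how you dispatch the residual $\omega$-independent identity $2\sum_i\hgt(h_i)=(u_{m,1}-1)(u_{m-1,2}-1)$. You invoke the classical floor-sum formula for coprime integers together with an inductive verification that $\gcd(u_{m,1},u_{m-1,2})=1$ (which indeed follows, e.g.\ from the determinantal identity $u_{m,1}u_{m-2,2}-u_{m-1,1}u_{m-1,2}=-1$, itself an easy induction from~\eqref{eq:roots recursive}). The paper instead avoids any number theory: it evaluates the already-established equality $P_{\bfe,P_m}(v^2)=\sum_\omega v^{(u_{m,1}-1)(u_{m-1,2}-1)-2d_1\binom{u_{m,1}-e_1}{2}-2d_2\binom{e_2}{2}+\gamma_\omega}$ at the special dimension vector $\bfe=(u_{m,1},0)$, where the left side is trivially $1$ (the unique subrepresentation being $(V_1,0)$) and the right side has a single summand with $\gamma_\omega=-2|\{(v,h):v<h\}|$, reading off the identity directly. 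Your route is self-contained combinatorially; the paper's is shorter and exploits the representation-theoretic input it already has in hand.
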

\begin{proof}
  We prove equation~\eqref{eq:preprojective counting polynomial}, the proof of equation~\eqref{eq:preinjective counting polynomial} is essentially the same. 

  By Corollary~\ref{cor:quantum combinatorial} and Theorem~\ref{th:quantum categorification}, we have
  \[P_{\bfe,P_m}(v^2)=\sum\limits_{\omega:E_m\to\ZZ_{\ge0}} v^{-\langle\bfe,\bfa_m-\bfe\rangle+1-u_{m,1}-u_{m-1,2}+\gamma_\omega+\beta_\omega},\]
  where the sum ranges over all $(d_1,d_2)$-bounded compatible gradings of $D_m$ with $\omega(H_m)\subset\{0,d_1\}$, $\omega(V_m)\subset\{0,d_2\}$, $|\supp(\omega_H)|=u_{m,1}-e_1$, and $|\supp(\omega_V)|=e_2$.
  But observe that
  \[\langle\bfe,\bfa_m-\bfe\rangle=d_1e_1(u_{m,1}-e_1)+d_2e_2(u_{m-1,2}-e_2)-d_1d_2e_2(u_{m,1}-e_1)\]
  and under the assumptions on $\omega$ we have
  \[\beta_\omega=d_1d_2e_2(u_{m,1}-e_1)+u_{m,1}u_{m-1,2}-d_1e_1(u_{m,1}-e_1)-2d_1{u_{m,1}-e_1\choose 2}-d_2e_2(u_{m-1,2}-e_2)-2d_2{e_2\choose 2}.\]
  Canceling like terms gives
  \[P_{\bfe,P_m}(v^2)=\sum\limits_{\omega:E_m\to\ZZ_{\ge0}} v^{(u_{m,1}-1)(u_{m-1,2}-1)-2d_1{u_{m,1}-e_1\choose 2}-2d_2{e_2\choose 2}+\gamma_\omega}.\]
  When $|\supp(\omega_H)|=0$ and $|\supp(\omega_V)|=0$, we have $\gamma_\omega=-2|\{e,e'\in E_m:e<e',e\in V_m, e'\in H_m\}|$.
  But these assumptions imply $\bfe=(u_{m,1},0)$ so that $P_{\bfe,P_m}(t)=1$ and thus 
  \[(u_{m,1}-1)(u_{m-1,2}-1)=2|\{e,e'\in E_m:e<e',e\in V_m, e'\in H_m\}|.\]
  In particular, the case $e\in V_m$ and $e'\in H_m$ can be ignored when computing $\gamma_\omega$ if we omit the term $(u_{m,1}-1)(u_{m-1,2}-1)$ from the exponent of $v$.
  Since $|\supp(\omega_H)|=u_{m,1}-e_1$ and $|\supp(\omega_V)|=e_2$, the cases $e,e'\in\supp(\omega_H)$ and $e,e'\in\supp(\omega_V)$ can also be ignored giving
  \[P_{\bfe,P_m}(v^2)=\sum\limits_{\omega:E_m\to\ZZ_{\ge0}} v^{2\overline{\gamma}_\omega}.\]
  This gives the result since $v$ was an indeterminate.
\end{proof}
\begin{remark}
  The exponents in equation~\eqref{eq:preprojective counting polynomial} are not manifestly positive, however equation~\eqref{eq:polynomial exponents} giving the exponents can be refined as follows.
  Consider $e\in\supp(\omega_H)$ and $e'\in\supp(\omega_V)$ with $e<e'$ which contributes a term $-d_1d_2$ in equation~\eqref{eq:polynomial exponents}.
  The $d_2$ horizontal edges preceding $e'$ cannot be in the support of $\omega_H$ by compatibility, moreover each such horizontal edge $h$ satisfies $e<h$.
  In particular, these pairs $e<h$ together contribute a term $d_1d_2$ in equation~\eqref{eq:polynomial exponents}.
  Thus the negative contribution to $\overline{\gamma}_\omega$ will always cancel and equation~\eqref{eq:preprojective counting polynomial} indeed gives $P_{\bfe,P_m}(t)$ as a polynomial in $t$.
\end{remark}


\end{document}